\documentclass[11pt,english]{amsart} 

\usepackage[margin=1in]{geometry}
\usepackage{amsmath, amssymb, amsthm, float, tikz, tikz-cd, epstopdf}
\usepackage{amscd}

\usepackage[unicode,colorlinks,plainpages=false,hyperindex=true,bookmarksnumbered=true,bookmarksopen=false,pdfpagelabels]{hyperref}
\hypersetup{urlcolor=cyan,linkcolor=blue,citecolor=red,colorlinks=true}
\usepackage{color}
\usepackage{enumitem}

\usepackage{datetime} 

\numberwithin{equation}{section}
\newtheorem{thm}{Theorem}[section]
\newtheorem{prop}[thm]{Proposition}
\newtheorem{lem}[thm]{Lemma}
\newtheorem{cor}[thm]{Corollary}

\theoremstyle{definition}
\newtheorem{defn}[thm]{Definition}
\newtheorem{exa}[thm]{Example}
\newtheorem{cons}[thm]{Construction}
\newtheorem{nota}[thm]{Notation}

\newtheorem{conv}[thm]{Convention}

\theoremstyle{remark}
\newtheorem{rem}[thm]{Remark}

\newcommand{\Z}{\mathbb{Z}}

\newcommand{\lk}{\operatorname{lk}}

\title{Higher Alexander Polynomials of Satellite Links}
\date{\today}
\author{Soheil Azarpendar}
\address{Mathematical Institute, University of Oxford, Andrew Wiles Building, Radcliffe Observatory Quarter, Woodstock Road, Oxford, OX2 6GG, UK}
\email{azarpendar@maths.ox.ac.uk}
\author{Colin McCulloch}
\address{Mathematical Institute, University of Oxford, Andrew Wiles Building, Radcliffe Observatory Quarter, Woodstock Road, Oxford, OX2 6GG, UK}
\email{colin.mcculloch@maths.ox.ac.uk}

\begin{document}

\begin{abstract}
We present formulas for the higher Alexander polynomials of satellite links.
\end{abstract}

\maketitle

\section{Introduction}

\subsection{The satellite formula: history and methods}\hfill

Let $K\subset S^{3}$ be an oriented knot, called the \emph{companion}, and let
\[
P=P_{1}\cup\cdots\cup P_{m}\subset S^{1}\times D^{2}
\]
be an oriented link, called the \emph{pattern}. Note that the pattern $P$ may be
contained in a $3$-ball in $S^{1}\times D^{2}$. Let $N(K)$ be a tubular neighbourhood of $K$.
Given a framing $n\in\mathbb Z$, choose a diffeomorphism
\[
f_n\colon S^{1}\times D^{2}\longrightarrow N(K)
\]
realising this framing. The link $f_n(P)\subset S^{3}$ is called the
\emph{$n$-twisted satellite} of $K$ with pattern $P$, and is denoted
by $K(P,n)$.

\begin{figure}[htbp]
\centering
\includegraphics[scale=0.2]{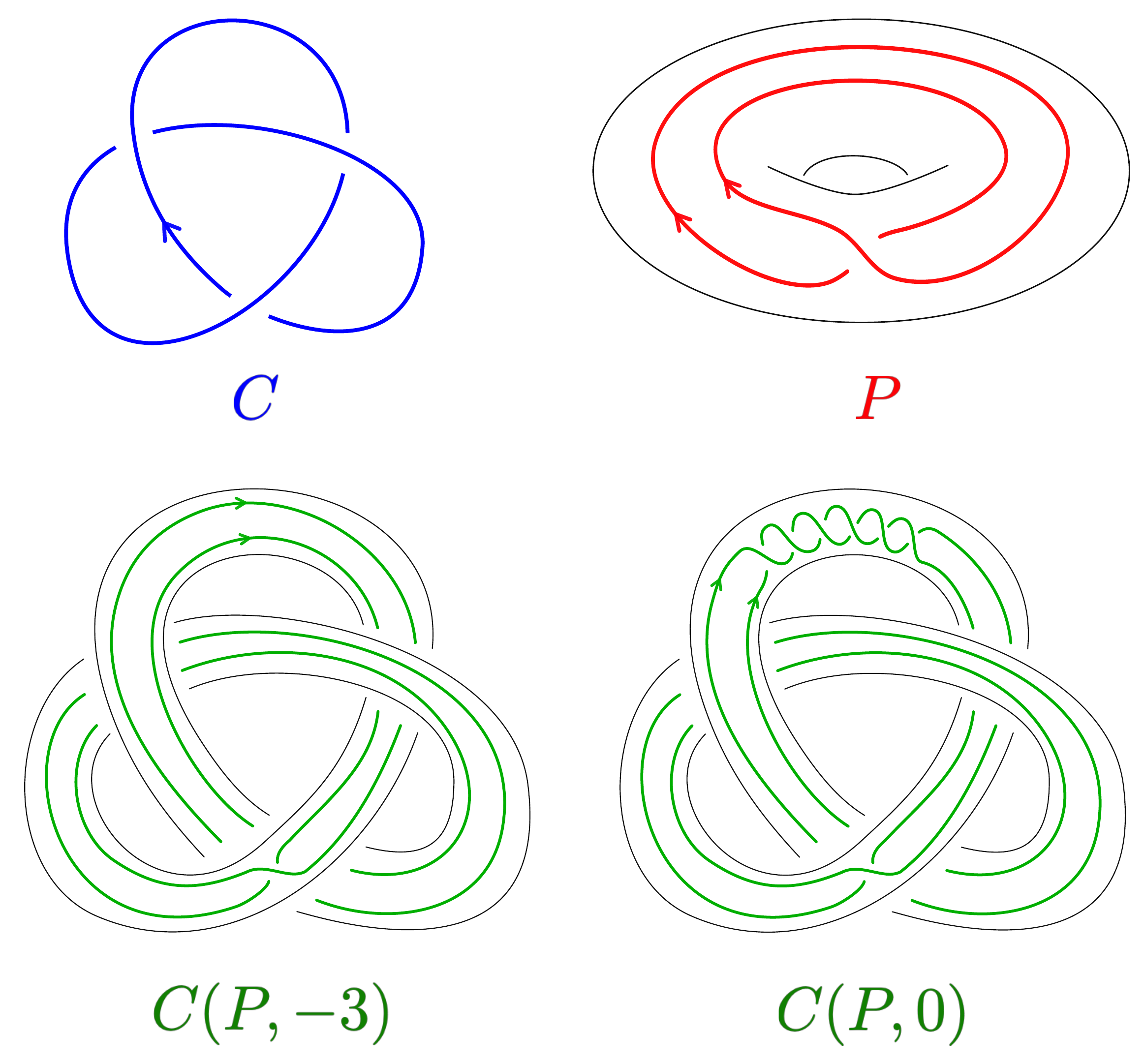}
\caption{Two satellites of the trefoil knot with $(2,1)$-torus knot pattern. The second example is the $(2,1)$-cable of the trefoil.}
\label{Fig:SatelliteExample}
\end{figure}
The behaviour of knot and link invariants under satellite operations has been studied extensively. One of the classical results in this direction describes the Alexander polynomial of a satellite in terms of the Alexander polynomials of its companion and pattern. In the special case where the pattern is a knot, the satellite formula goes back to Seifert~\cite{Seifert1950}.

Recall the winding number of $P$ is the class
\[
[P]\in H_1(S^1\times D^2)\cong \Z.
\]
Note that there are two choices for the identification here. In all the cases we consider the identification comes from the orientation of the core of the tubular neighbourhood.
\begin{thm}\label{thm:Seifert-formula}
Let $P\subset S^{1}\times D^{2}$ be a knot of winding number $w$. Then
\[
\Delta_{K(P,0)}(t)
\doteq
\Delta_K(t^{w})\Delta_P(t).
\]
\end{thm}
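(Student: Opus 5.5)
I would prove Theorem~\ref{thm:Seifert-formula} through a Mayer--Vietoris computation for the infinite cyclic cover. Here $\Delta_P(t)$ is read as the Alexander polynomial of $P$ viewed as a knot in $S^3$ via the standard unknotted embedding of $S^1\times D^2$. Write $J=K(P,0)$, $V=f_0(S^1\times D^2)$, and $E_K=S^3\setminus \operatorname{int}N(K)$. The exterior then splits as
\[
E_J = E_K\cup_{T} (V\setminus \operatorname{int}N(J)), \qquad T=\partial V .
\]
Let $\phi\colon \pi_1(E_J)\to \Z=\langle t\rangle$ be the abelianisation. On the pattern piece $X_P:=V\setminus\operatorname{int}N(J)$, this map agrees with the abelianisation of the exterior of $P$ in the unknotted solid torus. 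On $T$ it sends the meridian $\mu_K$ of $V$ to $t^w$ and the $0$-framed longitude $\lambda_K$ to $1$. The latter holds because $\lambda_K$ bounds a Seifert surface for $K$, which can be isotoped to avoid $V$. Consequently $\phi$ restricted to $E_K$ factors through $H_1(E_K)=\Z$ followed by $t\mapsto t^w$.

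The key steps, in order, are as follows. First, take the Mayer--Vietoris sequence of $\Lambda=\Z[t^{\pm1}]$-modules for the pulled-back infinite cyclic covers, tensored with $\mathbb{Q}$ so that we can work over the PID $\mathbb{Q}[t^{\pm1}]$ and use orders. Second, identify each term:
\begin{itemize}
\item On the torus $T$, the cover is a disjoint union of $|w|$ copies of $\mathbb{R}\times S^1$ (or $\Z$ copies of $T$ when $w=0$). This gives $H_1\cong \Lambda/(t^w-1)$ and $H_0\cong\Lambda/(t^w-1)$.
\item For $E_K$, the cover is induced from its own infinite cyclic cover, so $H_1(\widetilde{E_K})\cong \Lambda\otimes_{\Z[s^{\pm1}]}H_1(E_K;\Z[s^{\pm1}])$ via $s\mapsto t^w$. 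Its order is $\Delta_K(t^w)$ times the contribution from $\Lambda/(t^w-1)$, that is, the $H_1$ of the $w$-fold structure.
\item For $X_P$, I use the same decomposition applied to the trivial companion (the unknot, $\Delta=1$). This expresses the order of $H_1(\widetilde{X_P})$ in terms of $\Delta_P(t)$ and the torus contribution.
\end{itemize}
Third, use multiplicativity of orders in exact sequences of torsion modules. Comparing the sequence for $J$ with the one for $P$ (the unknot companion), the torus and $H_0$ terms cancel, leaving $\Delta_J(t)\doteq\Delta_K(t^w)\Delta_P(t)$.

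The step I expect to be the main obstacle is controlling the connecting maps and torsion. In particular, one must show that the map $H_1(\widetilde T)\to H_1(\widetilde{E_K})\oplus H_1(\widetilde{X_P})$ behaves identically in the $K$ and unknot cases, so that the contribution of $\lambda_K$, which lifts and dies in $\widetilde{E_K}$, cancels. I would handle this cleanly by replacing orders with Reidemeister/Milnor torsion, which is multiplicative under gluing along tori with acyclic twisted chain complexes (for $t^w\neq1$ over $\mathbb{Q}(t)$). Then $\tau(E_J)=\tau(E_K)|_{t^w}\cdot\tau(X_P)$, with $\tau(T)=1$. Using $\tau(E_K)=\Delta_K(s)/(s-1)$, I get $\tau(E_K)|_{t^w}=\Delta_K(t^w)/(t^w-1)$. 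Since $\tau(X_P)=\Delta_P(t)(t^w-1)/(t-1)$ by the unknot case, this yields $\Delta_J(t)/(t-1)$ up to units.

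The case $w=0$ needs separate treatment, since the torus complex is no longer acyclic. There I would check directly that $\widetilde{E_K}$ is a union of $\Z$ copies of $E_K$, whose $H_1$ is $\Z$-torsion-free with rank-one images of $\lambda_K$ killed by a Seifert surface. Those copies contribute nothing, consistent with $\Delta_K(1)=1$.
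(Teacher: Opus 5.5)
\textbf{Verdict: correct, by a different route from the one the paper points to.} The paper does not prove this statement itself. It quotes it from Seifert and points to two existing proofs. The first is the module-theoretic Mayer--Vietoris argument on the maximal abelian cover (Livingston--Melvin, Burde--Zieschang--Heusener), which is also the machinery of Section~\ref{sec:proof-main-results}. The second is Cimasoni's argument via gluing of Reidemeister torsion.

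Your first sketch is a version of the former; for a knot companion the maximal abelian cover is just the infinite cyclic cover. Your final argument for $w\neq 0$ is the latter, and it is correct.

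The trade-off is as follows. Torsion gluing needs no information about the Mayer--Vietoris maps, but it only sees $\Delta_0$ and needs acyclicity, which is why $w=0$ falls out. The module route gives an isomorphism
\[
H_1(\widetilde{E_J})\cong\bigl(\Lambda\otimes_{\mathbb{Z}[s^{\pm1}]}H_1(E_K;\mathbb{Z}[s^{\pm1}])\bigr)\oplus H_1(\widetilde{E_P}),\qquad s\mapsto t^w,
\]
where $E_P$ is the exterior of $P$ in $S^3$. By the direct-sum formula this computes every higher Alexander polynomial, which is the paper's real interest, and it handles $w=0$ with the same bookkeeping.

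The obstacle you anticipate is already dissolved by your own observations:
\begin{enumerate}
\item The lift of $\lambda_K$ bounds a lift of a Seifert surface, so $H_1(\widetilde T)$ maps to zero in the companion piece.
\item $H_0(\widetilde T)\to H_0$ of the companion piece is an isomorphism, since components correspond bijectively. Hence the Mayer--Vietoris sequence splits as displayed.
\item Your unknot comparison, i.e.\ refilling the axis $\rho$, identifies $H_1(\widetilde{X_P})/\langle\text{lifts of }\lambda_K\rangle$ with $H_1(\widetilde{E_P})$. This is exactly the mechanism of Lemma~\ref{lem:quotient-by-torus-image}.
\end{enumerate}
Even without identifying any maps, for $w\neq0$ every term of both sequences is $\mathbb{Q}[t^{\pm1}]$-torsion. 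The alternating product of orders then gives $(t^w-1)\Delta_J\doteq\Delta_K(t^w)\operatorname{ord}H_1(\widetilde{X_P})$ and $(t^w-1)\Delta_P\doteq\operatorname{ord}H_1(\widetilde{X_P})$, which yields the formula.

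\textbf{Statements in the sketch to correct.}
\begin{enumerate}
\item The order of $H_1$ of the companion piece is exactly $\Delta_K(t^w)$. The factor $t^w-1$ lives in $H_0$, and in the denominator of the torsion. If you combined your stated value with the correct value $H_1=0$ for the pulled-back cover of the unknot's solid torus, you would get a spurious factor $t^w-1$.
\item For $w=0$ the lifted torus has $H_1\cong\Lambda^2$, not $\Lambda/(t^0-1)$. What makes the companion copies contribute nothing is that $H_1(T)\to H_1(E_K)\cong\mathbb{Z}$ is onto with kernel spanned by $\lambda_K$, exactly as for the unknot's solid torus. Both sequences then give $H_1(\widetilde{E_J})\cong H_1(\widetilde{X_P})/\langle\text{lifts of }\lambda_K\rangle\cong H_1(\widetilde{E_P})$. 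Order arguments are unavailable in this case because the terms are not torsion.
\item Over $\mathbb{Q}[t^{\pm1}]$, orders are defined only up to rational scalars. Normalise using $\Delta(1)=\pm1$ on both sides; your torsion computation with integral cellular bases already avoids this.
\end{enumerate}
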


\begin{rem}
In the formula above, the pattern $P$ is regarded as a knot in $S^{3}$
by embedding $S^{1}\times D^{2}$ as a standard unknotted solid torus.
Thus its Alexander polynomial
$\Delta_P(t)$ is well-defined up to multiplication
by a unit. We use this convention throughout the paper
whenever referring to the Alexander polynomial of a pattern. 
\end{rem}

In his influential paper, Torres~\cite{Torres} extended this formula to the case of a pattern with several components, obtaining what is now known as the Seifert--Torres formula.

\begin{thm}
For each $i$, let $w_i$ denote the winding number of $P_i$ in the solid torus. Then
\[
\Delta_{K(P,0)}(t_{1},\ldots,t_{m})
\doteq
\Delta_K\!\left(t_{1}^{w_{1}}\cdots t_{m}^{w_{m}}\right)
\Delta_P(t_{1},\ldots,t_{m}).
\]
\end{thm}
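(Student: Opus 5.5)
I would prove the Seifert--Torres formula by decomposing the satellite exterior along the companion torus and computing Alexander modules with Fox calculus, or equivalently Reidemeister torsion, using multiplicativity under gluing. Write $L=K(P,0)$ and $X_L=S^3\setminus \nu L$. The torus $T=f_0(\partial (S^1\times D^2))$ splits $X_L$ as $X_L = X_K\cup_T V_P$, where $X_K=S^3\setminus \nu K$ and $V_P=(S^1\times D^2)\setminus \nu P$. Consider the map $\phi\colon \pi_1(X_L)\to H_1(X_L)\cong\Z^m=\langle t_1,\dots,t_m\rangle$, sending the meridian of $L_i$ to $t_i$. On $X_K$, $\phi$ factors through $H_1(X_K)\cong\Z$. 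The meridian of $K$ bounds a meridian disc of the solid torus, and that disc meets $P_i$ algebraically $w_i$ times. So the meridian of $K$ maps to $t_1^{w_1}\cdots t_m^{w_m}$. The $0$-framed longitude of $K$ is null-homologous in $X_K$, so it maps to $1$. On $T$, the image of $\pi_1(T)$ is therefore cyclic, generated by $\tau:=t_1^{w_1}\cdots t_m^{w_m}$.

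Next I apply the Mayer--Vietoris gluing formula for Reidemeister--Turaev torsion:
\[
\tau^\phi(X_L)\,\tau^\phi(T)\doteq\tau^\phi(X_K)\,\tau^\phi(V_P).
\]
For the torus, $\tau^\phi(T)=1$ whenever $\phi(\pi_1 T)$ is nontrivial, which holds as long as some $w_i\neq0$. For the knot exterior, Milnor's theorem gives $\tau(X_K)\doteq \Delta_K(s)/(s-1)$, and substituting $s=\tau$ yields $\Delta_K(\tau)/(\tau-1)$. For $V_P$, I view $V_P$ as the exterior of the link $P\cup A$ in $S^3$, where $A$ is the core of the complementary solid torus. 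The longitude of $A$ is the meridian of the solid torus, so setting the variable of $A$ to $1$ is exactly the map $\phi$. Then $\tau(V_P)\doteq\Delta_{P\cup A}(t_1,\dots,t_m,1)$, and the Torres condition gives
\[
\Delta_{P\cup A}(t,1)\doteq(\tau-1)\,\Delta_P(t)
\]
in the multivariable case $m\ge2$. For $m=1$ there are the usual $(t-1)$ normalisations, and the torsion of $X_L$ is $\Delta_L/(t-1)$. Combining these and cancelling $\tau-1$ gives the formula.

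I expect the main obstacle to be degenerate and normalisation cases. These include winding numbers with $\tau=1$ (for example all $w_i=0$), where $\phi|_T$ is trivial and the torsion of $T$ is not $1; there I would argue directly with Mayer--Vietoris on Alexander modules, or note that both sides vanish or equal $\Delta_P$ appropriately. They also include $m=1$ versus $m\ge2$, where Milnor's torsion--Alexander relation carries different $(t-1)$ factors. Beyond these, the claim reduces to applying Torres' condition correctly to the two-component-type link $P\cup A$, which I would check carefully, including orientation conventions for $A$.
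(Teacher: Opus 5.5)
The paper does not prove this statement; it quotes it from Torres. Its own arguments concern link companions via Mayer--Vietoris on the maximal abelian cover. Your route is torsion gluing, as in Cimasoni's approach, which the paper only mentions.

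\textbf{The case $\mathbf w\neq 0$.} Here your argument is sound. $T$ is acyclic, $\tau^\phi(X_K)=\Delta_K(\tau)/(\tau-1)$, and Torres's condition cancels the factor $\tau-1$. For $m=1$ the bookkeeping is $\Delta_L/(t-1)\doteq \frac{\Delta_K(t^w)}{t^w-1}\cdot\frac{t^w-1}{t-1}\,\Delta_P$. Two small corrections:
\begin{enumerate}
\item The variable of $A$ goes to $1$ because the \emph{meridian} of $A$ is the longitude of the solid torus. That longitude is glued to the $0$-framed longitude of $K$, which is null-homologous in $X_K$. The longitude of $A$ goes to $\tau$.
\item The identity $\tau^\phi(V_P)=\Delta_{P\cup A}(\mathbf t,1)$ uses that the torsion of a link exterior with at least two components specialises along any character nontrivial on some meridian. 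Here $P_i$'s meridian maps to $t_i\neq 1$.
\end{enumerate}
Tracking the $(s-1)$ in Milnor's torsion is essential. Proposition~\ref{prop:single-component-formula}, read naively at $\mu=1$, gives $\Delta_K(T_1)\,\Delta_{P'}(\mathbf t_1,1)$. By Torres this carries a spurious factor $T_1-1$, or $(T_1-1)/(t_{11}-1)$ when $m=1$. For the $(2,1)$-cable of the trefoil it would give $(t+1)(t^4-t^2+1)$, which is not even $\pm1$ at $t=1$.

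\textbf{The gap: the case $\mathbf w=0$.} This is the only case with $\tau=1$, and your sketch for it does not work.
\begin{itemize}
\item The character is trivial on $X_K$ and on $T$.
\item Torres's condition gives $\Delta_{P\cup A}(\mathbf t,1)=0$, so $V_P$ is not acyclic either.
\item So $X_K$, $T$ and $V_P$ all have nonzero twisted homology, while $X_L$ can be acyclic. Multiplicativity degenerates to $0=0$.
\end{itemize}
Your fallback that ``both sides vanish'' is also false. The claim here is $\Delta_L\doteq\Delta_K(1)\Delta_P\doteq\Delta_P$, which is nonzero whenever $m=1$; Whitehead doubles have $\Delta_L=1$.

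\textbf{How to close it.} Either use Turaev's gluing formula for non-acyclic complexes, or argue as follows.
\begin{enumerate}
\item Let $V^c$ be the complementary solid torus, and let $E_P=V_P\cup V^c$ be the exterior of $P$ in $S^3$.
\item Since $V^c\simeq S^1$, and sending the meridian of $K$ to a generator and its longitude to $0$ is compatible with $H_1(X_K)\cong\Z$, there is a map $g\colon X_K\to V^c$ agreeing with $f_0^{-1}$ on $\partial N(K)$. This $g$ is an integral homology isomorphism.
\item Extending by the identity on $V_P$ gives $G\colon X_L\to E_P$. It is an isomorphism on $H_1$, so it lifts equivariantly to the maximal abelian covers.
\item Because $\mathbf w=0$, the preimages of $X_K$, $T$ and $V^c$ are disjoint unions of $\Z^m$ copies of these spaces, and the lift is a copy of $g$ on each.
\item The five lemma on the two Mayer--Vietoris sequences gives $H_1(\widehat X_L)\cong H_1(\widehat E_P)$ as $\Lambda_m$-modules. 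Hence $\Delta_L\doteq\Delta_P$, and in fact all higher Alexander polynomials agree.
\end{enumerate}

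With this case added, your proof is complete. Compared with the module-theoretic approach the paper develops, torsion gluing is shorter and absorbs the $(t-1)$ normalisations automatically. However, it only sees $\Delta_0$ and breaks down exactly when some piece is non-acyclic. There a Mayer--Vietoris argument on Alexander modules is needed, and it yields the stronger module isomorphism.
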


Since then, the formula has been reproven and extended in several directions, and the methods used vary widely. Torres's original proof relies on a detailed Fox-calculus computation. Later, Lickorish~\cite{Lickorish} gave a more geometric proof by constructing a Seifert surface for the satellite from suitable Seifert surfaces for the companion and pattern. Petkova~\cite{Petkova} obtained the same formula by decategorifying the gluing theorem in bordered Floer homology.

A different line of proof lifts the satellite decomposition to the maximal abelian cover. The first homology of this cover is the Alexander module, and it turns out that in case the pattern is a knot the formula can be recovered by a Mayer--Vietoris argument. 

Livingston and Melvin~\cite{Livingston} point out that this viewpoint is implicit in Seifert's work and was later recovered by Shinohara. They also derive a formula for the Alexander module itself, from which the Seifert--Torres formula follows. A particularly clear exposition of this argument appears in the book of Burde, Zieschang, and Heusener~\cite{Burde}.

In this paper, we adapt this module-theoretic approach to
extend the satellite formula to the setting in which both the
companion and the pattern are allowed to be links. The general setting is explained in the following.

\begin{cons}[Satellite construction for links]
Let
\[
C=C_{1}\cup\cdots\cup C_{\mu}\subset S^{3}
\]
be an oriented link. For $1\leq i,j\leq\mu$, set
\[
l_{ij}=
\begin{cases}
\lk(C_i,C_j), & i\neq j,\\
0, & i=j.
\end{cases}
\]
For each $i\in\{1,\ldots,\mu\}$, let
\[
P_i=P_{i1}\cup\cdots\cup P_{im_i}\subset S^{1}\times D^{2}
\]
be an oriented link in the solid torus. We denote the winding number
of $P_{ij}$ by $w_{ij}$.

Choose pairwise disjoint tubular neighbourhoods $N(C_i)$ of the
components of $C$. For each $i$, choose a zero-framed
orientation-preserving diffeomorphism
\[
f_i\colon S^{1}\times D^{2}\longrightarrow N(C_i).
\]
The link
\[
L
=
\bigcup_{i=1}^{\mu}f_i(P_i)
=
\bigcup_{i=1}^{\mu}\bigcup_{j=1}^{m_i}f_i(P_{ij})
\subset S^{3}
\]
is called the satellite of $C$ with patterns $P_1,\ldots,P_\mu$, and
is denoted by
\[
C(P_1,\ldots,P_\mu).
\]
We denote the component $f_i(P_{ij})$ of $L$ by $L_{ij}$.
\end{cons}

Before stating the result, we introduce some additional notation
associated with the patterns. For each $i$, regard $P_i$ as a link in
a standard unknotted solid torus
\[
V_i\cong S^1\times D^2\subset S^3.
\]
\begin{nota}
Let $\rho_i$ denote the core of the complementary solid torus
\[
S^3\setminus \operatorname{int}(V_i).
\]
We define the \emph{augmented pattern link}
\[
P_i'
=
P_i \cup \rho_i
=
P_{i1}\cup\cdots\cup P_{im_i} \cup \rho_i
\subset S^3.
\]
\end{nota}
\begin{nota}
For each $i$, let
\[
\mathbf t_i=(t_{i1},\ldots,t_{im_i}),
\]
where $t_{ij}$ is the variable corresponding to the component $L_{ij}$ of the satellite link.  Define the monomials
\[
T_i
:=
\prod_{j=1}^{m_i}t_{ij}^{w_{ij}}
\]
and
\[
A_i
:=
\prod_{k=1}^{\mu}T_k^{l_{ik}}
=
\prod_{k=1}^{\mu}\prod_{r=1}^{m_k}
t_{kr}^{\,l_{ik}w_{kr}}.
\]
Since $l_{ii}=0$, the monomial $A_i$ does not involve the variables
belonging to $\mathbf t_i$.
\end{nota}

We can now state the general satellite formula for the Alexander polynomial.

\begin{thm}
\label{thm:main}
Let
\[
L=C(P_1,\ldots,P_\mu)
\]
be the satellite link constructed above. Then
\begin{equation}
\label{eq:full-satellite-formula}
\Delta_L(\mathbf t_1,\ldots,\mathbf t_\mu)
\doteq
\Delta_C(T_1,\ldots,T_\mu)
\prod_{i=1}^{\mu}
\Delta_{P_i'}(\mathbf t_i,A_i).
\end{equation}
\end{thm}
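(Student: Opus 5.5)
Our plan is a Mayer--Vietoris argument in the maximal abelian cover, working with Fitting ideals and Reidemeister torsion rather than only with modules, so that the formula is multiplicative along the decomposition. Write $X_L = S^3\setminus \nu L$. It splits along the tori $\partial N(C_i)$ as
\[
X_L = X_C \cup_{\bigcup_i \partial N(C_i)} \bigcup_{i=1}^{\mu} Y_i ,
\qquad Y_i := N(C_i)\setminus \nu f_i(P_i).
\]
Let $\phi\colon H_1(X_L)\to \Z^{\sum m_i}=\langle t_{ij}\rangle$ be the abelianization. The first step is to compute the restriction of $\phi$ to each piece.

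On $X_C$, the meridian of $C_i$ is homologous in $N(C_i)$ to $\sum_j w_{ij}$ times the meridians of the $L_{ij}$, so it maps to $T_i$. Hence $\phi|_{X_C}$ is the abelianization of $X_C$ followed by $t_i\mapsto T_i$. On $Y_i$, identify $Y_i$ with $V_i\setminus \nu P_i$, which is the exterior of the augmented link $P_i'$. The meridians of $P_{ij}$ go to $t_{ij}$. The meridian of $\rho_i$ is the zero-framed longitude $\lambda_i$ of $C_i$ on $\partial N(C_i)$, since $f_i$ is zero-framed. In $X_L$ this longitude is homologous to $\sum_{k\neq i} l_{ik}$ times the meridians of $C_k$, together with the $P_i$ contribution, which vanishes by the zero framing. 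It therefore maps to $\prod_k T_k^{l_{ik}} = A_i$. So $\phi|_{Y_i}$ is the abelianization of $X_{P_i'}$ followed by $(\mathbf t_i, t_\rho)\mapsto(\mathbf t_i, A_i)$. Checking this longitude computation carefully, including orientation conventions, is routine but essential.

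The second step is to apply multiplicativity of Reidemeister (Turaev) torsion over $\Lambda=\Z[t_{ij}^{\pm1}]$. With twisted coefficients the tori $\partial N(C_i)$ have $\phi$-twisted torsion equal to $1$ whenever $\phi$ of the torus is nontrivial, since $\tau(T^2)=1$. Then
\[
\tau(X_L)\doteq \tau^{\phi}(X_C)\prod_i \tau^{\phi}(Y_i).
\]
Next, use Turaev's functoriality: for a link exterior $X$ with $\ge 2$ components, $\tau(X)=\Delta$, and for a knot, $\tau=\Delta/(t-1)$. We also need that composing with a homomorphism $\Z^n\to\Z^k$ commutes with taking torsion when the image is nontrivial, with the standard correction $(t-1)$ factors in the one-variable case. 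This gives
\[
\tau(X_C)\mapsto \Delta_C(T_1,\dots,T_\mu)
\quad\text{and}\quad
\tau(X_{P_i'})\mapsto \Delta_{P_i'}(\mathbf t_i,A_i).
\]
Since $P_i'$ always has at least two components, no correction appears there. If $\mu=1$, we instead get $\Delta_C(T_1)/(T_1-1)$, and the factor $(T_1-1)$ must be matched against $L$'s own torsion normalisation. This is the classical Seifert--Torres case, consistent with Theorem~\ref{thm:Seifert-formula} because $\Delta_{P'}(t,1)$ contributes the appropriate $(T-1)$.

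The main obstacle is the degenerate cases. These occur when some $T_i=1$ (all winding numbers zero), or when $A_i=1$, or when the specialised torsion vanishes or the chain complexes are not acyclic. In those cases multiplicativity for torsion must be replaced by a statement about orders of homology. Our first attempt is to argue directly: when the gluing torus maps trivially, both sides are $0$ (the torus contributes a nonacyclic $\Lambda$-module of rank one, forcing $\Delta_L=0$), and we check that the right-hand side vanishes too, since $\Delta_{P_i'}(\mathbf t_i,1)$ or $\Delta_C(1,\ldots)$ vanishes by Torres conditions. Failing that, we would use a generic-specialisation argument: prove the formula with $\rho$'s variable free and specialise afterwards, which is valid when the relevant modules are torsion.
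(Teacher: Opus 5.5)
Your coefficient computation is correct and matches Remark~\ref{rem:change-of-variables}, and the torsion-gluing route does prove the formula for $\mu\geq2$. The $\mu=1$ paragraph, however, contains a genuine error.

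For $\mu=1$ and $T_1\neq1$, your gluing gives
\[
\Delta_L\doteq\frac{\Delta_C(T_1)\,\Delta_{P_1'}(\mathbf t_1,1)}{T_1-1}\ \ (m_1\geq2),
\qquad
\Delta_L\doteq\frac{(t_{11}-1)\,\Delta_C(T_1)\,\Delta_{P_1'}(t_{11},1)}{T_1-1}\ \ (m_1=1).
\]
By Torres, both equal $\Delta_C(T_1)\Delta_{P_1}(\mathbf t_1)$. That is the Seifert--Torres formula, but it is not the right-hand side of the statement. The statement asserts $\Delta_C(T_1)\Delta_{P_1'}(\mathbf t_1,1)$, which is too large by $T_1-1$ (resp.\ $(T_1-1)/(t_{11}-1)$). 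The extra factor cannot be ``matched against $L$'s normalisation'': the statement is simply false for $\mu=1$. For a knot pattern of winding number $w$, it predicts $\Delta_C(t^w)\frac{t^w-1}{t-1}\Delta_{P_1}(t)$, which contradicts Theorem~\ref{thm:Seifert-formula} whenever $|w|\neq1$. For example, the $(2,1)$-cable of the unknot would get $1+t$, and for $w=0$ a knot would get $0$. So you must restrict to $\mu\geq2$, or use the torsion normalisation $\Delta_K(t)/(t-1)$ for a one-component companion.

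Your degenerate-case discussion also needs the right mechanism, and this is exactly where $\mu\geq2$ enters. Let $F$ be the fraction field of $\Z[t_{ij}^{\pm1}]$.
\begin{enumerate}
\item The torus $\Sigma_i=\partial N(C_i)$ is $F$-acyclic unless $T_i=A_i=1$. Turaev's specialisation applies to $X_C$ as soon as some $T_k\neq1$. So $T_i=1$ or $A_i=1$ on its own is harmless.
\item When all tori are acyclic, Mayer--Vietoris gives $H_*(X_L;F)\cong H_*(X_C;F)\oplus\bigoplus_i H_*(Y_i;F)$. Hence both sides vanish together, and no generic-specialisation step is needed.
\item A torus with trivial coefficients does not by itself force $\Delta_L=0$. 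What does force it is that $H_1(X_L;F)$ surjects onto the kernel of $\bigoplus_i H_0(\Sigma_i;F)\to H_0(X_C;F)\oplus\bigoplus_i H_0(Y_i;F)$. Since $H_0(Y_i;F)=0$ always, and $H_0(X_C;F)\neq0$ only when every torus is trivial, this kernel is nonzero for $\mu\geq2$. It vanishes for $\mu=1$, where indeed $\Delta_L\doteq\Delta_{P_1}(\mathbf t_1)$ need not be $0$.
\item On the right-hand side, a trivial $\Sigma_i$ means $\mathbf w_i=0$, so $\Delta_{P_i'}(\mathbf t_i,1)=0$ by Torres. It has to be this factor that vanishes, since $\Delta_C(1,1)=\pm l_{12}$ need not vanish when $\mu=2$.
\end{enumerate}

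With these repairs, and for $\mu\geq2$, your argument is a genuinely different proof from the paper's. The paper inducts over the companion components, applying Proposition~\ref{prop:single-component-formula} at each step and using functoriality of the coefficient maps, $\psi_{j+1}\circ\phi_C^{(j)}=\phi_C^{(j+1)}$. It proves the single-component formula by a module-level Mayer--Vietoris argument on the maximal abelian cover (the case $k=0$ of Theorem~\ref{thm:higher-single-component-case1}) only when $\mathbf w_1\neq0\neq\boldsymbol\ell_1$. In the other cases it quotes Eisenbud--Neumann, so it does not reach $\mu=1$ either. Your simultaneous torsion gluing needs no induction and treats all winding and linking configurations uniformly, so for $\Delta_0$ it is more self-contained. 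The cost is that multiplicativity of torsion says nothing about higher Fitting ideals, which is exactly what the paper's short exact sequences of modules are designed to retain.
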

\begin{rem}
\label{rem:change-of-variables}
The substitutions in \eqref{eq:full-satellite-formula} are induced by
the homomorphisms on group rings coming from the inclusions of the
relevant exteriors. More precisely, let
\[
\iota_C\colon X_C\hookrightarrow X_L
\qquad\text{and}\qquad
\iota_i\colon X_{P_i'}\hookrightarrow X_L
\]
denote the natural inclusions. These maps induce ring homomorphisms
\[
\phi_C\colon \mathbb Z[H_1(X_C)]\longrightarrow \mathbb Z[H_1(X_L)]
\qquad\text{and}\qquad
\phi_{P_i}\colon \mathbb Z[H_1(X_{P_i'})]\longrightarrow \mathbb Z[H_1(X_L)].
\]
With respect to the chosen meridian bases, these homomorphisms are
given by
\[
\phi_C(s_i)=T_i
\]
and
\[
\phi_{P_i}(u_{ij})=t_{ij},
\qquad
\phi_{P_i}(z_i)=A_i.
\]
Here, $s_i$ corresponds to a meridian of $C_i$, $u_{ij}$ corresponds
to a meridian of $P_{ij}$, and $z_i$ corresponds to a meridian of the
axis $\rho_i$ of the augmented pattern link $P_i'$. These formulas follow
from a straightforward calculation of linking numbers.
\end{rem}

With the notation introduced in Remark~\ref{rem:change-of-variables}, Theorem~\ref{thm:main} can be reformulated as follows.

\begin{thm}[Reformulation of Theorem~\ref{thm:main}]
\label{thm:main-reformulated}
Let $L=C(P_1,\ldots,P_\mu)$. Then
\begin{equation}
\label{eq:full-satellite-formula}
\Delta_L(\mathbf t_1,\ldots,\mathbf t_\mu)
\doteq
\phi_C\bigl(\Delta_C\bigr)
\prod_{i=1}^{\mu}\phi_{P_i}\bigl(\Delta_{P_i'}\bigr).
\end{equation}
\end{thm}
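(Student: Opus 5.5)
We plan to prove Theorem~\ref{thm:main-reformulated} by lifting the JSJ-type decomposition of the exterior to the maximal abelian cover and computing orders of Alexander modules with Mayer--Vietoris, in the spirit of the Burde--Zieschang--Heusener exposition of Seifert's argument. Write $\Lambda=\mathbb Z[H_1(X_L)]$. The exterior decomposes as
\[
X_L \;=\; X_C \cup_{\partial} \bigcup_{i=1}^{\mu} Y_i,
\qquad Y_i = f_i\bigl(S^1\times D^2\setminus N(P_i)\bigr),
\]
glued along the tori $\partial N(C_i)$. The key observation is that $Y_i$ is homeomorphic to the exterior $X_{P_i'}$ of the augmented pattern link. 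Under this identification the meridian of $\rho_i$ corresponds to a longitude of $C_i$, since $f_i$ is zero-framed. The inclusions realise exactly the maps $\phi_C$ and $\phi_{P_i}$ of Remark~\ref{rem:change-of-variables}.

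Let $\widetilde X_L\to X_L$ be the maximal abelian cover. The preimages of the pieces are induced covers, for instance $\Lambda\otimes_{\mathbb Z[H_1(X_C)]}$ applied to the chain complex of the maximal abelian cover of $X_C$. We have the following facts.
\begin{itemize}
\item The induced map $H_1(X_C)\to H_1(X_L)$, $s_i\mapsto T_i$, may fail to be injective, or even nonzero, when some $w_{ij}$ vanish. In that case the correct statement uses the twisted homology $H_*(X_C;\Lambda)$.
\item The same holds for $H_*(X_{P_i'};\Lambda)$.
\item For the gluing tori, $H_*(T^2;\Lambda)$ is $\Lambda$-torsion whenever the image of $H_1(T^2)$ in $H_1(X_L)$ is nontrivial.
\end{itemize}
We work throughout with chain complexes over $\Lambda$ and Reidemeister (Turaev) torsion rather than with the modules themselves. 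This avoids the torsion/rank bookkeeping.

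Concretely, we use the torsion version of the Alexander polynomial. For a link with $\ge2$ components, $\tau(X_L)\doteq\Delta_L$. For a knot, $\tau(X_K)\doteq\Delta_K/(t-1)$. The plan has four steps.
\begin{enumerate}
\item Apply multiplicativity of torsion for the short exact sequence of $\Lambda$-chain complexes coming from the decomposition. Since tori have trivial torsion ($\tau(T^2;\Lambda)=1$ whenever the image of $H_1(T^2)$ is nonzero), this gives
\[
\tau(X_L)\doteq \tau(X_C;\Lambda)\prod_i \tau(X_{P_i'};\Lambda).
\]
\item Use naturality of torsion under the ring maps: $\tau(X;\Lambda)=\phi(\tau(X))$ when the coefficient change is a ring map.
\item Convert torsions back to Alexander polynomials.
\item Check that the $(t-1)$ correction factors cancel. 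Each $P_i'$ has at least two components, so its torsion equals $\Delta_{P_i'}$. If $\mu=1$, then $C$ is a knot and $\tau(X_C)=\Delta_C(s)/(s-1)$. Similarly $\tau(X_L)$ carries a denominator $(t-1)$ exactly when $L$ is a knot. One then checks case by case that the denominators match. For example, with $\mu=1$ and $L$ a knot, $\phi_C(s-1)=t^{w}-1$ must cancel against a factor of $\phi_{P}(\Delta_{P'})=\Delta_{P'}(t,1)$. This is the Torres condition: $\Delta_{P'}(t,1)\doteq\Delta_P(t)(t^w-1)/(t-1)$.
\end{enumerate}
Combined, these steps recover Theorem~\ref{thm:Seifert-formula} and the Seifert--Torres formula as special cases, which serves as a consistency check.

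The main obstacle is the degenerate situation where some image $\phi(\cdot)$ of a gluing torus group is trivial in $H_1(X_L)$, or where the twisted complexes are not acyclic. This happens when $T_i=1$, e.g.\ all winding numbers on $C_i$ are zero. Then $H_*(T^2;\Lambda)\cong H_*(T^2)\otimes\Lambda$ is not torsion, and the Mayer--Vietoris argument gives torsion only up to a correction.

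To handle it, we first show that in that case both sides vanish. Mapping $s_i\mapsto1$ kills $\Delta_C$ when $\mu\ge2$: by the Torres conditions, $\Delta_C(1,\dots)$ is divisible by linking-number factors, and it is identically $0$ after sending all $T_i$ with $w=0$ to $1$ in the relevant sense. One needs $\Delta_C(\dots,1,\dots)$ to reduce to a lower torsion that vanishes when the image is degenerate. Simultaneously, one shows $\Delta_L=0$ because $H_1(X_L;\Lambda)$ has positive rank. This rank comes from the unglued torus contribution in the Mayer--Vietoris sequence.

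If the vanishing fails in some subcase, the fallback is to compute $\Delta_L$ directly as the order of the first homology of the twisted complex. We then track the extra free summands through the Mayer--Vietoris sequence, using that orders are multiplicative in short exact sequences of torsion modules and that elementary ideals behave well under base change.
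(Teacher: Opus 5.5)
Your route is genuinely different from the paper's. The paper proves this statement by induction on the companion components. It applies Proposition~\ref{prop:single-component-formula} to $L^{(j+1)}=L^{(j)}(O,\ldots,O,P_{j+1},O,\ldots,O)$ and uses the functoriality $\psi_{j+1}\circ\phi_C^{(j)}=\phi_C^{(j+1)}$, $\psi_{j+1}\circ\phi_{P_i}^{(j)}=\phi_{P_i}^{(j+1)}$. You instead glue all pieces at once with torsion, as in Cimasoni. When $\mu\ge 2$ and no gluing torus is degenerate, your Steps 1--3 do give the formula. Step 4 is then empty, because $C$, $L$ and every $P_i'$ have at least two components.

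The genuine gap is Step 4 for $\mu=1$: the factors do not cancel, and no argument can make them cancel.
\begin{itemize}
\item With $C$ a knot and $T_1\neq 1$, your Step 1 gives $\tau(X_L)\doteq\frac{\Delta_C(T_1)}{T_1-1}\,\Delta_{P'}(\mathbf t_1,1)$.
\item Torres gives $\Delta_{P'}(\mathbf t_1,1)\doteq(T_1-1)\Delta_P(\mathbf t_1)$ when $m\ge 2$. When $m=1$ it gives $\frac{t^w-1}{t-1}\Delta_P(t)$, and there $\tau(X_L)=\Delta_L/(t-1)$.
\item So your argument actually proves $\Delta_L\doteq\Delta_C(T_1)\Delta_P(\mathbf t_1)$, the Seifert--Torres formula. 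This differs from $\phi_C(\Delta_C)\,\phi_P(\Delta_{P'})$ by the non-unit factor $T_1-1$, respectively $(t^w-1)/(t-1)$.
\end{itemize}
The displayed identity is in fact false for $\mu=1$. Take $C$ the unknot and $P$ the $(2,1)$-cable pattern. Then $L$ is the unknot and $\Delta_L=1$. But Torres gives $\Delta_{P'}(t,1)\doteq\frac{t^2-1}{t-1}\cdot 1=1+t$, so the right-hand side is $1+t$. Your ``consistency check'' against Theorem~\ref{thm:Seifert-formula} detects exactly this discrepancy. The paper's induction has the same defect, since its base case is Proposition~\ref{prop:single-component-formula} with $\mu=1$. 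So the statement needs the hypothesis $\mu\ge 2$, and that hypothesis, not a cancellation, is what replaces your Step 4.

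Your treatment of the degenerate case also needs repair.
\begin{itemize}
\item You misidentify when it occurs. The image of $H_1(\partial N(C_i))$ in $H_1(X_L)$ is generated by $T_i$ (meridian) and $A_i$ (longitude). So the twisted torus complex is non-acyclic only when $T_i=A_i=1$, not whenever $T_i=1$. If $T_i=1\neq A_i$, the generic argument applies.
\item You give the wrong reason for the right-hand side vanishing. When $T_i=A_i=1$, it vanishes because of the pattern factor: $\phi_{P_i}(\Delta_{P_i'})=\Delta_{P_i'}(\mathbf t_i,1)$, which by Torres is $(T_i-1)\Delta_{P_i}=0$, or $\frac{T_i-1}{t_{i1}-1}\Delta_{P_i}=0$ if $m_i=1$. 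It is not because $\phi_C(\Delta_C)$ vanishes, which is false in general. For $\mu=2$, $l_{12}\neq 0$ and both patterns of winding number zero, $\phi_C(\Delta_C)=\Delta_C(1,1)=\pm l_{12}$.
\item The left-hand side needs an actual rank argument, which you do not supply. In the paper's single-component setting this is Proposition~\ref{prop:module-k-for-00-case}. There the kernel $\mathfrak K$ of $H_0(\widehat\Sigma)\to H_0(Y_C)\oplus H_0(Y_P)$ is a nonzero ideal of $H_0(\widehat\Sigma)\cong R$, generated by the $(t_{1i}-1)(t_j-1)$ with $j\ge 2$. Hence $H_1(\widehat X_L)$ surjects onto a rank-one module.
\item That argument needs $\mu\ge 2$. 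For $\mu=1$ one gets $\mathfrak K=0$, and $\Delta_L\neq 0$. For example, the untwisted Whitehead double of a knot has $\Delta_L=1$ while the right-hand side is $\Delta_C(1)\,\Delta_{P'}(t,1)=0$. So your claim that ``both sides vanish'' also fails there.
\end{itemize}
Your ``fallback'' paragraph supplies none of these arguments.
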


Theorem~\ref{thm:main}, as well as our other results concerning invariants of satellite links, can be obtained by applying the corresponding single-component formulas successively, one component of the companion at a time.

To formulate this reduction, let
\[
O=S^1\times\{0\}\subset S^1\times D^2
\]
denote the core pattern. Since the satellite operation with pattern
$O$ and zero framing leaves a component unchanged, performing the
satellite operation only on $C_i$ is equivalent to taking $P_j=O$ for
every $j\neq i$. After relabelling the components of $C$, we may assume
that the satellite operation is performed on $C_1$.

This reduces the study of the satellite operation to the following single-component formula.

\begin{prop}
\label{prop:single-component-formula}
Let
\[
L=C(P_1,O,\ldots,O).
\]
For $1\leq j\leq m$, let $t_{1j}$ be the variable corresponding to
the component arising from $P_{1j}$, and, for $2\leq i\leq\mu$, let
$t_i$ be the variable corresponding to the unchanged component $C_i$.
Set
\[
\mathbf t_1=(t_{11},\ldots,t_{1m}),
\qquad
T_1=\prod_{j=1}^{m}t_{1j}^{w_{1j}},
\]
and
\[
\mathbf t_C=(t_{2},\ldots,t_{\mu}),
\qquad
A_1=\prod_{i=2}^{\mu}t_i^{l_{1i}}.
\]
Then
\begin{equation}
\label{eq:single-component-satellite-formula}
\Delta_L(\mathbf t_1,\mathbf t_C)
\doteq
\Delta_C(T_1, \mathbf t_C)
\Delta_{P_1'}(\mathbf t_1,A_1).
\end{equation}
\end{prop}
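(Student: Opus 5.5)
Our plan is to decompose the exterior $X_L$ along the torus $\Sigma=\partial N(C_1)$:
\[
X_L = X_C \cup_{\Sigma} Y,\qquad Y = N(C_1)\setminus \nu(f_1(P_1)) \cong (S^1\times D^2)\setminus \nu(P_1).
\]
We identify $Y$ with the exterior $X_{P_1'}$ of the augmented pattern link. Indeed, $S^3\setminus \operatorname{int}(V_1)$ is a neighbourhood of $\rho_1$, so removing it from $S^3\setminus\nu(P_1)$ leaves exactly $V_1\setminus\nu(P_1)$. Under this identification the meridian of $\rho_1$ is a longitude of $V_1$, and $f_1$ sends it to the zero-framed longitude of $C_1$. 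In $H_1(X_L)$ that longitude equals $\prod_{i\ge2} t_i^{l_{1i}}=A_1$. The meridian of $C_1$ (a meridian disc of $V_1$) maps to $\prod_j t_{1j}^{w_{1j}}=T_1$, as recorded in Remark~\ref{rem:change-of-variables}. We will work with the maximal abelian cover $\widetilde X_L$ and with $\Lambda=\mathbb Z[H_1(X_L)]$. Pulling back gives covers of $X_C$, $Y$ and $\Sigma$ which are induced covers, i.e.\ $\Lambda\otimes_{\phi}$ of the maximal abelian covers of the pieces. This holds provided $\phi_C$ and $\phi_{P_1}$ are injective on $H_1$, or, in the degenerate case, after tensoring chains rather than homology.

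The key step is to replace modules by Reidemeister/Turaev torsion, which is multiplicative under gluing along tori. Milnor's Mayer--Vietoris formula for torsion gives
\[
\tau(X_L)\,\tau(\Sigma)\doteq \tau(X_C;\phi_C)\,\tau(Y;\phi_{P_1}).
\]
The torus $\Sigma$ has Euler characteristic zero, and $\tau(\Sigma)=1$ whenever the coefficient map on $H_1(\Sigma)$ is nontrivial, which holds here since $\Sigma$'s meridian goes to $T_1$. We then use Milnor's and Turaev's identification of torsion with Alexander polynomials. For a link with at least two components, $\tau(X)\doteq\Delta$. For a knot, $\tau(X)\doteq\Delta(t)/(t-1)$. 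Naturality of torsion under the coefficient change gives $\tau(X_C;\phi_C)=\phi_C(\tau(X_C))$ whenever the image is nonzero.

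The remaining work is bookkeeping of cases where some side is a knot. If $\mu\ge2$ and $m\ge1$, then $C$, $P_1'$ and $L$ all have at least two components, so no correction factors occur and we obtain \eqref{eq:single-component-satellite-formula} directly. If $\mu=1$, then $A_1=1$, and $\tau(X_C)$ contributes $\Delta_C(T_1)/(T_1-1)$. If moreover $L$ has $m\ge2$ components, the Torres condition for the two-component link $P_1'$ gives $\Delta_{P_1'}(\mathbf t_1,1)\doteq (T_1-1)\Delta_{P_1}(\mathbf t_1)$. The denominator is absorbed by the Torres relation in the appropriate direction, or equivalently by comparing $\tau(Y)$ with the torsion of $P_1'$ at $z=1$. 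When $m=1$, the factor $(t-1)$ from the torsion of the knot $L$ must be matched in the same way, which recovers Theorem~\ref{thm:Seifert-formula}.

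The main obstacle is the degenerate situation where a coefficient map kills the relevant homology. This happens when $T_1=1$, i.e.\ all $w_{1j}$ vanish, or when $\phi_{P_1}(\Delta_{P_1'})=0$. In these cases torsions become acyclic or undefined, and naturality no longer applies directly. Our first approach is to argue via the Alexander module presentation instead. We will build a presentation matrix of $H_1(\widetilde X_L,\widetilde{pt})$ from the Mayer--Vietoris sequence of chain complexes, which is exact at the chain level with no acyclicity assumption. We then compute first elementary ideals block-triangularly: the block from $C$ is tensored by $\phi_C$ and the block from $Y$ by $\phi_{P_1}$, and the torus contributes only a unit-determinant gluing. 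This yields \eqref{eq:single-component-satellite-formula}, with both sides possibly $0$.
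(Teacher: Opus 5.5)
The $\mu=1$ part of your proposal fails: your own bookkeeping there contradicts \eqref{eq:single-component-satellite-formula} rather than proving it. With $\mu=1$ (so $A_1=1$) and $m\ge2$, your gluing gives $\Delta_L\doteq\Delta_C(T_1)\,\Delta_{P_1'}(\mathbf t_1,1)/(T_1-1)$. Theorem~\ref{thm:Torres} then turns this into $\Delta_C(T_1)\Delta_{P_1}(\mathbf t_1)$; note that $P_1'$ has $m+1\ge3$ components, not two. That is the Seifert--Torres formula, and it differs from the right-hand side of \eqref{eq:single-component-satellite-formula} by the factor $T_1-1$. Your ``absorbing the denominator'' silently replaces $\Delta_{P_1'}(\mathbf t_1,1)$ by $\Delta_{P_1}(\mathbf t_1)$. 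For $m=1$ the same thing happens, with the factor $(t^{w}-1)/(t-1)$.

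Concretely, take the $(2,1)$-cable of the trefoil. Here $P_1'$ is the torus link $T(2,4)$, so $\Delta_{P_1'}(t,1)\doteq 1+t$. The right-hand side is then $(t^4-t^2+1)(1+t)$, whereas $\Delta_L\doteq t^4-t^2+1$. For $w=0$, for example a Whitehead double, the right-hand side is $0$ while $\Delta_L\doteq\Delta_{P_1}\neq0$. So your fallback claim ``both sides possibly $0$'' is also false there. The statement must be read with $\mu\ge2$. That is the paper's setting: it obtains the proposition as the $k=0$ case of Theorem~\ref{thm:higher-single-component-case1}, where $\boldsymbol\ell_1\neq0$ forces $\mu\ge2$. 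For knot companions the correct statement is Theorem~\ref{thm:Seifert-formula} and its Torres analogue, so you should drop the $\mu=1$ paragraph.

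For $\mu\ge2$ your torsion argument is sound and genuinely different from the paper's route; it is essentially the Cimasoni gluing argument mentioned in the introduction. The degenerate-case analysis, however, needs to be redone.

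When $\mu\ge2$, both $\phi_C$ and $\phi_{P_1}$ are nontrivial, so the only obstruction to Milnor's gluing formula is the torus. Its twisted complex over $Q(R)$ is acyclic with $\tau=1$ unless $T_1=A_1=1$. Hence $T_1=1$ alone is harmless once $A_1\neq1$. If $\phi_C(\Delta_C)$ or $\phi_{P_1}(\Delta_{P_1'})$ vanishes, the long exact sequence over $Q(R)$ shows that $X_L$ is not acyclic. Then $\Delta_L=0$ and the identity holds as $0=0$.

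In the remaining case $T_1=A_1=1$ your chain-level fallback is not a proof. The lifts of $\Sigma$ are tori with $H_1(\widehat\Sigma)\cong R^2$, so nothing there is ``unit-determinant''; this is exactly where the paper's module $\mathfrak K$ acquires rank one (Proposition~\ref{prop:module-k-for-00-case}). What you actually need is that both sides vanish:
\begin{itemize}
\item The right side vanishes by Theorem~\ref{thm:Torres}, since $A_1=1$ forces $\Delta_C(1,\mathbf t_C)=0$.
\item The left side vanishes by Mayer--Vietoris over $Q(R)$. The $H_0$ of both pieces is zero, so $H_0(\Sigma;Q(R))\cong Q(R)$ must be hit by $H_1(X_L;Q(R))$, which is therefore nonzero.
\end{itemize}

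With these fixes, here is how the two routes compare. The paper's module-level Mayer--Vietoris uses $H_1(\widehat\Sigma)=0$, $\Delta_i(\mathfrak K)=1$ and $\Delta_1(H_1(\widehat\Sigma,\widehat X_0))=1$. It controls all higher $\Delta_{L,k}$, but only when $\mathbf w_1\neq0\neq\boldsymbol\ell_1$; the other cases are left to an unstated ``refinement''. The torsion route gives only $\Delta_{L,0}$, but it handles every winding/linking case uniformly.
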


This formula itself has appeared before in the literature. In particular, it appears in the book of Eisenbud and Neumann~\cite{EisenbudNeumann} in the more general setting of splicing. Their argument also follows a Mayer--Vietoris approach; however, they collapse all variables to powers of a single variable. Note that, given any Laurent polynomial
\[
Q(v_1,\ldots,v_n),
\]
one may choose integers $\alpha_1,\ldots,\alpha_n$ so that the substitution
$v_i=v^{\alpha_i}$ is injective on the finite set of monomials appearing in
$Q$. In this sense, the multivariable polynomial can be recovered from
the one-variable specialization $Q(v^{\alpha_1},\ldots,v^{\alpha_n})$.

For the multivariable Alexander polynomial, this corresponds to passing
to the infinite cyclic cover of the link exterior associated to the
homomorphism
\[
\pi_1(S^3 \setminus L)\longrightarrow \mathbb Z,
\qquad
[\gamma]\longmapsto \sum m_i\,\lk(\gamma,L_i).
\]
The argument then proceeds by a Mayer--Vietoris computation on this cover. 

Another approach to this formula is through the gluing formula for Reidemeister torsion. Such an argument was given by Cimasoni~\cite{Cimasoni}. Moreover, he refined the formula for the Conway potential. 

Finally we note that, behaviour of other invariants of links under satellite operation has also been explained. For instance, see \cite{Signature} for the case of mutlivariable signature.

\subsection{Our approach and higher Alexander polynomials}\hfill

In contrast, we tackle the problem by a direct description followed by a Mayer--Vietoris argument on the maximal abelian cover. The main advantage of this approach is that it retains information about the higher Alexander ideals and the higher Alexander polynomials of satellite links (see Definition~\ref{defn:Alexander polynomials}). Its drawback is that the technical analysis, and even the precise statement of the results, depends on additional hypotheses, most notably the vanishing or nonvanishing conditions on the winding and linking numbers.

We use the following notation for the winding and linking vectors:
\[
\mathbf w_1=(w_{11},\ldots,w_{1m_1})\in\mathbb Z^{m_1},
\qquad
\boldsymbol\ell_1=(l_{12},\ldots,l_{1\mu})\in\mathbb Z^{\mu-1}.
\]
According to whether these vectors vanish, the proofs and results splits into four cases:
\begin{itemize}
\item $\mathbf w_1\neq 0$ and $\boldsymbol\ell_1\neq 0$;
\item $\mathbf w_1=0$ and $\boldsymbol\ell_1\neq 0$;
\item $\mathbf w_1\neq 0$ and $\boldsymbol\ell_1=0$;
\item $\mathbf w_1=0$ and $\boldsymbol\ell_1=0$.
\end{itemize}
We are now ready to state the first of the four case-by-case results.
We begin with the case in which both the winding and linking vectors
are nonzero; this is also the case with the cleanest statement.

\begin{thm}
\label{thm:higher-single-component-case1}
Let
\[
L=C(P_1,O,\ldots,O).
\]
Assume that
\[
\mathbf w_1\neq 0
\qquad\text{and}\qquad
\boldsymbol\ell_1\neq 0.
\]
Then, for each $k\geq 0$, the $k$-th Alexander polynomial of $L$ is given by
\[
\Delta_{L,k}
\;\doteq\;
\gcd_{\substack{\\[2pt] i+j=k }}
\!\Bigl(
    \Delta_{C,i}(T_1,\mathbf t_C)
    \;
    \Delta_{P_1',j}(\mathbf t_1,A_1)
\Bigr),
\]
with the convention that terms with negative indices are omitted
\end{thm}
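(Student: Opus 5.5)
We will work in the maximal abelian cover of the satellite exterior and compare Alexander modules via Mayer--Vietoris. Write $\Lambda=\mathbb Z[H_1(X_L)]$. The exterior decomposes as $X_L=X_C\cup_{\mathcal T} X_{P_1'}$, where $\mathcal T=\partial N(C_1)$ is the gluing torus. We identify $X_{P_1'}$ with $V_1\setminus N(P_1)$, and $X_C$ with $S^3\setminus N(C)$. Let $\widetilde X_L\to X_L$ be the maximal abelian cover, and let $\widetilde X_C,\widetilde X_{P_1'},\widetilde{\mathcal T}$ be the preimages. Each preimage is a disjoint union of copies of the covers of the pieces induced by $\phi_C,\phi_{P_1}$, so its homology is the induced module, e.g.
\[
H_*(\widetilde X_C)\cong \Lambda\otimes_{\phi_C}H_*(X_C;\mathbb Z[H_1(X_C)]).
\]
We will rather work with homology relative to a lift of a basepoint (or with the chain complexes directly), so that the first homology groups are the Alexander modules whose elementary ideals define $\Delta_{\cdot,k}$.

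The first key step uses the hypotheses $\mathbf w_1\neq0$ and $\boldsymbol\ell_1\neq 0$. The torus $\mathcal T$ has $H_1$ generated by a meridian $\mu_1$ and a longitude $\lambda_1$ of $C_1$, mapping to $T_1$ and $A_1$ in $H_1(X_L)$ respectively. Since $\mathbf w_1\neq0$, we have $T_1\neq 1$; since $\boldsymbol\ell_1\neq0$, we have $A_1\neq1$. Hence $H_*(\widetilde{\mathcal T})=\Lambda\otimes H_*(T^2;\mathbb Z[\mathbb Z^2])$ is built from $\Lambda/(T_1-1,A_1-1)$. This module is annihilated by the coprime elements $T_1-1$ and $A_1-1$, and its induced contribution is torsion with trivial (unit) elementary ideals in the relevant range. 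Concretely, I will show that the chain complex $\Lambda\otimes C_*(\widetilde T^2)$ is acyclic after localizing at any height-one prime. Better, I will work at the chain level: choose CW structures in which $\mathcal T$ is a subcomplex. Then the Mayer--Vietoris short exact sequence of chain complexes
\[
0\to C_*(\widetilde{\mathcal T})\to C_*(\widetilde X_C)\oplus C_*(\widetilde X_{P_1'})\to C_*(\widetilde X_L)\to 0
\]
gives, after collapsing the torus cells, a presentation matrix for $H_1(\widetilde X_L,\tilde x_0)$. Since $(T_1-1)$ and $(A_1-1)$ generate an ideal of depth two, the torus complex is "$\Lambda$-contractible enough": its Koszul complex on $T_1-1,A_1-1$ is exact except in degree $0$, and $H_0=\Lambda/(T_1-1,A_1-1)$ has ideal of codimension $\ge 2$.

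The second step is to derive a block form. Using Fox calculus on a Wirtinger-type presentation of $\pi_1(X_L)$ as an amalgam $\pi_1(X_C)*_{\mathbb Z^2}\pi_1(X_{P_1'})$, the Alexander matrix of $L$ is, up to elementary row and column operations over $\Lambda$, the block matrix
\[
\begin{pmatrix}\phi_C(M_C) & 0 & *\\ 0 & \phi_{P_1}(M_{P_1'}) & *\\ 0&0& E\end{pmatrix},
\]
where $E$ comes from the two amalgamation relations $\mu_1=z_1^{\pm1}$-type identifications. The entries of $E$ are $T_1-1$ and $A_1-1$-type differences. I will use the nonvanishing hypotheses to clear the off-diagonal blocks: one column of each Alexander matrix (the meridian of $C_1$, resp. of $\rho_1$) is redundant up to multiplication by $T_1-1$, resp. $A_1-1$. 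This is the step where the hypotheses enter. Then $E$ reduces so that the elementary ideals satisfy
\[
E_k(L)=\sum_{i+j=k}\phi_C(E_i(C))\,\phi_{P_1}(E_j(P_1'))\cdot J,
\]
with $J$ an ideal generated by products of powers of $T_1-1,A_1-1$. Its effect cancels against the standard normalization $\Delta=\gcd$ of $E_k/(t-1)$-factors, exactly as in Proposition~\ref{prop:single-component-formula}. The Minkowski-sum formula for minors of a block-diagonal matrix is standard: the ideal of $r$-minors of $M\oplus N$ is $\sum_{a+b=r}I_a(M)I_b(N)$.

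The final step is to take gcds. The gcd of a sum of products of ideals is the gcd over $i+j=k$ of the products of the gcds, because $\Lambda$ is a UFD and the gcd of a product ideal $IJ$ is $\gcd(I)\gcd(J)$. Also, $\phi_C$ and $\phi_{P_1}$ commute with taking gcd up to units: the gcd must be preserved under these monomial substitutions. I expect this to be the main obstacle. A substitution such as $s_1\mapsto T_1$ can create common factors, as with $\phi(f)$ and $\phi(g)$ for coprime $f,g$. I would handle it by showing that $\phi_C$ factors as an injective map of group rings composed with a flat extension, using $T_1\ne1$ to make $\langle s_1\rangle\to H_1(X_L)$ injective, and similarly for $\phi_{P_1}$ using $A_1\neq1$. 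Then I would check that gcds are preserved under flat, faithfully flat extensions of Laurent polynomial rings along primitive-after-saturation sublattices. If the image lattice is not saturated, I would first extract roots, observing that $\mathbb Z[\Gamma']\to\mathbb Z[\Gamma]$ with finite index is free, hence faithfully flat, so gcds of ideals are still preserved.
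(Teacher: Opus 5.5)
\paragraph{The gap: your second step.}
The gap is your second step, which is where the entire content of the theorem lies.

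\emph{The block form is not available.} It cannot be reached by elementary operations over $\Lambda$. In the amalgam presentation, the two gluing rows contain the Fox derivatives of the longitudes of $C_1$ and of $\rho$, spread over many columns. The fundamental formula of Fox calculus makes the meridian columns redundant only after multiplying them by $T_1-1$ or $A_1-1$. That is a non-invertible operation and changes every minor through those columns. This is where your undetermined ideal $J$ comes from.

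\emph{The ideal identity is asserted, not derived.} The formula $E_k(L)=\sum_{i+j=k}\phi_C(E_i(C))\,\phi_{P_1}(E_j(P_1'))\,J$ has no derivation behind it. You do not even specify whether these are the ideals of the relative or the absolute modules.

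\emph{$J$ cannot be made to ``cancel against the standard normalization''.} Torres' identity expressing codimension-one Fox minors as $(t_i-1)\Delta_L$ concerns only $\Delta_{L,0}$. For higher $k$, the passage from the relative module to $\Delta_{L,k}$ is the rank--torsion shift of Proposition~\ref{prop:relative-Alexander-module}, not a division by $(t-1)$-factors. Appealing to Proposition~\ref{prop:single-component-formula} is circular, since it is the case $k=0$.

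\emph{Something real is missing.} Write $\mathfrak M_L,\mathfrak M_C,\mathfrak M_P$ for the first homology of $\widetilde X_L,\widetilde X_C,\widetilde X_{P_1'}$ relative to the lifts of the basepoint. Present $\mathfrak M_L$ by the block-diagonal presentation of $\mathfrak M_C\oplus\mathfrak M_P$ plus the two gluing rows $r_\mu,r_\lambda$, and Laplace-expand. You only get $E_{k+1}(\mathfrak M_C\oplus\mathfrak M_P)\subseteq E_{k+1}(\mathfrak M_L)\subseteq E_{k+3}(\mathfrak M_C\oplus\mathfrak M_P)$. That is, $\gcd_{i+j=k+1}(\cdots)\mid\Delta_{L,k}\mid\gcd_{i+j=k-1}(\cdots)$, each off by one from the statement.

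\paragraph{The missing idea.}
You state it in your first step and then abandon it: use the torus at the level of modules, through the absolute and relative Mayer--Vietoris sequences, rather than through an explicit matrix.

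\emph{Upper bound.} Because $T_1$ and $A_1$ are independent, every component of the preimage of $\mathcal T$ is a plane, so its $H_1$ vanishes. The absolute sequence then gives $0\to H_1(\widetilde X_C)\oplus H_1(\widetilde X_{P_1'})\to H_1(\widetilde X_L)\to\mathfrak K\to 0$ with $\mathfrak K\subseteq\Lambda/\langle T_1-1,A_1-1\rangle$. Since $\operatorname{Ann}(\mathfrak K)^N\subseteq E_0(\mathfrak K)$ contains the coprime elements $(T_1-1)^N$ and $(A_1-1)^N$, all orders of $\mathfrak K$ are $1$. The inclusion $E_i(M')E_j(M'')\subseteq E_{i+j}(M)$ then gives $\Delta_{L,k}\mid\gcd_{i+j=k}(\cdots)$.

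\emph{Lower bound.} The relative sequence is short exact, with left term isomorphic to the ideal $\langle T_1-1,A_1-1\rangle\subseteq\Lambda$, whose $E_1$ has gcd $1$. The same inclusion with $i=1$ gives $\Delta_{k+2}(\mathfrak M_C\oplus\mathfrak M_P)\mid\Delta_{k+1}(\mathfrak M_L)=\Delta_{L,k}$, which is the reverse divisibility. In your matrix language, this is the fact that $(A_1-1)r_\mu-(T_1-1)r_\lambda$ lies in the row space of the block-diagonal part. That relation is what lets you trade each gluing row for a coprime factor.

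\emph{Your third step is fine.} $\phi_C$ and $\phi_{P_1}$ come from injective lattice maps, so $\Lambda$ is free over the smaller group rings. Hence gcds of extended ideals are the images of the original gcds. This is exactly what identifies the orders of the induced modules with $\Delta_{C,i}(T_1,\mathbf t_C)$ and $\Delta_{P_1',j}(\mathbf t_1,A_1)$.
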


\begin{rem}\label{rem:indexing Alexander polynomials}
It is common for the Alexander polynomials of a link to be indexed
starting from one rather than zero. However, in this paper we use the
same indexing convention for Fitting ideals and Alexander polynomials,
so we begin at zero. 
\end{rem}
\begin{rem}
\label{rem:case1-recovers-proposition}
When $i=0$, the formula in Theorem~\ref{thm:higher-single-component-case1} recovers Proposition~\ref{prop:single-component-formula} for the ordinary Alexander polynomial.
\end{rem}
In the remaining cases, we will not obtain exact formulas for the higher
Alexander polynomials. Instead, we will obtain upper and lower bounds in
the form of divisibility statements. We begin with the fourth case, since
its statement is the simplest to formulate. To streamline the discussion,
we introduce notation for the relevant sublink of the companion and for
the correction terms that will appear in the formulas below.

\begin{nota}\label{not:sublink}
Let $C'$ denote the sublink of $C$
obtained by deleting the component $C_1$.
\end{nota}

\begin{defn}
\label{defn:DC-DP}
For each $i\geq 0$, define
\[
D_{C,i}:=
\begin{cases}
\Delta_{C',i}, & \mu>2,\\[4pt]
(t_2-1)\Delta_{C',0}, & \mu=2,\ i=0,\\[4pt]
\gcd\bigl(\Delta_{C',i-1},\,(t_2-1)\Delta_{C',i}\bigr), & \mu=2,\ i\geq 1,
\end{cases}
\]
\[
D_{P_1,i}:=
\begin{cases}
\Delta_{P_1,i}, & m_1>1,\\[4pt]
(t_{11}-1)\Delta_{P_1,0}, & m_1=1,\ i=0,\\[4pt]
\gcd\bigl(\Delta_{P_1,i-1},\,(t_{11}-1)\Delta_{P_1,i}\bigr), & m_1=1,\ i\geq 1.
\end{cases}
\]
\end{defn}

\begin{thm}
\label{thm:higher-single-component-case4}
Let
\[
L=C(P_1,O,\ldots,O).
\]
Assume that
\[
\mathbf w_1=0
\qquad\text{and}\qquad
\boldsymbol\ell_1=0.
\]
Then, for each $i\geq 0$, the $k$-th Alexander polynomial of $L$ satisfies
\[
\gcd_{\substack{\\[2pt] i+j=k+1}}
\!\Bigl(
    D_{C,i}(\mathbf t_C)
    \;
    D_{P_1,j}(\mathbf t_1)
\Bigr)
\Bigm|
\Delta_{L,k},
\]
and 
\[
\Delta_{L,k}
\Bigm|
\gcd_{\substack{\\[2pt] i+j=k-2}}
\!\Bigl(
    \Delta_{C',i}(\mathbf t_C)
    \;
    \Delta_{P_1,j}(\mathbf t_1)
\Bigr),
\]
with the convention that terms with negative indices are omitted. In particular, if $\mu>2$ and $m_1>1$, then
\[
\gcd_{\substack{\\[2pt] i+j=k}}
\!\Bigl(
    \Delta_{C',i}(\mathbf t_C)
    \;
    \Delta_{P_1,j}(\mathbf t_1)
\Bigr)
\;\Bigm|\;
\Delta_{L,k}
\;\Bigm|\;
\gcd_{\substack{\\[2pt] i+j=k-2}}
\!\Bigl(
    \Delta_{C',i}(\mathbf t_C)
    \;
    \Delta_{P_1,j}(\mathbf t_1)
\Bigr).
\]
\end{thm}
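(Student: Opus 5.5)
We will work on the maximal abelian cover and use a Mayer--Vietoris argument for the decomposition
\[
X_L = X_C^{\circ}\cup_{T} X_{P_1'}^{\circ},
\]
where $X_C^{\circ}=S^3\setminus \operatorname{int}N(C)$ is the companion exterior, $X_{P_1'}^{\circ}=N(C_1)\setminus N(L_{11}\cup\cdots\cup L_{1m})$ is identified with the exterior of the augmented pattern $P_1'$, and $T=\partial N(C_1)$ is the gluing torus. When $\mathbf w_1=0$ and $\boldsymbol\ell_1=0$, Remark~\ref{rem:change-of-variables} gives $\phi_C(s_1)=T_1=1$ and $\phi_{P_1}(z_1)=A_1=1$. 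The meridian of $C_1$ therefore lifts to closed loops in the cover of $X_L$, and so does the longitude of $C_1$, which corresponds to the meridian $z_1$ of $\rho_1$. Consequently $T$ lifts to a disjoint union of tori indexed by $H_1(X_L)$, so $\widetilde T\simeq \Lambda\otimes_{\Z} T$ with $H_1=\Lambda^2$ and $H_0=\Lambda$, where $\Lambda=\Z[H_1(X_L)]$.

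\textbf{Identifying the pieces.} Killing $s_1$ in the cover of $X_C^{\circ}$ turns the pulled-back cover into the cover of $X_C$ induced by $\Z[H_1(X_C)]\to\Z[t_2^{\pm1},\dots,t_\mu^{\pm1}]\otimes(\text{free in }\mathbf t_1)$. Since all $l_{1i}=0$, this cover is the maximal abelian cover of $X_{C'}$ with the solid torus $N(C_1)$ removed, where $C_1$ is null-homologous in $X_{C'}$. We will express its first homology via the pair sequence for $(X_{C'},X_C^{\circ})$: excision gives $H_*(N(C_1),T)$, which contributes one extra free generator and one extra relation. Hence $H_1(\widetilde X_C^{\circ})$ is an extension of the Alexander module $H_1(\widetilde X_{C'})$ by a cyclic module, and symmetrically $H_1(\widetilde X_{P_1'}^{\circ})$ is an extension of $H_1(\widetilde X_{P_1})$ by a cyclic module. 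All of these are extended by scalars to $\Lambda$, which is flat over each coefficient ring, so Fitting ideals are preserved under the extension.

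\textbf{Mayer--Vietoris and Fitting ideals.} The Mayer--Vietoris sequence reads
\[
\Lambda^2\to H_1(\widetilde X_C^{\circ})\oplus H_1(\widetilde X_{P_1'}^{\circ})\to H_1(\widetilde X_L)\to \Lambda\to \Lambda\oplus\Lambda\to\cdots.
\]
The map on $H_0$ is injective, so $H_1(\widetilde X_L)$ is the cokernel of $\Lambda^2\to M_C\oplus M_P$. Fitting ideals are multiplicative on direct sums, $E_k(M\oplus N)=\sum_{i+j=k}E_i(M)E_j(N)$. For a quotient by $r$ elements one has $E_{k}(M)\subseteq E_k(M/\langle r\text{ elts}\rangle)$, and conversely $E_{k}(M/\langle r\rangle)\subseteq E_{k-r}(M)$. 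With $r=2$ these two inclusions produce the shifts in the upper and lower bounds. It then remains to compute $E_i(M_C)$ and $E_j(M_P)$ in terms of the $C'$ and $P_1$ data. A one-generator-one-relation modification of a presentation changes the Fitting ideals by exactly the $\gcd$ pattern in Definition~\ref{defn:DC-DP}. The special cases $\mu=2$ and $m_1=1$ arise because $\Delta_{C',i}$ for a knot must be corrected by the factor $(t_2-1)$ coming from $H_0$; equivalently, one uses the link-module convention in which $E$ of a knot exterior's module includes the augmentation ideal. Taking gcds, that is, passing to the smallest principal ideals containing these ideals, yields the stated divisibilities. The index shift $k+1$ on the left, rather than $k$, comes from accounting for the extra cyclic summand.

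\textbf{Main obstacle.} The hardest step is pinning down exactly how $H_1$ of the punctured exteriors relates to the Alexander modules of $C'$ and $P_1$. In particular, the extra generator coming from the null-homologous $C_1$, and from $\rho_1$ with $\mathbf w_1=0$, must be controlled precisely enough to obtain the stated indices. The plan is to do this with explicit Fox-calculus presentations: removing $C_1$ adds a meridian generator $x$ and Wirtinger-type relations, and because $C_1$ is null-homologous each relation's Fox derivative in $x$ lies in the augmentation-related ideal. From there the Fitting ideals can be bounded by expanding minors along the new column. The final specialization in the case $\mu>2$, $m_1>1$ is then immediate from the definition $D=\Delta$.
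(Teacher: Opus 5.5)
There is a genuine gap, and it sits in a step you treat as routine: the map on $H_0$ in your Mayer--Vietoris sequence is not injective.

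\textbf{The $H_0$ map has a nonzero kernel.} As you note, the lifted companion piece is a disjoint union of copies of $Z_C$ (the cover of $X_C$ obtained by forgetting $C_1$), indexed only by the $\mathbf t_1$-translates. So its $H_0$ is $\Lambda/\langle t_2-1,\dots,t_\mu-1\rangle$, not $\Lambda$. Likewise the lifted pattern piece has $H_0\cong\Lambda/\langle t_{11}-1,\dots,t_{1m}-1\rangle$. The lifted torus has $H_0\cong\Lambda$, so the kernel on $H_0$ is the intersection of these two augmentation ideals. This is the ideal $\mathfrak K$ generated by the products $(t_{1i}-1)(t_j-1)$, which is nonzero of rank one. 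Hence you only get
\[
0\longrightarrow\mathfrak B\longrightarrow H_1(\widehat X_L)\longrightarrow\mathfrak K\longrightarrow 0,
\qquad
\mathfrak B=\operatorname{coker}\bigl(\Lambda^2\to M_C\oplus M_P\bigr).
\]
So the Alexander module is an extension of $\mathfrak K$ by your cokernel, with rank one larger; in particular $\Delta_{L,0}=0$ in this case.

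\textbf{The quotient inequality is backwards.} If $N$ is generated by $r$ elements, then $E_k(M)\subseteq E_k(M/N)\subseteq E_{k+r}(M)$, that is, $\Delta_{k+r}(M)\mid\Delta_k(M/N)\mid\Delta_k(M)$. You wrote $E_{k-r}$ in place of $E_{k+r}$.

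\textbf{Upper bound.} With the correct structure, your tools miss the stated index by one. From $\Delta_0(\mathfrak K)=0$ and $\Delta_i(\mathfrak K)=1$ for $i\geq1$ you get $\Delta_{L,k}\mid\Delta_{k-1}(\mathfrak B)$. The crude estimate $\Delta_{k-1}(\mathfrak B)\mid\Delta_{k-1}(M_C\oplus M_P)$, combined with $\Delta_i(M_C)\mid\Delta_{C',i-1}$ and $\Delta_j(M_P)\mid\Delta_{P_1,j-1}$, then yields only $\gcd_{i+j=k-3}$. Your version lands on $k-2$ only because omitting $\mathfrak K$ cancels a shift. The paper recovers the index with Lemma~\ref{lem:quotient-module-zero-zero}. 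There a block-determinant argument shows that imposing only the meridian relation on the companion side gives a module whose Fitting ideals lie inside those of $\mathfrak B$. Lemma~\ref{lem:quotient-by-torus-image} then identifies that quotient of $M_C$ exactly with $H_1(\widehat X_{C'})\otimes\Lambda$, so only one Traldi shift is paid.

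\textbf{Lower bound.} Here too you miss by one. Quotienting $M_C\oplus M_P$ by two elements gives at best $\gcd_{i+j=k+2}(D_{C,i}D_{P_1,j})\mid\Delta_{L,k}$. The ``extra cyclic summand'' you appeal to for the shift to $k+1$ has no counterpart in your setup. The paper gets $k+1$ from the relative Mayer--Vietoris sequence over $\widehat X_0=p_L^{-1}(x_0)$. There $H_1(\widehat X_L,\widehat X_0)$ genuinely is the quotient of $H_1(Y_C,\widehat X_0)\oplus H_1(Y_P,\widehat X_0)$ by a $2$-generated submodule. Each relative module has rank one more than its absolute counterpart, so the bookkeeping gives $\Delta_{k+1}(M_C\oplus M_P)\mid\Delta_{L,k}$.

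\textbf{Comparison with $C'$ and $P_1$.} The passage from $M_C,M_P$ to the data of $C',P_1$ is not an exact ``gcd pattern''. The paper invokes Traldi's two-sided bounds (Theorem~\ref{thm:Traldi}). With $A_1=1$ these give only $D_{C,i}\mid\Delta_i(H_1(Z_C))\mid\Delta_{C',i-1}$, and the analogous bounds for $P_1$. Your Fox-calculus sketch would have to reprove these.
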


Finally, we cover the other two cases. 

\begin{thm}
\label{thm:higher-single-component-case2}
Let
\[
L=C(P_1,O,\ldots,O).
\]
Assume that
\[
\mathbf w_1=0
\qquad\text{and}\qquad
\boldsymbol\ell_1\neq 0.
\]
Then, for each $k\geq 0$, the $k$-th Alexander polynomial of $L$ satisfies
\[
\gcd_{\substack{\\[2pt] i+j=k+1}}
\!\Bigl(
    D_{C,i}(\mathbf t_C)
    \;
    \Delta_{P_1',j}(\mathbf t_1,A_1)
\Bigr)
\Bigm|
\Delta_{L,k},
\]
\[
\gcd_{\substack{\\[2pt] i+j=k}}
\!\Bigl(
    D_{C,i}(\mathbf t_C)
    \;
    \Delta_{P_1',j}(\mathbf t_1,A_1)
\Bigr)
\Bigm|
(A_1-1) \Delta_{L,k},
\]
and
\[
\Delta_{L,k}
\Bigm|
\gcd_{\substack{\\[2pt] i+j=k}}
\!\Bigl(
    \Delta_{C',i-1}(\mathbf t_C)
    \;\Delta_{P_1',j}(\mathbf t_1,A_1),\,
(A_1-1)\Delta_{C',i}(\mathbf t_C)
    \;\Delta_{P_1',j}(\mathbf t_1,A_1)
\Bigr)
\]
with the convention that terms with negative indices are omitted.
\end{thm}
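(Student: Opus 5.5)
We will follow the module-theoretic approach outlined in the introduction. Write $X_L = X_{C} \cup_{\partial N(C_1)} X_{P_1'}$, where we identify $X_{P_1'}$ with $f_1(S^1\times D^2)\setminus N(P_1)$. The gluing torus $\partial N(C_1)$ has meridian $z_1$ and longitude $s_1$. Let $\Lambda=\mathbb Z[H_1(X_L)]$ and let $\widetilde X$ denote the maximal abelian cover of $X_L$, together with a lift of a basepoint on the torus. Pulling back gives covers of $X_C$, $X_{P_1'}$ and $T^2$. These are disjoint unions of copies of the corresponding covers for the images of the induced maps on $H_1$, so the chains are $\Lambda\otimes_{\phi_C}$ and $\Lambda\otimes_{\phi_{P_1}}$ of the respective Alexander chain complexes.

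\textbf{Step 1: identify the torus term.} In Case 2 we have $\phi_C(s_1)=T_1=1$ because $\mathbf w_1=0$, and $\phi_{P_1}(z_1)=A_1\neq 1$. Hence $H_1$ of the preimage of the torus is $\Lambda\otimes\mathbb Z[\langle s_1,z_1\rangle]$, computed with $z_1\mapsto A_1$ and $s_1\mapsto 1$. Concretely, the torus chain complex tensored down is $\Lambda \to \Lambda^2\to\Lambda$ with differentials built from $(A_1-1,0)$. This gives $H_0\cong \Lambda/(A_1-1)$, $H_1\cong \Lambda/(A_1-1)$ and $H_2\cong\Lambda/(A_1-1)$.

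\textbf{Step 2: identify the pieces.} On the companion side, $\phi_C$ sends $s_1\mapsto 1$. So the $\Lambda$-module $H_1$ of the preimage of $X_C$ is the Alexander module of $C$ with $t_1$ set to $1$, tensored up. By the Torres-type relation, this is controlled by the Alexander module of $C'$ together with the free summand arising from the extra component. This is exactly where $D_{C,i}$ comes in: $D_{C,i}$ is the $i$-th Fitting generator of $\Lambda\otimes H_1(X_C;\,t_1=1)$, computed via $H_1(X_{C'})$ plus the meridian of $C_1$. The $\mu=2$ case produces the factor $(t_2-1)$ and the index shift. We will check this by presenting $H_1(X_C)$ relative to a basepoint and specialising, using that $X_C\cup$ (solid torus filling $C_1$) is $X_{C'}$. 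On the pattern side, $\phi_{P_1}$ is injective on $H_1(X_{P_1'})$ after substitution, giving $\Delta_{P_1',j}(\mathbf t_1,A_1)$ directly.

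\textbf{Step 3: Mayer--Vietoris and Fitting ideals.} The relevant portion of the sequence is
\[
H_1(\widetilde T)\xrightarrow{\alpha} H_1(\widetilde X_C)\oplus H_1(\widetilde X_{P})\to H_1(\widetilde X)\to \ker\bigl(H_0(\widetilde T)\to H_0(\widetilde X_C)\oplus H_0(\widetilde X_P)\bigr)\to 0.
\]
We will work with the relative modules $H_1(\,\cdot\,,\text{lift of basepoint})$ so that Fitting ideals are the Alexander ideals. We then apply the standard Fitting ideal inequalities:
\begin{itemize}[label={}]
\item for $0\to M'\to M\to M''\to0$ one has $\sum_{i+j=k}F_i(M')F_j(M'')\subseteq F_k(M)$ and $F_k(M)\subseteq F_k(M'')$;
\item $F_k(M_1\oplus M_2)=\sum F_i(M_1)F_j(M_2)$.
\end{itemize}
The cokernel of $\alpha$ injects into the relative $H_1(\widetilde X)$, with a quotient that is a quotient of $\Lambda/(A_1-1)$ or the kernel term. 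This yields the following:
\begin{itemize}[label={}]
\item The first lower bound comes from the fact that $\operatorname{im}\alpha$ is cyclic. Killing a cyclic submodule shifts the index by one, which gives $F_{k+1}(\text{direct sum})\subseteq F_k(\text{coker }\alpha)$. This produces the $i+j=k+1$ gcd.
\item The second lower bound comes from the extension by $\Lambda/(A_1-1)$, whose $F_0=(A_1-1)$. This produces the factor $(A_1-1)\Delta_{L,k}$.
\item The upper bound comes from $F_k(H_1(\widetilde X))\subseteq F_k$ of a quotient. Here we use the explicit form of the companion summand, namely $H_1(X_{C'})$-term $\oplus\,\Lambda/(A_1-1)$-type term, which gives the two listed families in the gcd.
\end{itemize}
Passing from ideals to gcds of generators finishes the argument.

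\textbf{Main obstacle.} The hardest step will be Step 2 on the companion side: precisely computing the Fitting ideals of $H_1(X_C)$ under $t_1\mapsto1$ in terms of $C'$. This requires a careful splitting of the meridian/longitude of $C_1$, and in particular handling the $\mu=2$ degeneracy. I would first try to write a presentation matrix of $H_1(X_C,\ast)$ whose specialisation at $t_1=1$ is block triangular, with one block presenting $H_1(X_{C'},\ast)$ and an extra row and column recording the longitude of $C_1$. I would then read off $D_{C,i}$ from this presentation.
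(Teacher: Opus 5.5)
Your architecture is the paper's: lift $X_L=X_C\cup_\Sigma X_P$ to the maximal abelian cover, use $H_1(\widehat\Sigma)\cong R/\langle A_1-1\rangle$, take the companion piece to be $H_1(Z_C)\otimes R$ and the pattern piece $H_1(\widehat X_P)\otimes R$, and run absolute and relative Mayer--Vietoris. But the way you extract the three divisibilities does not work as written. The upper bound has the most serious gap. The inclusion you invoke, $F_k(H_1(\widehat X_L))\subseteq F_k(Q)$ for a quotient $Q$, gives $\Delta_k(Q)\mid\Delta_{L,k}$, which is a \emph{lower} bound. Also, the companion piece is not $H_1(\widehat X_{C'})\oplus R/\langle A_1-1\rangle$; it is only an extension $0\to\operatorname{im}f_m\to H_1(Z_C)\to H_1(\widehat X_{C'})\to 0$ (Lemma~\ref{lem:quotient-by-torus-image}). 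The paper uses $0\to\mathfrak B\to H_1(\widehat X_L)\to\mathfrak K\to 0$, where $\mathfrak B=\operatorname{coker}\alpha$ and $\mathfrak K=\ker\bigl(H_0(\widehat\Sigma)\to H_0(Y_C)\oplus H_0(Y_P)\bigr)$, together with $E_i(\mathfrak B)E_j(\mathfrak K)\subseteq E_{i+j}(H_1(\widehat X_L))$. Two inputs you never supply are then essential.
\begin{enumerate}
\item The orders of $\mathfrak K$: $\Delta_0(\mathfrak K)\mid A_1-1$ and $\Delta_i(\mathfrak K)=1$ for $i\geq1$, proved from the generators $g_{ij}=(t_{1i}-1)(t_j-1)$ via two coprime minors. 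These give $\Delta_{L,k}\mid\Delta_{k-1}(\mathfrak B)$ and $\Delta_{L,k}\mid(A_1-1)\Delta_k(\mathfrak B)$. This, not the structure of the companion piece, is where the two families in the gcd come from.
\item The divisibility $\Delta_k(\mathfrak B)\mid\gcd_{i+j=k}\Delta_{C',i}\Delta_{P_1',j}$. Here $\operatorname{im}\alpha$ is generated by the lifted \emph{meridian} of $C_1$. (You have meridian and longitude swapped: the meridian maps to $T_1=1$, and the $0$-framed longitude, i.e.\ $z_1$, maps to $A_1$.) Its $Y_C$-component $r_C$ satisfies $H_1(Y_C)/\langle r_C\rangle\cong H_1(\widehat X_{C'})\otimes R$. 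A block-triangular minor argument then shows that every nonzero minor of a presentation of $H_1(Y_C)/\langle r_C\rangle\oplus H_1(Y_P)$ reappears unchanged as a minor of the presentation of $\mathfrak B$, even though the relation row has a nonzero pattern part $r_P$.
\end{enumerate}
If you only use that $\mathfrak B$ is a quotient of $H_1(Y_C)\oplus H_1(Y_P)$, plus Traldi's upper bound on $H_1(Z_C)$, you lose a factor. Already at $k=0$ you would get $(A_1-1)^2\Delta_{C',0}\Delta_{P_1',0}$ instead of $(A_1-1)\Delta_{C',0}\Delta_{P_1',0}$.

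The lower bounds have a similar problem. If $\alpha$ is the absolute map, $\operatorname{im}\alpha$ is indeed cyclic, but its cokernel $\mathfrak B$ is only a \emph{submodule} of $H_1(\widehat X_L)$. The Fitting-ideal inequalities give no lower bound on the orders of a module from those of a submodule; the only quotient there is $\mathfrak K$, whose higher orders are trivial. Moreover, your stated inclusion is reversed: for cyclic $N\subseteq M$ one has $F_k(M)\subseteq F_k(M/N)\subseteq F_{k+1}(M)$. The paper gets both lower bounds from the relative sequence. Every component of $\widehat\Sigma$ contains a lift of $x_0$, so $H_0(\widehat\Sigma,\widehat X_0)=0$, and $H_1(\widehat X_L,\widehat X_0)$ is the quotient of $\mathfrak B'=H_1(Y_C,\widehat X_0)\oplus H_1(Y_P,\widehat X_0)$ by $\operatorname{im}f_1^{\mathrm{rel}}$. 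There, however, $H_1(\widehat\Sigma,\widehat X_0)\cong R\oplus R/\langle A_1-1\rangle$ is two-generated, not cyclic. One uses $\Delta_2(\operatorname{im}f_1^{\mathrm{rel}})=1$ and $\Delta_1(\operatorname{im}f_1^{\mathrm{rel}})\mid A_1-1$, together with the relative index shift on each piece, to get $\Delta_{k+3}(\mathfrak B')\mid\Delta_{L,k}$ and $\Delta_{k+2}(\mathfrak B')\mid(A_1-1)\Delta_{L,k}$.

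Finally, your Step 2 claim that $D_{C,i}$ \emph{is} the $i$-th order of the companion piece is false. For $\mu>2$, $i=0$ and $\Delta_{C',0}\neq0$, Torres' formula gives $\Delta_0(H_1(Z_C))\doteq(A_1-1)\Delta_{C',0}$, whereas $D_{C,0}=\Delta_{C',0}$ and $A_1\neq1$. In general one only has $D_{C,i}\mid\Delta_i(H_1(Z_C))\mid\gcd(\Delta_{C',i-1},(A_1-1)\Delta_{C',i})$ (Corollary~\ref{cor:traldi-quotient-bounds}). Only the left divisibility is needed, and the paper imports it from Traldi's theorem. Your block-triangular presentation would have to reprove that result, including the $(t_2-1)$-refinement when $\mu=2$. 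A minor point: $H_2$ of the torus cover is $0$, not $R/\langle A_1-1\rangle$.
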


\begin{thm}
\label{thm:higher-single-component-case3}
Let
\[
L=C(P_1,O,\ldots,O).
\]
Assume that
\[
\mathbf w_1 \neq 0
\qquad\text{and}\qquad
\boldsymbol\ell_1 = 0.
\]
Then, for each $k\geq 0$, the $k$-th Alexander polynomial of $L$ satisfies
\[
\gcd_{\substack{\\[2pt] i+j=k}}
\!\Bigl(
    \Delta_{C,i}(T_1,\mathbf t_C)
    \;
    D_{P_1,j}(\mathbf t_1)
\Bigr)
\Bigm|
\Delta_{L,k},
\]
\[
\gcd_{\substack{\\[2pt] i+j=k+1}}
\!\Bigl(
    \Delta_{C,i}(T_1,\mathbf t_C)
    \;
    D_{P_1,j}(\mathbf t_1)
\Bigr)
\Bigm|
(T_1-1)\Delta_{L,k},
\]
and
\[
\Delta_{L,k}
\Bigm|
\gcd_{\substack{\\[2pt] i+j=k}}
\!\Bigl(
    \Delta_{C,i-1}(T_1,\mathbf t_C)
    \;\Delta_{P_1,j}(\mathbf t_1),\,
(T_1-1)\Delta_{C,i}(\mathbf t_C)
    \;\Delta_{P_1,j}(\mathbf t_1)
\Bigr)
\]
with the convention that terms with negative indices are omitted.
\end{thm}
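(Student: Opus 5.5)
We will follow the same Mayer--Vietoris strategy on the maximal abelian cover that we use for the other three cases. Write the exterior as $X_L=X_C\cup_{\partial N(C_1)} Y$, where $Y=f_1(S^1\times D^2\setminus N(P_1))$ is identified with the exterior $X_{P_1'}$ of the augmented pattern, and the gluing torus is $\mathcal T=\partial N(C_1)$. We pass to the maximal abelian cover $\widetilde X_L\to X_L$ with deck group $H=H_1(X_L)$ and denote $\Lambda=\mathbb Z[H]$. Restricting the cover to each piece gives induced covers, whose chains are $\Lambda\otimes_{\phi_C}\mathbb Z[H_1(X_C)]$ and $\Lambda\otimes_{\phi_{P_1}}\mathbb Z[H_1(X_{P_1'})]$, with the substitutions of Remark~\ref{rem:change-of-variables}. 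In the present case $\boldsymbol\ell_1=0$, so $A_1=1$. The meridian $z_1$ of $\rho_1$ therefore maps trivially, and $\phi_{P_1}(\Delta_{P_1',j})$ degenerates. Since $\mathbf w_1\neq 0$, $T_1\neq 1$, so the substitution into $\Delta_C$ is nondegenerate.

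\textbf{Step 1: exact sequence.} We write the Mayer--Vietoris sequence
\[
H_1(\widetilde{\mathcal T})\to H_1(\widetilde X_C)\oplus H_1(\widetilde Y)\to H_1(\widetilde X_L)\to H_0(\widetilde{\mathcal T})\to H_0(\widetilde X_C)\oplus H_0(\widetilde Y).
\]
The torus lifts with deck group generated by the images $T_1$ (the longitude of $C_1$, i.e.\ the meridian class of $\rho_1$ up to $A_1$) and $A_1=1$ (the meridian of $C_1$). So $\widetilde{\mathcal T}$ is a disjoint union of copies of $\mathbb R\times S^1$ indexed by $H/\langle T_1\rangle$. This gives $H_1(\widetilde{\mathcal T})\cong H_0(\widetilde{\mathcal T})\cong\Lambda/(T_1-1)$.

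\textbf{Step 2: the $Y$ side.} Here $Y$ is the exterior of $P_1$ in the solid torus. With $A_1=1$, its cover is the pullback of the cover of $X_{P_1}$ (with $\rho_1$ filled in) plus the effect of the boundary torus. This is exactly where the quantities $D_{P_1,j}$ enter. We first show that the module $H_1(\widetilde Y)$ sits in a sequence $0\to\Lambda/(T_1-1)\text{-type term}\to H_1(\widetilde Y)\to H_1(\widetilde X_{P_1})\otimes\Lambda\to 0$. Then, using the standard relation between the Alexander module and the module $H_1(\widetilde X,\widetilde x_0)$, we show that its Fitting ideals are bounded above and below by the ideals generated by $D_{P_1,j}$. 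The factor $(t_{11}-1)$ in the one-component case is the usual discrepancy between the knot module and the relative module.

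\textbf{Step 3: Fitting ideal bookkeeping.} On the $X_C$ side we have $H_1(\widetilde X_C)\otimes_{\phi_C}\Lambda$. Fitting ideals commute with base change, so its $i$-th ideal is generated by the image of $E_i(C)$, whose gcd is $\Delta_{C,i}(T_1,\mathbf t_C)$. From Step 1 we obtain a short exact sequence
\[
0\to M\to H_1(\widetilde X_L)\to N\to 0,
\]
where $M$ is a quotient of $H_1(\widetilde X_C)\oplus H_1(\widetilde Y)$ by an image of $\Lambda/(T_1-1)$, and $N\subseteq\Lambda/(T_1-1)$. We then apply the standard inequalities. First, $F_k(M\oplus M')=\sum_{i+j=k}F_i(M)F_j(M')$. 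Second, $F_i(M)F_j(N)\subseteq F_{i+j}(E)$ for an extension. Third, $F_k(E)\subseteq F_k(N)$ and $F_k(\text{quotient})\supseteq F_k$. Fourth, killing a cyclic $\Lambda/(T_1-1)$ shifts indices by at most one and changes ideals by a factor of at most $(T_1-1)$. Each of these yields a divisibility after taking gcd's.

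The lower bound $\gcd_{i+j=k}(\Delta_{C,i}D_{P_1,j})\mid\Delta_{L,k}$ comes from the surjection of $H_1(\widetilde X_C)\oplus H_1(\widetilde Y)$ onto $M$, together with the extension. The second bound, with index shift $k+1$ and the factor $(T_1-1)$, comes from accounting for $N$ and the index shift caused by the torus term. The upper bound uses the reverse inclusion $F_k(E)\subseteq$ the ideal built from the presentation of $M$. There, the torus relation either lowers the index on the $C$-factor (producing $\Delta_{C,i-1}$) or contributes a factor $(T_1-1)$.

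\textbf{Main obstacle.} We expect the main difficulty to be Step 2 combined with the upper bound. Because $A_1=1$, the connecting map $H_1(\widetilde{\mathcal T})\to H_1(\widetilde Y)$ need not be injective. We do not know exactly how the $\Lambda/(T_1-1)$ summand interacts with the pattern module, which is precisely why only bounds, not an equality, are obtained. Our first attempt will be to use an explicit presentation matrix: a Wirtinger/Fox presentation of $\pi_1(X_L)$ obtained by amalgamating presentations of $\pi_1(X_C)$ and $\pi_1(Y)$ along $\mathbb Z^2$. We would then bound its minors directly by block-triangular arguments, taking gcd's of products of minors from the two diagonal blocks plus one extra row and column coming from the torus.
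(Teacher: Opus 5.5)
Your Step~3 runs the Fitting-ideal inclusions in the wrong direction for the lower bounds, and this is a genuine gap. In $0\to M\to H_1(\widehat X_L)\to N\to 0$ the satellite module \emph{contains} $M$. The surjection $H_1(Y_C)\oplus H_1(Y_P)\twoheadrightarrow M$ gives $E_k(H_1(Y_C)\oplus H_1(Y_P))\subseteq E_k(M)$, and the extension gives $E_i(M)E_j(N)\subseteq E_{i+j}(H_1(\widehat X_L))$. Both only produce divisibilities of the form $\Delta_{L,k}\mid\cdots$; that is how the \emph{upper} bound is proved. The inclusion $E_k(H_1(\widehat X_L))\subseteq(\text{ideal from }M)$ that you invoke would give a lower bound, but it is false for submodules at the level of ideals: take $\langle x,y\rangle\subseteq\mathbb Z[x,y]$ with $k=1$. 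So nothing in your list bounds $\Delta_{L,k}$ from below.

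The argument this theorem is meant to mirror is the proof of Theorem~\ref{thm:higher-single-component-case2} with $C$ and $P_1$ exchanged. It gets its lower bounds from the relative sequence~\eqref{eq:relative-MV-sequence}. Since $H_0(\widehat\Sigma,\widehat X_0)=0$, the module $H_1(\widehat X_L,\widehat X_0)$ is a \emph{quotient} of $\mathfrak B'=H_1(Y_C,\widehat X_0)\oplus H_1(Y_P,\widehat X_0)$ by $\operatorname{im}(f_1^{\mathrm{rel}})$. Here $H_1(\widehat\Sigma,\widehat X_0)\cong R\oplus R/\langle T_1-1\rangle$, so $\operatorname{im}(f_1^{\mathrm{rel}})$ is $2$-generated, with $\Delta_1\mid T_1-1$ and $\Delta_i=1$ for $i\geq 2$. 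This gives $\Delta_{k+3}(\mathfrak B')\mid\Delta_{L,k}$ and $\Delta_{k+2}(\mathfrak B')\mid(T_1-1)\Delta_{L,k}$, where $\Delta_n(\mathfrak B')\doteq\gcd_{i+j=n-2}\Delta_{C,i}(T_1,\mathbf t_C)\,\Delta_j(H_1(Y_P))$.

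Alternatively, you could keep the absolute sequence. Apply your cyclic-kernel estimate to $0\to\operatorname{im}f_1\to H_1(Y_C)\oplus H_1(Y_P)\to M\to 0$, and add the lemma that $\Delta_k(M)\mid\Delta_k(E)$ for every submodule $M\subseteq E$ over a Noetherian UFD (proved by localising at height-one primes). That lemma is not among the inequalities you list. Finally, $D_{P_1,j}\mid\Delta_j(H_1(Y_P))$ needs Traldi's theorem, i.e.\ Corollary~\ref{cor:traldi-quotient-bounds} applied to $P'$ with $\rho$ forgotten. Your Step~2 sequence only gives $\Delta_{P_1,j}\mid\Delta_j(H_1(Y_P))$, which misses the factor $t_{11}-1$ in $D_{P_1,0}$ when $m_1=1$.

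Carried out correctly, either route yields
\[
\gcd_{i+j=k+1}\Bigl(\Delta_{C,i}(T_1,\mathbf t_C)\,D_{P_1,j}(\mathbf t_1)\Bigr)\Bigm|\Delta_{L,k},
\qquad
\gcd_{i+j=k}\Bigl(\Delta_{C,i}(T_1,\mathbf t_C)\,D_{P_1,j}(\mathbf t_1)\Bigr)\Bigm|(T_1-1)\Delta_{L,k}.
\]
These are the first two displays of the statement with $k$ and $k+1$ interchanged, exactly as in Theorem~\ref{thm:higher-single-component-case2}. The first display as printed cannot be proved, because it fails when $m_1=1$. Take $P_1=O$ and $C$ the Whitehead link, so $\lk(C_1,C_2)=0$ and $\Delta_C\doteq(s_1-1)(s_2-1)$. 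Then $L=C$ and $\Delta_{L,0}\doteq(t_{11}-1)(t_2-1)$, whereas $\Delta_{C,0}(T_1,t_2)\,D_{P_1,0}(t_{11})\doteq(t_{11}-1)^2(t_2-1)$. So your placement of the factor $T_1-1$ on the index-$(k+1)$ bound is backwards. That factor comes from $\operatorname{im}(f_1^{\mathrm{rel}})$ (or $\operatorname{im}f_1$), not from $N$.

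For the upper bound, read $(T_1-1)\Delta_{C,i}(\mathbf t_C)$ as $(T_1-1)\Delta_{C,i}(T_1,\mathbf t_C)$. You still need two inputs you do not supply:
\begin{enumerate}
\item $\Delta_0(N)\mid T_1-1$ and $\Delta_i(N)=1$ for $i\geq 1$. These come from the generators $(t_{1i}-1)(t_j-1)$ of $N\subseteq R/\langle T_1-1\rangle$ and explicit minors, and they produce the index drop and the factor $T_1-1$.
\item The Leibniz-expansion argument that $E_k\bigl(H_1(Y_C)\oplus H_1(Y_P)/\operatorname{im}f_P\bigr)\subseteq E_k(M)$. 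Together with Lemma~\ref{lem:quotient-by-torus-image}, this replaces $\Delta_j(H_1(Y_P))$ by $\Delta_{P_1,j}$.
\end{enumerate}
Also, in Step~1 you swapped meridian and longitude. The meridian of $C_1$ maps to $T_1$, and the $0$-framed longitude (the meridian of $\rho$) maps to $A_1=1$. So each cylinder's $H_1$ is generated by the lifted longitude, which is exactly the class Lemma~\ref{lem:quotient-by-torus-image} kills on the pattern side.
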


\begin{rem}
\label{rem:sublinks}
As the case of vanishing winding or linking number shows, the sublinks
\[
C'\subset C
\qquad\text{and}\qquad
P_1\subset P_1'
\]
naturally appear in the formulas. In the case of the ordinary Alexander
polynomial $\Delta_0$, Torres's formula~\cite{Torres} relates these contributions to
the Alexander polynomial of the original link. For higher Alexander
polynomials, however, such a reduction is no longer available. We
discuss this in more detail in Section~\ref{sec:AlexanderModuleBackground}.

On a separate note, although the formulas themselves do not recover
Proposition~\ref{prop:single-component-formula}, a refinement of the
proof does recover it.
\end{rem}

\begin{rem}\label{rem:no-general-higher-formula}
In the same spirit as Proposition~\ref{prop:single-component-formula} and
Theorem~\ref{thm:main}, we can apply
Theorems~\ref{thm:higher-single-component-case1},
\ref{thm:higher-single-component-case2},
\ref{thm:higher-single-component-case3}, and
\ref{thm:higher-single-component-case4}
successively, one companion component at a time, to obtain a formula for the
most general satellite construction. Since the resulting formulas are more
complicated, we do not record them here as a separate theorem or corollary.
\end{rem}

\begin{defn}
The nullity of a link $L$, denoted by $\beta(L)$, is the rank of its Alexander module. Equivalently,
\[
\beta(L)=\min\{\,i\geq 0 \mid \Delta_{L,i}\neq 0\,\}.
\]
\end{defn}

\begin{cor}
\label{cor:rank-single-component}
Let
\[
L=C(P_1,O,\ldots,O).
\]
Assume that
\[
\mathbf w_1\neq 0
\qquad\text{and}\qquad
\boldsymbol\ell_1\neq 0.
\]
Then 
\[
\beta(L)=\beta(C)+\beta(P_1').
\]
\end{cor}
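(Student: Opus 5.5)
The plan is to read the nullity off Theorem~\ref{thm:higher-single-component-case1} directly. By definition, $\beta(L)$ is the least $k$ with $\Delta_{L,k}\neq 0$. We use that the gcd of a family of polynomials vanishes if and only if every member vanishes. Applying Theorem~\ref{thm:higher-single-component-case1}, $\Delta_{L,k}=0$ holds if and only if
\[
\Delta_{C,i}(T_1,\mathbf t_C)\,\Delta_{P_1',j}(\mathbf t_1,A_1)=0
\]
for every pair $i+j=k$ with $i,j\geq 0$. Since $\mathbb Z[H_1(X_L)]$ is a domain, each such product vanishes exactly when one of its factors vanishes.

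The key step is to check that the substitutions $\phi_C$ and $\phi_{P_1}$ of Remark~\ref{rem:change-of-variables} send nonzero polynomials to nonzero polynomials, so that vanishing can be tested before substituting.

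For $\phi_C$, the variables map as $s_1\mapsto T_1$ and $s_i\mapsto t_i$ for $i\geq 2$. Because $\mathbf w_1\neq 0$, $T_1$ is a nontrivial monomial in the $\mathbf t_1$ variables, which are independent of $\mathbf t_C$. The induced homomorphism $\mathbb Z^{\mu}\to\mathbb Z^{m_1+\mu-1}$ on exponent lattices is therefore injective. An injective group homomorphism of free abelian groups induces an injective map of group rings, because distinct monomials go to distinct monomials. Hence $\phi_C$ is injective.

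For $\phi_{P_1}$, the variables map as $u_{1j}\mapsto t_{1j}$ and $z_1\mapsto A_1$. Here $A_1$ is a nontrivial monomial in $\mathbf t_C$ because $\boldsymbol\ell_1\neq 0$, so $\phi_{P_1}$ is injective by the same argument.

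This is exactly where both hypotheses enter. Everything else is formal, so this injectivity check is the only real obstacle, and it is mild.

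Now take $a=\beta(C)$ and $b=\beta(P_1')$, so $\Delta_{C,i}=0$ for $i<a$, $\Delta_{C,a}\neq 0$, and similarly for $P_1'$ with $b$.

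For $k<a+b$, every pair $i+j=k$ has $i<a$ or $j<b$. Each product therefore vanishes, and so $\Delta_{L,k}=0$.

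For $k=a+b$, the term with $(i,j)=(a,b)$ is nonzero by injectivity and the domain property. Hence the gcd is nonzero and $\Delta_{L,a+b}\neq 0$.

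Together these give $\beta(L)=a+b=\beta(C)+\beta(P_1')$. One small point to note: the nullity is computed with the same convention as Remark~\ref{rem:indexing Alexander polynomials}, so no index shift occurs.
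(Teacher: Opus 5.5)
Your proof is correct and follows the paper's intended route: the paper gives no separate proof of this corollary and treats it as a direct consequence of Theorem~\ref{thm:higher-single-component-case1}. Your key check is that $\phi_C$ and $\phi_{P_1}$ are injective because $\mathbf w_1\neq 0$ and $\boldsymbol\ell_1\neq 0$, the same observation the paper uses in the proof of Proposition~\ref{prop:homology-of-lifted-pieces}; with it, reading off the first nonvanishing index from the gcd formula gives $\beta(L)=\beta(C)+\beta(P_1')$.
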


\begin{rem}
In the remaining cases, the corresponding theorems also give bounds on the nullity of the satellite link. However, these bounds are less clean, so we do not record them here.
\end{rem}

\subsection{Organization of the Paper} \hfill

This paper is organized as follows. To start we cover the background material of the paper in Sections \ref{sec:fitting-ideals-orders} and \ref{sec:AlexanderModuleBackground}. Proofs of the main results are all included in Section \ref{sec:proof-main-results}. Theorems \ref{thm:higher-single-component-case1}, 
\ref{thm:higher-single-component-case4} and \ref{thm:higher-single-component-case2} are proven in Subsections \ref{subsec:winding-linking-nonzero}, \ref{subsec:winding-linking-zero} and \ref{subsec:winding-or-linking-zero}, respectively. The proofs in the
later subsections build on the notation, lemmas, and ideas developed in the preceding subsections. Finally, in Section~\ref{sec:Examples} we give some examples of computations of the higher Alexander polynomials of satellite links and verification of our formulas.

\section*{Acknowledgments} 
We are very grateful to David Cimasoni and Stefan Friedl for their valuable feedback on an earlier draft of this paper. We also wish to thank András Juhász and Livio Ferreti for their insightful suggestions and interest in the project. Finally, the second author was supported by the Additional Funding Programme for Mathematical Sciences, delivered by EPSRC (EP/V521917/1) and the
Heilbronn Institute for Mathematical Research.

\section{Fitting Ideals and Orders}
\label{sec:fitting-ideals-orders}

In this section, we develop the algebraic preliminaries used throughout
the paper, focusing in particular on Fitting ideals and orders of modules.
Throughout this section, we assume that $R$ is a UFD and that all
$R$-modules under consideration are finitely presented.

\begin{defn}
\label{defn:presentation-matrix}
Let $\mathfrak M$ be a finitely presented $R$-module, and let
\[
R^m\xrightarrow{\ \phi\ }R^n\longrightarrow \mathfrak M\longrightarrow 0
\]
be a presentation of $\mathfrak M$ with $n$ generators and $m$ relations,
where $m\geq n$. A \emph{presentation matrix} for $\mathfrak M$ is an
$m\times n$ matrix $M$ representing $\phi$ by right multiplication.
\end{defn}
\begin{rem}
\label{rem:adding-zero-relations}
We may always assume that $m\geq n$ by adding zero rows to the
presentation matrix.
\end{rem}

We now recall the definitions of Fitting ideals and orders. Recall that
an $r$-minor of a matrix is the determinant of an $r\times r$ submatrix.

\begin{defn}
\label{defn:fitting-ideals}
Let $\mathfrak M$ be a finitely presented $R$-module with presentation
matrix $M$ as in Definition~\ref{defn:presentation-matrix}. For $k\geq 0$,
the \emph{$k$-th Fitting ideal of $\mathfrak M$} is the ideal generated by
the $(n-k)$-minors of $M$, that is,
\[
E_k(\mathfrak M)
:=
\left\langle
\text{$(n-k)$-minors of $M$}
\right\rangle
\subseteq R,
\]
with $E_k(\mathfrak M)=R$ for $k\geq n$. The \emph{$k$-th order of
$\mathfrak M$} is defined to be the greatest common divisor of the
elements of $E_k(\mathfrak M)$, that is,
\[
\Delta_k(\mathfrak M):=\gcd E_k(\mathfrak M)\in R/R^\times.
\]
The zeroth order $\Delta_0(\mathfrak M)$ is also called the
\emph{order of $\mathfrak M$} and is denoted by
$\operatorname{ord}(\mathfrak M)$. Note that, each order is defined up to
multiplication by units in $R$.
\end{defn}

We recall the following standard properties of Fitting ideals, which will
be used throughout the paper. For proofs, see, for example,
\cite[Section~15.8]{stacks-project}.

\begin{lem}
\label{lem:basic-fitting-properties}
Let $\mathfrak M$ be a finitely presented $R$-module. Then:
\begin{enumerate}
    \item For every $k\geq 0$, the ideal $E_k(\mathfrak M)$ is
    independent of the choice of presentation of $\mathfrak M$.

    \item The Fitting ideals form an increasing sequence
    \[
    E_0(\mathfrak M)
    \subseteq
    E_1(\mathfrak M)
    \subseteq
    E_2(\mathfrak M)
    \subseteq\cdots.
    \]
    \item The orders form a decreasing sequence with respect to
    divisibility; that is,
    \[
    \Delta_{k+1}(\mathfrak M)\mid \Delta_k(\mathfrak M)
    \qquad\text{for every }k\geq 0.
    \]
\end{enumerate}
\end{lem}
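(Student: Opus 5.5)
Lemma~\ref{lem:basic-fitting-properties} collects standard facts, so the plan is to prove each item directly from Definition~\ref{defn:fitting-ideals} using elementary manipulations of presentation matrices, with part (1) the only substantive step.

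\textbf{Part (1).} Fix two presentations of $\mathfrak M$. By the standard Tietze-type argument, any two finite presentations are related by a finite sequence of the following moves:
\begin{enumerate}
\item[(a)] invertible row operations, i.e.\ changing the set of relations by an automorphism of $R^m$;
\item[(b)] invertible column operations, i.e.\ changing the generators by an automorphism of $R^n$;
\item[(c)] adding a relation that is an $R$-linear combination of existing ones, or adding a zero row as in Remark~\ref{rem:adding-zero-relations};
\item[(d)] adding a new generator $g$ together with a relation $g=\sum r_ix_i$, i.e.\ enlarging $M$ to $\begin{pmatrix}M&0\\ v&1\end{pmatrix}$.
\end{enumerate}
To establish this, I would map both free modules onto $\mathfrak M$ and lift each generating set in terms of the other. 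This is a Schanuel-type comparison of the two presentations with the direct-sum presentation on $n+n'$ generators.

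I would then check invariance of the ideal of $(n-k)$-minors under each move. For (a) and (b), Cauchy--Binet shows that every minor of $UMV$ is an $R$-combination of minors of $M$ of the same size. Applying this to the inverse operations gives the reverse inclusion. For (c), a new row is a combination of old rows, so by multilinearity the new minors lie in the old ideal. For (d), the number of generators rises to $n+1$, so we must compare $(n+1-k)$-minors of the enlarged matrix with $(n-k)$-minors of $M$. After column operations we may take $v=0$, and then the enlarged matrix is block diagonal. Its $(n+1-k)$-minors either include the entry $1$, giving exactly the $(n-k)$-minors of $M$, or omit it, giving $(n+1-k)$-minors of $M$. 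The latter lie in the ideal of $(n-k)$-minors by Laplace expansion. Hence the ideals coincide.

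\textbf{Part (2).} Expanding any $(n-k)$-minor along a row by Laplace expansion writes it as an $R$-combination of $(n-k-1)$-minors, so $E_k\subseteq E_{k+1}$. The convention $E_k=R$ for $k\ge n$ makes the chain consistent at the top, since the $0$-minors equal $1$.

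\textbf{Part (3).} This follows formally from part (2). If $I\subseteq J$ are ideals in the UFD $R$, then $\gcd J$ divides every element of $J$, hence every element of $I$, and therefore divides $\gcd I$. Here the gcd of a finitely generated ideal is the gcd of any generating set, which exists in a UFD. Well-definedness up to units then follows from part (1).

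The main obstacle is part (1), specifically the reduction of an arbitrary change of presentation to moves (a)–(d). Since the paper already cites \cite[Section~15.8]{stacks-project}, in the write-up I would treat this reduction as standard, spelling out only the Cauchy--Binet and block-matrix computations.
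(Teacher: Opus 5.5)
Your proposal is correct and takes essentially the paper's route: the paper gives no proof of its own and cites \cite[Section~15.8]{stacks-project}, which argues as you do, reducing independence of the presentation to adding a redundant generator or relation and comparing minors, and proving $E_k(\mathfrak M)\subseteq E_{k+1}(\mathfrak M)$ by Laplace expansion, while part (3) follows from part (2) exactly as you describe. One small point for the write-up of move (d): once $v=0$, the $(n+1-k)$-minors that use the new row but not the new column, or the new column but not the new row, are zero, so your two cases cover all nonzero minors.
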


We next recall the behavior of Fitting ideals under a change of base
ring, with particular applications to changes of variables over Laurent
polynomial rings.

\begin{lem}
\label{lem:fitting-base-change}
Let $\psi\colon R\to R'$ be a ring homomorphism, and let $\mathfrak M$
be a finitely presented $R$-module. Then
$\mathfrak M\otimes_R R'$ is a finitely presented $R'$-module, and for
every $k\geq 0$,
\[
E_k(\mathfrak M\otimes_R R')
=
\bigl\langle \psi(E_k(\mathfrak M))\bigr\rangle
\subseteq R'.
\]
In other words, the Fitting ideals of
$\mathfrak M\otimes_R R'$ are generated by the images of the
corresponding Fitting ideals of $\mathfrak M$.
\end{lem}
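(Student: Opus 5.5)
The plan is to reduce the statement to the case of a single polynomial ring, where Fitting ideals are computed from an explicit presentation matrix, and then use that taking minors commutes with applying $\psi$ entrywise.

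\textbf{Step 1: base-change a presentation.} Choose a finite presentation
\[
R^m\xrightarrow{\ \phi\ }R^n\longrightarrow\mathfrak M\longrightarrow 0
\]
with presentation matrix $M=(m_{ab})$ as in Definition~\ref{defn:presentation-matrix}. Apply the functor $-\otimes_R R'$. Since tensor product is right exact, the sequence
\[
R'^m\xrightarrow{\ \phi\otimes 1\ }R'^n\longrightarrow\mathfrak M\otimes_R R'\longrightarrow 0
\]
is exact, using $R^k\otimes_R R'\cong R'^k$. This shows at once that $\mathfrak M\otimes_R R'$ is finitely presented. Under these identifications, $\phi\otimes 1$ is represented by the matrix $\psi(M)=(\psi(m_{ab}))$, obtained by applying $\psi$ to each entry. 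This matrix is still $m\times n$ with $m\ge n$.

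\textbf{Step 2: compare minors.} Because $\psi$ is a ring homomorphism and a determinant is a polynomial with integer coefficients in the entries, every $r$-minor of $\psi(M)$ equals $\psi$ applied to the corresponding $r$-minor of $M$. Taking $r=n-k$ and using Lemma~\ref{lem:basic-fitting-properties}(1) to compute $E_k(\mathfrak M\otimes_R R')$ from this particular presentation, we get
\[
E_k(\mathfrak M\otimes_R R')=\bigl\langle \psi(\text{$(n-k)$-minors of }M)\bigr\rangle .
\]
This ideal equals $\langle\psi(E_k(\mathfrak M))\rangle$: the images of a generating set of an ideal generate the same ideal in $R'$ as the image of the whole ideal, because $\psi(\sum r_i g_i)=\sum\psi(r_i)\psi(g_i)$.

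\textbf{Step 3: the case $k\ge n$.} Here $E_k(\mathfrak M)=R$, so $\langle\psi(R)\rangle$ contains $\psi(1)=1$ and is all of $R'$. This agrees with the convention $E_k=R'$ for the base-changed presentation, which also has $n$ generators.

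The main point needing care is the presentation-independence of Fitting ideals over $R'$, since $R'$ need not be a UFD. Lemma~\ref{lem:basic-fitting-properties}(1) is valid for arbitrary commutative rings (cf.\ the Stacks reference), so we invoke it in that generality. Everything else is formal.
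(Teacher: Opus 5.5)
Your proof is correct and is the standard argument: right exactness of $-\otimes_R R'$ turns a presentation with matrix $M$ into one with matrix $\psi(M)$, and minors commute with ring homomorphisms. The paper gives no proof of its own here; it defers to the Stacks Project, Section~15.8, whose proof is essentially yours. One small point: your opening sentence about ``reducing to the case of a single polynomial ring'' does not describe what you actually do, and no such reduction is needed.
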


We next recall the behavior of Fitting ideals in short exact sequences,
which will be used repeatedly in the proofs of our results.

\begin{lem}
\label{lem:Fitting-short-exact-sequence}
Let
\[
0\longrightarrow \mathfrak M'
\longrightarrow \mathfrak M
\longrightarrow \mathfrak M''
\longrightarrow 0
\]
be a short exact sequence of $R$-modules. Then, for $i,j\geq 0$,
\[
E_i(\mathfrak M') \; E_j(\mathfrak M'')
\subseteq
E_{i+j}(\mathfrak M).
\]
Moreover, if the sequence splits, then for every $k\geq 0$,
\[
E_k(\mathfrak M)
=
\sum_{i+j=k}
E_i(\mathfrak M') \; E_j(\mathfrak M'').
\]
\end{lem}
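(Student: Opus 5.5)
The plan is to build an explicit presentation matrix for $\mathfrak M$ out of presentations of $\mathfrak M'$ and $\mathfrak M''$, and then read off minors. Choose a presentation of $\mathfrak M'$ with generators $x_1,\ldots,x_a$ and relation matrix $A$, and a presentation of $\mathfrak M''$ with generators $y_1,\ldots,y_b$ and relation matrix $B$. By Remark~\ref{rem:adding-zero-relations} we may pad both with zero rows so that each has at least as many rows as columns. Lift each $y_s$ to an element $\tilde y_s\in\mathfrak M$, and identify each $x_r$ with its image in $\mathfrak M$.

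\textbf{Step 1: the block presentation.} Each row $\beta$ of $B$ gives an element $\sum_s\beta_s\tilde y_s$ that maps to $0$ in $\mathfrak M''$. It therefore lies in the image of $\mathfrak M'$, so it equals $\sum_r\gamma_r x_r$ for some coefficients $\gamma_r$. This yields a relation of $\mathfrak M$ with row $(-\gamma\mid\beta)$. Collecting these rows into $(C\mid B)$, I claim that
\[
M=\begin{pmatrix} A & 0\\ C & B\end{pmatrix}
\]
presents $\mathfrak M$ on the generators $x_1,\ldots,x_a,\tilde y_1,\ldots,\tilde y_b$.
\begin{itemize}
\item \emph{Generation.} This follows from exactness in the middle and surjectivity onto $\mathfrak M''$.
\item \emph{Completeness of the relations.} Suppose $\sum c_r x_r+\sum d_s\tilde y_s=0$. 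Projecting to $\mathfrak M''$ shows that $d$ is an $R$-combination of rows of $B$. Subtracting the same combination of rows of $(C\mid B)$ leaves a relation $\sum c'_r x_r=0$ in $\mathfrak M$. Since $\mathfrak M'\to\mathfrak M$ is injective, $c'$ is a combination of rows of $A$.
\end{itemize}
This verification is the main (though mild) point requiring care. By Lemma~\ref{lem:basic-fitting-properties}(1), we may then compute $E_k(\mathfrak M)$ from $M$.

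\textbf{Step 2: the inclusion.} Here $n=a+b$. First suppose $i<a$ and $j<b$. Take an $(a-i)$-minor of $A$ and a $(b-j)$-minor of $B$. The square submatrix of $M$ using the union of these rows and columns is block lower triangular, because the upper-right block is $0$. Its determinant is therefore the product of the two minors, and it is an $(n-i-j)$-minor of $M$. Since the products of generators generate $E_i(\mathfrak M')E_j(\mathfrak M'')$, the inclusion follows.

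Now the edge cases, in which a Fitting ideal is all of $R$.
\begin{itemize}
\item If $i\ge a$, then $E_i(\mathfrak M')=R$. The $(b-j)$-minors of $B$, taken with rows in $(C\mid B)$ and columns in the $B$ block, are $(n-(a+j))$-minors of $M$. Hence $E_j(\mathfrak M'')\subseteq E_{a+j}(\mathfrak M)\subseteq E_{i+j}(\mathfrak M)$ by monotonicity (Lemma~\ref{lem:basic-fitting-properties}(2)).
\item If $j\ge b$, the argument is symmetric, using the rows $(A\mid 0)$.
\item If both $i\ge a$ and $j\ge b$, then $i+j\ge n$, so $E_{i+j}(\mathfrak M)=R$ and there is nothing to prove.
\end{itemize}

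\textbf{Step 3: the split case.} If the sequence splits, we may take $\tilde y_s$ to be the images under a section, so that $C=0$ and $M=\operatorname{diag}(A,B)$. The inclusion $\supseteq$ is Step 2. For $\subseteq$, consider any $(n-k)$-minor of $\operatorname{diag}(A,B)$, using $r$ rows and $p$ columns from the $A$ block and $s$ rows and $q$ columns from the $B$ block. Laplace expansion along the $A$-block rows shows that this minor vanishes unless $r=p$, in which case it equals ($\pm$) an $r$-minor of $A$ times an $s$-minor of $B$. That product lies in $E_{a-r}(\mathfrak M')E_{b-s}(\mathfrak M'')$, and $(a-r)+(b-s)=n-(r+s)=k$. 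Hence $E_k(\mathfrak M)\subseteq\sum_{i+j=k}E_i(\mathfrak M')E_j(\mathfrak M'')$. When $r=0$ or $s=0$, the corresponding factor is the empty minor $1$, which is consistent with $E_a(\mathfrak M')=R$ and $E_b(\mathfrak M'')=R$.
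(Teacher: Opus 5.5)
Your proof is correct: the block lower-triangular presentation $\begin{pmatrix} A & 0\\ C & B\end{pmatrix}$ of $\mathfrak M$, which becomes block diagonal with $C=0$ when the sequence splits, is the standard argument for this lemma, and you handle the edge cases where a Fitting ideal equals $R$ properly. The paper gives no proof of its own; it recalls the lemma as a standard fact and points to the Stacks Project's section on Fitting ideals, so your write-up supplies that standard proof.
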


Taking greatest common divisors in Lemma~\ref{lem:Fitting-short-exact-sequence}
gives the following corresponding statements for orders.

\begin{cor}
\label{cor:orders-short-exact-sequence}
Let
\[
0\longrightarrow \mathfrak M'
\longrightarrow \mathfrak M
\longrightarrow \mathfrak M''
\longrightarrow 0
\]
be a short exact sequence of $R$-modules. Then, for every $k\geq 0$,
\[
\Delta_k(\mathfrak M)
\Bigm|
\gcd_{\substack{\\[2pt] i+j=k}}
\!\Bigl(
    \Delta_i(\mathfrak M')
    \;\;
    \Delta_j(\mathfrak M'')
\Bigr).
\]
If the sequence splits, then
\[
\Delta_k(\mathfrak M)
\doteq
\gcd_{\substack{\\[2pt] i+j=k}}
\!\Bigl(
    \Delta_i(\mathfrak M')
    \;\;
    \Delta_j(\mathfrak M'')
\Bigr).
\]
\end{cor}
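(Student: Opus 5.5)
The claim follows by taking greatest common divisors in Lemma~\ref{lem:Fitting-short-exact-sequence}. We treat the two assertions separately, since the first uses only the inclusion of ideals and the second uses the equality in the split case.

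\emph{General case.} Fix $k\geq 0$ and any pair $i,j\geq 0$ with $i+j=k$. Lemma~\ref{lem:Fitting-short-exact-sequence} gives
\[
E_i(\mathfrak M')\,E_j(\mathfrak M'')\subseteq E_k(\mathfrak M).
\]
By definition, $\Delta_k(\mathfrak M)$ divides every element of $E_k(\mathfrak M)$. Hence it divides every product $ab$ with $a\in E_i(\mathfrak M')$ and $b\in E_j(\mathfrak M'')$.

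Over the UFD $R$, the gcd of all such products $ab$ is $\Delta_i(\mathfrak M')\,\Delta_j(\mathfrak M'')$. This holds because $\gcd_{a,b}(ab)=\gcd_a\bigl(a\gcd_b b\bigr)=\gcd_b(b)\gcd_a(a)$, using $\gcd(cx,cy)=c\gcd(x,y)$. It follows that
\[
\Delta_k(\mathfrak M)\mid \Delta_i(\mathfrak M')\,\Delta_j(\mathfrak M'')
\]
for each such pair, and therefore $\Delta_k(\mathfrak M)$ divides their gcd. If some Fitting ideal is zero the corresponding order is $0$, and divisibility is trivial.

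\emph{Split case.} Lemma~\ref{lem:Fitting-short-exact-sequence} now gives
\[
E_k(\mathfrak M)=\sum_{i+j=k}E_i(\mathfrak M')\,E_j(\mathfrak M'').
\]
The gcd of a sum of ideals is the gcd of the gcds of the summands. By the product computation above, the gcd of the summand $E_i(\mathfrak M')E_j(\mathfrak M'')$ is $\Delta_i(\mathfrak M')\Delta_j(\mathfrak M'')$. Taking the gcd over all pairs with $i+j=k$ gives the claimed equality up to units.

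The only point needing care is the identity $\gcd(IJ)=\gcd(I)\gcd(J)$ for ideals $I,J$ in a UFD. It reduces to the distributivity of gcd over multiplication noted above, and to the observation that $IJ$ is generated by the products $ab$ with $a\in I$ and $b\in J$. The finite-presentation hypothesis ensures that all the Fitting ideals involved are well defined, by Lemma~\ref{lem:basic-fitting-properties}.
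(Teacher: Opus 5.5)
Your proposal is correct and takes the same route as the paper, which proves the corollary in one line by taking greatest common divisors in Lemma~\ref{lem:Fitting-short-exact-sequence}. You fill in the details the paper leaves implicit, namely the UFD identities $\gcd(IJ)\doteq\gcd(I)\gcd(J)$ and $\gcd(I+J)\doteq\gcd(\gcd I,\gcd J)$; the paper itself invokes the second of these later, in the proof of Corollary~\ref{cor:traldi-quotient-bounds}.
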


We next recall the relationship between the zeroth Fitting ideal and the
annihilator of a module. We begin by recalling the definition of the annihilator.

\begin{defn}
\label{defn:annihilator}
Let $\mathfrak M$ be an $R$-module. The \emph{annihilator of $\mathfrak M$}
is the ideal
\[
\operatorname{Ann}(\mathfrak M)
:=
\{\,r\in R \mid r m=0 \text{ for every } m\in\mathfrak M\,\}.
\]
\end{defn}
The annihilator and the zeroth Fitting ideal are related by the following
standard inclusions.

\begin{lem}
\label{lem:annihilator-fitting}
If $\mathfrak M$ is generated by $n$ elements, then
\[
\operatorname{Ann}(\mathfrak M)^n
\subseteq
E_0(\mathfrak M)
\subseteq
\operatorname{Ann}(\mathfrak M).
\]
\end{lem}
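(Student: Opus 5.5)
We prove the two inclusions separately, working directly from a presentation of $\mathfrak M$ on $n$ generators. Throughout, fix a surjection $R^n\to\mathfrak M$ with images $e_1,\ldots,e_n$, and let $M$ be the corresponding $m\times n$ presentation matrix. By Lemma~\ref{lem:basic-fitting-properties}(1) we may use this presentation, enlarged as we like by further relations, to compute $E_0(\mathfrak M)$. Note that $E_0(\mathfrak M)$ is generated by the $n\times n$ minors of $M$.

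For $E_0(\mathfrak M)\subseteq\operatorname{Ann}(\mathfrak M)$, we use the adjugate. Let $N$ be any $n\times n$ submatrix of $M$, so that $N$ consists of $n$ of the relation rows. Write $e=(e_1,\ldots,e_n)^{T}$. The relations say that $Ne=0$ in $\mathfrak M^n$. Multiplying on the left by $\operatorname{adj}(N)$ gives
\[
\det(N)\,e_i=0 \qquad\text{for every } i.
\]
Since the $e_i$ generate $\mathfrak M$, it follows that $\det N\in\operatorname{Ann}(\mathfrak M)$. These determinants generate $E_0(\mathfrak M)$, so the inclusion holds.

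For $\operatorname{Ann}(\mathfrak M)^n\subseteq E_0(\mathfrak M)$, take $a_1,\ldots,a_n\in\operatorname{Ann}(\mathfrak M)$. For each $i$ we have $a_ie_i=0$ in $\mathfrak M$, so the vector $a_i\varepsilon_i$ lies in the relation module $\operatorname{im}\phi$. Adjoining these $n$ vectors as extra rows does not change the module, so the enlarged matrix $M'$ is still a presentation matrix of $\mathfrak M$. By Lemma~\ref{lem:basic-fitting-properties}(1), $M'$ therefore computes the same ideal $E_0(\mathfrak M)$. The $n\times n$ submatrix of $M'$ formed by these added rows is $\operatorname{diag}(a_1,\ldots,a_n)$, and its minor is $a_1\cdots a_n$. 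Hence
\[
a_1\cdots a_n\in E_0(\mathfrak M).
\]
Products of this form generate $\operatorname{Ann}(\mathfrak M)^n$, which gives the inclusion.

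The only delicate point is the justification for adjoining relations. Any element of the kernel of $R^n\to\mathfrak M$ may be appended as a row without changing the cokernel, and Lemma~\ref{lem:basic-fitting-properties}(1) then says the Fitting ideal is unaffected. The determinant and adjugate computations are routine and hold over any commutative ring, so the UFD hypothesis is not needed here.
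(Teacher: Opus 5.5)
Your proof is correct and complete: the adjugate (Cramer's rule) argument gives $E_0(\mathfrak M)\subseteq\operatorname{Ann}(\mathfrak M)$, and appending the relations $a_i\varepsilon_i$, which is allowed because Fitting ideals do not depend on the presentation, makes $a_1\cdots a_n$ a maximal minor and so gives the other inclusion. The paper gives no proof of its own; it recalls the lemma as standard with a pointer to \cite[Section~15.8]{stacks-project}, and your argument is the standard one found there.
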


We conclude this section by recalling the relationship between the rank
of a module and its orders. We first recall the definition of rank.

\begin{defn}
\label{defn:module-rank}
Let $\mathfrak M$ be an $R$-module, and let $Q(R)$ denote the field of
fractions of $R$. The \emph{rank of $\mathfrak M$} is defined by
\[
\operatorname{rank}_R(\mathfrak M)
:=
\dim_{Q(R)}
\bigl(
Q(R)\otimes_R\mathfrak M
\bigr).
\]
\end{defn}

The following result of Blanchfield describes the orders of a module in
terms of its rank and torsion submodule.

\begin{prop}[\cite{Blanchfield}]
\label{prop:rank-torsion-orders}
Let $\mathfrak M$ be a finitely presented $R$-module, and set
\[
r=\operatorname{rank}_R(\mathfrak M).
\]
Then, for every $i\geq 0$,
\[
\Delta_i(\mathfrak M)
\doteq
\begin{cases}
0, & i<r,\\[4pt]
\Delta_{i-r}\bigl(\operatorname{Tor}(\mathfrak M)\bigr),
& i\geq r.
\end{cases}
\]
Consequently,
\[
\operatorname{rank}_R(\mathfrak M)
=
\min\{\,i\geq 0\mid \Delta_i(\mathfrak M)\neq 0\,\},
\]
and, in particular,
\[
\operatorname{rank}_R(\mathfrak M)=0
\quad\Longleftrightarrow\quad
\Delta_0(\mathfrak M)\neq 0.
\]
\end{prop}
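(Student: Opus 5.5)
The plan is to reduce to the torsion part and then track the Fitting ideals under localization. Let $\mathfrak M$ have rank $r$ and write $T=\operatorname{Tor}(\mathfrak M)$. Since $R$ is a UFD, and in particular a domain, $\mathfrak M/T$ is torsion-free of rank $r$.

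\textbf{Case $i<r$.} Tensor with $Q=Q(R)$. By Lemma~\ref{lem:fitting-base-change}, $E_i(\mathfrak M)Q=E_i(Q\otimes\mathfrak M)$. The module $Q\otimes\mathfrak M\cong Q^r$ is free of rank $r$, and it has $E_i=0$ for $i<r$. Since $R\to Q$ is injective, $E_i(\mathfrak M)=0$, and so $\Delta_i(\mathfrak M)=0$.

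\textbf{Case $i\ge r$, lower divisibility.} Apply Corollary~\ref{cor:orders-short-exact-sequence} to $0\to T\to\mathfrak M\to\mathfrak M/T\to 0$. The key point is that $\Delta_r(\mathfrak M/T)$ is a unit. To see this, embed $F=\mathfrak M/T$ into $Q^r$ and clear denominators. This gives $F\hookrightarrow R^r$ with cokernel torsion, so $F$ has an $r\times r$ minor structure whose $E_r(F)$ equals $R$ only if $F$ needs at most $r$ generators, which fails in general.

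So I would instead argue through gcds, using that a UFD has height-one primes $(p)$ and that $\gcd E_k$ detects only codimension-one information. For each prime $p$, localize at $(p)$. The ring $R_{(p)}$ is a DVR, so $F_{(p)}$ is free of rank $r$ and $\mathfrak M_{(p)}\cong R_{(p)}^r\oplus T_{(p)}$ splits. The split case of Lemma~\ref{lem:Fitting-short-exact-sequence}, together with base change, then gives
\[
E_i(\mathfrak M)_{(p)}=\sum_{a+b=i}E_a(T)_{(p)}E_b(R^r)_{(p)}=E_{i-r}(T)_{(p)},
\]
since $E_b(R^r)$ is $0$ for $b<r$ and $R$ for $b\ge r$.

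The $p$-adic valuation of $\gcd E$ equals the minimum valuation over $E$, which equals the valuation of the generator of $E_{(p)}$ in the DVR. Hence $v_p(\Delta_i(\mathfrak M))=v_p(\Delta_{i-r}(T))$ for every prime $p$. This gives associate gcds, provided both are nonzero. For that I note $E_{i-r}(T)\supseteq E_0(T)\supseteq \operatorname{Ann}(T)^n\ne 0$ by Lemma~\ref{lem:annihilator-fitting} and Lemma~\ref{lem:basic-fitting-properties}. Nonvanishing of $E_i(\mathfrak M)$ for $i\ge r$ follows from tensoring with $Q$ as above.

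\textbf{Consequences.} The characterization of the rank and the final equivalence follow immediately. The main obstacle is the valuation identity: that localization commutes with taking Fitting ideals (Lemma~\ref{lem:fitting-base-change} with $R\to R_{(p)}$), and that the valuation of a gcd equals the minimum valuation over a finite generating set. Both are routine in a Noetherian UFD. For a general UFD, finitely presented suffices, because the Fitting ideals are finitely generated.
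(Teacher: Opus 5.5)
Your argument is correct. The paper gives no proof of its own here; it simply cites Blanchfield. So your proposal is a self-contained replacement for that citation, built only from tools already in Section~2:
\begin{itemize}
\item base change of Fitting ideals, to $Q(R)$ for $i<r$ and to $R_{(p)}$ for $i\ge r$;
\item the split formula;
\item the annihilator inclusion, used for nonvanishing.
\end{itemize}
The decisive idea has two parts. First, for a nonzero ideal $E$ of a UFD, $v_p(\gcd E)$ can be read off from $ER_{(p)}$. Second, $R_{(p)}$ is a DVR, so $\mathfrak M_{(p)}\cong R_{(p)}^r\oplus T_{(p)}$ and hence $E_i(\mathfrak M)R_{(p)}=E_{i-r}(T)R_{(p)}$.

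Compared with the bare citation, your route also explains why the statement holds only for orders and not for Fitting ideals. The ideals $E_i(\mathfrak M)$ and $E_{i-r}(T)$ can genuinely differ away from height one. For example, the augmentation ideal $\mathfrak a\subset\Lambda_2$ is torsion-free of rank one, yet $E_1(\mathfrak a)=\mathfrak a\neq\Lambda_2=E_0(0)$. The gcd is exactly the invariant that sees only height-one primes.

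Two small points.

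First, delete the paragraph headed ``lower divisibility''; it is a muddled false start. By your own localization argument, $(\mathfrak M/T)_{(p)}$ is free of rank $r$ for every $p$, so $\Delta_r(\mathfrak M/T)=1$ does hold, even though $E_r(\mathfrak M/T)\neq R$ in general. Even so, Corollary~\ref{cor:orders-short-exact-sequence} would only give $\Delta_i(\mathfrak M)\mid\Delta_{i-r}(T)$, never the reverse divisibility.

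Second, you tacitly use that $T$ is finitely generated. You need this to speak of $E_{i-r}(T)$ and to apply Lemma~\ref{lem:annihilator-fitting}. It is automatic over the Noetherian rings $\Lambda_n$ used in the paper, and the statement presupposes it. So your closing remark about general UFDs misidentifies the issue: gcds of nonzero ideals exist in any UFD, and finite generation of $T$ is what would actually need checking.
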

\section{Alexander Module and Maximal Abelian Cover}
\label{sec:AlexanderModuleBackground}

In this section we recover some basic facts about the Alexander modules and the maximal Ableian covers of links. Throughout this section, we use the notation
\[
\Lambda_n
:=
\mathbb Z[t_1^{\pm1},\ldots,t_n^{\pm1}]
\]
for the Laurent polynomial ring in $n$ variables over $\mathbb Z$.

\begin{defn}
\label{defn:max-abelian-cover}
For any oriented link $L\subset S^3$, let $X_L$ denote its exterior. Choose a basepoint
$x_0\in X_L$, and let
\[
p_L\colon \widehat X_L\longrightarrow X_L
\]
be the normal cover corresponding to the kernel of the abelianisation map
\[
\operatorname{ab}_L\colon
\pi_1(X_L,x_0)\longrightarrow H_1(X_L;\mathbb Z).
\]
The cover $\widehat X_L$ is called the \emph{maximal abelian cover} of $L$.
\end{defn}

The deck transformation group of the cover $p_L$ is
\[
H_1(X_L;\mathbb Z)\cong \mathbb Z^\mu,
\]
where $\mu$ is the number of components of $L$. Thus, its group ring is
\[
\mathbb Z[H_1(X_L;\mathbb Z)]\cong \Lambda_\mu.
\]
The action of the deck transformation group gives each homology group
$H_i(\widehat X_L)$ a $\Lambda_\mu$-module structure. Since
$\widehat X_L$ is a non-compact connected $3$-manifold, the homology groups of
primary interest are $H_1(\widehat X_L)$ and $H_2(\widehat X_L)$. We focus on $H_1(\widehat X_L)$. As we will see later in this
section, $H_2(\widehat X_L)$ vanishes in many of the cases we
consider, which further justifies this focus.

\begin{defn}
\label{defn:Alexander polynomials}
The $\Lambda_\mu$-module $H_1(\widehat X_L)$ is called the \emph{Alexander
module} of the oriented link $L$. The $i$-th Fitting ideal of
$H_1(\widehat X_L)$ is called the $i$-th \emph{Alexander ideal} of
$L$ and is denoted by $E_i(L)$. The $i$-th order of
$H_1(\widehat X_L)$ is called the $i$-th \emph{Alexander polynomial}
of $L$ and is denoted by $\Delta_{L,i}$.
\end{defn}

As mentioned in Remark~\ref{rem:indexing Alexander polynomials}, the
zeroth Alexander polynomial in Definition~\ref{defn:Alexander
polynomials} is the classical Alexander polynomial of the link.

\subsection{Fundamental Theorem of Fox Calculus}
\label{subsec:Foxcalc}\hfill

To compute the Alexander polynomials of the link, we need a presentation of the Alexander module. We start by considering a CW complex structure on $X_L$

\begin{prop}
\label{prop:cw-model-link-exterior}
The link exterior $X_L$ is homotopy equivalent to a finite connected
CW complex $\mathcal X$ with one $0$-cell, $n$ $1$-cells, and $n-1$ $2$-cells,
for some $n\geq 1$.
\end{prop}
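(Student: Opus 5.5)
The plan is to start from a compact model of the exterior and collapse it in two stages. Choose a compact model: the exterior $X_L$ is a compact, connected, orientable $3$-manifold whose boundary is a nonempty union of tori (one for each component of $L$). First, triangulate $X_L$ (or take a handle decomposition, using that smooth compact $3$-manifolds admit these). Because $\partial X_L\neq\emptyset$, the manifold collapses onto a $2$-dimensional spine. Concretely, take a handle decomposition with $0$-, $1$-, $2$- and $3$-handles. Turning it upside down with respect to a boundary component lets us cancel all $3$-handles, since each $3$-handle can be paired with a $2$-handle meeting the nonempty boundary. We can also cancel all but one $0$-handle against $1$-handles, since $X_L$ is connected. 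The result is a homotopy equivalence from $X_L$ to a connected $2$-complex $\mathcal X$ with a single $0$-cell, $n$ $1$-cells and $n'$ $2$-cells.

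Next we count cells via the Euler characteristic. Since $\partial X_L$ is a union of tori, $\chi(\partial X_L)=0$. For a compact $3$-manifold we have $\chi(X_L)=\tfrac12\chi(\partial X_L)$, so $\chi(X_L)=0$; alternatively, this follows from Alexander duality: $H_1(X_L)\cong\Z^\mu$, $H_2(X_L)\cong\Z^{\mu-1}$, $H_3=0$, so $\chi=1-\mu+(\mu-1)=0$. Homotopy equivalence preserves Euler characteristic, so $1-n+n'=0$, giving $n'=n-1$. Finally, $n\geq1$ because $H_1(X_L)\cong\Z^\mu$ with $\mu\geq1$ forces at least one $1$-cell.

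An alternative, more hands-on route uses a Wirtinger presentation from a diagram with $c$ crossings (assuming $c\ge1$, after a Reidemeister I move if needed). This gives $c$ generators and $c$ relations, one of which is redundant, and the $2$-complex of the Wirtinger presentation is a deformation retract of the exterior (the standard result for a connected diagram). A split diagram can be made connected by Reidemeister II moves. Dropping the redundant relator then gives $n$ $1$-cells and $n-1$ $2$-cells. This step needs care, because removing a $2$-cell is only justified up to homotopy equivalence; the Euler characteristic argument is the cleaner way to obtain it.

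The main obstacle is justifying the spine and collapse step rigorously: the claim that a compact $3$-manifold with nonempty boundary is homotopy equivalent to a $2$-complex with a single vertex. I would cite standard handle-cancellation or simplicial-collapse results, then contract a maximal tree in the $1$-skeleton, which is a homotopy equivalence, to reduce to one $0$-cell.
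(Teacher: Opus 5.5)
Your main argument is correct and is essentially the paper's proof: since $X_L$ is connected with nonempty boundary, it has a handle decomposition with one $0$-handle and no $3$-handles, and then $\chi(X_L)=\tfrac12\chi(\partial X_L)=0$ gives $n'=n-1$ (your Alexander-duality check of $\chi$ is a fine alternative, and $n\geq 1$ also follows directly from $n-1=n'\geq 0$). The only slip is in your Wirtinger aside: the complex with all $c$ generators and all $c$ relators has Euler characteristic $1$, so it is a deformation retract of $\mathbb{R}^3\setminus L\simeq X_L\vee S^2$ rather than of $X_L$; it is the complex with one relator omitted that models $X_L$.
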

\begin{proof}
The link exterior $X_L$ is a compact connected $3$-manifold with
nonempty boundary. By the standard relative Morse-theoretic
construction of a handle decomposition, $X_L$ admits a finite handle decomposition with one $0$-handle and no $3$-handles. Let $h_1$ and $h_2$ denote the numbers of $1$- and $2$-handles, respectively. As a result the link exterior $X_L$ is homotopy equivalent to a finite CW complex $\mathcal X_L$ with one $0$-cell, $h_1$ $1$-cells, and $h_2$ $2$-cells.

Since $\chi(X_L)=\frac{\chi(\partial X_L)}{2}=0$, we have
\[
0=\chi(X_L)=1-h_1+h_2,
\]
so $h_2=h_1-1$. Thus $\mathcal X_L$ has the required form.
\end{proof}

We now apply the \emph{Fundamental Theorem of Fox Calculus} to the cell structure on $\mathcal X_L$. In general, this theorem relates the presentation of the fundamental group coming from a CW complex to a presentation of the homology of a normal cover as a module over the group ring of its deck transformation group. We recall the general statement below.

Let $p\colon \widetilde{\mathcal X}\to \mathcal X$ be a regular covering
of a connected $2$-complex. We may assume that $\mathcal X$ has a single
$0$-cell, denoted by $x_0$. The CW structure on $\mathcal X$ determines
a presentation
\[
G=\pi_1(\mathcal X,x_0)
=
\langle s_1,\ldots,s_n\mid r_1,\ldots,r_m\rangle,
\]
where the generators $s_i$ correspond to the $1$-cells and the relators
$r_j$ correspond to the $2$-cells. Let $D$ denote the deck
transformation group of the covering. The quotient homomorphism
$G\to D$ induces a homomorphism of group rings
\[
\varphi\colon \mathbb Z[G]\longrightarrow \mathbb Z[D].
\]
Choose a lift $\widetilde x_0$ of $x_0$ as a basepoint for
$\widetilde{\mathcal X}$, and let
\[
\widetilde{\mathcal X}_0=p^{-1}(x_0).
\]
For each $1$-cell corresponding to $s_i$, choose a lift
$\widetilde s_i$ in $\widetilde{\mathcal X}$ beginning at
$\widetilde x_0$.

\begin{thm}[Fundamental Theorem of Fox Calculus]
\label{thm:Foxcalclus}
The relative homology group
$H_1(\widetilde{\mathcal X},\widetilde{\mathcal X}_0)$ is generated as
a $\mathbb Z[D]$-module by
$\widetilde{s}_1,\ldots,\widetilde{s}_n$, subject to the relations
\[
\sum_{i=1}^n
\varphi\left(\frac{\partial r_j}{\partial s_i}\right)
\widetilde{s}_i=0,
\qquad j=1,\ldots,m.
\]
\end{thm}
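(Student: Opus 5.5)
The statement is the Fundamental Theorem of Fox Calculus. I would prove it by computing cellular homology of the lifted $2$-complex directly. Lift the CW structure of $\mathcal X$ to $\widetilde{\mathcal X}$. Fix the lift $\widetilde x_0$; the $0$-cells of $\widetilde{\mathcal X}$ are then the translates $d\,\widetilde x_0$ for $d\in D$. The $1$-cells are the translates $d\,\widetilde s_i$, and the $2$-cells are the translates $d\,\widetilde r_j$ of chosen lifts of the $2$-cells, each based at $\widetilde x_0$. The cellular chain groups are therefore free $\mathbb Z[D]$-modules,
\[
C_2=\mathbb Z[D]^m,\qquad C_1=\mathbb Z[D]^n,\qquad C_0=\mathbb Z[D],
\]
and the relative complex $C_*(\widetilde{\mathcal X},\widetilde{\mathcal X}_0)$ is $C_2\to C_1\to 0$, because the subcomplex $\widetilde{\mathcal X}_0$ is exactly the whole $0$-skeleton. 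It follows that
\[
H_1(\widetilde{\mathcal X},\widetilde{\mathcal X}_0)=C_1/\partial_2(C_2)
\]
is generated by $\widetilde s_1,\ldots,\widetilde s_n$ with relations $\partial_2\widetilde r_j=0$. The whole proof therefore comes down to computing $\partial_2\widetilde r_j$.

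The key step is the formula
\[
\partial_2\widetilde r_j=\sum_i\varphi\!\left(\tfrac{\partial r_j}{\partial s_i}\right)\widetilde s_i.
\]
The boundary of the $2$-cell $r_j$ is the loop spelled by the word $r_j=s_{i_1}^{\epsilon_1}\cdots s_{i_\ell}^{\epsilon_\ell}$. Lifting this loop to $\widetilde x_0$ gives the cellular $1$-chain of $\partial\widetilde r_j$. I would prove by induction on word length a more general claim: for any word $w$, the lift of the path $w$ starting at $\widetilde x_0$ is the $1$-chain
\[
\sum_i\varphi\!\left(\tfrac{\partial w}{\partial s_i}\right)\widetilde s_i,
\]
and it ends at $\varphi(w)\widetilde x_0$.

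The base cases are as follows.
\begin{itemize}
\item For $w=s_i$, the lift is $\widetilde s_i$, matching $\partial s_i/\partial s_i=1$.
\item For $w=s_i^{-1}$, the lifted path traverses $\varphi(s_i)^{-1}\widetilde s_i$ backwards, which gives the chain $-\varphi(s_i^{-1})\widetilde s_i$. This matches $\partial s_i^{-1}/\partial s_i=-s_i^{-1}$.
\end{itemize}

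For the inductive step, the lift of a product $uv$ is the lift of $u$ followed by the translate by $\varphi(u)$ of the lift of $v$. This is exactly the Fox product rule
\[
\frac{\partial(uv)}{\partial s_i}=\frac{\partial u}{\partial s_i}+u\,\frac{\partial v}{\partial s_i}.
\]
The rule is compatible with $\varphi$, since $\varphi$ is a ring homomorphism and deck transformations act $\mathbb Z[D]$-linearly on chains.

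Since $r_j$ is a relator, its lifted loop is a closed cycle, and the general claim gives the boundary formula. Applying $\mathbb Z[D]$-linearity, $\partial_2(d\,\widetilde r_j)=d\,\partial_2\widetilde r_j$, so these relations generate the whole image of $\partial_2$.

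The main obstacle is keeping the orientation and translate conventions consistent. Lifts must start at $\widetilde x_0$, $D$ acts on the left, and inverse letters must be lifted starting at their endpoint. The base case for $s_i^{-1}$ in particular needs care to land on the correct translate of $\widetilde s_i$.
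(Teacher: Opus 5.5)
The paper does not prove this statement; it simply recalls it as a standard fact. Your argument is the standard proof, and it is correct: you identify $H_1(\widetilde{\mathcal X},\widetilde{\mathcal X}_0)$ with $C_1/\partial_2(C_2)$ for the lifted presentation complex. You then show, by induction on word length using the Fox product rule, that the lift of $w$ from $\widetilde x_0$ is the chain $\sum_i\varphi(\partial w/\partial s_i)\widetilde s_i$ ending at $\varphi(w)\widetilde x_0$.

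The only convention worth stating explicitly is the identification $G\to D$: it must be the one under which the lift of $g$ from $\widetilde x_0$ ends at $\varphi(g)\widetilde x_0$. This is what makes $\varphi$ a homomorphism for the left action, and both your $s_i^{-1}$ base case and the inductive step rely on it.
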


At first sight, it is not clear how Theorem~\ref{thm:Foxcalclus}
can be used to compute the Alexander ideals and polynomials, since the
theorem is formulated in terms of relative homology. The following
proposition makes the connection explicit. We use the notation introduced
above for the basepoints; in particular, we take $x_0$ to be the basepoint
of $X_L$ and set
\[
\widehat X_0:=p_L^{-1}(x_0).
\]

\begin{prop}
\label{prop:relative-Alexander-module}
Let
\[
\varepsilon_L\colon \Lambda_\mu\longrightarrow\mathbb Z
\]
be the augmentation homomorphism, and let
\[
\mathfrak a_L
:=
\ker(\varepsilon_L)
=
\left\langle
t_1-1,\ldots,t_\mu-1
\right\rangle
\]
be the augmentation ideal of $\Lambda_\mu$. Then there is a short exact
sequence
\[
0
\longrightarrow
H_1(\widehat X_L)
\longrightarrow
H_1(\widehat X_L,\widehat X_0)
\longrightarrow
\mathfrak a_L
\longrightarrow
0.
\]
Consequently,
\[
\operatorname{Tor} H_1(\widehat X_L,\widehat X_0)
\cong
\operatorname{Tor} H_1(\widehat X_L),
\]
and
\[
\operatorname{rank}_{\Lambda_\mu}
H_1(\widehat X_L,\widehat X_0)
=
\operatorname{rank}_{\Lambda_\mu}
H_1(\widehat X_L)+1.
\]
In particular,
\[
\Delta_{i+1}\bigl(H_1(\widehat X_L,\widehat X_0)\bigr)
\doteq
\Delta_i\bigl(H_1(\widehat X_L)\bigr)
\doteq
\Delta_{L,i}.
\]
\end{prop}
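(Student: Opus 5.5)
The plan is to prove the short exact sequence from the long exact sequence of the pair $(\widehat X_L,\widehat X_0)$, and then to read off torsion, rank and orders from it. The relevant portion of the long exact sequence is
\[
H_1(\widehat X_0)\to H_1(\widehat X_L)\to H_1(\widehat X_L,\widehat X_0)\to H_0(\widehat X_0)\to H_0(\widehat X_L)\to H_0(\widehat X_L,\widehat X_0).
\]
Since $\widehat X_0$ is a discrete set, $H_1(\widehat X_0)=0$. The fibre $\widehat X_0$ is a free orbit of the deck group $H_1(X_L)\cong\mathbb Z^\mu$, so $H_0(\widehat X_0)\cong\Lambda_\mu$. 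The cover $\widehat X_L$ is connected, so $H_0(\widehat X_L)\cong\mathbb Z$ with trivial action. The map $\Lambda_\mu\to\mathbb Z$ sends each point to the generator, so it is exactly $\varepsilon_L$; it is surjective, and $H_0(\widehat X_L,\widehat X_0)=0$. The kernel of $\varepsilon_L$ is $\mathfrak a_L$, which yields the claimed short exact sequence of $\Lambda_\mu$-modules, since all maps are equivariant.

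For the consequences, first tensor the sequence with $Q(\Lambda_\mu)$, which is exact because localization is flat. The ideal $\mathfrak a_L$ is nonzero and torsion-free, being a submodule of $\Lambda_\mu$, so it has rank $1$. This gives the rank formula. For torsion, $\operatorname{Tor}$ is left exact. The injection $H_1(\widehat X_L)\to H_1(\widehat X_L,\widehat X_0)$ therefore maps $\operatorname{Tor}H_1(\widehat X_L)$ into $\operatorname{Tor}H_1(\widehat X_L,\widehat X_0)$. Conversely, any torsion element of the middle term maps to a torsion element of $\mathfrak a_L$, hence to $0$, so it comes from a torsion element of $H_1(\widehat X_L)$, using injectivity and the fact that $\Lambda_\mu$ is a domain. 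Hence the torsion submodules are isomorphic.

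For the orders, I will invoke Proposition~\ref{prop:rank-torsion-orders}. Set $r=\operatorname{rank}H_1(\widehat X_L)$, so that the relative module has rank $r+1$. For $i<r$ both $\Delta_{i+1}$ of the relative module and $\Delta_i$ of the absolute module vanish, because $i+1<r+1$. For $i\ge r$ both equal $\Delta_{i-r}(\operatorname{Tor})$, and the torsion submodules are isomorphic. The only subtle point is finite presentation: it holds for the relative module by Theorem~\ref{thm:Foxcalclus}, and $\Lambda_\mu$ is Noetherian, so submodules of finitely presented modules are also finitely presented. The main obstacle is correctly identifying the $H_0$ map as the augmentation, including its equivariance; the remaining steps are formal.
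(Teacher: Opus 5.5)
Your proposal is correct and follows the paper's proof essentially step for step: the long exact sequence of the pair $(\widehat X_L,\widehat X_0)$ with the $H_0$ map identified as the augmentation $\varepsilon_L$, rank and torsion read off from the resulting short exact sequence, and the orders obtained from Proposition~\ref{prop:rank-torsion-orders}. Your extra remarks on finite presentation, and on why the preimage of a torsion element is itself torsion (via injectivity and $\Lambda_\mu$ being a domain), are small clarifications that the paper leaves implicit.
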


\begin{proof}
The long exact sequence of the pair
$(\widehat X_L,\widehat X_0)$ contains
\[
0
\longrightarrow
H_1(\widehat X_L)
\longrightarrow
H_1(\widehat X_L,\widehat X_0)
\longrightarrow
H_0(\widehat X_0)
\longrightarrow
H_0(\widehat X_L)
\longrightarrow
0.
\]
Choosing a lift of $x_0$ identifies
\[
H_0(\widehat X_0)\cong \Lambda_\mu.
\]
Since $\widehat X_L$ is connected, every deck transformation acts
trivially on $H_0(\widehat X_L)$, and hence
\[
H_0(\widehat X_L)
\cong
\Lambda_\mu/\mathfrak a_L
\cong
\mathbb Z.
\]
Under these identifications, the map
\[
H_0(\widehat X_0)\longrightarrow H_0(\widehat X_L)
\]
is precisely the augmentation homomorphism $\varepsilon_L$. Its kernel
is therefore $\mathfrak a_L$, and exactness gives
\[
0
\longrightarrow
H_1(\widehat X_L)
\longrightarrow
H_1(\widehat X_L,\widehat X_0)
\longrightarrow
\mathfrak a_L
\longrightarrow
0.
\]
Since $\mathfrak a_L$ is a submodule of the free $\Lambda_\mu$-module
$\Lambda_\mu$, it is torsion-free. Moreover, the short exact sequence
\[
0
\longrightarrow
\mathfrak a_L
\longrightarrow
\Lambda_\mu
\xrightarrow{\;\varepsilon_L\;}
\mathbb Z
\longrightarrow
0
\]
shows that
\[
\operatorname{rank}_{\Lambda_\mu}(\mathfrak a_L)=\operatorname{rank}_{\Lambda_\mu}(\Lambda_\mu)-\operatorname{rank}_{\Lambda_\mu}(\mathbb Z)=1,
\]
Now let
\[
z\in
\operatorname{Tor}H_1(\widehat X_L,\widehat X_0).
\]
Its image in $\mathfrak a_L$ is torsion and hence vanishes, since
$\mathfrak a_L$ is torsion-free. By exactness, $z$ therefore lies in
the image of $H_1(\widehat X_L)$. Conversely, every torsion element of
$H_1(\widehat X_L)$ remains torsion in
$H_1(\widehat X_L,\widehat X_0)$. Thus
\[
\operatorname{Tor}H_1(\widehat X_L,\widehat X_0)
\cong
\operatorname{Tor}H_1(\widehat X_L).
\]
Additivity of rank in the short exact sequence gives
\[
\operatorname{rank}_{\Lambda_\mu}
H_1(\widehat X_L,\widehat X_0)
=
\operatorname{rank}_{\Lambda_\mu}
H_1(\widehat X_L)+1.
\]
Therefore, Proposition~\ref{prop:rank-torsion-orders}, together with the
equality of the torsion submodules above, gives
\[
\Delta_{i+1}\bigl(H_1(\widehat X_L,\widehat X_0)\bigr)
\doteq
\Delta_i\bigl(H_1(\widehat X_L)\bigr).
\]
Combined with Definition~\ref{defn:Alexander polynomials}, this proves the final assertion of the proposition.
\end{proof}
Proposition~\ref{prop:relative-Alexander-module} shows that, in order
to compute the Alexander polynomials of $L$, it suffices to obtain a
presentation of the relative homology module
\[
H_1(\widehat X_L,\widehat X_0).
\]
By the Fundamental Theorem of Fox Calculus, such a presentation can
be obtained directly from a suitable presentation of the link group
$\pi_1(X_L)$, such as a Wirtinger or Dehn presentation. This provides
an effective method for computing the Alexander polynomials.

\subsection{Other Abelian Covers and Sublinks}
\label{subsec:other-abelian-covers}\hfill

We next consider abelian covers of link exteriors other than the maximal
abelian cover, with particular emphasis on those associated to sublinks. We begin with the following general divisibility statement.

\begin{lem}
\label{lem:specialization-abelian-cover}
Assume that $\mu\geq 2$. Let
\[
p\colon Z\longrightarrow X_L
\]
be a connected regular cover with free abelian deck transformation group
of rank $n$. Suppose that the cover is determined by an epimorphism
\[
\pi_1(X_L,x_0)
\xrightarrow{\ \operatorname{ab}_L\ }
H_1(X_L;\mathbb Z)\cong\mathbb Z^\mu
\xrightarrow{\ f\ }
\mathbb Z^n.
\]
Let
\[
f_*\colon \Lambda_\mu\longrightarrow\Lambda_n
\]
be the induced homomorphism of group rings, and set
\[
Z_0:=p^{-1}(x_0).
\]
Then, for every $i\geq 0$,
\[
f_*(\Delta_{L,i})
\mid
\Delta_{i+1}\bigl(H_1(Z,Z_0)\bigr).
\]
\end{lem}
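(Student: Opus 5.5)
We will prove the statement at the level of chain complexes, using the Fundamental Theorem of Fox Calculus (Theorem~\ref{thm:Foxcalclus}) for both covers. By Proposition~\ref{prop:cw-model-link-exterior}, replace $X_L$ by the CW complex $\mathcal X_L$, which has one $0$-cell, $n_0$ $1$-cells and $n_0-1$ $2$-cells. This gives a presentation
\[
\pi_1(X_L)=\langle s_1,\ldots,s_{n_0}\mid r_1,\ldots,r_{n_0-1}\rangle .
\]
Let $J$ be the Fox Jacobian, an $(n_0-1)\times n_0$ matrix over $\mathbb Z[\pi_1]$. Applying Theorem~\ref{thm:Foxcalclus} to the maximal abelian cover, $\operatorname{ab}_L(J)$ presents $H_1(\widehat X_L,\widehat X_0)$ over $\Lambda_\mu$. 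Applying it to $Z$, whose deck group map factors as $f\circ\operatorname{ab}_L$, the matrix $f_*(\operatorname{ab}_L(J))$ presents $H_1(Z,Z_0)$ over $\Lambda_n$. In other words,
\[
H_1(Z,Z_0)\cong H_1(\widehat X_L,\widehat X_0)\otimes_{\Lambda_\mu}\Lambda_n .
\]

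By Lemma~\ref{lem:fitting-base-change}, $E_{i+1}(H_1(Z,Z_0))$ is generated by $f_*$ of the generators of $E_{i+1}(H_1(\widehat X_L,\widehat X_0))$. These generators are the $(n_0-i-1)$-minors of $\operatorname{ab}_L(J)$. It therefore suffices to show that every such minor is divisible by $\Delta_{L,i}$ in $\Lambda_\mu$. Divisibility is preserved by the ring map $f_*$, so $f_*(\Delta_{L,i})$ would then divide every generator of $E_{i+1}(H_1(Z,Z_0))$, and hence their gcd.

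The remaining claim is that $\Delta_{L,i}$ divides every $(n_0-i-1)$-minor of $\operatorname{ab}_L(J)$. Proposition~\ref{prop:relative-Alexander-module} gives $\Delta_{L,i}\doteq\Delta_{i+1}(H_1(\widehat X_L,\widehat X_0))$. By definition, the right-hand side is the gcd of exactly these minors. So the divisibility holds by the definition of the order, and no further argument is needed here.

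The only delicate point is the identification of the two presentations. We must check that the Fox matrix for $Z$ is exactly the image under $f_*$ of the one for $\widehat X_L$, which follows since $\varphi_Z=f_*\circ\varphi_{\widehat X}$ on $\mathbb Z[G]$. We also need $Z$ connected, which holds because $f$ is an epimorphism. We do not need the rank shift for $Z$ itself, and this matters because $H_1(Z,Z_0)$ need not have the same relation to $H_1(Z)$ as in Proposition~\ref{prop:relative-Alexander-module}. We only use the shift on the $\widehat X_L$ side. The hypothesis $\mu\ge 2$ plays no essential role in this argument beyond ensuring the setting of the subsequent sublink applications, and if the divisibility is vacuous because $f_*(\Delta_{L,i})=0$ while the target is nonzero, that cannot happen, since zero divides only zero and each generator is an $f_*$-image of a multiple of $\Delta_{L,i}$.
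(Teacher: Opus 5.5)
Your proposal is correct and is essentially the paper's own argument. Fox calculus makes the presentation matrix of $H_1(Z,Z_0)$ the entrywise $f_*$-image of the one for $H_1(\widehat X_L,\widehat X_0)$, so $E_{i+1}(H_1(Z,Z_0))$ is generated by $f_*$ of minors divisible by $\Delta_{i+1}(H_1(\widehat X_L,\widehat X_0))\doteq\Delta_{L,i}$ (Proposition~\ref{prop:relative-Alexander-module}), and divisibility survives the ring map $f_*$; your side remarks (renaming the cell count to $n_0$, using the rank shift only on the $\widehat X_L$ side) are sound refinements of that same proof.
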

\begin{proof}
By the Fundamental Theorem of Fox Calculus, a presentation matrix for
the $\Lambda_n$-module $H_1(Z,Z_0)$ is obtained from a presentation matrix for
$H_1(\widehat X_L,\widehat X_0)$ by applying $f_*$ to each entry.
Consequently,
\[
E_{i+1}\bigl(H_1(Z,Z_0)\bigr)
=
\Bigl \langle f_*\left(
E_{i+1}\bigl(H_1(\widehat X_L,\widehat X_0)\bigr)
\right) \Bigr \rangle.
\]
By Proposition~\ref{prop:relative-Alexander-module},
\[
\Delta_{i+1}\bigl(H_1(\widehat X_L,\widehat X_0)\bigr)
\doteq
\Delta_{L,i}.
\]
Thus $\Delta_{L,i}$ divides every element of
$E_{i+1}(H_1(\widehat X_L,\widehat X_0))$. Applying $f_*$ shows that
$f_*(\Delta_{L,i})$ divides every element of
$E_{i+1}(H_1(Z,Z_0))$, and
therefore
\[
f_*(\Delta_{L,i})
\mid
\Delta_{i+1}\bigl(H_1(Z,Z_0)\bigr).
\]
\end{proof}

\begin{rem}
\label{rem:relative-Alexander-module-abelian-cover}
The argument of Proposition~\ref{prop:relative-Alexander-module} applies
more generally to any connected regular cover with free abelian deck
transformation group. Consequently,
\[
\Delta_{i+1}\bigl(H_1(Z,Z_0)\bigr)
\doteq
\Delta_i\bigl(H_1(Z)\bigr).
\]
\end{rem}
\begin{rem}
\label{rem:delta0-specialization}
In general, the argument of Lemma~\ref{lem:specialization-abelian-cover} does not give a
direct computation of the higher orders of $H_1(Z)$. However, it does
provide an effective method for computing the zeroth order. Consider
the $n\times n$ presentation matrix for
$H_1(\widehat X_L,\widehat X_0)$ arising from a Wirtinger presentation.
The key fact is that its $(n-1)\times(n-1)$ minors
are of the form
\[
(t_i-1)\Delta_L,
\]
and hence, after applying $f_*$, we obtain
\[
\Delta_0\bigl(H_1(Z)\bigr)
=
\gcd_{1\leq i\leq\mu}
\Bigl(
f_*(t_i-1)\,f_*(\Delta_L)
\Bigr).
\]
\end{rem}
We will be particularly interested in the abelian cover obtained by
ignoring one component of the link. Without loss of generality, we
take this to be the first component, leading to the following
definition.
\begin{defn}
\label{defn:special-abelian-cover}
Assume that $\mu\geq 2$. Let
\[
p: Z_L\longrightarrow X_L
\]
denote the connected covering space corresponding to the kernel of the composition
\[
\pi_1(X_L,x_0)
\xrightarrow{\ \mathrm{ab}_L\ }
H_1(X_L;\mathbb Z)
\cong \mathbb Z^\mu
\xrightarrow{\ h\ }
\mathbb Z^{\mu-1},
\]
where $h$ is the projection onto the last $\mu-1$ coordinates.
\end{defn}
Note that, in this case, the induced homomorphism on group rings is the
specialization
\[
f_*\colon
\mathbb Z[t_1^{\pm1},\ldots,t_\mu^{\pm1}]
\longrightarrow
\mathbb Z[t_2^{\pm1},\ldots,t_\mu^{\pm1}],
\qquad
t_1\longmapsto 1.
\]
In particular, Remark~\ref{rem:delta0-specialization} gives
\begin{equation}
\label{eq:zeroth-order-special-cover}
\Delta_0\bigl(H_1(Z_L)\bigr)
\doteq
\begin{cases}
\Delta_L(1,t_2,\ldots,t_\mu), & \mu>2,\\[4pt]
(t_2-1)\,\Delta_L(1,t_2), & \mu=2.
\end{cases}
\end{equation}

A result of Torres~\cite{Torres} now relates the zeroth order of
$H_1(Z_L)$ to the Alexander polynomial of a sublink of $L$. In accordance with Notation~\ref{not:sublink}, let
\[
L':=L\setminus L_1,
\]
and, as in Proposition~\ref{prop:single-component-formula}, set
\[
A_1:=\prod_{j=2}^{\mu} t_j^{\lk(L_1,L_j)}.
\]
\begin{thm}[Torres formula~\cite{Torres}]
\label{thm:Torres}
Let $L=L_1\cup\cdots\cup L_\mu$ be a link with $\mu\geq 2$. Then
\[
\Delta_L(1,t_2,\ldots,t_\mu)
\doteq
\begin{cases}
(A_1-1)\Delta_{L'}(t_2,\ldots,t_\mu), & \mu>2,\\[6pt]
\dfrac{A_1-1}{t_2-1}\,
\Delta_{L'}(t_2), & \mu=2.
\end{cases}
\]
\end{thm}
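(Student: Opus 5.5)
The plan is to prove the Torres formula via Remark~\ref{rem:delta0-specialization} and equation~\eqref{eq:zeroth-order-special-cover}. The left-hand side $\Delta_L(1,t_2,\ldots,t_\mu)$ is, up to the factor $(t_2-1)$ when $\mu=2$, the order $\Delta_0(H_1(Z_L))$. It therefore suffices to compute $\Delta_0(H_1(Z_L))$ geometrically.

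The first step is to relate $Z_L$ to the maximal abelian cover of the sublink $L'$. The cover $Z_L$ is the pullback of $\widehat X_{L'}\to X_{L'}$ along the inclusion $X_L\hookrightarrow X_{L'}$. Indeed, the map $H_1(X_L)\to H_1(X_{L'})$ kills the meridian of $L_1$ and preserves the other meridians, so it is exactly the projection $h$. Hence $\widehat X_{L'}$ is obtained from $Z_L$ by gluing in the preimage of $N(L_1)$. That preimage is a disjoint union of copies of $\widetilde{N}\cong \mathbb R\times D^2$ or $S^1\times D^2$ according to whether the longitude of $L_1$ maps to $A_1\neq 1$ or to $1$.

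The second step is a Mayer--Vietoris sequence over $\Lambda_{\mu-1}$:
\[
H_1(\widetilde{\partial N})\to H_1(Z_L)\oplus H_1(\widetilde N)\to H_1(\widehat X_{L'})\to H_0(\widetilde{\partial N})\to H_0(Z_L)\oplus H_0(\widetilde N)\to\cdots.
\]
Here $\widetilde{\partial N}$ is the induced cover of the torus, with group $\Lambda_{\mu-1}\otimes_{\mathbb Z[A_1^{\pm1}]}$ applied to a torus cover. In the generic case $A_1\neq1$, one has
\[
H_1(\widetilde{\partial N})\cong \Lambda_{\mu-1}/(A_1-1)
\]
generated by the meridian lift, and this lift dies in $H_1(\widetilde N)$, while $H_1(\widetilde N)=0$. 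Similarly, $H_0(\widetilde{\partial N})\cong H_0(\widetilde N)\cong \Lambda_{\mu-1}/(A_1-1)$. This yields an exact sequence
\[
\Lambda_{\mu-1}/(A_1-1)\to H_1(Z_L)\to H_1(\widehat X_{L'})\to 0.
\]
Taking orders, and using Corollary~\ref{cor:orders-short-exact-sequence} together with multiplicativity of $\Delta_0$ for torsion modules, we get
\[
\Delta_0(H_1(Z_L)) \doteq (A_1-1)\,\Delta_0(H_1(\widehat X_{L'}))
\]
provided the left map is injective. Injectivity when $\Delta_{L'}\neq0$ should follow from a rank/order count; alternatively, I would pass to the relative modules of Proposition~\ref{prop:relative-Alexander-module}, where the presentation matrices can be compared directly.

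For $\mu>2$, combining this with~\eqref{eq:zeroth-order-special-cover} gives $(A_1-1)\Delta_{L'}$. For $\mu=2$, the order of $H_1(\widehat X_{L'})$ for the knot $L'$ is $\Delta_{L'}(t_2)$, and dividing by the extra factor $(t_2-1)$ in~\eqref{eq:zeroth-order-special-cover} gives $\frac{A_1-1}{t_2-1}\Delta_{L'}$.

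The main obstacle is making the multiplicativity of orders exact rather than just a divisibility. This includes the degenerate cases $A_1=1$ and $\Delta_{L'}=0$, where both sides vanish: there $H_1(\widetilde{\partial N})$ has positive rank, so the rank of $H_1(Z_L)$ is positive. I would handle these by a rank argument using Proposition~\ref{prop:rank-torsion-orders}. As a fallback for the exact identity, I would cite Torres's original Fox-calculus proof, applying the specialization $t_1\mapsto1$ to a Wirtinger matrix.
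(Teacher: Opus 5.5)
The paper does not prove this statement; it cites Torres, whose proof is a Fox-calculus computation on a link-group presentation. Your route is genuinely different and workable: you compute $\Delta_0(H_1(Z_L))$ once via \eqref{eq:zeroth-order-special-cover} and once via Mayer--Vietoris for $X_{L'}=X_L\cup_\Sigma N(L_1)$, which is the sequence behind Lemma~\ref{lem:quotient-by-torus-image}. None of these ingredients depends on the formula being proved.

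However, the step that turns a divisibility into an equality is not justified. The exact sequence $\Lambda_{\mu-1}/(A_1-1)\xrightarrow{\alpha}H_1(Z_L)\to H_1(\widehat X_{L'})\to 0$ alone gives only $\Delta_0(H_1(Z_L))\doteq\Delta_0(\operatorname{im}\alpha)\,\Delta_{L'}$ with $\Delta_0(\operatorname{im}\alpha)\mid A_1-1$. No rank or order count can detect $\ker\alpha$. The source of $\alpha$ is torsion, so rank sees nothing. An independent value of $\Delta_0(H_1(Z_L))$ is exactly what you are trying to find.

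The missing idea is to continue the Mayer--Vietoris sequence one term to the left: $H_2(\widehat X_{L'})\to H_1(\widetilde{\partial N})\to H_1(Z_L)\oplus H_1(\widetilde N)$. So $\ker\alpha$ is the image of $H_2(\widehat X_{L'})$. This is zero by Proposition~\ref{prop:vanishingH2} whenever $\Delta_{L'}\neq 0$, which always holds when $\mu=2$.

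You must then also justify multiplicativity of $\Delta_0$ on the resulting short exact sequence. Corollary~\ref{cor:orders-short-exact-sequence} only gives a divisibility for non-split sequences. Add the standard argument: localize at height-one primes $(p)$. For torsion $M$ one has $\Delta_0(M)\doteq\prod_p p^{\operatorname{length}(M_{(p)})}$, and length is additive.

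The degenerate case $A_1=1$ has the same hole. That $H_1(\widetilde{\partial N})\cong\Lambda_{\mu-1}^2$ has positive rank says nothing about $H_1(Z_L)$ unless you control the kernel of the map out of it. With the fix above the argument works:
\begin{itemize}
\item If $\Delta_{L'}\neq 0$, then $H_2(\widehat X_{L'})=0$ makes $\Lambda_{\mu-1}^2\to H_1(Z_L)\oplus H_1(\widetilde N)\cong H_1(Z_L)\oplus\Lambda_{\mu-1}$ injective, so $\operatorname{rank}H_1(Z_L)\geq 1$.
\item If $\Delta_{L'}=0$ (for any $A_1$), the surjection $H_1(Z_L)\twoheadrightarrow H_1(\widehat X_{L'})$ from Lemma~\ref{lem:quotient-by-torus-image} gives the same conclusion.
\end{itemize}
In either case Proposition~\ref{prop:rank-torsion-orders} yields $\Delta_0(H_1(Z_L))=0$, so both sides of the formula vanish. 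For $\mu=2$ this holds after cancelling $t_2-1$ in the domain $\mathbb Z[t_2^{\pm1}]$.

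Finally, your proposed fallback of citing Torres's proof is not itself a proof, although it is exactly what the paper does.
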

Combining Theorem~\ref{thm:Torres} with
Equation~\ref{eq:zeroth-order-special-cover}, we obtain
\[
\Delta_0\bigl(H_1(Z_L)\bigr)
\doteq
(A_1-1)\,\Delta_{L'}(t_2,\ldots,t_\mu).
\]

The higher orders of $H_1(Z_L)$ are also related to the higher
Alexander polynomials of the sublink $L'$, although the relationship is
more subtle. The first result in this direction is due to
Traldi~\cite{Traldi}.

Similar to Proposition~\ref{prop:relative-Alexander-module}, let
$\mathfrak a_{L'}$ denote the augmentation ideal of the ring
\[
\Lambda_{\mu-1}
=
\mathbb Z[t_2^{\pm1},\ldots,t_\mu^{\pm1}].
\]
Then Traldi's result can be stated as follows.

\begin{thm}[\cite{Traldi}]
\label{thm:Traldi}
For any link $L$ and any $i\geq 1$, we have
\[
E_{i-1}(L')+(A_1-1)\,E_i(L')
\leq
f_*\bigl(E_i(L)\bigr),
\]
and
\[
f_*\bigl(E_i(L)\bigr)
\leq
E_{i-1}(L')+\mathfrak a_{L'}\,E_i(L').
\]
\end{thm}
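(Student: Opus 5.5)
We propose to prove Theorem~\ref{thm:Traldi} by working directly with a Wirtinger-type presentation matrix and following its behaviour under the specialization $f_*\colon t_1\mapsto 1$. By Lemma~\ref{lem:fitting-base-change}, $f_*(E_i(L))$ generates the $i$-th Fitting ideal of $H_1(\widehat X_L)\otimes_{\Lambda_\mu}\Lambda_{\mu-1}$. So it is enough to find a presentation matrix for $H_1(\widehat X_L)$ that is well adapted to the splitting $L=L_1\cup L'$, and then compare the minors of its specialization with those of a presentation matrix for $H_1(\widehat X_{L'})$.

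\emph{Step 1: choose the presentation.} Fix a diagram of $L$ in which $L_1$ has arcs $x_1,\ldots,x_a$ and $L'$ has arcs $y_1,\ldots,y_b$. From the Fox matrix (Theorem~\ref{thm:Foxcalclus}) we obtain a presentation of $H_1(\widehat X_L)$ in the standard way. We drop one redundant Wirtinger relation. Using Proposition~\ref{prop:relative-Alexander-module}, we pass from $H_1(\widehat X_L,\widehat X_0)$ to $H_1(\widehat X_L)$ by eliminating the generator $\widetilde x_1$ via column operations; this is possible because the image of $\widetilde x_1$ in $\mathfrak a_L$ is $t_1-1$.

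\emph{Step 2: specialize and read off the block structure.} After setting $t_1=1$ the crossings fall into three kinds. At a crossing where $L_1$ passes over an arc of $L'$, the row becomes $y_b-y_a+(1-t_j)x_c$. At a crossing where $L'$ passes over $L_1$, the row involves the $x$'s only through $x_{c+1}-t_jx_c$ together with an $L'$ term. Crossings of $L'$ with itself give the rows of the Wirtinger matrix $M'$ of $L'$. We then perform row and column operations: telescope the $L_1$ columns along $L_1$, and absorb the $(1-t_j)x_c$ terms into the $y$-columns. After these operations the specialized matrix should take the shape
\[
\begin{pmatrix} M' & * \\ 0 & \operatorname{diag}(1,\ldots,1,\,A_1-1)\end{pmatrix},
\]
where the column $*$ has entries in $\mathfrak a_{L'}$. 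The unit diagonal entries come from identifying consecutive arcs of $L_1$. The final entry $A_1-1$ comes from going once around $L_1$, since the product of the $t_j$ picked up along the way is the image of the longitude of $L_1$, namely $A_1$.

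\emph{Step 3: compare minors.} Consider the reduced matrix $\begin{pmatrix} M' & v\\ 0 & A_1-1\end{pmatrix}$ with $v$ having entries in $\mathfrak a_{L'}$. For the lower bound, take a minor of size $n'-(i-1)$ of $M'$. We can make it a minor of the required size either by omitting the last row and column, or by adjoining the last row and column. In the first case it contributes $E_{i-1}(L')$; in the second case expanding along the last row gives $(A_1-1)$ times the minor. This yields $E_{i-1}(L')+(A_1-1)E_i(L')\subseteq f_*(E_i(L))$. For the upper bound, expand an arbitrary minor of the required size along the last column. The terms are either minors of $M'$ of the smaller size, lying in $E_{i-1}(L')$, or an entry of $v$ or $A_1-1$ times a larger minor of $M'$, lying in $\mathfrak a_{L'}E_i(L')$, since $A_1-1\in\mathfrak a_{L'}$. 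The reverse inclusion follows.

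The main obstacle is Step 2. We must verify that the operations exist, that the coupling column really has entries in $\mathfrak a_{L'}$, and that the closing entry is exactly $A_1-1$. We also need care with the redundant relation and with eliminating the basepoint generator, so that the shift between relative and absolute Fitting ideals is correct. To check this we would first work out the case where $L'$ is a single unknotted component, before attempting the general bookkeeping. Once that computation is settled, Step 3 is a routine Laplace expansion.
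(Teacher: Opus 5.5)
There is a genuine gap at Step~1, and for the statement as written it cannot be closed.

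\textbf{Step~1 does not produce $H_1(\widehat X_L)$.} When $\mu\ge 2$, deleting the column of $\widetilde x_1$ from the Fox matrix does not present $H_1(\widehat X_L)$. It presents $H_1(\widehat X_L,\widehat X_0)/\langle\widetilde x_1\rangle$, which is an extension of $\mathfrak a_L/(t_1-1)\Lambda_\mu\neq 0$ by $H_1(\widehat X_L)$. The knot trick works only because there $\mathfrak a=(t-1)$ is free and every generator maps to $t-1$. The orders already disagree: the square minors of the column-deleted matrix are $(t_1-1)\Delta_L$ (Remark~\ref{rem:delta0-specialization}), not $\Delta_L$. For the Hopf link the Fox row is $(1-t_2,\;t_1-1)$. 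Deleting either column leaves a nonzero cyclic module, although $H_1(\widehat X_{\mathrm{Hopf}})=0$.

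\textbf{Step~2 is inconsistent with Step~1.} Once an $L_1$-column is gone, the specialized $L_1$-block is a path rather than a cycle. It reduces to an identity block plus a zero row, and no entry $A_1-1$ ever appears. Your block shape comes from the \emph{full} Fox matrix. In that case $M'$ is the full Fox matrix of the diagram of $L'$ obtained by erasing $L_1$, and it presents $H_1(\widehat X_{L'},p_{L'}^{-1}(x_0))$, not $H_1(\widehat X_{L'})$.

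\textbf{What Steps~2--3 actually prove.} Read that way, Steps~2 and~3 are sound:
\begin{itemize}
\item After $t_1\mapsto 1$, every row from an undercrossing of $L_1$ has zero $L'$-part. The $L'$ entry there is a unit times $1-t_1$, so there is no ``$L'$ term'', and this vanishing is exactly what produces the zero block.
\item The cyclic block reduces to $\operatorname{diag}(1,\dots,1,\delta)$ with $\delta\doteq A_1-1$.
\item The coupling entries are combinations of the $1-t_j$, and merging $y$-columns across $L_1$-overpasses keeps the last column in $\mathfrak a_{L'}$.
\item In Step~3, the $(A_1-1)$-term comes from bordering an $(n'-i)$-minor, not an $(n'-i+1)$-minor.
\end{itemize}
So what you have proved is the chain of inclusions for the Fitting ideals of the relative modules $H_1(\widehat X,\widehat X_0)$, i.e.\ the classical elementary ideals of the Alexander matrices.

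\textbf{The statement as written is false.} The relative result cannot be converted into the statement as written, because with $E_i$ as in Definition~\ref{defn:Alexander polynomials} the first inclusion fails. Take $L$ to be the three-component unlink and $i=2$, so $A_1=1$. Here $H_1(\widehat X_L)$ is the module of Koszul syzygies of $(t_1-1,t_2-1,t_3-1)$, with three generators and a single relation whose entries are $\pm(t_j-1)$. Hence $E_2(L)=\mathfrak a_L$ and $f_*(E_2(L))=\mathfrak a_{L'}$. But for the two-component unlink $H_1(\widehat X_{L'})\cong\Lambda_2$, so $E_1(L')=\Lambda_2\not\subseteq\mathfrak a_{L'}$. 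The underlying point is that Proposition~\ref{prop:relative-Alexander-module} shifts orders, not ideals. For the Hopf link, $E_0(H_1(\widehat X))=\Lambda_2$ while $E_1(H_1(\widehat X,\widehat X_0))=\mathfrak a_L$.

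\textbf{The repair.} The paper gives no proof of this theorem, only the citation, and it uses the theorem only through greatest common divisors. So the right fix is to state the theorem for the relative ideals, which your Steps~2--3 establish. Corollary~\ref{cor:traldi-quotient-bounds} then follows by taking gcds of those inclusions at index $i+1$. Use $f_*E_{i+1}(H_1(\widehat X_L,\widehat X_0))=E_{i+1}(H_1(Z_L,Z_0))$ from the proof of Lemma~\ref{lem:specialization-abelian-cover}. Then use Remark~\ref{rem:relative-Alexander-module-abelian-cover} and Proposition~\ref{prop:relative-Alexander-module} to shift the indices.
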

Note that, in our setting, as observed in the proof of
Lemma~\ref{lem:specialization-abelian-cover},
\[
f_*\bigl(E_i(L)\bigr)=E_i\bigl(H_1(Z_L)\bigr).
\]
Combining this identification with Theorem~\ref{thm:Traldi} and taking
greatest common divisors of the resulting inequalities, we obtain the
following corollary.
\begin{cor}
\label{cor:traldi-quotient-bounds}
For any link $L$ and any $i\geq 1$,
\[
\Delta_i\bigl(H_1(Z_L)\bigr)
\mid
\gcd\Bigl(
\Delta_{L',i-1},
(A_1-1)\,\Delta_{L',i}
\Bigr).
\]
Furthermore, if $\mu>2$, then
\[
\Delta_{L',i}
\mid
\Delta_i\bigl(H_1(Z_L)\bigr),
\]
whereas if $\mu=2$, then
\[
\gcd\Bigl(
\Delta_{L',i-1},
(t_2-1)\,\Delta_{L',i}
\Bigr)
\mid
\Delta_i\bigl(H_1(Z_L)\bigr).
\]
\end{cor}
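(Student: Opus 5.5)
The plan is to read Corollary~\ref{cor:traldi-quotient-bounds} off Theorem~\ref{thm:Traldi} by taking greatest common divisors of the ideals on each side. Throughout we use the identification $f_*(E_i(L))=E_i(H_1(Z_L))$ from the proof of Lemma~\ref{lem:specialization-abelian-cover}. The only facts about gcds we need are two. First, if $I\subseteq J$ are ideals of the UFD $\Lambda_{\mu-1}$, then $\gcd J\mid\gcd I$. Second, $\gcd(I+J)\doteq\gcd(\gcd I,\gcd J)$ and $\gcd(aI)\doteq a\gcd I$ for $a\in\Lambda_{\mu-1}$.

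\emph{Upper bound.} The first inclusion of Theorem~\ref{thm:Traldi} puts $E_{i-1}(L')+(A_1-1)E_i(L')$ inside $E_i(H_1(Z_L))$. Hence $\Delta_i(H_1(Z_L))$ divides
\[
\gcd\bigl(\gcd E_{i-1}(L'),\,(A_1-1)\gcd E_i(L')\bigr)\doteq\gcd\bigl(\Delta_{L',i-1},(A_1-1)\Delta_{L',i}\bigr),
\]
which is the first assertion.

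\emph{Lower bound.} The second inclusion gives
\[
\gcd\bigl(E_{i-1}(L')+\mathfrak a_{L'}E_i(L')\bigr)\mid\Delta_i(H_1(Z_L)).
\]
The left side equals $\gcd\bigl(\Delta_{L',i-1},\,\gcd(\mathfrak a_{L'})\,\Delta_{L',i}\bigr)$. This uses $\gcd(\mathfrak a_{L'}E_i(L'))\doteq\gcd(\mathfrak a_{L'})\gcd(E_i(L'))$, which holds because products of generators generate the product ideal, and in a UFD the gcd of all pairwise products is the product of the gcds by Gauss's lemma applied prime by prime.

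\emph{Computing $\gcd(\mathfrak a_{L'})$.} The ideal $\mathfrak a_{L'}$ is generated by $t_2-1,\dots,t_\mu-1$.
\begin{itemize}
\item If $\mu>2$, there are at least two such generators. They are distinct primes, so their gcd is $1$.
\item If $\mu=2$, the ideal is principal, generated by $t_2-1$.
\end{itemize}

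\emph{Finishing the two cases.} For $\mu=2$ the lower bound reads $\gcd(\Delta_{L',i-1},(t_2-1)\Delta_{L',i})\mid\Delta_i(H_1(Z_L))$, as claimed. For $\mu>2$ we obtain $\gcd(\Delta_{L',i-1},\Delta_{L',i})\mid\Delta_i(H_1(Z_L))$. Lemma~\ref{lem:basic-fitting-properties}(3) gives $\Delta_{L',i}\mid\Delta_{L',i-1}$, so this gcd is $\Delta_{L',i}$.

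\emph{Expected obstacle.} The only mildly delicate point is the multiplicativity $\gcd(IJ)\doteq\gcd I\cdot\gcd J$. To prove it, fix a prime $p$. If $p^a$ exactly divides $\gcd I$, choose $x\in I$ with $p$-adic valuation exactly $a$; likewise choose $y\in J$ with valuation exactly $b$. Then $xy$ has valuation exactly $a+b$, so $p^{a+b}$ is the exact power of $p$ dividing $\gcd(IJ)$. Everything else is formal.
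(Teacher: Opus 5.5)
Your proposal is correct and follows the paper's argument: combine Traldi's two inclusions with the identification $f_*(E_i(L))=E_i(H_1(Z_L))$, take gcds using $\gcd(J_1+J_2)\doteq\gcd(\gcd J_1,\gcd J_2)$, and compute $\gcd(\mathfrak a_{L'})$ separately for $\mu>2$ and $\mu=2$. You also make explicit two steps the paper leaves implicit, namely the gcd of the product ideal $\mathfrak a_{L'}E_i(L')$ and the use of $\Delta_{L',i}\mid\Delta_{L',i-1}$ when $\mu>2$; for the divisibilities actually claimed, only the easy direction $\gcd(\mathfrak a_{L'})\,\Delta_{L',i}\mid\gcd(\mathfrak a_{L'}E_i(L'))$ is needed, so your valuation argument is correct but not required.
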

\begin{proof}
For any two ideals $J_1$ and $J_2$, we have
\[
\gcd\{\,r\mid r\in J_1+J_2\,\}
=
\gcd\Bigl(
\gcd\{\,r\mid r\in J_1\,\},
\gcd\{\,r\mid r\in J_2\,\}
\Bigr).
\]
Applying this observation to the first inclusion in
Theorem~\ref{thm:Traldi} gives the first divisibility. For the remaining
divisibilities, we apply the same observation to the second inclusion
and use
\[
\gcd\{\,r\mid r\in\mathfrak a_{L'}\,\}
\doteq
\begin{cases}
1, & \mu>2,\\[4pt]
t_2-1, & \mu=2.
\end{cases}
\]
\end{proof}

We record a final relation between the abelian cover $Z_L$ and the sublink $L'$. In particular, we explain how
$H_1(\widehat X_{L'})$ can be recovered as a quotient of $H_1(Z_L)$.

\begin{nota}
\label{not:quotient-by-torus-image}
Let
\[
\Sigma:=\partial N(L_1)
\qquad\text{and}\qquad
\widehat\Sigma:=p^{-1}(\Sigma)\subset Z_L.
\]
Let $\gamma_m\subset\Sigma$ denote a meridian of $L_1$, and set
\[
\widehat\gamma_m:=p^{-1}(\gamma_m)\subset\widehat\Sigma.
\]
Thus, $\widehat\gamma_m$ is the disjoint union of the lifts of
$\gamma_m$ to $\widehat\Sigma$. Let
\[
f_\Sigma\colon H_1(\widehat\Sigma)\longrightarrow H_1(Z_L)
\qquad\text{and}\qquad
f_m\colon H_1(\widehat\gamma_m)\longrightarrow H_1(Z_L)
\]
denote the homomorphisms induced by the inclusions.
\end{nota}

\begin{lem}
\label{lem:quotient-by-torus-image}
There is a natural isomorphism
\[
H_1(Z_L)/\operatorname{im}(f_m)
\cong
H_1(\widehat X_{L'}).
\]
\end{lem}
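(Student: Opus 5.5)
We will prove Lemma~\ref{lem:quotient-by-torus-image} by comparing $Z_L$ with the covering of $X_{L'}$ obtained by filling back in the tubular neighbourhood of $L_1$. Write
\[
X_{L'}=X_L\cup_\Sigma N(L_1),
\]
where $N(L_1)\cong S^1\times D^2$ is a solid torus. The meridian $\gamma_m$ bounds a meridian disc in $N(L_1)$.

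\textbf{Step 1: the cover extends.} The epimorphism $h\circ\operatorname{ab}_L$ kills the meridian of $L_1$. It therefore factors through $\pi_1(X_{L'})=\pi_1(X_L)/\langle\!\langle\gamma_m\rangle\!\rangle$, and the factored map is exactly $\operatorname{ab}_{L'}\colon\pi_1(X_{L'})\to H_1(X_{L'})\cong\mathbb Z^{\mu-1}$. Hence $Z_L$ is the restriction to $X_L$ of the maximal abelian cover $\widehat X_{L'}\to X_{L'}$. This gives a decomposition
\[
\widehat X_{L'}=Z_L\cup_{\widehat\Sigma}\widehat N,
\qquad \widehat N:=p^{-1}(N(L_1)).
\]

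\textbf{Step 2: describe $\widehat N$.} The meridian lifts to closed loops, so $\widehat N$ is a disjoint union of solid cylinders $D^2\times\mathbb R$ or solid tori, according to whether the image of the longitude of $L_1$ in $\mathbb Z^{\mu-1}$, namely $\boldsymbol\ell_1$, is nonzero or zero. The components of $\widehat\Sigma$ are correspondingly annuli $S^1\times\mathbb R$ or tori. In both cases the map $H_1(\widehat\Sigma)\to H_1(\widehat N)$ is surjective, and its kernel is generated as a $\Lambda_{\mu-1}$-module by the classes of the lifted meridians, i.e.\ by $H_1(\widehat\gamma_m)$. Moreover $H_0(\widehat\Sigma)\to H_0(\widehat N)$ is an isomorphism, since each component of $\widehat\Sigma$ is the boundary of exactly one component of $\widehat N$.

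\textbf{Step 3: Mayer--Vietoris.} The Mayer--Vietoris sequence of this decomposition reads
\[
H_1(\widehat\Sigma)\xrightarrow{(f_\Sigma,\,j)}H_1(Z_L)\oplus H_1(\widehat N)\to H_1(\widehat X_{L'})\to H_0(\widehat\Sigma)\to H_0(Z_L)\oplus H_0(\widehat N).
\]
The last map is injective by Step 2, so
\[
H_1(\widehat X_{L'})\cong\bigl(H_1(Z_L)\oplus H_1(\widehat N)\bigr)/\operatorname{im}(f_\Sigma,j).
\]
Because $j$ is surjective, we can use elements of $H_1(\widehat\Sigma)$ to eliminate the $H_1(\widehat N)$ summand. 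The resulting quotient is $H_1(Z_L)/f_\Sigma(\ker j)$, and by Step 2 we have $f_\Sigma(\ker j)=\operatorname{im}(f_m)$. This is the claimed isomorphism. It is natural and $\Lambda_{\mu-1}$-linear because every map involved is induced by inclusions equivariant under the deck group.

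\textbf{Main obstacle.} The step needing the most care is Step 2: checking that $\ker j$ is exactly the image of $H_1(\widehat\gamma_m)$ in both the $\boldsymbol\ell_1\neq0$ and $\boldsymbol\ell_1=0$ cases. In the torus case one must confirm that the lifted longitude survives in $H_1(\widehat N)$ while the lifted meridians span the kernel. Alongside this, one must verify the $H_0$ injectivity so that no extra term enters the quotient.
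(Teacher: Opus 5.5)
Your proposal is correct and follows essentially the same route as the paper: you extend $Z_L$ to the maximal abelian cover of $X_{L'}=X_L\cup_\Sigma N(L_1)$, decompose it as $Z_L\cup_{\widehat\Sigma}\widehat N$, and apply Mayer--Vietoris. As in the paper, the key inputs are injectivity on $H_0$ and the fact that the lifted meridians span the kernel of $H_1(\widehat\Sigma)\to H_1(\widehat N)$. The only difference is presentational. The paper splits into the cylinder case ($\boldsymbol\ell_1\neq 0$) and the torus case ($\boldsymbol\ell_1=0$), whereas you treat both at once via surjectivity of $j$. You also check the $H_0$ injectivity in the torus case, which the paper leaves implicit.
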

\begin{proof}
We may assume that the basepoint $x_0$ lies on $\Sigma$. Recall that
\[
\Lambda_{\mu-1}
\cong 
\mathbb Z[t_2^{\pm1},\ldots,t_\mu^{\pm1}].
\]
Consider the maximal abelian cover
\[
p_{L'}\colon \widehat X_{L'}\longrightarrow X_{L'}.
\]
Since
\[
X_{L'}=X_L\cup_\Sigma N(L_1),
\]
this decomposition lifts to a corresponding decomposition of
$\widehat X_{L'}$. The restriction of $p_{L'}$ to $X_L$ is precisely
the cover $Z_L$, while its restriction to $\Sigma$ is the cover
$\widehat\Sigma$. Finally, let
\[
\widehat N:=p_{L'}^{-1}(N(L_1)).
\]
Applying Mayer--Vietoris gives
\begin{equation}
\label{eq:mv-quotient-by-torus-image}
\cdots \longrightarrow
H_1(\widehat\Sigma)
\xrightarrow{\,f_1\,}
H_1(Z_L)\oplus H_1(\widehat N)
\longrightarrow
H_1(\widehat X_{L'})
\longrightarrow
H_0(\widehat\Sigma)
\longrightarrow
H_0(Z_L)\oplus H_0(\widehat N).
\end{equation}
The projection of $f_1$ onto $H_1(Z_L)$ is precisely $f_\Sigma$.
Thus, for some homomorphism
\[
g_1\colon H_1(\widehat\Sigma)\longrightarrow H_1(\widehat N),
\]
we have
\[
f_1=(f_\Sigma,g_1).
\]
We now distinguish two cases according to the covering
$\widehat\Sigma\to\Sigma$. Suppose first that
\[
\operatorname{lk}(L_1,L_j)\neq 0
\]
for some $j$. Then $\widehat\Sigma$ is a disjoint union of infinite
cylinders. In particular,
\[
f_\Sigma\bigl(H_1(\widehat\Sigma)\bigr)
\cong
f_m\bigl(H_1(\widehat\gamma_m)\bigr).
\]
Moreover, $\widehat N$ is a disjoint union of solid cylinders, and hence
\[
H_1(\widehat N)=0.
\]
Furthermore, the map
\[
H_0(\widehat\Sigma)\longrightarrow H_0(\widehat N)
\]
is injective. Exactness of
\eqref{eq:mv-quotient-by-torus-image} therefore gives
\[
\begin{aligned}
H_1(\widehat X_{L'})
&\cong
\bigl(H_1(Z_L)\oplus H_1(\widehat N)\bigr)/
\operatorname{im}(f_1)\\
&\cong
H_1(Z_L)/\operatorname{im}(f_\Sigma)\\
&\cong
H_1(Z_L)/\operatorname{im}(f_m).
\end{aligned}
\]
Now suppose that
\[
\operatorname{lk}(L_1,L_j)=0
\qquad\text{for all }j.
\]
In this case, $\widehat\Sigma$ is a disjoint union of tori and
$\widehat N$ is a disjoint union of solid tori. As
$\Lambda_{\mu-1}$-modules, the relevant part of the Mayer--Vietoris map
takes the form
\[
\Lambda_{\mu-1}^2
\xrightarrow{\,f_1\,}
H_1(Z_L)\oplus\Lambda_{\mu-1},
\]
where
\[
f_1=
\begin{bmatrix}
f_m & 0\\
* & \operatorname{id}
\end{bmatrix}.
\]
It follows that
\[
\begin{aligned}
H_1(\widehat X_{L'})
&\cong
\bigl(H_1(Z_L)\oplus H_1(\widehat N)\bigr)/
\operatorname{im}(f_1)\\
&\cong
H_1(Z_L)/\operatorname{im}(f_m).
\end{aligned}
\]
This proves the result.
\end{proof}

\subsection{Second Homology of the Maximal Abelian Cover}
\label{subsec:second-homology-max-abelian-cover}\hfill 

We conclude this section by studying the second homology group
$H_2(\widehat X_L)$ of the maximal abelian cover. We refer the reader to Chapter 339, especially Proposition 339.19, in Freidl~\cite{Friedl} for an alternative approach to this topic. 

Our main goal is to
establish the following vanishing result. 

\begin{prop}
\label{prop:vanishingH2}
Let $L$ be a link in $S^3$. Then
\[
\operatorname{rank}_{\Lambda_{\mu}}
\bigl(H_2(\widehat X_L)\bigr)
=
\operatorname{rank}_{\Lambda_{\mu}}
\bigl(H_1(\widehat X_L)\bigr).
\]
In particular, if the Alexander polynomial $\Delta_L$ is nonzero, then
\[
H_2(\widehat X_L)=0.
\]
\end{prop}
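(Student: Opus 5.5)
We will compute the rank of $H_2(\widehat X_L)$ from the chain complex of the finite CW model of Proposition~\ref{prop:cw-model-link-exterior}. Let $\mathcal X_L$ be the $2$-complex with one $0$-cell, $n$ $1$-cells and $n-1$ $2$-cells, homotopy equivalent to $X_L$. Lifting to the maximal abelian cover gives a free $\Lambda_\mu$-chain complex
\[
0\longrightarrow \Lambda_\mu^{\,n-1}\xrightarrow{\ \partial_2\ }\Lambda_\mu^{\,n}\xrightarrow{\ \partial_1\ }\Lambda_\mu\longrightarrow 0
\]
computing $H_*(\widehat X_L)$. Because $\mathcal X_L$ is $2$-dimensional, $H_2(\widehat X_L)=\ker\partial_2$, a submodule of a free module. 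Hence it is torsion-free, and its rank is $(n-1)-\operatorname{rank}(\operatorname{im}\partial_2)$.

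The plan is to tensor with the fraction field $Q=Q(\Lambda_\mu)$. Since $Q$ is flat over $\Lambda_\mu$, ranks become $Q$-dimensions of the homology of $C_*\otimes Q$, and Euler characteristic additivity gives
\[
\sum_i(-1)^i\operatorname{rank}H_i(\widehat X_L)=\sum_i(-1)^i\operatorname{rank}C_i=1-n+(n-1)=0 .
\]
It therefore suffices to show $\operatorname{rank}H_0(\widehat X_L)=0$. This holds because $H_0(\widehat X_L)\cong\Lambda_\mu/\mathfrak a_L\cong\mathbb Z$ is torsion, being killed by $t_1-1$, as noted in the proof of Proposition~\ref{prop:relative-Alexander-module}. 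The identity then reads $-\operatorname{rank}H_1+\operatorname{rank}H_2=0$, which is the claimed equality.

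There is one subtlety. The cover $\widehat X_L$ is homotopy equivalent to the corresponding cover of $\mathcal X_L$, since a homotopy equivalence induces an equivalence of covers associated to the same quotient of $\pi_1$. So the homology groups of $\widehat X_L$ agree with those of this cellular complex as $\Lambda_\mu$-modules.

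For the final assertion, suppose $\Delta_L=\Delta_{L,0}\neq 0$. Proposition~\ref{prop:rank-torsion-orders} then gives $\operatorname{rank}H_1(\widehat X_L)=0$, so $H_2(\widehat X_L)$ has rank $0$. Being a submodule of the free module $\Lambda_\mu^{n-1}$, it is torsion-free, and a torsion-free module of rank $0$ vanishes. Hence $H_2(\widehat X_L)=0$.

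The only step requiring care is this use of the $2$-dimensional model. For the actual $3$-manifold, $H_2$ need not obviously be torsion-free, but the homotopy equivalence of covers transfers the statement from the model, so I expect no real obstacle.
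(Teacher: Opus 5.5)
Your proof is correct and follows essentially the same route as the paper: both use the lifted $2$-dimensional cellular chain complex with ranks $1,n,n-1$, identify $H_2(\widehat X_L)=\ker\partial_2$ as a submodule of a free module, and finish by noting that a torsion-free module of rank zero vanishes. The differences are cosmetic. You package the rank count as an Euler characteristic over $Q(\Lambda_\mu)$ using $\operatorname{rank}H_0(\widehat X_L)=0$, whereas the paper computes $\operatorname{rank}\ker\partial_1=n-1$ directly from $\operatorname{im}\partial_1=\mathfrak a_L$. Your remark about lifting the homotopy equivalence to the covers makes explicit a step the paper leaves implicit.
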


We start from Proposition~\ref{prop:cw-model-link-exterior} and lift the CW structure to $\widehat X_L$. This gives the following cellular
description.

\begin{cor}
\label{cor:cellular-chain-complex-max-abelian-cover}
The homology groups $H_*(\widehat X_L)$ are computed by the cellular
chain complex
\[
0
\longrightarrow
C_2
\xrightarrow{\partial_2}
C_1
\xrightarrow{\partial_1}
C_0
\longrightarrow
0,
\]
where each $C_i$ is the free $\Lambda_\mu$-module generated by the lifts
of the $i$-cells of $\mathcal X$. In particular,
\[
\operatorname{rank}_{\Lambda_\mu} C_1=n
\qquad\text{and}\qquad
\operatorname{rank}_{\Lambda_\mu} C_2=n-1.
\]
\end{cor}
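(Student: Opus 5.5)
The plan is to lift the CW model of Proposition~\ref{prop:cw-model-link-exterior} to the maximal abelian cover and read off the cellular chain complex. Fix a homotopy equivalence $h\colon \mathcal X\to X_L$, where $\mathcal X$ has one $0$-cell, $n$ $1$-cells and $n-1$ $2$-cells.

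\textbf{Step 1: lift the cover.} Since $h_*$ is an isomorphism on $\pi_1$, the subgroup $\ker(\operatorname{ab}_L)$ determines a regular cover $\widehat{\mathcal X}\to\mathcal X$. Its deck group is again $H_1(X_L;\mathbb Z)\cong\mathbb Z^\mu$. By lifting criteria, $h$ and a homotopy inverse lift to equivariant maps between $\widehat{\mathcal X}$ and $\widehat X_L$. The homotopies lift as well, because homotopies lift uniquely once an initial lift is fixed. Hence $\widehat{\mathcal X}\simeq\widehat X_L$ equivariantly, and $H_*(\widehat X_L)\cong H_*(\widehat{\mathcal X})$ as $\Lambda_\mu$-modules. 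This is the step needing the most care, since one must check that the module structures, and not just the abelian groups, agree. I would argue that the lifted maps commute with deck transformations after fixing compatible basepoints, so that the induced isomorphism is $\Lambda_\mu$-linear.

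\textbf{Step 2: cells of the cover.} The covering $\widehat{\mathcal X}$ inherits a CW structure whose cells are the lifts of the cells of $\mathcal X$. Each cell is simply connected, so the lifts of a given cell form a single free orbit under the deck group. Choosing one lift of each cell therefore identifies the cellular chain group $C_i(\widehat{\mathcal X})$ with the free $\Lambda_\mu$-module on the $i$-cells of $\mathcal X$. The cellular boundary maps commute with deck transformations, so they are $\Lambda_\mu$-linear.

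\textbf{Step 3: conclusion.} Since $\mathcal X$ has no cells above dimension $2$, the complex is $0\to C_2\to C_1\to C_0\to 0$. Counting cells gives $\operatorname{rank}_{\Lambda_\mu}C_1=n$ and $\operatorname{rank}_{\Lambda_\mu}C_2=n-1$. Cellular homology then computes $H_*(\widehat{\mathcal X})\cong H_*(\widehat X_L)$, which proves the corollary.
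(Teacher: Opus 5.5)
Your proposal is correct and takes essentially the same approach as the paper, which simply lifts the CW model of Proposition~\ref{prop:cw-model-link-exterior} to the maximal abelian cover and reads off the free cellular chain modules; you additionally make explicit the transfer along the homotopy equivalence $\mathcal X\simeq X_L$, which the paper leaves implicit. The only small technicality is that the lifted homotopy starting at $\hat g\hat h$ ends at some deck transformation rather than necessarily at the identity. This is harmless: the equivariant lift $\hat h$ is still a homotopy equivalence, so it induces a $\Lambda_\mu$-linear isomorphism $H_*(\widehat{\mathcal X})\cong H_*(\widehat X_L)$.
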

Since
\[
H_2(\widehat X_L)=\ker(\partial_2),
\]
the module $H_2(\widehat X_L)$ is a submodule of the free module $C_2$.
We will use the following elementary lemma.
\begin{lem}
\label{lem:rank-of-submodules-of-free-modules}
Let $M$ be a free $R$-module and let $N\subseteq M$ be a submodule.
If
\[
\operatorname{rank}_R(N)=0,
\]
then $N=0$.
\end{lem}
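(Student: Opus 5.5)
The plan is to pass to the field of fractions $Q=Q(R)$ and argue that $N$ is simultaneously torsion and torsion-free. By Definition~\ref{defn:module-rank}, the hypothesis $\operatorname{rank}_R(N)=0$ says exactly that $Q\otimes_R N=0$. Since $R$ is a domain and $Q$ is a localisation of $R$, we have $Q\otimes_R N\cong S^{-1}N$ with $S=R\setminus\{0\}$. An element $x\in N$ maps to zero in $S^{-1}N$ exactly when $sx=0$ for some nonzero $s\in R$. So every element of $N$ is annihilated by some nonzero scalar, i.e. $N$ is a torsion module.

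Next we use freeness of $M$. Write $M\cong\bigoplus_{\alpha}R$ and let $x\in N\subseteq M$ have coordinates $(x_\alpha)$. If $sx=0$ with $s\neq 0$, then $sx_\alpha=0$ in $R$ for every $\alpha$. Since $R$ is an integral domain (a UFD in our setting), this forces $x_\alpha=0$ for all $\alpha$, so $x=0$. Thus $N=0$.

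No step here is a genuine obstacle. The only point needing care is the identification $Q\otimes_R N\cong S^{-1}N$, together with the standard description of the kernel of $N\to S^{-1}N$; this is where we need $Q$ to be a localisation and hence flat over $R$. Alternatively, one can use flatness directly: the inclusion $N\hookrightarrow M$ stays injective after tensoring with $Q$, and the kernel of $N\to Q\otimes N$ lies in $\operatorname{Tor}(M)=0$.

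In the application, $M=C_2$ is free over $\Lambda_\mu$ and $N=H_2(\widehat X_L)=\ker\partial_2$. So once the rank equality of Proposition~\ref{prop:vanishingH2} is established (via an Euler characteristic count over $Q(\Lambda_\mu)$ using Corollary~\ref{cor:cellular-chain-complex-max-abelian-cover} and the fact that $H_0(\widehat X_L)=\mathbb Z$ has rank $0$), this lemma gives the vanishing statement.
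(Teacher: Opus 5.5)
Your proof is correct and is essentially the paper's argument. Both combine two facts: $N$ sits inside a free module over a domain, so it is torsion-free and $N\to Q(R)\otimes_R N$ is injective; and rank zero forces $Q(R)\otimes_R N=0$. You additionally spell out the localisation description of the kernel of $N\to S^{-1}N$, which the paper leaves implicit.
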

\begin{proof}
Since $N$ is a submodule of a free $R$-module, it is torsion-free.
Therefore, the natural map
\[
N\longrightarrow Q(R)\otimes_R N
\]
is injective. On the other hand, the assumption
$\operatorname{rank}_R(N)=0$ implies
\[
Q(R)\otimes_R N=0.
\]
Hence $N=0$.
\end{proof}

Now we are ready to prove Proposition~\ref{prop:vanishingH2}.

\begin{proof}
Consider the exact sequence
\[
0\longrightarrow \ker(\partial_2)
\longrightarrow C_2
\longrightarrow \operatorname{im}(\partial_2)
\longrightarrow 0.
\]
By additivity of rank,
\[
\operatorname{rank}_{\Lambda_\mu}(C_2)
=
\operatorname{rank}_{\Lambda_\mu}(\ker(\partial_2))
+
\operatorname{rank}_{\Lambda_\mu}(\operatorname{im}(\partial_2)).
\]
Similarly,
\[
\operatorname{rank}_{\Lambda_\mu}(\ker(\partial_1))
=
\operatorname{rank}_{\Lambda_\mu}(C_1)
-
\operatorname{rank}_{\Lambda_\mu}(\operatorname{im}(\partial_1)).
\]
Since $\widehat X_L$ is connected,
\[
H_0(\widehat X_L)
\cong
\Lambda_\mu/
\langle t_1-1,\ldots,t_\mu-1\rangle
\cong
\mathbb Z.
\]
Hence
\[
\operatorname{im}(\partial_1)
=
\langle t_1-1,\ldots,t_\mu-1\rangle.
\]
Therefore,
\[
\operatorname{rank}_{\Lambda_\mu}(\operatorname{im}(\partial_1))=1,
\]
and consequently
\[
\operatorname{rank}_{\Lambda_\mu}(\ker(\partial_1))
=
n-1.
\]
Since
\[
H_1(\widehat X_L)
=
\ker(\partial_1)/\operatorname{im}(\partial_2),
\]
we obtain
\[
\operatorname{rank}_{\Lambda_\mu}(H_1(\widehat X_L))
=
n-1-
\operatorname{rank}_{\Lambda_\mu}(\operatorname{im}(\partial_2)).
\]
On the other hand, since
\[
\operatorname{rank}_{\Lambda_\mu}(C_2)=n-1,
\]
we have
\[
\operatorname{rank}_{\Lambda_\mu}(H_2(\widehat X_L))
=
\operatorname{rank}_{\Lambda_\mu}(\ker(\partial_2))
=
n-1-
\operatorname{rank}_{\Lambda_\mu}(\operatorname{im}(\partial_2)).
\]
Thus
\[
\operatorname{rank}_{\Lambda_\mu}(H_2(\widehat X_L))
=
\operatorname{rank}_{\Lambda_\mu}(H_1(\widehat X_L)).
\]
If $\Delta_L\neq 0$, then the Alexander module
$H_1(\widehat X_L)$ is $\Lambda_\mu$-torsion, and hence
\[
\operatorname{rank}_{\Lambda_\mu}(H_1(\widehat X_L))=0.
\]
Therefore,
\[
\operatorname{rank}_{\Lambda_\mu}(H_2(\widehat X_L))=0.
\]
By Lemma~\ref{lem:rank-of-submodules-of-free-modules},
\[
H_2(\widehat X_L)=0.
\]
\end{proof}

\section{Proofs of the Main Results}
\label{sec:proof-main-results}

\subsection{From One Component to the General Satellite}
\label{subsec:one-to-many-components}\hfill

As explained in the introduction, the general satellite construction can be obtained by performing the satellite operation one component at a time. As noted in Remark~\ref{rem:no-general-higher-formula}, our main results focus on the one-component case. Here, however, we briefly describe how the single-component formula, Proposition~\ref{prop:single-component-formula}, extends to the general formula, Theorem~\ref{thm:main}, for the ordinary Alexander polynomial. The reformulation in Theorem~\ref{thm:main-reformulated} and the coefficient maps of Remark~\ref{rem:change-of-variables} allow us to keep track of the variables. 

\begin{proof}[Proof of Theorem~\ref{thm:main}]
For $0\leq j\leq \mu$, set
\[
L^{(j)}:=C(P_1,\ldots,P_j,O,\ldots,O),
\]
so that $L^{(0)}=C$ and $L^{(\mu)}=L$. For each $j\geq 1$, let
\[
\eta_j\colon X_{L^{(j-1)}}\hookrightarrow X_{L^{(j)}}
\]
denote the natural inclusion, and let
\[
\psi_j\colon
\mathbb Z[H_1(X_{L^{(j-1)}})]
\longrightarrow
\mathbb Z[H_1(X_{L^{(j)}})]
\]
be the induced homomorphism of group rings. We also write
\[
\phi_C^{(j)}\colon \mathbb Z[H_1(X_C)]\longrightarrow \mathbb Z[H_1(X_{L^{(j)}})]
\qquad\text{and}\qquad
\phi_{P_i}^{(j)}\colon \mathbb Z[H_1(X_{P_i'})]\longrightarrow \mathbb Z[H_1(X_{L^{(j)}})]
\]
for the coefficient homomorphisms induced by the inclusions of
$X_C$ and $X_{P_i'}$ into $X_{L^{(j)}}$.

We claim that, for each $1\leq j\leq\mu$,
\begin{equation}
\label{eq:inductive-reformulation}
\Delta_{L^{(j)}}
\doteq
\phi_C^{(j)}\bigl(\Delta_C\bigr)
\prod_{i=1}^{j}
\phi_{P_i}^{(j)}\bigl(\Delta_{P_i'}\bigr).
\end{equation}
For $j=1$, this is precisely
Proposition~\ref{prop:single-component-formula}. Assume that
\eqref{eq:inductive-reformulation} holds for some $j<\mu$.

Apply Proposition~\ref{prop:single-component-formula} to the satellite
\[
L^{(j+1)}
=
L^{(j)}
\bigl(
\underbrace{O,\ldots,O}_{j},
P_{j+1},
\underbrace{O,\ldots,O}_{\mu-j-1}
\bigr).
\]
Using the reformulation in Theorem~\ref{thm:main-reformulated}, we obtain
\[
\Delta_{L^{(j+1)}}
\doteq
\psi_{j+1}\bigl(\Delta_{L^{(j)}}\bigr)\,
\phi_{P_{j+1}}^{(j+1)}
\bigl(\Delta_{P_{j+1}'}\bigr).
\]
By functoriality of the induced maps on group rings,
\[
\psi_{j+1}\circ \phi_C^{(j)}=\phi_C^{(j+1)}
\qquad\text{and}\qquad
\psi_{j+1}\circ \phi_{P_i}^{(j)}=\phi_{P_i}^{(j+1)}
\quad (1\leq i\leq j).
\]
Applying $\psi_{j+1}$ to
\eqref{eq:inductive-reformulation} therefore gives
\[
\psi_{j+1}\bigl(\Delta_{L^{(j)}}\bigr)
\doteq
\phi_C^{(j+1)}\bigl(\Delta_C\bigr)
\prod_{i=1}^{j}
\phi_{P_i}^{(j+1)}\bigl(\Delta_{P_i'}\bigr).
\]
Hence
\[
\Delta_{L^{(j+1)}}
\doteq
\phi_C^{(j+1)}\bigl(\Delta_C\bigr)
\prod_{i=1}^{j+1}
\phi_{P_i}^{(j+1)}\bigl(\Delta_{P_i'}\bigr),
\]
so \eqref{eq:inductive-reformulation} holds for $j+1$.

By induction, \eqref{eq:inductive-reformulation} holds for $j=\mu$.
Since $L^{(\mu)}=L$ and the maps $\phi_C^{(\mu)}$ and
$\phi_{P_i}^{(\mu)}$ are precisely those of
Remark~\ref{rem:change-of-variables}, the resulting formula is
equivalent to Theorem~\ref{thm:main}.
\end{proof}

\subsection{Setup and Conventions}
\label{subsec:setup-and-conventions}\hfill

In this subsection, we establish the geometric setup and notation that will be used throughout the proof of Theorems~\ref{thm:higher-single-component-case1}, \ref{thm:higher-single-component-case4}, \ref{thm:higher-single-component-case2}, and \ref{thm:higher-single-component-case3}. We study the maximal
abelian cover of the satellite exterior and the decomposition induced by the companion and pattern pieces.

\begin{conv}
\label{conv:single-component-setup}
Throughout the following subsections, we use the following conventions:
\begin{itemize}[leftmargin=*]
\item To simplify notation, we henceforth write $P$ in place of $P_1$
for the distinguished pattern link, and $P'$ for the corresponding
augmented pattern. We continue
to distinguish the components of $P$ and $P'$ by the notations
\[
P=P_{11}\cup\cdots\cup P_{1m},
\qquad
P'=P\cup\rho,
\]
where $m$ denotes the number of components of $P$.
\item We consider the satellite link
\[
L=C(P,O,\ldots,O).
\]
\item We write
\[
X_L=S^3\setminus\operatorname{int}N(L),
\qquad
X_C=S^3\setminus\operatorname{int}N(C),
\]
and regard $X_C$ as a subspace of $X_L$.
\item We let
\[
X_{P}
:=
(S^1\times D^2)\setminus\operatorname{int}N(P_1)
\]
denote the exterior of $P$ in the solid torus, identified with its
image in $N(C_1)\subset S^3$.
\item We set
\[
\Sigma:=\partial N(C_1)
\]
and choose a basepoint $x_0\in\Sigma$.
\item We let
\[
p_L\colon \widehat X_L\longrightarrow X_L,
\qquad
\widehat X_C\longrightarrow X_C,
\qquad
\widehat X_{P}\longrightarrow X_{P}
\]
denote the maximal abelian covers of $X_L$, $X_C$, and $X_{P}$,
respectively.
\item We set
\[
\widehat X_0:=p_L^{-1}(x_0)\subseteq \widehat X_L,
\]
the full preimage of the basepoint $x_0$.
\item We identify the deck transformation group of $\widehat X_L$ with
\[
H_1(X_L;\mathbb Z)
\cong
\mathbb Z^{m}\oplus\mathbb Z^{\mu-1},
\]
using the ordered basis corresponding to
\[
(t_{11},\ldots,t_{1m},t_2,\ldots,t_\mu) = (\mathbf t_1,\mathbf t_C).
\]
\item The corresponding group ring is
\[
R:=\mathbb Z\bigl[t_{11}^{\pm1},\ldots,t_{1m}^{\pm1},
t_2^{\pm1},\ldots,t_\mu^{\pm1}\bigr].
\]
\item Using the notation of Remark~\ref{rem:change-of-variables}, we set
\[
R_C
:=
\mathbb Z[H_1(X_C)]
=
\mathbb Z[s_1^{\pm1},\ldots,s_\mu^{\pm1}],
\]
and
\[
R_P
:=
\mathbb Z[H_1(X_{P'})]
=
\mathbb Z[u_{11}^{\pm1},\ldots,u_{1m}^{\pm1},z_1^{\pm1}].
\]
We regard $R$ as an $R_C$-module via $\phi_C$ and as an
$R_P$-module via $\phi_P$.
\item Unless explicitly stated otherwise, all homology groups are regarded as $R$-modules; when a homology group is instead regarded as an $R_C$-module or an $R_P$-module, this will be stated explicitly.
\item We use the notation 
\[
\Delta_{L,i},\qquad
\Delta_{P,i},\qquad
\Delta_{C,i},
\]
and analogous notation for other links, to denote the
$i$-th Alexander polynomial of the corresponding link. We primarily
regard $P$ as a link in the solid torus, but when considering its
Alexander polynomial, we regard the solid torus as embedded as a
standard unknotted solid torus in $S^3$.
\item Throughout the remainder of this section, terms involving
Alexander ideals or Alexander polynomials with negative indices are
understood to be omitted.
\end{itemize}
\end{conv}
With these conventions in place, we now introduce the lifted pieces that will be used throughout the proofs.

\begin{defn}
\label{defn:Y-subspaces}
Define
\[
Y_C:=p_L^{-1}(X_C),
\qquad
Y_P:=p_L^{-1}(X_P),
\qquad
\widehat\Sigma:=p_L^{-1}(\Sigma).
\]
\end{defn}

\begin{rem}\label{rem:lifted}
The spaces $Y_C$, $Y_P$, and $\widehat\Sigma$ need not be connected.
The restrictions of $p_L$ make them covering spaces of $X_C$, $X_P$,
and $\Sigma$, respectively. Moreover, the decomposition
\[
X_L=X_C\cup_\Sigma X_P
\]
lifts to
\[
\widehat X_L=Y_C\cup_{\widehat\Sigma}Y_P.
\]
\end{rem}

We need a few more technical notations before we proceed.

\begin{nota}
We use the following notation for the winding and linking vectors:
\[
\mathbf w_1=(w_{11},\ldots,w_{1m})\in\mathbb Z^{m},
\qquad
\boldsymbol\ell_1=(l_{12},\ldots,l_{1\mu})
\in\mathbb Z^{\mu-1}.
\]
We also consider the corresponding extended vectors
\[
\overline{\mathbf w}_1=(\mathbf w_1,\mathbf 0),
\qquad
\overline{\boldsymbol\ell}_1=(\mathbf 0,\boldsymbol\ell_1)
\in
\mathbb Z^{m}\oplus\mathbb Z^{\mu-1}.
\]
\end{nota}
\begin{defn}
\label{defn:component-indexing-sets}
Define the following indexing sets:
\[
\Gamma_C
:=
\mathbb Z^{m}/\langle\mathbf w_1\rangle,
\qquad
\Gamma_P
:=
\mathbb Z^{\mu-1}/\langle\boldsymbol\ell_1\rangle,
\]
and
\[
\Gamma_\Sigma
:=
\left(\mathbb Z^{m}\oplus\mathbb Z^{\mu-1}\right)
\Big/
\left\langle
\overline{\mathbf w}_1,\overline{\boldsymbol\ell}_1
\right\rangle.
\]
\end{defn}
\begin{nota}
For
\[
\mathbf n=(n_{11},\ldots,n_{1m})\in\mathbb Z^{m},
\qquad
\mathbf r=(r_2,\ldots,r_\mu)\in\mathbb Z^{\mu-1},
\]
we write
\[
\mathbf t_1^{\mathbf n}
:=
\prod_{j=1}^{m}t_{1j}^{n_{1j}},
\qquad
\mathbf t_C^{\mathbf r}
:=
\prod_{i=2}^{\mu}t_i^{r_i}.
\]
\end{nota}

We end this subsection by describing the main tools that drive the
proofs in the following subsections. These are the absolute and
relative Mayer--Vietoris sequences associated to the lifted
decomposition.

\begin{cons}[Mayer--Vietoris sequences]
The decomposition from Remark~\ref{rem:lifted},
\[
\widehat X_L=Y_C\cup_{\widehat\Sigma}Y_P,
\]
gives the following absolute Mayer--Vietoris sequence of $R$-modules:
\begin{equation}
\label{eq:absolute-MV-sequence}
\cdots
\longrightarrow
H_1(\widehat\Sigma)
\longrightarrow
H_1(Y_C)\oplus H_1(Y_P)
\longrightarrow
H_1(\widehat X_L)
\longrightarrow
H_0(\widehat\Sigma)
\longrightarrow
H_0(Y_C)\oplus H_0(Y_P).
\end{equation}
Since
\[
\widehat X_0\subseteq\widehat\Sigma\subseteq Y_C,Y_P,
\]
the same decomposition gives the relative Mayer--Vietoris sequence
\begin{equation}
\label{eq:relative-MV-sequence}
H_2(\widehat X_L,\widehat X_0)
\longrightarrow
H_1(\widehat\Sigma,\widehat X_0)
\longrightarrow
H_1(Y_C,\widehat X_0)
\oplus
H_1(Y_P,\widehat X_0)
\longrightarrow
H_1(\widehat X_L,\widehat X_0)
\longrightarrow
H_0(\widehat\Sigma,\widehat X_0).
\end{equation}
\end{cons}

\subsection{Winding and Linking Number Non-Zero}
\label{subsec:winding-linking-nonzero}\hfill

\noindent
Throughout this subsection, we assume that
\[
\mathbf w_1\neq 0
\qquad\text{and}\qquad
\boldsymbol\ell_1\neq 0.
\]
Our main goal in this subsection is to prove
Theorem~\ref{thm:higher-single-component-case1}.

We first focus on the absolute Mayer--Vietoris sequence
\eqref{eq:absolute-MV-sequence}. We begin by analyzing the
$R$-module structure of the zeroth homology groups of the lifted pieces.

\begin{lem}
\label{lem:lifted-pieces-nonzero}
Let $\widehat X_L$, $Y_C$, $Y_P$, and $\widehat\Sigma$ be as in
Definition~\ref{defn:Y-subspaces}. Then there are identifications
\begin{align}
Y_C
&=
\bigsqcup_{[\mathbf n]\in\Gamma_C}
\mathbf t_1^{\mathbf n}\widehat X_C,
\label{eq:YC-components}
\\
Y_P
&=
\bigsqcup_{[\mathbf r]\in\Gamma_P}
\mathbf t_C^{\mathbf r}\widehat X_P,
\label{eq:YP-components}
\\
\widehat\Sigma
&=
\bigsqcup_{[(\mathbf n,\mathbf r)]\in\Gamma_\Sigma}
\mathbf t_1^{\mathbf n}\mathbf t_C^{\mathbf r}\widetilde\Sigma,
\label{eq:Sigma-components}
\end{align}
where $\widetilde\Sigma\cong\mathbb R^2$ denotes the universal cover of $\Sigma$.
\end{lem}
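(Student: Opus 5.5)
The plan is to use covering space theory: components of the preimage of a connected subspace under a regular cover correspond to cosets of the image of its fundamental group in the deck group. For a connected subspace $A\subseteq X_L$ containing (or path-connected to) the basepoint, $p_L^{-1}(A)$ is a disjoint union of connected covers of $A$. These components are permuted simply transitively, up to stabiliser, by the deck group $H_1(X_L)\cong\mathbb Z^m\oplus\mathbb Z^{\mu-1}$. The stabiliser of a component is the image $\operatorname{im}(H_1(A)\to H_1(X_L))$, and each component is the regular cover of $A$ corresponding to the kernel of $\pi_1(A)\to H_1(X_L)$. So for each of $X_C$, $X_P$, $\Sigma$ I will compute two things. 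First, the image $I_A$ of $H_1(A)$ in $H_1(X_L)$, which identifies the component set with $H_1(X_L)/I_A$. Second, the kernel of $\pi_1(A)\to H_1(X_L)$, which identifies the component.

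For $\Sigma$: $H_1(\Sigma)$ is generated by the meridian $\gamma_m$ of $C_1$ and the zero-framed longitude $\lambda$. By linking numbers (Remark~\ref{rem:change-of-variables}), $\gamma_m$ maps to $T_1$, i.e.\ to $\overline{\mathbf w}_1$, since a meridian of $N(C_1)$ links $L_{1j}$ exactly $w_{1j}$ times. The longitude $\lambda$ is isotopic in $N(C_1)$ to a parallel of $C_1$ and maps to $A_1$, i.e.\ to $\overline{\boldsymbol\ell}_1$, since it links $P$ zero times (zero framing) and links $C_i$ exactly $l_{1i}$ times. Both vectors are nonzero and lie in complementary summands, so they are linearly independent. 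Hence $H_1(\Sigma)\cong\mathbb Z^2\to H_1(X_L)$ is injective and $\pi_1(\Sigma)=\mathbb Z^2$ maps injectively. The component cover is therefore the universal cover $\widetilde\Sigma\cong\mathbb R^2$, indexed by $\Gamma_\Sigma$, with translates written as $\mathbf t_1^{\mathbf n}\mathbf t_C^{\mathbf r}\widetilde\Sigma$.

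For $X_C$: $H_1(X_C)$ has basis the meridians $s_1,\dots,s_\mu$, and $\phi_C(s_1)=T_1$, $\phi_C(s_i)=t_i$. The image is $\langle\overline{\mathbf w}_1\rangle\oplus\mathbb Z^{\mu-1}$, so the quotient is $\mathbb Z^m/\langle\mathbf w_1\rangle=\Gamma_C$, with coset representatives acting as $\mathbf t_1^{\mathbf n}$. The kernel of $\pi_1(X_C)\to H_1(X_L)$ equals the kernel of abelianisation of $X_C$. This holds because $H_1(X_C)\to H_1(X_L)$ is injective, as $\mathbf w_1\neq0$ makes $s_1\mapsto\overline{\mathbf w}_1$ independent of the $t_i$. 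So each component is $\widehat X_C$.

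For $X_P$: $H_1(X_P)$ has basis the meridians $u_{1j}$ of $P_{1j}$ and a class $z$ dual to the solid torus core, which is identified with the meridian of $\rho$ in $X_{P'}$ since $X_P\cong X_{P'}$. These map to $t_{1j}$ and $A_1$, so the image is $\mathbb Z^m\oplus\langle\boldsymbol\ell_1\rangle$, with quotient $\Gamma_P$. Injectivity follows from $\boldsymbol\ell_1\neq0$, so each component is the maximal abelian cover $\widehat X_P$.

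The main obstacle is keeping the identifications honest. First, the homological images of $\gamma_m$, $\lambda$ and $z$ must be correct; this needs zero framing and the orientation convention for $w$. Second, the components must be indexed coherently by translates of a fixed base component through $x_0\in\Sigma$. I would pick the base lifts containing a common lift $\widetilde x_0$, so the three decompositions are compatible.
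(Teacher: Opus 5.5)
Your proposal is correct and follows essentially the paper's proof. The paper likewise takes the component of $Y_C$ through a lift $\widehat x_0$ and identifies it as the cover of $X_C$ associated to $\ker(\operatorname{ab}_L\circ\iota_*)$. It then uses injectivity of $\iota_*$ on $H_1$ (from $\mathbf w_1\neq 0$) to get $\widehat X_C$, and indexes the components by $H_1(X_L;\mathbb Z)/\operatorname{im}(\iota_*)\cong\Gamma_C$. It leaves $Y_P$ and $\widehat\Sigma$ as ``similar'', and you carry these out explicitly via $\gamma_m\mapsto\overline{\mathbf w}_1$, $\lambda\mapsto\overline{\boldsymbol\ell}_1$ and $z\mapsto A_1$.

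One wording slip: $z$ is the meridian of $\rho$, i.e.\ the zero-framed longitude of $V$, which is dual to $\rho$ rather than to the core of $V$. Your identification of $z$ and its image $A_1$ are nonetheless correct.
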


\begin{proof}
We prove the identification~\eqref{eq:YC-components}; the other two
are proved similarly.

Let $\widehat x_0\in\widehat\Sigma$ be a lift of the basepoint $x_0$, and
let $Y_C^0$ denote the connected component of $Y_C$ containing
$\widehat x_0$. The restriction
\[
p_L|_{Y_C^0}\colon Y_C^0\longrightarrow X_C
\]
is the connected covering corresponding to the kernel of the composition
\[
\pi_1(X_C,x_0)
\xrightarrow{\,\iota_*\,}
\pi_1(X_L,x_0)
\xrightarrow{\,\operatorname{ab}_L\,}
H_1(X_L;\mathbb Z),
\]
where $\iota\colon X_C\hookrightarrow X_L$ is the inclusion.

Indeed, if $\gamma$ is a loop in $X_C$ based at $x_0$, then its lift
to $\widehat X_L$ beginning at $\widehat x_0$ is contained in $Y_C^0$,
and this lift is closed if and only if
\[
\iota_*[\gamma]=0
\qquad\text{in }H_1(X_L;\mathbb Z).
\]
Here $\iota_*$ denotes the homomorphism induced by the inclusion on first
homology,
\[
\iota_*\colon
H_1(X_C;\mathbb Z)\longrightarrow H_1(X_L;\mathbb Z).
\]
With respect to the conventions fixed above, this map is given by
\[
\iota_*\colon
\mathbb Z^\mu
\longrightarrow
\mathbb Z^{m}\oplus\mathbb Z^{\mu-1},
\qquad
(a_1,\ldots,a_\mu)
\longmapsto
(a_1\mathbf w_1,a_2,\ldots,a_\mu).
\]
Since $\mathbf w_1\neq 0$, this map is injective. Consequently,
\[
\iota_*[\gamma]=0
\quad\Longleftrightarrow\quad
[\gamma]=0\text{ in }H_1(X_C;\mathbb Z).
\]
It follows that
\[
\ker\!\left(
\operatorname{ab}_L\circ\iota_*
\right)
=
\ker\!\left(
\operatorname{ab}_C
\right),
\]
and hence $Y_C^0$ is isomorphic, as a covering space of $X_C$, to the
maximal abelian cover $\widehat X_C$. Since $\widehat X_L\to X_L$ is
regular, every connected component of $Y_C$ is a deck translate of
$Y_C^0$, and hence isomorphic to $\widehat X_C$.

It remains to identify $\pi_0(Y_C)$. Write
\[
G_L:=\pi_1(X_L,x_0).
\]
The components of $Y_C$ are naturally indexed by the double coset space
\[
[G_L,G_L]\backslash G_L/\operatorname{im}(\iota_*).
\]
Passing to abelianisation, we obtain
\[
\pi_0(Y_C)
\cong
H_1(X_L;\mathbb Z)/\operatorname{im}(\iota_*)
\cong
\mathbb Z^{m}/\langle\mathbf w_1\rangle
=
\Gamma_C.
\]
Consequently,
\[
Y_C
=
\bigsqcup_{[\mathbf n]\in\Gamma_C}
\mathbf t_1^{\mathbf n}\widehat X_C,
\]
which proves~\eqref{eq:YC-components}.
\end{proof}

\begin{rem}
With the above identifications, if
\[
\mathbf t_1^{\mathbf n}\widehat X_C
\qquad\text{and}\qquad
\mathbf t_C^{\mathbf r}\widehat X_P
\]
are corresponding connected components of $Y_C$ and $Y_P$, then their
intersection is the corresponding translate of the lifted gluing
surface:
\[
\mathbf t_1^{\mathbf n}\widehat X_C
\cap
\mathbf t_C^{\mathbf r}\widehat X_P
=
\mathbf t_1^{\mathbf n}\mathbf t_C^{\mathbf r}\widetilde T.
\]
\end{rem}
In the next step, we compute the $R$-module structure of
$H_1(\widehat\Sigma)$, $H_1(Y_C)$, and $H_1(Y_P)$, and then analyze
the maps in the Mayer--Vietoris sequence~\eqref{eq:absolute-MV-sequence}.

\begin{prop}
\label{prop:homology-of-lifted-pieces}
There are natural isomorphisms of $R$-modules
\begin{align}
H_1(\widehat\Sigma)
&=0,
\label{eq:H1-lifted-torus}
\\
H_1(Y_C)
&\cong
H_1(\widehat X_C)\otimes_{R_C}R,
\label{eq:H1-YC-extension}
\\
H_1(Y_P)
&\cong
H_1(\widehat X_P)\otimes_{R_P}R.
\label{eq:H1-YP-extension}
\end{align}
\end{prop}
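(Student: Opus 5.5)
The plan is to read everything off the component decompositions of Lemma~\ref{lem:lifted-pieces-nonzero}, using that homology of a disjoint union is the direct sum of the homologies of the components.

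\emph{Step 1: $H_1(\widehat\Sigma)=0$.} By \eqref{eq:Sigma-components}, $\widehat\Sigma$ is a disjoint union of copies of $\widetilde\Sigma\cong\mathbb R^2$. Each copy is contractible, so $H_1(\widehat\Sigma)=\bigoplus H_1(\mathbb R^2)=0$. What has to be checked is that each component really is the universal cover of $\Sigma$. This amounts to showing that $\iota_*\colon H_1(\Sigma)\cong\mathbb Z^2\to H_1(X_L)$ is injective. In the chosen basis the meridian of $C_1$ maps to $\overline{\mathbf w}_1$ and the zero-framed longitude maps to $\overline{\boldsymbol\ell}_1$. These vectors have disjoint supports and are both nonzero by hypothesis, so they are linearly independent. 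Therefore the component of $\widehat\Sigma$ corresponding to the kernel of $\pi_1(\Sigma)\to H_1(X_L)$ is the universal cover.

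\emph{Step 2: $H_1(Y_C)$.} By \eqref{eq:YC-components},
\[
H_1(Y_C)\cong\bigoplus_{[\mathbf n]\in\Gamma_C}\mathbf t_1^{\mathbf n}H_1(\widehat X_C).
\]
The $R$-action is as follows. The stabiliser of the component $\widehat X_C$ is $\operatorname{im}(\iota_*)\subset H_1(X_L)$, which is identified with $H_1(X_C)$ via the injective map $\iota_*$. On this component the action of $\mathbb Z[\operatorname{im}\iota_*]$ is the deck action on $\widehat X_C$, transported through $\phi_C$. The remaining elements of $H_1(X_L)$ permute the components through the coset space $\Gamma_C$.

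Next, $R$ is free as an $R_C$-module via $\phi_C$, with basis given by coset representatives of $H_1(X_L)/\operatorname{im}\iota_*$. This holds because $\phi_C$ is the group ring map of an injective group homomorphism, and the group ring of a group is free over the group ring of a subgroup. Hence
\[
H_1(\widehat X_C)\otimes_{R_C}R\cong\bigoplus_{\Gamma_C}\mathbf t_1^{\mathbf n}\,H_1(\widehat X_C).
\]
The natural map $H_1(\widehat X_C)\otimes_{R_C}R\to H_1(Y_C)$, defined by $x\otimes g\mapsto g\cdot(\text{inclusion of }x)$, is $R$-linear. Under the two direct-sum descriptions it sends summand to summand isomorphically, so it is an isomorphism.

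\emph{Step 3: $H_1(Y_P)$.} The argument is identical, using \eqref{eq:YP-components} and $\phi_P$. The needed injectivity is that $H_1(X_P)\to H_1(X_L)$ is injective. Here $H_1(X_P)\cong H_1(X_{P'})\cong\mathbb Z^{m+1}$, generated by the meridians $u_{1j}$ and the class $z$. The class $z$ is the longitude of the solid torus, equivalently the meridian of $\rho$, and it maps to $\overline{\boldsymbol\ell}_1$. The $u_{1j}$ map to standard basis vectors of $\mathbb Z^m$. Since $\boldsymbol\ell_1\neq0$, the image $\overline{\boldsymbol\ell}_1$ is independent of the $u_{1j}$, so the map is injective and $Y_P$ splits exactly as in Step 2.

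\emph{Main obstacle.} The delicate point is identifying $\widehat X_P$ with the cover of $X_{P'}$ restricted to $X_P$, and verifying that the $R_P$-module structure on $H_1(\widehat X_P)$ matches $\phi_P$, in particular that $z\mapsto A_1$. I would handle this by noting that $X_P$ is homeomorphic to $X_{P'}$, since removing a neighbourhood of $\rho$ from $S^3$ gives the solid torus. Under this homeomorphism the deck groups agree with the identifications of Remark~\ref{rem:change-of-variables}, and the linking-number calculation there gives $z\mapsto A_1$.
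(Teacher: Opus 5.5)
Your proposal is correct and follows the paper's proof. Both read the result off the component decomposition of Lemma~\ref{lem:lifted-pieces-nonzero}, and both use that $R$ is free over $R_C$ (resp.\ $R_P$) via the injective coefficient map, with basis given by coset representatives. The only differences are minor: the paper identifies $C_*(Y_C)\cong C_*(\widehat X_C)\otimes_{R_C}R$ at chain level and invokes flatness, whereas you work directly with $H_1$ of a disjoint union; and you spell out the injectivity checks for $\Sigma$ and $X_P$ that the paper leaves as ``proved similarly''.
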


\begin{proof}
By Lemma~\ref{lem:lifted-pieces-nonzero}, every connected component of
$\widehat\Sigma$ is isomorphic to the universal cover
$\widetilde\Sigma\cong\mathbb R^2$. Therefore,
\[
H_1(\widehat\Sigma)=0.
\]

The homomorphism on deck transformation groups associated with the
inclusion $X_C\hookrightarrow X_L$ is given by
\[
s_1\longmapsto T_1,
\qquad
s_i\longmapsto t_i
\quad (2\leq i\leq\mu),
\]
and hence induces the coefficient homomorphism $\phi_C$ defined in
Remark~\ref{rem:change-of-variables}. The decomposition
\[
Y_C
=
\bigsqcup_{[\mathbf n]\in\Gamma_C}
\mathbf t_1^{\mathbf n}\widehat X_C
\]
therefore identifies the cellular chain complex of $Y_C$ with the
induced chain complex
\[
C_*(Y_C)
\cong
C_*(\widehat X_C)\otimes_{R_C}R.
\]
Since $\mathbf w_1\neq 0$, the homomorphism $\phi_C$ is injective.
Moreover, $R$ is a free, and hence flat, $R_C$-module. It follows that
extension of scalars commutes with homology, giving
\[
H_1(Y_C)
\cong
H_1(\widehat X_C)\otimes_{R_C}R.
\]
The same argument, applied to the pattern piece, gives
\[
H_1(Y_P)
\cong
H_1(\widehat X_P)\otimes_{R_P}R.
\]
\end{proof}
\begin{rem}
\label{rem:H0-lifted-pieces}
The same tensor-product description holds in degree $0$. Moreover,
since $\widehat X_C$, $\widehat X_P$, and $\widetilde\Sigma$ are
connected, the computation of $H_0$ follows directly from the proof of
Lemma~\ref{lem:lifted-pieces-nonzero}. In particular,
\begin{align}
H_0(Y_C)
&\cong
R\big/
\langle T_1-1,t_2-1,\ldots,t_\mu-1\rangle,
\label{eq:H0-YC}
\\
H_0(Y_P)
&\cong
R\big/
\langle
t_{11}-1,\ldots,t_{1m}-1,A_1-1
\rangle,
\label{eq:H0-YP}
\\
H_0(\widehat\Sigma)
&\cong
R\big/\langle T_1-1,A_1-1\rangle.
\label{eq:H0-lifted-Sigma}
\end{align}
\end{rem}
We now isolate the kernel of the map between the zeroth homology groups
in the absolute Mayer--Vietoris sequence.

\begin{defn}
\label{defn:K-module}
Let
\[
f\colon
H_0(\widehat\Sigma)
\longrightarrow
H_0(Y_C)\oplus H_0(Y_P)
\]
denote the homomorphism appearing in
\eqref{eq:absolute-MV-sequence}. Define
\[
\mathfrak K:=\ker(f).
\]
\end{defn}
Combining Definition~\ref{defn:K-module} with
Equation~\eqref{eq:H1-lifted-torus}, the Mayer--Vietoris sequence
\eqref{eq:absolute-MV-sequence} reduces to the short exact sequence
\begin{equation}
\label{eq:MV-short-exact}
0
\longrightarrow
H_1(Y_C)\oplus H_1(Y_P)
\longrightarrow
H_1(\widehat X_L)
\longrightarrow
\mathfrak K
\longrightarrow
0.
\end{equation}
The remaining task is to understand the module $\mathfrak K$. We first
show that all of its orders are trivial.

\begin{prop}
\label{prop:K-order}
The module $\mathfrak K$ satisfies
\[
\Delta_i(\mathfrak K)=1
\qquad\text{for all } i\geq 0.
\]
\end{prop}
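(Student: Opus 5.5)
Our plan is to compute $\mathfrak K$ explicitly from Remark~\ref{rem:H0-lifted-pieces} and to show that it is an ideal of $R/\langle T_1-1,A_1-1\rangle$ whose Fitting ideals all have unit gcd. Write $I_C=\langle T_1-1,t_2-1,\ldots,t_\mu-1\rangle$, $I_P=\langle t_{11}-1,\ldots,t_{1m}-1,A_1-1\rangle$ and $J=\langle T_1-1,A_1-1\rangle$. The map $f$ is induced by the natural projections $R/J\to R/I_C$ and $R/J\to R/I_P$; in each case the generator $[\widetilde\Sigma]$ goes to the class of the containing component, which is $1$. Hence
\[
\mathfrak K=(I_C\cap I_P)/J .
\]
Since $I_C+I_P$ is the augmentation ideal of $R$, we have $R/(I_C+I_P)\cong\mathbb Z$, and the exact sequence $0\to R/(I_C\cap I_P)\to R/I_C\oplus R/I_P\to \mathbb Z\to 0$ lets us control $I_C\cap I_P$.

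The key point is that $\mathfrak K$ is supported on a locus of codimension at least $2$. Because $\mathbf w_1\neq 0$ and $\boldsymbol\ell_1\neq0$, the elements $T_1-1$ and $A_1-1$ are nonzero and involve disjoint sets of variables ($\mathbf t_1$ and $\mathbf t_C$, respectively). They are therefore coprime, and in fact form a regular sequence in $R$. It follows that $R/J$ is annihilated by the nonzero element $(T_1-1)(A_1-1)$. We then look for two further annihilators of $\mathfrak K$ that are coprime to one another. Take $x\in I_C\cap I_P$. Then $(t_{1j}-1)x$ lies in $J$ up to terms we can absorb; more precisely, we use that $I_C\cap I_P\supseteq I_CI_P$, and that modulo $J$ the ideal $I_C$ becomes $\langle t_2-1,\ldots,t_\mu-1\rangle$ while $I_P$ becomes $\langle t_{11}-1,\ldots\rangle$. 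The cleanest route is to note that $\operatorname{Ann}(\mathfrak K)$ contains both $T_1-1$ and $A_1-1$, and also contains suitable products. By Lemma~\ref{lem:annihilator-fitting}, $E_0(\mathfrak K)\supseteq\operatorname{Ann}(\mathfrak K)^n\ni (T_1-1)^n,(A_1-1)^n$. Since $T_1-1$ and $A_1-1$ are coprime in the UFD $R$, we get $\Delta_0(\mathfrak K)\mid\gcd\bigl((T_1-1)^n,(A_1-1)^n\bigr)=1$.

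Once $\Delta_0(\mathfrak K)=1$, Lemma~\ref{lem:basic-fitting-properties}(3) gives $\Delta_i(\mathfrak K)\mid\Delta_0(\mathfrak K)=1$ for all $i\ge0$, which proves the proposition. It also requires $\mathfrak K$ to be finitely generated, which holds because $R$ is Noetherian.

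The main obstacle is to verify that $T_1-1$ and $A_1-1$ really annihilate $\mathfrak K=(I_C\cap I_P)/J$. This is immediate, since both lie in $J$, so $J$ annihilates $R/J$ and therefore annihilates any submodule of it. The substantive step is then the coprimality of $T_1-1$ and $A_1-1$. We will argue this directly: the irreducible factors of $T_1-1=\mathbf t_1^{\mathbf w_1}-1$ are nonunit polynomials in the $\mathbf t_1$ variables only, the irreducible factors of $A_1-1$ are nonunit polynomials in the $\mathbf t_C$ variables only, and no irreducible factor can lie in both subrings unless it is a unit. One subtlety to check is that $\mathbf t_1^{\mathbf w_1}-1$ is nonzero and not a unit, which holds because $\mathbf w_1\neq0$; the same argument applies to $A_1-1$.
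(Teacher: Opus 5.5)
Your proposal is correct and is essentially the paper's proof. Both use only that $\mathfrak K$ is a finitely generated submodule of $R/\langle T_1-1,A_1-1\rangle$, so that $(T_1-1)^N,(A_1-1)^N\in\operatorname{Ann}(\mathfrak K)^N\subseteq E_0(\mathfrak K)$ by Lemma~\ref{lem:annihilator-fitting}; coprimality then gives $\Delta_0(\mathfrak K)=1$, and monotonicity gives $\Delta_i(\mathfrak K)=1$ for all $i$. The explicit description $(I_C\cap I_P)/J$ and the detour about further annihilators are unnecessary. One small precision in your coprimality argument: $\mathbb Z[\mathbf t_1^{\pm1}]\cap\mathbb Z[\mathbf t_C^{\pm1}]=\mathbb Z$ contains nonunit primes, so you should also note that $T_1-1$ and $A_1-1$ have coefficients $\pm1$, and hence no integer prime divides them.
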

\begin{proof}
By definition, $\mathfrak K$ is an $R$-submodule of
$H_0(\widehat\Sigma)$. Equation~\eqref{eq:H0-lifted-Sigma} shows that
$H_0(\widehat\Sigma)$ is finitely generated over $R$. Since $R$ is
Noetherian, the submodule $\mathfrak K$ is also finitely generated.

If $\mathfrak K=0$, then $E_0(\mathfrak K)=R$, and hence
$\Delta_0(\mathfrak K)=1$. We may therefore assume that
$\mathfrak K$ is generated by $N\geq 1$ elements. Using Lemma~\ref{lem:annihilator-fitting}, we have 
\[
\operatorname{Ann}(\mathfrak K)^N
\subseteq
E_0(\mathfrak K).
\]
Since $\mathfrak K$ is a submodule of
$R/\langle T_1-1,A_1-1\rangle$, both $T_1-1$ and $A_1-1$
annihilate $\mathfrak K$. It follows that
\[
(T_1-1)^N,(A_1-1)^N
\in E_0(\mathfrak K).
\]
The monomial $T_1$ involves only the variables
$t_{11},\ldots,t_{1m}$, whereas $A_1$ involves only the variables
$t_2,\ldots,t_\mu$. Moreover, under the standing assumptions
$\mathbf w_1\neq 0$ and $\boldsymbol\ell_1\neq 0$, neither monomial is
equal to $1$. Consequently,
\[
\gcd\bigl((T_1-1)^N,(A_1-1)^N\bigr)=1.
\]
Since $\Delta_0(\mathfrak K)$ is the greatest common divisor of the
elements of $E_0(\mathfrak K)$, we conclude that
\[
\Delta_0(\mathfrak K)=1.
\]
Finally, by Lemma~\ref{lem:basic-fitting-properties},
\[
\Delta_{i+1}(\mathfrak K)\mid\Delta_i(\mathfrak K)
\qquad\text{for all }i\geq0.
\]
It follows inductively that
\[
\Delta_i(\mathfrak K)=1
\qquad\text{for all }i\geq0,
\]
as required.
\end{proof}

Although Proposition~\ref{prop:K-order} is sufficient for the arguments in this subsection, it is useful to understand the module $\mathfrak K$ more explicitly. We therefore determine a generating set for $\mathfrak K$.

\begin{prop}
\label{prop:generators-for-K}
For $1\leq i\leq m$ and $2\leq j\leq\mu$, define
\[
g_{ij}:=(t_{1i}-1)(t_j-1).
\]
Then $\mathfrak K$ is generated by the elements
\[
g_{ij},
\qquad
1\leq i\leq m,\quad
2\leq j\leq\mu.
\]
\end{prop}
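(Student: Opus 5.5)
We plan to compute $\mathfrak K$ directly from the explicit descriptions of the zeroth homology groups in Remark~\ref{rem:H0-lifted-pieces}. Write $S:=R/\langle T_1-1,A_1-1\rangle\cong H_0(\widehat\Sigma)$. Since the maps in the Mayer--Vietoris sequence are induced by inclusions $\widehat\Sigma\hookrightarrow Y_C$ and $\widehat\Sigma\hookrightarrow Y_P$, and each is $R$-linear sending the class of the base component $\widetilde\Sigma$ to the class of the base component, the map $f$ becomes, up to the usual sign convention,
\[
f\colon S\longrightarrow R/I_C\;\oplus\;R/I_P,\qquad [x]\longmapsto([x],\pm[x]),
\]
where $I_C=\langle T_1-1,t_2-1,\ldots,t_\mu-1\rangle$ and $I_P=\langle t_{11}-1,\ldots,t_{1m}-1,A_1-1\rangle$. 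Both quotient maps are well defined because $T_1-1\in I_C\cap I_P$ (indeed $T_1-1\in\langle t_{1j}-1\rangle$) and likewise $A_1-1\in I_C\cap I_P$. Hence
\[
\mathfrak K\cong (I_C\cap I_P)\big/\langle T_1-1,A_1-1\rangle,
\]
and the proposition reduces to the ideal identity
\[
I_C\cap I_P=\langle T_1-1,\;A_1-1\rangle+\langle (t_{1i}-1)(t_j-1)\ :\ 1\le i\le m,\ 2\le j\le\mu\rangle .
\]

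The inclusion ``$\supseteq$'' is immediate: $(t_{1i}-1)(t_j-1)$ lies in $\langle t_j-1\rangle\subseteq I_C$ and in $\langle t_{1i}-1\rangle\subseteq I_P$. For ``$\subseteq$'', we use that $R=R_1\otimes_{\mathbb Z}R_2$ with $R_1=\mathbb Z[\mathbf t_1^{\pm1}]$ and $R_2=\mathbb Z[\mathbf t_C^{\pm1}]$. Let $\mathfrak a_1,\mathfrak a_2$ denote the augmentation ideals of $R_1,R_2$ (extended to $R$). Then $I_C=\langle T_1-1\rangle+\mathfrak a_2$ and $I_P=\mathfrak a_1+\langle A_1-1\rangle$, with $T_1-1\in\mathfrak a_1$ and $A_1-1\in\mathfrak a_2$. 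Take $x\in I_C\cap I_P$ and write $x=a(T_1-1)+y$ with $y\in\mathfrak a_2$. Subtracting $a(T_1-1)$, which lies in the target ideal, we reduce to $y\in\mathfrak a_2\cap(\mathfrak a_1+\langle A_1-1\rangle)$. Next write $y=b(A_1-1)+z$ with $z\in\mathfrak a_1$; then $z=y-b(A_1-1)\in\mathfrak a_1\cap\mathfrak a_2$. So it suffices to prove $\mathfrak a_1\cap\mathfrak a_2=\mathfrak a_1\mathfrak a_2$, since $\mathfrak a_1\mathfrak a_2$ is generated by the $g_{ij}$.

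This last equality is the main point, and we would prove it by the standard tensor-product argument. $R_1=\mathbb Z\oplus\mathfrak a_1^{(1)}$ as abelian groups (split by the augmentation), and similarly for $R_2$, where $\mathfrak a_k^{(k)}$ denotes the augmentation ideal inside $R_k$. Hence
\[
R=(\mathbb Z\oplus\mathfrak a_1^{(1)})\otimes(\mathbb Z\oplus\mathfrak a_2^{(2)})=\mathbb Z\oplus\mathfrak a_1^{(1)}\oplus\mathfrak a_2^{(2)}\oplus(\mathfrak a_1^{(1)}\otimes\mathfrak a_2^{(2)}),
\]
all pieces being free abelian, hence flat, so the tensor products embed. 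In this decomposition $\mathfrak a_1R=\mathfrak a_1^{(1)}\oplus(\mathfrak a_1^{(1)}\otimes\mathfrak a_2^{(2)})$, $\mathfrak a_2R=\mathfrak a_2^{(2)}\oplus(\mathfrak a_1^{(1)}\otimes\mathfrak a_2^{(2)})$, and their intersection is $\mathfrak a_1^{(1)}\otimes\mathfrak a_2^{(2)}$, which equals $\mathfrak a_1\mathfrak a_2$. Since $\mathfrak a_k^{(k)}$ is generated as an ideal by the $t-1$, the product is generated by the $g_{ij}$.

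Finally, the images of $T_1-1$ and $A_1-1$ vanish in $S$, so the images of the $g_{ij}$ generate $\mathfrak K$. The step requiring most care is pinning down $f$ concretely on $H_0$, specifically that both components are the natural quotient maps of cyclic modules. This follows from Lemma~\ref{lem:lifted-pieces-nonzero}, because the base component $\widetilde\Sigma$ lies in the base components $\widehat X_C$ and $\widehat X_P$.
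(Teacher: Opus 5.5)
Your proof is correct, and it is the ``purely algebraic proof'' that the paper mentions but chooses not to give. The paper instead identifies $f$ with the cellular boundary map $C_1(\mathcal B)\to C_0(\mathcal B)$ of the complete bipartite graph $\mathcal B$. The vertex sets of $\mathcal B$ are $\Gamma_C$ and $\Gamma_P$ (the components of $Y_C$ and $Y_P$), and its edges are the components of $\widehat\Sigma$, so that $\mathfrak K\cong H_1(\mathcal B)$. Since $H_1$ of a complete bipartite graph is generated by $4$-cycles, and the deck group acts transitively on edges, it suffices to treat a $4$-cycle through the base edge $e_{\mathbf 0,\mathbf 0}$. Such a cycle equals $(1-\mathbf t_1^{\mathbf n})(1-\mathbf t_C^{\mathbf r})e_{\mathbf 0,\mathbf 0}$, which is visibly an $R$-combination of the $g_{ij}$.

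You never pass through the graph. You compute $\mathfrak K\cong(I_C\cap I_P)/\langle T_1-1,A_1-1\rangle$ and strip off $T_1-1\in\mathfrak a_1R$ and $A_1-1\in\mathfrak a_2R$. This reduces everything to $\mathfrak a_1R\cap\mathfrak a_2R=\mathfrak a_1\mathfrak a_2R$, which you prove from the $\mathbb Z$-splittings $R_k=\mathbb Z\oplus\mathfrak a_k^{(k)}$ by distributing the tensor product.

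The graph argument is shorter and explains geometrically why only products of a pattern-side and a companion-side augmentation element appear. Your argument needs no combinatorial input: no generation of cycles by $4$-cycles, and no transitivity of the deck action on edges. It also yields the explicit ideal-theoretic description $I_C\cap I_P=\langle T_1-1,A_1-1\rangle+\mathfrak a_1\mathfrak a_2R$. This makes it immediate that the same generators work in the later cases where the paper reuses this proposition with $T_1=1$ or $A_1=1$. There your first reduction steps simply become vacuous; for instance, when both vectors vanish you get $\mathfrak K=\mathfrak a_1R\cap\mathfrak a_2R=\mathfrak a_1\mathfrak a_2R$ directly as an ideal of $H_0(\widehat\Sigma)\cong R$.
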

\begin{proof}
Although a purely algebraic proof is also possible, we use a
combinatorial interpretation of the map
\[
f\colon
H_0(\widehat\Sigma)
\longrightarrow
H_0(Y_C)\oplus H_0(Y_P).
\]
Let $\mathcal B$ be the complete bipartite graph whose two sets of
vertices are $\Gamma_C$ and $\Gamma_P$. For each
\[
[\mathbf n]\in\Gamma_C,
\qquad
[\mathbf r]\in\Gamma_P,
\]
let $e_{\mathbf n,\mathbf r}$ denote the edge joining
$[\mathbf n]$ to $[\mathbf r]$, oriented from the $\Gamma_C$-vertex
to the $\Gamma_P$-vertex.

By Lemma~\ref{lem:lifted-pieces-nonzero}, the vertices of $\mathcal B$
correspond to the connected components of $Y_C$ and $Y_P$, while the
edge $e_{\mathbf n,\mathbf r}$ corresponds to the connected component
\[
\mathbf t_1^{\mathbf n}\mathbf t_C^{\mathbf r}\widetilde\Sigma
\]
of $\widehat\Sigma$ contained in the intersection of the corresponding
components of $Y_C$ and $Y_P$. Consequently, there are natural
isomorphisms of $R$-modules
\[
C_1(\mathcal B)
\cong
H_0(\widehat\Sigma),
\qquad
C_0(\mathcal B)
\cong
H_0(Y_C)\oplus H_0(Y_P),
\]
under which the cellular boundary map
\[
\partial\colon
C_1(\mathcal B)
\longrightarrow
C_0(\mathcal B)
\]
agrees with $f$. Under the first identification, we identify the edge
$e_{\mathbf 0,\mathbf 0}$ with the corresponding generator of
$H_0(\widehat\Sigma)$, and hence with $1$ in
\[
H_0(\widehat\Sigma)
\cong
R/\langle T_1-1,A_1-1\rangle.
\]
Since the cellular boundary map agrees with $f$, we obtain
\[
\mathfrak K
=
\ker(f)
\cong
H_1(\mathcal B).
\]
Every cycle in a complete bipartite graph is a sum of $4$-cycles.
Moreover, the deck transformation group acts transitively on the edges
of $\mathcal B$, so after translating we may assume that a given
$4$-cycle contains the edge $e_{\mathbf 0,\mathbf 0}$. Let the other
two vertices be indexed by
\[
[\mathbf n]\in\Gamma_C,
\qquad
[\mathbf r]\in\Gamma_P.
\]
The corresponding cycle is
\begin{align*}
e_{\mathbf 0,\mathbf 0}
-e_{\mathbf n,\mathbf 0}
-e_{\mathbf 0,\mathbf r}
+e_{\mathbf n,\mathbf r}
&=
\bigl(
1-\mathbf t_1^{\mathbf n}
-\mathbf t_C^{\mathbf r}
+\mathbf t_1^{\mathbf n}\mathbf t_C^{\mathbf r}
\bigr)e_{\mathbf 0,\mathbf 0}
\\
&=
(1-\mathbf t_1^{\mathbf n})
(1-\mathbf t_C^{\mathbf r})
e_{\mathbf 0,\mathbf 0}.
\end{align*}
Now
\[
1-\mathbf t_1^{\mathbf n}
\in
\langle
1-t_{11},\ldots,1-t_{1m}
\rangle,
\]
and
\[
1-\mathbf t_C^{\mathbf r}
\in
\langle
1-t_2,\ldots,1-t_\mu
\rangle.
\]
Hence there exist $a_i,b_j\in R$ such that
\[
1-\mathbf t_1^{\mathbf n}
=
\sum_{i=1}^{m}a_i(1-t_{1i}),
\qquad
1-\mathbf t_C^{\mathbf r}
=
\sum_{j=2}^{\mu}b_j(1-t_j).
\]
Therefore,
\begin{align*}
(1-\mathbf t_1^{\mathbf n})
(1-\mathbf t_C^{\mathbf r})
e_{\mathbf 0,\mathbf 0}
&=
\sum_{i=1}^{m}
\sum_{j=2}^{\mu}
a_i b_j
(1-t_{1i})(1-t_j)
e_{\mathbf 0,\mathbf 0} \\
&=
\sum_{i=1}^{m}
\sum_{j=2}^{\mu}
a_i b_j
g_{ij}.
\end{align*}
Thus every $4$-cycle lies in the $R$-submodule generated by the
elements $g_{ij}$. Since the $4$-cycles generate $H_1(\mathcal B)$, the
elements $g_{ij}$ generate $\mathfrak K$.
\end{proof}

The preceding analysis of the module $\mathfrak K$ now yields the
first corollary towards Theorem~\ref{thm:higher-single-component-case1}. It gives the first of
the two divisibility statements needed for the proof. 

\begin{cor}
\label{cor:first-divisibility}
For each $k\geq 0$, the $k$-th Alexander polynomial of the satellite link satisfies
\[
\Delta_{L,k}
\;\Bigm|\;
\gcd_{\substack{\\[2pt] i+j=k}}
\!\Bigl(
    \Delta_{C,i}(T_1,\mathbf t_C)
    \;
    \Delta_{P',j}(\mathbf t_1,A_1)
\Bigr).
\]
\end{cor}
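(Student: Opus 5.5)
The plan is to apply Corollary~\ref{cor:orders-short-exact-sequence} to the short exact sequence \eqref{eq:MV-short-exact},
\[
0\longrightarrow H_1(Y_C)\oplus H_1(Y_P)\longrightarrow H_1(\widehat X_L)\longrightarrow \mathfrak K\longrightarrow 0,
\]
which gives, for every $k\geq 0$,
\[
\Delta_{L,k}=\Delta_k\bigl(H_1(\widehat X_L)\bigr)
\Bigm|
\gcd_{a+b=k}\Bigl(\Delta_a\bigl(H_1(Y_C)\oplus H_1(Y_P)\bigr)\,\Delta_b(\mathfrak K)\Bigr).
\]
By Proposition~\ref{prop:K-order}, every $\Delta_b(\mathfrak K)$ is a unit. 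The $b=0$ term therefore already equals $\Delta_k(H_1(Y_C)\oplus H_1(Y_P))$, and the terms with $b>0$ only enlarge the gcd by divisors of $\Delta_{k-b}$. Since the orders decrease under divisibility (Lemma~\ref{lem:basic-fitting-properties}), we have $\Delta_k\mid\Delta_{k-b}$, so the whole gcd is just $\Delta_k(H_1(Y_C)\oplus H_1(Y_P))$. Hence $\Delta_{L,k}\mid\Delta_k(H_1(Y_C)\oplus H_1(Y_P))$.

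Next, the direct sum is a split short exact sequence, so the splitting case of Corollary~\ref{cor:orders-short-exact-sequence} gives
\[
\Delta_k\bigl(H_1(Y_C)\oplus H_1(Y_P)\bigr)\doteq\gcd_{i+j=k}\Bigl(\Delta_i(H_1(Y_C))\,\Delta_j(H_1(Y_P))\Bigr).
\]
By Proposition~\ref{prop:homology-of-lifted-pieces}, $H_1(Y_C)\cong H_1(\widehat X_C)\otimes_{R_C}R$. Lemma~\ref{lem:fitting-base-change} then gives $E_i(H_1(Y_C))=\langle\phi_C(E_i(C))\rangle$.

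The key point is that taking gcds commutes with $\phi_C$. The map $\phi_C$ is the monomial substitution $s_1\mapsto T_1$, $s_i\mapsto t_i$. It is injective because $\mathbf w_1\neq0$, and after a monomial change of coordinates on the $\mathbf t_1$-variables it becomes $s_1\mapsto \tau^{d}$, where $d=\gcd(\mathbf w_1)$ and $\tau$ is part of a basis. Thus $R$ is a Laurent polynomial ring over $\mathbb Z[\tau^{\pm d},t_2^{\pm1},\ldots]$, free on monomials. Given a finite set of polynomials with gcd $g$, the quotients have gcd $1$ in $R_C$, and one has to check that their images remain coprime in $R$. Substituting $s_1\mapsto\tau^d$ preserves coprimality, since the extension $\mathbb Z[\tau^{\pm d}]\subset\mathbb Z[\tau^{\pm1}]$ is free and a common factor could be replaced by its norm. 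Adjoining free variables also preserves coprimality. Hence $\Delta_i(H_1(Y_C))\doteq\Delta_{C,i}(T_1,\mathbf t_C)$. Even without this equality, the divisibility $\phi_C(\Delta_{C,i})\mid\Delta_i(H_1(Y_C))$ holds trivially, but equality is what makes the final gcd come out as stated.

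The same argument with $\phi_P$ (injective because $\boldsymbol\ell_1\neq0$) gives $\Delta_j(H_1(Y_P))\doteq\Delta_j(H_1(\widehat X_P))(\mathbf t_1,A_1)$. It remains to identify $\widehat X_P$ with the maximal abelian cover of $X_{P'}$. This holds because $X_P$, the exterior of $P$ in the solid torus, is homeomorphic to the exterior of $P'=P\cup\rho$ in $S^3$: the complementary solid torus is a tubular neighbourhood of the axis $\rho$. Therefore $H_1(\widehat X_P)=H_1(\widehat X_{P'})$, whose orders are $\Delta_{P',j}$.

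Combining these steps yields the stated divisibility. I expect the main obstacle to be the gcd-compatibility of the monomial substitutions; if the full equality proves awkward, I would fall back on the observation that the statement needs only the direction in which $\phi$ of the order divides the order of the extended module, and check that the chain of divisibilities survives.
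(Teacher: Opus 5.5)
Your proposal is correct and is essentially the paper's proof: the short exact sequence \eqref{eq:MV-short-exact} with $\Delta_b(\mathfrak K)=1$, monotonicity of orders, the direct-sum formula, and base change along $\phi_C$ and $\phi_P$; your norm argument additionally justifies that gcds commute with these injective monomial substitutions, which the paper uses without comment. One caution: your closing fallback has the direction reversed, because this upper bound needs $\Delta_i(H_1(Y_C))\mid\phi_C(\Delta_{C,i})$, while the trivial divisibility $\phi_C(\Delta_{C,i})\mid\Delta_i(H_1(Y_C))$ would not suffice, so the equality you prove is genuinely required.
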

\begin{proof}
Applying Corollary~\ref{cor:orders-short-exact-sequence} to the short
exact sequence~\eqref{eq:MV-short-exact} gives
\[
\Delta_k\bigl(H_1(\widehat X_L)\bigr)
\;\Bigm|\;
\gcd_{\substack{\\[2pt] i_1+i_2=k}}
\!\Bigl(
    \Delta_{i_1}\bigl(H_1(Y_C)\oplus H_1(Y_P)\bigr)
    \;
    \Delta_{i_2}(\mathfrak K)
\Bigr).
\]
By Proposition~\ref{prop:K-order},
\[
\Delta_{i_2}(\mathfrak K)=1
\qquad\text{for all }i_2\geq0.
\]
Hence
\[
\Delta_k\bigl(H_1(\widehat X_L)\bigr)
\;\Bigm|\;
\gcd_{0\leq i_1\leq k}
\!\Bigl(
    \Delta_{i_1}\bigl(H_1(Y_C)\oplus H_1(Y_P)\bigr)
\Bigr).
\]
Recall that the orders form a decreasing sequence with respect to
divisibility. Therefore, we can simply write
\[
\Delta_k\bigl(H_1(\widehat X_L)\bigr)
\;\Bigm|\;
\Delta_k\bigl(H_1(Y_C)\oplus H_1(Y_P)\bigr).
\]
We now apply the direct-sum formula for orders from
Corollary~\ref{cor:orders-short-exact-sequence} to obtain
\[
\Delta_k\bigl(H_1(Y_C)\oplus H_1(Y_P)\bigr)
\doteq
\gcd_{\substack{\\[2pt] i+j=k}}
\!\Bigl(
    \Delta_i\bigl(H_1(Y_C)\bigr)
    \;
    \Delta_j\bigl(H_1(Y_P)\bigr)
\Bigr).
\]
Finally, Proposition~\ref{prop:homology-of-lifted-pieces}, together with
the behavior of Fitting ideals under extension of scalars, gives
\[
\Delta_i\bigl(H_1(Y_C)\bigr)
\doteq
\Delta_{C,i}(T_1,\mathbf t_C)
\]
and
\[
\Delta_j\bigl(H_1(Y_P)\bigr)
\doteq
\Delta_{P',j}(\mathbf t_1,A_1).
\]
Combining these identities with
\[
\Delta_k\bigl(H_1(\widehat X_L)\bigr)
\doteq
\Delta_{L,k}
\]
yields
\[
\Delta_{L,k}
\;\Bigm|\;
\gcd_{\substack{\\[2pt] i+j=k}}
\!\Bigl(
    \Delta_{C,i}(T_1,\mathbf t_C)
    \;
    \Delta_{P',j}(\mathbf t_1,A_1)
\Bigr),
\]
as required.
\end{proof}

To prove the reverse divisibility, we now use the relative
Mayer--Vietoris sequence~\eqref{eq:relative-MV-sequence}. To obtain a
short exact sequence from it, we first establish the following two facts.

\begin{prop}
\label{prop:relative-end-terms}
We have
\[
H_0(\widehat \Sigma,\widehat X_0)=0
\]
and the connecting homomorphism
\[
H_2(\widehat X_L,\widehat X_0)
\longrightarrow
H_1(\widehat \Sigma,\widehat X_0)
\]
is the zero map.
\end{prop}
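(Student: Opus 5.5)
For the vanishing of $H_0(\widehat\Sigma,\widehat X_0)$, I would use the long exact sequence of the pair $(\widehat\Sigma,\widehat X_0)$:
\[
H_0(\widehat X_0)\longrightarrow H_0(\widehat\Sigma)\longrightarrow H_0(\widehat\Sigma,\widehat X_0)\longrightarrow 0 .
\]
It then suffices to show that every component of $\widehat\Sigma$ contains a point of $\widehat X_0$. This is immediate because $x_0\in\Sigma$. Every component of $\widehat\Sigma$ is a covering space of $\Sigma$ (by Lemma~\ref{lem:lifted-pieces-nonzero} it is a translate of $\widetilde\Sigma$), so it surjects onto $\Sigma$ and meets $p_L^{-1}(x_0)$. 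Hence the first map is surjective and $H_0(\widehat\Sigma,\widehat X_0)=0$.

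For the connecting homomorphism, I would avoid analysing $H_2(\widehat X_L,\widehat X_0)$ directly. Instead I would show that the next map in \eqref{eq:relative-MV-sequence},
\[
H_1(\widehat\Sigma,\widehat X_0)\longrightarrow H_1(Y_C,\widehat X_0)\oplus H_1(Y_P,\widehat X_0),
\]
is injective; exactness then forces the connecting map to be zero. To compute the source, I would use the pair sequence together with $H_1(\widehat\Sigma)=0$ from Proposition~\ref{prop:homology-of-lifted-pieces}:
\[
0\to H_1(\widehat\Sigma,\widehat X_0)\to H_0(\widehat X_0)\to H_0(\widehat\Sigma)\to 0 .
\]
This identifies $H_1(\widehat\Sigma,\widehat X_0)$ with $\ker\bigl(H_0(\widehat X_0)\to H_0(\widehat\Sigma)\bigr)$, that is, with the ideal $\langle T_1-1,A_1-1\rangle\subset R\cong H_0(\widehat X_0)$ via \eqref{eq:H0-lifted-Sigma}. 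In the same way, the pair sequences for $Y_C$ and $Y_P$ give maps
\[
H_1(Y_C,\widehat X_0)\to \ker\bigl(R\to H_0(Y_C)\bigr),\qquad H_1(Y_P,\widehat X_0)\to \ker\bigl(R\to H_0(Y_P)\bigr).
\]

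By naturality of the pair sequences, the composite
\[
H_1(\widehat\Sigma,\widehat X_0)\to H_1(Y_C,\widehat X_0)\to H_0(\widehat X_0)=R
\]
equals the inclusion $H_1(\widehat\Sigma,\widehat X_0)\hookrightarrow H_0(\widehat X_0)$, because the inclusion $\widehat\Sigma\subseteq Y_C$ restricts to the identity on $\widehat X_0$. Concretely, a relative class in $\widehat\Sigma$ has boundary in $\widehat X_0$, and that boundary is unchanged when the class is pushed into $Y_C$. So already the first component of the Mayer--Vietoris map is injective, since it commutes with the injection into $R$. Only the $Y_C$ component is needed, with no hypothesis beyond $H_1(\widehat\Sigma)=0$, which holds since $\mathbf w_1\neq0$ and $\boldsymbol\ell_1\neq0$. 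This gives injectivity, and exactness at $H_1(\widehat\Sigma,\widehat X_0)$ yields the vanishing of the connecting homomorphism.

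The main point to check carefully is the sign and naturality convention in the Mayer--Vietoris map $x\mapsto(i_*x,-j_*x)$. This is harmless, since injectivity of one component suffices. One should also confirm that the relative Mayer--Vietoris sequence is valid here, for example by thickening $\widehat\Sigma$ to a collar so that excision applies.
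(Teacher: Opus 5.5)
Your proof is correct. The treatment of $H_0(\widehat\Sigma,\widehat X_0)$ is the same as the paper's, but for the connecting homomorphism you take a different route.

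The paper observes that $H_2(\widehat X_L)\to H_2(\widehat X_L,\widehat X_0)$ is an isomorphism, since $\widehat X_0$ is discrete. It then compares the relative Mayer--Vietoris connecting map with the absolute one $H_2(\widehat X_L)\to H_1(\widehat\Sigma)=0$, using naturality of the connecting maps.

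You instead work one step later in the relative sequence. You use $H_1(\widehat\Sigma)=0$ to make $\partial\colon H_1(\widehat\Sigma,\widehat X_0)\to H_0(\widehat X_0)$ injective. Naturality of the long exact sequence of a pair then shows that $H_1(\widehat\Sigma,\widehat X_0)\to H_1(Y_C,\widehat X_0)$ is already injective, and exactness gives the vanishing.

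Both arguments rest on the same input, $H_1(\widehat\Sigma)=0$, and each trades one naturality statement for another. Your version needs only naturality of the pair sequence, not a map from the absolute to the relative Mayer--Vietoris sequence. It also proves something slightly stronger: the $Y_C$-component alone is injective, which makes the short exact sequence \eqref{eq:relative-short-sequence} transparent.

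The paper's version is a one-line diagram chase once that map of sequences is accepted. Because $H_1(\widehat\Sigma)\to H_1(\widehat\Sigma,\widehat X_0)$ is injective (as $H_1(\widehat X_0)=0$), it shows in general that the relative connecting map vanishes exactly when the absolute one does.
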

\begin{proof}
Each connected component of $\widehat\Sigma$ contains a lift of $x_0$.
Therefore, the homomorphism
\[
H_0(\widehat X_0)
\longrightarrow
H_0(\widehat\Sigma)
\]
induced by inclusion is surjective. The long exact sequence of the pair
$(\widehat\Sigma,\widehat X_0)$ then gives
\[
H_0(\widehat\Sigma,\widehat X_0)=0.
\]

For the second assertion, since $\widehat X_0$ is zero-dimensional, the
long exact sequence of the pair $(\widehat X_L,\widehat X_0)$ gives a
natural isomorphism
\[
H_2(\widehat X_L)
\overset{\cong}{\longrightarrow}
H_2(\widehat X_L,\widehat X_0).
\]
Naturality of the Mayer--Vietoris connecting homomorphisms gives the
commutative diagram
\[
\begin{CD}
H_2(\widehat X_L)
@>>>
H_1(\widehat\Sigma)
\\
@V{\cong}VV
@VVV
\\
H_2(\widehat X_L,\widehat X_0)
@>>>
H_1(\widehat\Sigma,\widehat X_0),
\end{CD}
\]
where both vertical maps are induced by the inclusions of pairs. By
Equation~\eqref{eq:H1-lifted-torus},
\[
H_1(\widehat\Sigma)=0.
\]
Hence the upper horizontal map is zero. Since the left vertical map is
an isomorphism, commutativity implies that the lower horizontal map
\[
H_2(\widehat X_L,\widehat X_0)
\longrightarrow
H_1(\widehat\Sigma,\widehat X_0)
\]
is also zero.
\end{proof}

Combining Proposition~\ref{prop:relative-end-terms} with the relative
Mayer--Vietoris sequence~\eqref{eq:relative-MV-sequence}, we obtain the
short exact sequence
\begin{equation}
\label{eq:relative-short-sequence}
0
\longrightarrow
H_1(\widehat\Sigma,\widehat X_0)
\longrightarrow
H_1(Y_C,\widehat X_0)
\oplus
H_1(Y_P,\widehat X_0)
\longrightarrow
H_1(\widehat X_L,\widehat X_0)
\longrightarrow
0.
\end{equation}

The right-hand term of
\eqref{eq:relative-short-sequence} recovers the Alexander module of
$L$ by Proposition~\ref{prop:relative-Alexander-module}. For the same
reason, the middle terms are closely related to the corresponding
Alexander modules, but require one additional step. Let
\[
\widehat X_{0,C}\subseteq\widehat X_C
\qquad\text{and}\qquad
\widehat X_{0,P}\subseteq\widehat X_P
\]
denote the full preimages of $x_0$ in the maximal abelian covers
$\widehat X_C$ and $\widehat X_P$, respectively.

\begin{rem}
\label{rem:relative-modules-for-PC}
The argument of Proposition~\ref{prop:relative-Alexander-module} applies
equally to the relative homology modules
\[
H_1(\widehat X_C,\widehat X_{0,C})
\qquad\text{and}\qquad
H_1(\widehat X_P,\widehat X_{0,P}),
\]
viewed as modules over $R_C$ and $R_P$, respectively.
\end{rem}

\begin{prop}
\label{prop:relhomology-of-lifted-pieces}
There are natural isomorphisms of $R$-modules
\begin{align}
H_1(Y_C,\widehat X_0)
&\cong
H_1(\widehat X_C,\widehat X_{0,C})
\otimes_{R_C}R,
\label{eq:relative-H1-YC-extension}
\\
H_1(Y_P,\widehat X_0)
&\cong
H_1(\widehat X_P,\widehat X_{0,P})
\otimes_{R_P}R.
\label{eq:relative-H1-YP-extension}
\end{align}
\end{prop}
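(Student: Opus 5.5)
The plan is to mirror the proof of Proposition~\ref{prop:homology-of-lifted-pieces}, now keeping track of the basepoint fibre. We treat the companion piece; the pattern piece is identical with $\phi_P$, $\Gamma_P$ and $\widehat X_P$ in place of $\phi_C$, $\Gamma_C$ and $\widehat X_C$.

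\textbf{Step 1: the pair upstairs.} By Lemma~\ref{lem:lifted-pieces-nonzero},
\[
Y_C=\bigsqcup_{[\mathbf n]\in\Gamma_C}\mathbf t_1^{\mathbf n}\widehat X_C .
\]
Since $x_0\in\Sigma\subset X_C$, the fibre $\widehat X_0=p_L^{-1}(x_0)$ lies in $Y_C$. Its intersection with the component $Y_C^0\cong\widehat X_C$ is exactly $p_L|_{Y_C^0}^{-1}(x_0)$, which is identified with $\widehat X_{0,C}$. Translating by the deck group, the intersection with $\mathbf t_1^{\mathbf n}\widehat X_C$ is $\mathbf t_1^{\mathbf n}\widehat X_{0,C}$. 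Thus the pair decomposes as
\[
(Y_C,\widehat X_0)=\bigsqcup_{[\mathbf n]\in\Gamma_C}\mathbf t_1^{\mathbf n}\bigl(\widehat X_C,\widehat X_{0,C}\bigr).
\]

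\textbf{Step 2: chain level.} Take a CW structure on $X_C$ with $x_0$ as a $0$-cell, for instance from Proposition~\ref{prop:cw-model-link-exterior} after arranging that the unique $0$-cell is $x_0$. Lift it to both covers. The relative cellular chain complex $C_*(\widehat X_C,\widehat X_{0,C})$ is then a complex of free $R_C$-modules, obtained by deleting the free summand generated by the lift of $x_0$ in degree $0$. By the decomposition in Step~1, the relative chain complex of $(Y_C,\widehat X_0)$ is the induced complex
\[
C_*(Y_C,\widehat X_0)\cong C_*(\widehat X_C,\widehat X_{0,C})\otimes_{R_C}R .
\]
Here $R$ is viewed as an $R_C$-module via $\phi_C$. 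The point is that $R$ is free over $\phi_C(R_C)$ with basis indexed by coset representatives of $\Gamma_C$, because $\phi_C$ is injective when $\mathbf w_1\neq0$.

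\textbf{Step 3: pass to homology.} Since $R$ is a free, hence flat, $R_C$-module, tensoring commutes with homology, which gives \eqref{eq:relative-H1-YC-extension}. Naturality holds because all identifications are induced by inclusions and by the deck action. For \eqref{eq:relative-H1-YP-extension}, injectivity of $\phi_P$ uses $\boldsymbol\ell_1\neq0$. In that case $u_{1j}\mapsto t_{1j}$ and $z_1\mapsto A_1$ with $A_1$ a nontrivial monomial in the other variables, so $\phi_P$ is injective.

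\textbf{Main obstacle.} The only delicate point is Step~1: checking that the basepoint fibres match up equivariantly. Concretely, the lifts of $x_0$ lying in a given component $\mathbf t_1^{\mathbf n}\widehat X_C$ must form a single free orbit of $\phi_C(H_1(X_C))$. This holds because points of $\widehat X_0$ in $Y_C^0$ correspond to elements of $H_1(X_L)$ that are images of loops in $X_C$, that is, to $\operatorname{im}(\iota_*)\cong H_1(X_C)$. We use the same double-coset argument as in Lemma~\ref{lem:lifted-pieces-nonzero}, where injectivity of $\iota_*$ is what guarantees that this orbit is free.
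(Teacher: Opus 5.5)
Your proposal is correct and is essentially the paper's argument. The paper just says the proof is identical to that of Proposition~\ref{prop:homology-of-lifted-pieces}: decompose into deck translates, identify the (relative) chain complex with the induced complex over $R_C$ (resp.\ $R_P$), and use injectivity of $\phi_C$, $\phi_P$ together with freeness of $R$ to commute $\otimes$ with homology. Your Step~1 check that $\widehat X_0$ meets each component $\mathbf t_1^{\mathbf n}\widehat X_C$ exactly in $\mathbf t_1^{\mathbf n}\widehat X_{0,C}$ is the detail the paper leaves implicit (and a triangulation of $X_C$ with $x_0$ as a vertex is a cleaner cell structure than the homotopy-equivalent model of Proposition~\ref{prop:cw-model-link-exterior}).
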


\begin{proof}
The proof is identical to that of
Proposition~\ref{prop:homology-of-lifted-pieces}.
\end{proof}

Finally, we compute the left-hand term in the short exact sequence
\eqref{eq:relative-short-sequence}.

\begin{prop}
\label{prop:relative-homology-of-torus}
There is a natural isomorphism of $R$-modules
\[
H_1(\widehat \Sigma,\widehat X_0)
\cong
\langle T_1-1,A_1-1\rangle
\subseteq R.
\]
\end{prop}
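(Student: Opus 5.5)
We use the long exact sequence of the pair $(\widehat\Sigma,\widehat X_0)$. Since $H_1(\widehat\Sigma)=0$ by \eqref{eq:H1-lifted-torus}, and $\widehat X_0$ is a discrete set so $H_1(\widehat X_0)=0$, the sequence reduces to
\[
0\longrightarrow H_1(\widehat\Sigma,\widehat X_0)\longrightarrow H_0(\widehat X_0)\longrightarrow H_0(\widehat\Sigma)\longrightarrow H_0(\widehat\Sigma,\widehat X_0)\longrightarrow 0 .
\]
Hence $H_1(\widehat\Sigma,\widehat X_0)$ is identified, naturally and $R$-equivariantly, with the kernel of $H_0(\widehat X_0)\to H_0(\widehat\Sigma)$. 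It remains to compute that kernel.

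Next we identify the two $H_0$ terms. The deck group $H_1(X_L;\mathbb Z)$ acts freely and transitively on $\widehat X_0$, so after choosing a lift $\widehat x_0$ we get $H_0(\widehat X_0)\cong R$, with $\widehat x_0\mapsto 1$. By \eqref{eq:H0-lifted-Sigma}, $H_0(\widehat\Sigma)\cong R/\langle T_1-1,A_1-1\rangle$, where $1$ corresponds to the component $\widetilde\Sigma$ containing $\widehat x_0$. Under these identifications the inclusion-induced map sends $g\cdot\widehat x_0$ to the class of the component containing it, namely $g\cdot[\widetilde\Sigma]$. So the map is the quotient projection $R\to R/\langle T_1-1,A_1-1\rangle$, and its kernel is exactly the ideal $\langle T_1-1,A_1-1\rangle$.

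The step needing care is justifying $H_0(\widehat\Sigma)\cong R/\langle T_1-1,A_1-1\rangle$ with the correct stabiliser. By Lemma~\ref{lem:lifted-pieces-nonzero}, the stabiliser of a component of $\widehat\Sigma$ is the image of $H_1(\Sigma)\to H_1(X_L)$. This image is generated by the image of the longitude of $C_1$, which is $\overline{\mathbf w}_1$, i.e.\ $T_1$, and the image of the meridian of $C_1$, which is $\overline{\boldsymbol\ell}_1$, i.e.\ $A_1$; these are the linking-number computations of Remark~\ref{rem:change-of-variables}. Thus the components are indexed by $\Gamma_\Sigma$, and the group ring of $\Gamma_\Sigma$ is $R/\langle T_1-1,A_1-1\rangle$, since the augmentation ideal of a subgroup generated by $a,b$ is generated by $a-1$ and $b-1$.

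Naturality follows because every identification made above comes from inclusions together with the fixed choice of $\widehat x_0$. The hypotheses $\mathbf w_1\neq0$ and $\boldsymbol\ell_1\neq0$ enter only through the vanishing $H_1(\widehat\Sigma)=0$.
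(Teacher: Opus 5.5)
Your proof is correct and follows essentially the same route as the paper: the long exact sequence of the pair $(\widehat\Sigma,\widehat X_0)$ together with $H_1(\widehat\Sigma)=0$, the identifications $H_0(\widehat X_0)\cong R$ and $H_0(\widehat\Sigma)\cong R/\langle T_1-1,A_1-1\rangle$, and the observation that the inclusion-induced map is the quotient projection, so its kernel is the ideal. One harmless slip: the meridian of $C_1$ maps to $T_1$, since it bounds a meridian disc of $N(C_1)$ meeting the $P_{1j}$ with intersection numbers $w_{1j}$. The $0$-framed longitude maps to $A_1$, not the other way round as you state, but the two images generate the same subgroup, so your conclusion is unaffected.
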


\begin{proof}
The long exact sequence of the pair
$(\widehat\Sigma,\widehat X_0)$ gives
\[
0
\longrightarrow
H_1(\widehat\Sigma,\widehat X_0)
\longrightarrow
H_0(\widehat X_0)
\longrightarrow
H_0(\widehat\Sigma)
\longrightarrow
0.
\]
Here we have used Equation~\eqref{eq:H1-lifted-torus}, which gives
$H_1(\widehat\Sigma)=0$, together with
Proposition~\ref{prop:relative-end-terms}, which gives
$H_0(\widehat\Sigma,\widehat X_0)=0$.

After choosing a lift of $x_0$, we have
\[
H_0(\widehat X_0)\cong R,
\]
while Equation~\eqref{eq:H0-lifted-Sigma} gives
\[
H_0(\widehat\Sigma)
\cong
R/\langle T_1-1,A_1-1\rangle.
\]
Under these identifications, the map
\[
H_0(\widehat X_0)\longrightarrow H_0(\widehat\Sigma)
\]
is the quotient homomorphism
\[
R\longrightarrow
R/\langle T_1-1,A_1-1\rangle.
\]
Its kernel is therefore
\[
\langle T_1-1,A_1-1\rangle,
\]
and exactness gives the desired isomorphism.
\end{proof}

\begin{cor}
\label{cor:Fitting-relative-torus}
The $R$-module $H_1(\widehat\Sigma,\widehat X_0)$ admits a presentation
\[
R
\xrightarrow{\;\psi\;}
R^2
\longrightarrow
H_1(\widehat\Sigma,\widehat X_0)
\longrightarrow
0,
\]
where
\[
\psi(1)
=
\bigl(A_1-1,-(T_1-1)\bigr).
\]
Consequently,
\[
E_0\bigl(H_1(\widehat\Sigma,\widehat X_0)\bigr)=0
\]
and
\[
E_1\bigl(H_1(\widehat\Sigma,\widehat X_0)\bigr)
=
\langle T_1-1,A_1-1\rangle.
\]
In particular,
\[
\Delta_1\bigl(H_1(\widehat\Sigma,\widehat X_0)\bigr)=1.
\]
\end{cor}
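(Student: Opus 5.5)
The presentation follows directly from Proposition~\ref{prop:relative-homology-of-torus}, which identifies $H_1(\widehat\Sigma,\widehat X_0)$ with the ideal $I:=\langle T_1-1,A_1-1\rangle\subseteq R$. I would take the surjection $R^2\to I$ sending the two basis vectors $e_1,e_2$ to $T_1-1$ and $A_1-1$. The plan is to show that its kernel is the cyclic submodule generated by the Koszul relation $(A_1-1)e_1-(T_1-1)e_2$; this is precisely $\psi(1)$.

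Suppose $a(T_1-1)+b(A_1-1)=0$ in $R$. The key observation is that $T_1-1$ and $A_1-1$ are coprime in the UFD $R$. Indeed, $T_1$ is a nontrivial monomial in the variables $\mathbf t_1$ alone, since $\mathbf w_1\neq0$. Likewise $A_1$ is a nontrivial monomial in $\mathbf t_C$ alone, since $\boldsymbol\ell_1\neq0$. A common irreducible factor would therefore have to lie in $\mathbb Z[\mathbf t_1^{\pm1}]$ and also in $\mathbb Z[\mathbf t_C^{\pm1}]$, so it would be a constant. It cannot be a nonunit integer, because both polynomials have coefficients $\pm1$. This is the same argument as in Proposition~\ref{prop:K-order}.

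Coprimality gives $(A_1-1)\mid a$. Writing $a=c(A_1-1)$ and cancelling forces $b=-c(T_1-1)$. Hence $(a,b)=c\,\psi(1)$, which establishes exactness of
\[
R\xrightarrow{\psi}R^2\to H_1(\widehat\Sigma,\widehat X_0)\to0 .
\]

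For the Fitting ideals, the presentation matrix is the $1\times2$ matrix $\bigl(A_1-1,\;-(T_1-1)\bigr)$. By Remark~\ref{rem:adding-zero-relations} I would pad it with a zero row, giving a $2\times 2$ matrix whose determinant is zero; hence $E_0=0$. The $1$-minors are the entries, so $E_1=\langle T_1-1,A_1-1\rangle$. Finally, $\Delta_1=\gcd(T_1-1,A_1-1)=1$ by the coprimality established above.

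The only point requiring genuine care is that coprimality step. I expect it to be immediate given the separation of variables, so there is no real obstacle.
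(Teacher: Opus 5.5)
Your proposal is correct and follows essentially the same route as the paper. You identify the module with the ideal $\langle T_1-1,A_1-1\rangle$ via Proposition~\ref{prop:relative-homology-of-torus}, use coprimality in the UFD $R$ to show that the kernel of $R^2\to\langle T_1-1,A_1-1\rangle$ is generated by the Koszul relation, and then pad with a zero row to read off $E_0$, $E_1$ and $\Delta_1$. The differences are cosmetic: you divide out by $A_1-1$ where the paper divides by $T_1-1$, and you justify coprimality explicitly (separation of variables plus content one), which the paper only asserts.
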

\begin{proof}
By Proposition~\ref{prop:relative-homology-of-torus}, it suffices to
consider the ideal
\[
I:=\langle T_1-1,A_1-1\rangle\subseteq R.
\]
The homomorphism
\[
R^2\longrightarrow I,
\qquad
(p,q)\longmapsto p(T_1-1)+q(A_1-1),
\]
is surjective.

Since $T_1-1$ and $A_1-1$ are relatively prime in the UFD $R$, the
kernel of this homomorphism is generated by
\[
\bigl(A_1-1,-(T_1-1)\bigr).
\]
Indeed, if
\[
p(T_1-1)+q(A_1-1)=0,
\]
then $T_1-1$ divides $q(A_1-1)$. Since $T_1-1$ and $A_1-1$ are
relatively prime, it follows that $T_1-1$ divides $q$. Writing
\[
q=r(T_1-1)
\]
and substituting into the relation gives
\[
p=-r(A_1-1).
\]
Thus
\[
(p,q)
=
-r\bigl(A_1-1,-(T_1-1)\bigr),
\]
which proves the asserted presentation.

By Remark~\ref{rem:adding-zero-relations}, we may add a zero row to the presentation matrix so that it has two rows and two columns. Therefore
\[
E_0\bigl(H_1(\widehat\Sigma,\widehat X_0)\bigr)=0,
\]
and
\[
E_1\bigl(H_1(\widehat\Sigma,\widehat X_0)\bigr)
=
\langle T_1-1,A_1-1\rangle.
\]
Finally, since
\[
\gcd(T_1-1,A_1-1)=1,
\]
we obtain
\[
\Delta_1\bigl(H_1(\widehat\Sigma,\widehat X_0)\bigr)=1.
\]
\end{proof}

We are now ready to prove the reverse divisibility.

\begin{cor}
\label{cor:reverse-divisibility}
For each $k\geq 0$, the $k$-th Alexander polynomial of the satellite
link satisfies
\[
\gcd_{\substack{\\[2pt] i+j=k}}
\!\Bigl(
    \Delta_{C,i}(T_1,\mathbf t_C)
    \;
    \Delta_{P',j}(\mathbf t_1,A_1)
\Bigr)
\;\Bigm|\;
\Delta_{L,k}.
\]
\end{cor}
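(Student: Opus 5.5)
We will work with the short exact sequence \eqref{eq:relative-short-sequence}
\[
0\to H_1(\widehat\Sigma,\widehat X_0)\to H_1(Y_C,\widehat X_0)\oplus H_1(Y_P,\widehat X_0)\to H_1(\widehat X_L,\widehat X_0)\to 0
\]
and use the first part of Lemma~\ref{lem:Fitting-short-exact-sequence} (the non-split inclusion). With $\mathfrak M'=H_1(\widehat\Sigma,\widehat X_0)$, $\mathfrak M$ the middle term and $\mathfrak M''=H_1(\widehat X_L,\widehat X_0)$, this gives, for every $a,b\ge 0$,
\[
E_a(\mathfrak M')\,E_b(\mathfrak M'')\subseteq E_{a+b}(\mathfrak M).
\]
We take $a=1$ and $b=k+1$. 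By Corollary~\ref{cor:Fitting-relative-torus}, $E_1(\mathfrak M')=\langle T_1-1,A_1-1\rangle$. We therefore get
\[
(T_1-1)\,E_{k+1}(\mathfrak M'')\subseteq E_{k+2}(\mathfrak M)
\qquad\text{and}\qquad
(A_1-1)\,E_{k+1}(\mathfrak M'')\subseteq E_{k+2}(\mathfrak M).
\]

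Next we take orders. Set $D:=\Delta_{k+2}(\mathfrak M)$. The first inclusion says $D$ divides $(T_1-1)x$ for every $x\in E_{k+1}(\mathfrak M'')$, so $D\mid (T_1-1)\Delta_{k+1}(\mathfrak M'')$. Similarly $D\mid (A_1-1)\Delta_{k+1}(\mathfrak M'')$. Since $\gcd(T_1-1,A_1-1)=1$ in the UFD $R$ (they involve disjoint sets of variables and are nonunits by hypothesis), we conclude $D\mid \Delta_{k+1}(\mathfrak M'')$. By Proposition~\ref{prop:relative-Alexander-module}, $\Delta_{k+1}(\mathfrak M'')\doteq\Delta_{L,k}$.

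It remains to identify $D$. The module $\mathfrak M$ splits as a direct sum, so Corollary~\ref{cor:orders-short-exact-sequence} gives
\[
D\doteq\gcd_{a'+b'=k+2}\Delta_{a'}\bigl(H_1(Y_C,\widehat X_0)\bigr)\,\Delta_{b'}\bigl(H_1(Y_P,\widehat X_0)\bigr).
\]
By Proposition~\ref{prop:relhomology-of-lifted-pieces} and Lemma~\ref{lem:fitting-base-change}, the factors are $\phi_C$ and $\phi_P$ applied to the orders of the relative modules $H_1(\widehat X_C,\widehat X_{0,C})$ and $H_1(\widehat X_P,\widehat X_{0,P})$. Through Remark~\ref{rem:relative-modules-for-PC} and Proposition~\ref{prop:relative-Alexander-module}, those orders are $\Delta_{C,a'-1}$ and $\Delta_{P',b'-1}$, and they vanish when $a'=0$ or $b'=0$. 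The surviving terms are those with $i=a'-1$ and $j=b'-1$ satisfying $i+j=k$. This gives $D\doteq\gcd_{i+j=k}\Delta_{C,i}(T_1,\mathbf t_C)\Delta_{P',j}(\mathbf t_1,A_1)$, which yields the claimed divisibility.

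\textbf{Main obstacle.} The delicate step is transporting orders through base change. The gcd of the image ideal $\langle\phi_C(E_a)\rangle$ must equal $\phi_C$ applied to the gcd, and the vanishing of the $a'=0$ term must persist after tensoring. Vanishing is immediate, since $\phi_C(0)=0$. For the gcd statement we use that $\phi_C$ is injective, so $R$ is free over the image ring with a monomial basis. We then argue that a gcd computed in $R_C$ remains a gcd in $R$: after a monomial change of coordinates, $\phi_C$ becomes an inclusion of a polynomial subring in some of the variables, and gcds are preserved under adjoining variables. Corollary~\ref{cor:first-divisibility} used this step implicitly, so we may cite the same reasoning. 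The same holds for $\phi_P$, using $\boldsymbol\ell_1\neq 0$.
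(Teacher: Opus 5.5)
Your argument is correct and is essentially the paper's proof: the paper applies Corollary~\ref{cor:orders-short-exact-sequence} to \eqref{eq:relative-short-sequence} using $\Delta_1\bigl(H_1(\widehat\Sigma,\widehat X_0)\bigr)=1$, which is exactly your coprimality step applied to $E_1=\langle T_1-1,A_1-1\rangle$, and then identifies the middle term through the direct-sum formula and base change, as you do.

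One correction to your side remark on base change: if $\mathbf w_1$ or $\boldsymbol\ell_1$ is not primitive (e.g.\ $\mathbf w_1=(2)$), no monomial change of coordinates turns $\phi_C$ or $\phi_P$ into an inclusion of variables. The step still holds, because $R$ is free, hence flat, over $R_C$ and $R_P$, and by going-down a nonzero ideal with gcd $1$ (not contained in any height-one prime) still has gcd $1$ after a flat extension of Noetherian UFDs.
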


\begin{proof}
For ease of notation, set
\begin{align*}
\mathfrak M_\Sigma
&:=
H_1(\widehat\Sigma,\widehat X_0),\\
\mathfrak M_C
&:=
H_1(Y_C,\widehat X_0),\\
\mathfrak M_P
&:=
H_1(Y_P,\widehat X_0),\\
\mathfrak M_L
&:=
H_1(\widehat X_L,\widehat X_0).
\end{align*}
The short exact sequence~\eqref{eq:relative-short-sequence}, together
with Corollary~\ref{cor:orders-short-exact-sequence}, gives, for every
$k\geq0$,
\[
\Delta_{k+2}(\mathfrak M_C\oplus\mathfrak M_P)
\;\Bigm|\;
\Delta_1(\mathfrak M_\Sigma)\,
\Delta_{k+1}(\mathfrak M_L).
\]
By Corollary~\ref{cor:Fitting-relative-torus},
\[
\Delta_1(\mathfrak M_\Sigma)=1.
\]
Hence
\[
\Delta_{k+2}(\mathfrak M_C\oplus\mathfrak M_P)
\;\Bigm|\;
\Delta_{k+1}(\mathfrak M_L).
\]
Proposition~\ref{prop:relative-Alexander-module} gives
\[
\Delta_{k+1}(\mathfrak M_L)
\doteq
\Delta_{L,k}.
\]
On the other hand, the direct-sum formula for orders from
Corollary~\ref{cor:orders-short-exact-sequence} gives
\[
\Delta_{k+2}(\mathfrak M_C\oplus\mathfrak M_P)
\doteq
\gcd_{\substack{\\[2pt] i+j=k+2}}
\!\Bigl(
    \Delta_i(\mathfrak M_C)
    \;
    \Delta_j(\mathfrak M_P)
\Bigr).
\]
By Proposition~\ref{prop:relhomology-of-lifted-pieces},
Remark~\ref{rem:relative-modules-for-PC}, and
Lemma~\ref{lem:fitting-base-change},
\[
\Delta_i(\mathfrak M_C)
\doteq
\Delta_{C,i-1}(T_1,\mathbf t_C)
\qquad\text{and}\qquad
\Delta_j(\mathfrak M_P)
\doteq
\Delta_{P',j-1}(\mathbf t_1,A_1).
\]
Therefore,
\[
\Delta_{k+2}(\mathfrak M_C\oplus\mathfrak M_P)
\doteq
\gcd_{\substack{\\[2pt] i+j=k}}
\!\Bigl(
    \Delta_{C,i}(T_1,\mathbf t_C)
    \;
    \Delta_{P',j}(\mathbf t_1,A_1)
\Bigr).
\]
Combining the preceding identities yields
\[
\gcd_{\substack{\\[2pt] i+j=k}}
\!\Bigl(
    \Delta_{C,i}(T_1,\mathbf t_C)
    \;
    \Delta_{P',j}(\mathbf t_1,A_1)
\Bigr)
\;\Bigm|\;
\Delta_{L,k},
\]
as required.
\end{proof}

We are now ready to complete the proof of
Theorem~\ref{thm:higher-single-component-case1}.

\begin{proof}[Proof of Theorem~\ref{thm:higher-single-component-case1}]
The result follows immediately by combining
Corollaries~\ref{cor:first-divisibility} and
\ref{cor:reverse-divisibility}.
\end{proof}

\subsection{Winding and Linking Number Zero}
\label{subsec:winding-linking-zero}\hfill

\noindent
Throughout this subsection, we assume that
\[
\mathbf w_1=\mathbf 0
\qquad\text{and}\qquad
\boldsymbol\ell_1=\mathbf 0.
\]
Our main goal in this subsection is to prove
Theorem~\ref{thm:higher-single-component-case4}.

The argument follows the same general strategy as in the previous
subsection, but in this case we obtain only the two divisibility
statements appearing in
Theorem~\ref{thm:higher-single-component-case4}, rather than an exact
formula. As we will see, this loss of equality ultimately comes from
Corollary~\ref{cor:traldi-quotient-bounds}. We continue to use the
notation introduced in the preceding subsection, but the relevant
covering spaces now have a different form.

As before, we begin with the absolute Mayer--Vietoris sequence
\eqref{eq:absolute-MV-sequence}. We first analyze the connected
components of the lifted pieces. For this, we recall the special abelian
cover introduced in Definition~\ref{defn:special-abelian-cover} and use
its analogue for the pattern piece.

\begin{defn}\label{defn:special-abelian-cover-companion-pattern}
Let
\[
Z_C\longrightarrow X_C
\]
denote the special abelian cover of $X_C$ obtained by forgetting the
first component, as in Definition~\ref{defn:special-abelian-cover}.
Thus it corresponds to the kernel of
\[
\pi_1(X_C,x_0)
\xrightarrow{\ \operatorname{ab}\ }
H_1(X_C;\mathbb Z)
\cong
\mathbb Z^\mu
\xrightarrow{\ h_C\ }
\mathbb Z^{\mu-1},
\]
where $h_C$ is projection onto the last $\mu-1$ coordinates.

Similarly, let
\[
Z_P\longrightarrow X_P
\]
denote the cover corresponding to the kernel of
\[
\pi_1(X_P,x_0)
\xrightarrow{\ \operatorname{ab}\ }
H_1(X_P;\mathbb Z)
\cong
\mathbb Z^{m+1}
\xrightarrow{\ h_P\ }
\mathbb Z^m,
\]
where $h_P$ is projection onto the first $m$ coordinates. Equivalently,
viewing $X_P$ as the exterior of the augmented link $P'=P\cup\rho$,
the cover $Z_P$ is the special abelian cover obtained by forgetting the augmented component $\rho$.
\end{defn}

We can now formulate the analogue of
Lemma~\ref{lem:lifted-pieces-nonzero} for the present case. We first
record how the indexing sets of
Definition~\ref{defn:component-indexing-sets} simplify under our
assumptions.

\begin{rem}
\label{rem:indexing-sets-zero-zero}
Since
\[
\mathbf w_1=\mathbf 0
\qquad\text{and}\qquad
\boldsymbol\ell_1=\mathbf 0,
\]
the indexing sets of Definition~\ref{defn:component-indexing-sets}
reduce to
\[
\Gamma_C=\mathbb Z^{m},
\qquad
\Gamma_P=\mathbb Z^{\mu-1},
\qquad
\Gamma_\Sigma
=
\mathbb Z^{m}\oplus\mathbb Z^{\mu-1}.
\]
\end{rem}

\begin{lem}
\label{lem:lifted-pieces-zero-zero}
There are identifications
\begin{align}
Y_C
&=
\bigsqcup_{\mathbf n\in\Gamma_C}
\mathbf t_1^{\mathbf n}Z_C,
\label{eq:YC-components-zero-zero}
\\
Y_P
&=
\bigsqcup_{\mathbf r\in\Gamma_P}
\mathbf t_C^{\mathbf r}Z_P,
\label{eq:YP-components-zero-zero}
\\
\widehat\Sigma
&=
\bigsqcup_{(\mathbf n,\mathbf r)\in\Gamma_\Sigma}
\mathbf t_1^{\mathbf n}\mathbf t_C^{\mathbf r}\Sigma.
\label{eq:Sigma-components-zero-zero}
\end{align}
\end{lem}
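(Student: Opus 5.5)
We will follow the proof of Lemma~\ref{lem:lifted-pieces-nonzero} almost verbatim, replacing the injectivity argument by an explicit computation of the kernel of the inclusion-induced map on first homology. We treat \eqref{eq:YC-components-zero-zero} in detail; \eqref{eq:YP-components-zero-zero} is symmetric, and \eqref{eq:Sigma-components-zero-zero} is the simplest case.

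Let $\widehat x_0\in\widehat\Sigma$ be a lift of $x_0$ and let $Y_C^0$ be the component of $Y_C$ containing it. As before, $p_L|_{Y_C^0}$ is the connected cover of $X_C$ corresponding to $\ker(\operatorname{ab}_L\circ\iota_*)$. In the chosen bases, the map on first homology is
\[
(a_1,\ldots,a_\mu)\longmapsto(a_1\mathbf w_1,a_2,\ldots,a_\mu)=(\mathbf 0,a_2,\ldots,a_\mu),
\]
since $\mathbf w_1=\mathbf 0$. This is exactly the projection $h_C$ followed by the inclusion of $\mathbb Z^{\mu-1}$ as the second summand. That summand inclusion is injective, so
\[
\ker(\operatorname{ab}_L\circ\iota_*)=\ker(h_C\circ\operatorname{ab}_C).
\]
Hence $Y_C^0\cong Z_C$ as covers of $X_C$, and by regularity every component of $Y_C$ is a deck translate of $Y_C^0$. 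The double-coset argument then gives
\[
\pi_0(Y_C)\cong H_1(X_L)/\operatorname{im}(\iota_*)=(\mathbb Z^m\oplus\mathbb Z^{\mu-1})/(0\oplus\mathbb Z^{\mu-1})\cong\mathbb Z^m=\Gamma_C.
\]
Its elements are represented by the translates $\mathbf t_1^{\mathbf n}$, which yields \eqref{eq:YC-components-zero-zero}.

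For the pattern piece we must check the map $H_1(X_P)\cong\mathbb Z^{m+1}\to H_1(X_L)$. By Remark~\ref{rem:change-of-variables} it sends $u_{1j}\mapsto t_{1j}$ and $z_1\mapsto A_1$, and $A_1=1$ here because $\boldsymbol\ell_1=\mathbf 0$. So the map is $h_P$ followed by inclusion into the first summand, the same argument identifies the components with $Z_P$, and the components are indexed by $\mathbb Z^{\mu-1}=\Gamma_P$.

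For $\Sigma$, the map $H_1(\Sigma)\cong\mathbb Z^2\to H_1(X_L)$ sends the meridian to $(\mathbf w_1,\mathbf 0)=0$ and the longitude to $(\mathbf 0,\boldsymbol\ell_1)=0$, so it is the zero map. Every loop in $\Sigma$ therefore lifts to a closed loop. Each component of $\widehat\Sigma$ is then a homeomorphic copy of $\Sigma$, and the components are indexed by all of $H_1(X_L)=\Gamma_\Sigma$.

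The only real point needing care is the identification of the inclusion-induced maps in the chosen bases, particularly for $X_P$ with its meridian of $\rho$ (i.e.\ the longitude of $N(C_1)$). This amounts to the linking-number computation already recorded in Remark~\ref{rem:change-of-variables}, so we will simply cite it. Everything else is formal covering-space theory identical to the nonzero case.
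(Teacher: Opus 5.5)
Your proposal is correct and follows essentially the same argument as the paper: as in Lemma~\ref{lem:lifted-pieces-nonzero}, each component of a lifted piece is the cover determined by the kernel of the inclusion-induced map to $H_1(X_L)$. That map now factors injectively through $h_C$, factors injectively through $h_P$, or is zero, respectively, and the components are indexed by its cokernel. You spell out the $Y_P$ and $\widehat\Sigma$ computations, using $z_1\mapsto A_1=1$ and the vanishing of the images of both the meridian and the longitude; the paper leaves these to ``the same argument.''
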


\begin{proof}
The argument is identical to that of Lemma~\ref{lem:lifted-pieces-nonzero};
in particular, the connected components are determined by the image of
the corresponding homomorphism on first homology. In the present case,
however, the map
\[
\iota_*\colon
\mathbb Z^\mu
\longrightarrow
\mathbb Z^{m}\oplus\mathbb Z^{\mu-1},
\qquad
(a_1,\ldots,a_\mu)
\longmapsto
(a_1\mathbf w_1,a_2,\ldots,a_\mu)
\]
is no longer injective, since $\mathbf w_1=\mathbf 0$. Consequently,
the connected component of $Y_C$ is no longer the maximal abelian cover
$\widehat X_C$, but rather the cover $Z_C$ defined above. The same
argument applies to $Y_P$ and $\widehat\Sigma$, with the corresponding
maps and covers.
\end{proof}

\begin{rem}
\label{rem:H0-lifted-pieces-zero-zero}
As in Remark~\ref{rem:H0-lifted-pieces}, the description of the
connected components in Lemma~\ref{lem:lifted-pieces-zero-zero}
determines the zeroth homology of the lifted pieces. Since
\[
T_1=1
\qquad\text{and}\qquad
A_1=1
\]
under the present assumptions, we obtain
\begin{align}
H_0(Y_C)
&\cong
R\big/
\langle t_2-1,\ldots,t_\mu-1\rangle,
\label{eq:H0-YC-zero-zero}
\\
H_0(Y_P)
&\cong
R\big/
\langle t_{11}-1,\ldots,t_{1m}-1\rangle,
\label{eq:H0-YP-zero-zero}
\\
H_0(\widehat\Sigma)
&\cong
R.
\label{eq:H0-Sigma-zero-zero}
\end{align}
\end{rem}

\begin{rem}
\label{rem:relative-end-terms-zero-zero}
Although the covering spaces appearing in the present case differ from
those in the previous subsection, the argument of
Proposition~\ref{prop:relative-end-terms} still applies to the relative
zeroth homology. In particular, every connected component of
$\widehat\Sigma$ contains a lift of $x_0$, and hence
\[
H_0(\widehat\Sigma,\widehat X_0)=0.
\]
\end{rem}

We again extract short exact sequences from the absolute and relative
Mayer--Vietoris sequences
\eqref{eq:absolute-MV-sequence} and
\eqref{eq:relative-MV-sequence}. Unlike in the previous subsection,
the relevant end terms do not all vanish, so the resulting short exact
sequences take a slightly different form.

\begin{cons}[Short exact sequences]
\label{cons:zero-zero-short-exact-sequences}
Let
\[
f_1\colon
H_1(\widehat\Sigma)
\longrightarrow
H_1(Y_C)\oplus H_1(Y_P)
\]
denote the corresponding map in the absolute Mayer--Vietoris sequence
\eqref{eq:absolute-MV-sequence}. Using
Definition~\ref{defn:K-module}, exactness gives
\begin{equation}
\label{eq:00-case-MV}
0
\longrightarrow
\frac{H_1(Y_C)\oplus H_1(Y_P)}
     {\operatorname{im}(f_1)}
\longrightarrow
H_1(\widehat X_L)
\longrightarrow
\mathfrak K
\longrightarrow
0.
\end{equation}
Similarly, let
\[
f_1^{\mathrm{rel}}\colon
H_1(\widehat\Sigma,\widehat X_0)
\longrightarrow
H_1(Y_C,\widehat X_0)\oplus H_1(Y_P,\widehat X_0)
\]
denote the corresponding map in the relative Mayer--Vietoris sequence
\eqref{eq:relative-MV-sequence}. By
Remark~\ref{rem:relative-end-terms-zero-zero}, exactness gives
\begin{equation}
\label{eq:00-case-MV-relative}
0
\longrightarrow
\operatorname{im}(f_1^{\mathrm{rel}})
\longrightarrow
H_1(Y_C,\widehat X_0)\oplus H_1(Y_P,\widehat X_0)
\longrightarrow
H_1(\widehat X_L,\widehat X_0)
\longrightarrow
0.
\end{equation}
\end{cons}

\begin{rem}
\label{rem:K-module-zero-zero}
As in Definition~\ref{defn:K-module}, the module $\mathfrak K$ appearing
in \eqref{eq:00-case-MV} is the kernel of the map
\[
H_0(\widehat\Sigma)
\longrightarrow
H_0(Y_C)\oplus H_0(Y_P)
\]
in the absolute Mayer--Vietoris sequence. However, the description in
Remark~\ref{rem:H0-lifted-pieces-zero-zero} shows that $\mathfrak K$
is different from the module appearing in the previous subsection. In
particular, it is now a submodule, and indeed an ideal, of
\[
H_0(\widehat\Sigma)\cong R,
\]
and hence is torsion-free. Indeed, as we
will see in Proposition~\ref{prop:module-k-for-00-case}, the module
$\mathfrak K$ has positive rank. Nevertheless, it admits the same
generating set as in Proposition~\ref{prop:generators-for-K}.
\end{rem}

We next analyze the first homology of the lifted pieces.

\begin{prop}
\label{prop:homology-of-lifted-pieces-zero-zero}
There are natural isomorphisms of $R$-modules
\begin{align}
H_1(\widehat\Sigma)
&\cong R^2,
\label{eq:H1-Sigma-zero-zero}
\\
H_1(Y_C)
&\cong
H_1(Z_C)\otimes_{R_C}R,
\label{eq:H1-YC-zero-zero}
\\
H_1(Y_P)
&\cong
H_1(Z_P)\otimes_{R_P}R.
\label{eq:H1-YP-zero-zero}
\end{align}
\end{prop}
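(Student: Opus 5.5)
The plan is to follow the proof of Proposition~\ref{prop:homology-of-lifted-pieces} essentially verbatim, using the component decomposition of Lemma~\ref{lem:lifted-pieces-zero-zero} in place of Lemma~\ref{lem:lifted-pieces-nonzero}.

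\textbf{The torus term.} By \eqref{eq:Sigma-components-zero-zero}, every connected component of $\widehat\Sigma$ is a homeomorphic copy of the torus $\Sigma$. The deck group $\mathbb Z^m\oplus\mathbb Z^{\mu-1}$ permutes these components freely and transitively, so $\widehat\Sigma \cong \Sigma\times(\mathbb Z^m\oplus\mathbb Z^{\mu-1})$ equivariantly. It follows that
\[
H_1(\widehat\Sigma)\cong H_1(\Sigma;\mathbb Z)\otimes_{\mathbb Z}R\cong R^2,
\]
with free basis given by the classes of a meridian $\gamma_m$ and a longitude of $C_1$ in the base component.

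\textbf{The companion and pattern terms.} For $Y_C$, the plan is as follows.
\begin{enumerate}
\item The restriction of $p_L$ to the base component of $Y_C$ is $Z_C$. Its deck group is the image of $\iota_*$, namely $\mathbb Z^{\mu-1}$ generated by $t_2,\ldots,t_\mu$.
\item The coefficient map $\phi_C$ sends $s_1\mapsto T_1=1$ and $s_i\mapsto t_i$ for $i\ge 2$. It therefore factors through the specialization $f_*\colon R_C\to\mathbb Z[t_2^{\pm1},\ldots,t_\mu^{\pm1}]$, $s_1\mapsto1$, followed by the inclusion into $R$. This inclusion is free, with basis the monomials $\mathbf t_1^{\mathbf n}$.
\item Accordingly, $H_1(Z_C)$ is naturally an $R_C$-module on which $s_1$ acts trivially. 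The expression $H_1(Z_C)\otimes_{R_C}R$ should be read through $\phi_C$, and it equals $H_1(Z_C)\otimes_{\mathbb Z[t_2^{\pm1},\ldots,t_\mu^{\pm1}]}R$.
\item The decomposition \eqref{eq:YC-components-zero-zero} identifies the cellular chain complex
\[
C_*(Y_C)\cong C_*(Z_C)\otimes_{\mathbb Z[t_2^{\pm1},\ldots,t_\mu^{\pm1}]}R=\bigoplus_{\mathbf n}\mathbf t_1^{\mathbf n}C_*(Z_C).
\]
\item Since $R$ is free, hence flat, over $\mathbb Z[t_2^{\pm1},\ldots,t_\mu^{\pm1}]$, homology commutes with this extension of scalars. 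This gives \eqref{eq:H1-YC-zero-zero}.
\end{enumerate}
The same argument applies to $Y_P$, with $Z_P$ in place of $Z_C$. Here $\phi_P$ sends $z_1\mapsto A_1=1$ and $u_{1j}\mapsto t_{1j}$, and $R$ is free over $\mathbb Z[\mathbf t_1^{\pm1}]$ with basis the monomials $\mathbf t_C^{\mathbf r}$.

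\textbf{Main obstacle.} The only real subtlety is making sense of the tensor product over $R_C$ (respectively $R_P$), since $\phi_C$ is no longer injective. The step requiring care is to verify that the $R_C$-action on $H_1(Z_C)$ genuinely factors through $s_1\mapsto1$. This holds because a loop representing $s_1$ lifts to a closed loop in $Z_C$, so it acts on $Z_C$ as the identity deck transformation. With that in hand, flatness of $R$ over the image ring $\phi_C(R_C)$, which is a sub-Laurent ring of $R$, completes the argument.
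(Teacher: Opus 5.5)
Your proposal is correct and follows the paper's proof, which simply says that the argument of Proposition~\ref{prop:homology-of-lifted-pieces} carries over using the decomposition of Lemma~\ref{lem:lifted-pieces-zero-zero}. You also factor $\phi_C$ and $\phi_P$ through the specializations $s_1\mapsto 1$ and $z_1\mapsto 1$, and use flatness of $R$ over the image Laurent subring rather than over $R_C$ or $R_P$ (over which $R$ is no longer flat); this is exactly the adjustment needed to make that ``identical argument'' literally valid here.
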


\begin{proof}
The argument is identical to that of
Proposition~\ref{prop:homology-of-lifted-pieces}, using the description
of the lifted pieces in Lemma~\ref{lem:lifted-pieces-zero-zero}.
\end{proof}

We now compute the orders of $\mathfrak K$. As suggested by
Remark~\ref{rem:K-module-zero-zero}, its behavior differs from that in
the previous subsection.

\begin{prop}
\label{prop:module-k-for-00-case}
The module $\mathfrak K$ satisfies
\[
\Delta_0(\mathfrak K)=0
\qquad\text{and}\qquad
\Delta_i(\mathfrak K)=1
\quad\text{for all }i\geq1.
\]
\end{prop}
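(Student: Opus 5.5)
We want $\Delta_0(\mathfrak K)=0$ and $\Delta_i(\mathfrak K)=1$ for $i\geq 1$. The plan is to identify $\mathfrak K$ as an explicit ideal of $R$, find its rank, and then read off the orders from Proposition~\ref{prop:rank-torsion-orders}.

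\emph{Step 1: identify $\mathfrak K$ as an ideal.} By Remark~\ref{rem:H0-lifted-pieces-zero-zero}, $H_0(\widehat\Sigma)\cong R$, and the Mayer--Vietoris map
\[
f\colon R\longrightarrow R/\mathfrak a_C\oplus R/\mathfrak a_P
\]
is the pair of quotient maps. Here
\[
\mathfrak a_C=\langle t_2-1,\ldots,t_\mu-1\rangle,
\qquad
\mathfrak a_P=\langle t_{11}-1,\ldots,t_{1m}-1\rangle.
\]
Up to the sign convention on the second factor, which does not affect the kernel, we get $\mathfrak K=\mathfrak a_C\cap\mathfrak a_P$. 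The two ideals involve disjoint sets of variables, so $R\cong R_1\otimes_{\mathbb Z}R_2$, where $R_1$ is the Laurent ring in $\mathbf t_1$ and $R_2$ is the Laurent ring in $\mathbf t_C$. Both $R_i/\mathfrak a$ are $\mathbb Z$, hence flat over $\mathbb Z$, so the intersection equals the product:
\[
\mathfrak K=\mathfrak a_P\,\mathfrak a_C.
\]
This agrees with the generating set $g_{ij}=(t_{1i}-1)(t_j-1)$ claimed in Remark~\ref{rem:K-module-zero-zero}. As a check, the bipartite-graph argument of Proposition~\ref{prop:generators-for-K} also applies verbatim.

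\emph{Step 2: rank and torsion.} $\mathfrak K$ is a nonzero ideal of the domain $R$; for example $g_{12}\neq0$, and $\mu\geq2$ is needed here. It is therefore torsion-free of rank one, because $Q(R)\otimes\mathfrak K=Q(R)$. Proposition~\ref{prop:rank-torsion-orders} then gives $\Delta_0(\mathfrak K)=0$. It also gives $\Delta_i(\mathfrak K)\doteq\Delta_{i-1}(\operatorname{Tor}\mathfrak K)=\Delta_{i-1}(0)=1$ for all $i\geq1$, since the zero module has all Fitting ideals equal to $R$.

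\emph{Alternative for Step 2.} If one prefers to avoid citing Blanchfield's result, one can argue directly from a presentation. The generators $g_{ij}$ give a surjection $R^{N}\to\mathfrak K$. The relations include the Koszul-type relations $g_{ij}e_{kl}-g_{kl}e_{ij}$, and their $(N-1)$-minors include, up to sign, powers of individual $g_{ij}$ in a suitable pattern. Hence $E_1(\mathfrak K)$ contains elements such as $g_{12}^{N-1}$ and $g_{ij}^{N-1}$ for all $i,j$. When $m\geq2$ or $\mu\geq3$ these have gcd $1$. In general, though, gcd considerations are more delicate, so I would use the rank argument.

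\emph{Main obstacle.} The step needing care is Step 1, the identification of $\mathfrak K$, in particular making sure the map $f$ really is the pair of quotient maps under the identifications of Remark~\ref{rem:H0-lifted-pieces-zero-zero}. Once $\mathfrak K$ is known to be a nonzero ideal of $R$, the conclusion is immediate. Strictly, even the intersection/product identity is unnecessary: all that is needed is that $\mathfrak K$ is a nonzero submodule of $R$. Nonzeroness follows from exhibiting $g_{12}\in\ker f$, which is clear because $g_{12}$ maps to zero in both quotients.
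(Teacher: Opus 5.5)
Your argument is correct, and for the higher orders it takes a genuinely different route from the paper's.

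\textbf{Where you agree with the paper.} The paper also obtains $\Delta_0(\mathfrak K)=0$ from the fact that $\mathfrak K$ has rank one.

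\textbf{Where you differ.} For $\Delta_i(\mathfrak K)=1$ the paper works by hand. It treats $(m,\mu)=(1,2)$ separately, where $\mathfrak K$ is principal. Otherwise it adjoins the relations $z_{i,jk}$ and $z_{ik,j}$ to a presentation on the generators $g_{ij}$. It then exhibits two lower-triangular $(m(\mu-1)-1)$-minors with coprime determinants, $(t_{11}-1)^{m-1}(t_2-1)^{m(\mu-2)}$ and $\pm(t_{12}-1)^{m-1}(t_3-1)^{m(\mu-2)}$.

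You instead use the torsion half of Proposition~\ref{prop:rank-torsion-orders}. A nonzero ideal of a domain is torsion-free of rank one, so $\Delta_i(\mathfrak K)\doteq\Delta_{i-1}(0)=1$ for $i\geq 1$. This is shorter and uniform in $(m,\mu)$. It needs only that $\mathfrak K$ is a nonzero ideal, so you are right that Step~1 beyond $g_{12}\in\ker f$ is unnecessary. You are also right that $\mu\geq 2$ is needed for nonvanishing; the paper uses this implicitly.

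\textbf{What the paper's route buys.} The explicit matrices are reusable. In Proposition~\ref{prop:module-k-0-case}, $\mathfrak K$ lies inside $R/\langle A_1-1\rangle$ and is therefore torsion, so your rank argument does not apply there. The same two matrices, augmented by the rows $(A_1-1)x_{12}$ and $(A_1-1)x_{23}$, give both $\Delta_1(\mathfrak K)=1$ and $\Delta_0(\mathfrak K)\mid A_1-1$ in that case.

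\textbf{A slip in your aside.} The pure-power minors $g_{ij}^{N-1}$ from the Koszul relations have gcd $1$ only when $m\geq 2$ \emph{and} $\mu\geq 3$. If $\mu=2$, every $g_{i2}$ is divisible by $t_2-1$. If $m=1$, every $g_{1j}$ is divisible by $t_{11}-1$. This is exactly why the paper uses the mixed relations $z_{i,jk}$ and $z_{ik,j}$ rather than the Koszul relations. Since you do not rely on this alternative, it does not affect your proof.
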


\begin{proof}
As suggested in Remark \ref{rem:K-module-zero-zero}, $\mathfrak K$ is a
submodule of a free $R$-module of rank one and is generated by the
elements
\[
g_{ij}=(t_{1i}-1)(t_j-1),
\qquad
1\leq i\leq m,\quad 2\leq j\leq\mu.
\]
In particular, $\mathfrak K\neq0$. Hence, by
Lemma~\ref{lem:rank-of-submodules-of-free-modules},
\[
\operatorname{rank}_R(\mathfrak K)>0.
\]
Since $\mathfrak K\subseteq H_0(\widehat\Sigma)\cong R$, we also have
\[
\operatorname{rank}_R(\mathfrak K)\leq1.
\]
Therefore,
\[
\operatorname{rank}_R(\mathfrak K)=1.
\]
and consequently
\[
\Delta_0(\mathfrak K)=0.
\]
It remains to determine the higher orders. Since the orders form a
decreasing sequence with respect to divisibility, it suffices to prove
that
\[
\Delta_1(\mathfrak K)=1.
\]
If $m=1$ and $\mu=2$, then $\mathfrak K$ is a principal ideal of $R$,
As a result, it is a free $R$-module of rank one, and hence
\[
\Delta_1(\mathfrak K)=1.
\]
We may therefore assume that $(m,\mu)\neq(1,2)$.

Let
\[
\left\{
x_{ij}
\ \middle|\
1\leq i\leq m,\;
2\leq j\leq\mu
\right\}
\]
be the standard basis of $R^{m(\mu-1)}$, and define
\[
g\colon
R^{m(\mu-1)}
\longrightarrow
\mathfrak K
\]
by
\[
g(x_{ij})=g_{ij}.
\]
Since the elements $g_{ij}$ generate $\mathfrak K$, we have a short
exact sequence
\[
0
\longrightarrow
\ker(g)
\longrightarrow
R^{m(\mu-1)}
\xrightarrow{\ g\ }
\mathfrak K
\longrightarrow
0.
\]

Since $R$ is Noetherian, $\ker(g)$ is finitely generated. Choose a
finite generating set $\{y_p\}_{p\in I}$ for $\ker(g)$. This gives a
presentation
\[
R^{|I|}
\xrightarrow{\ \phi\ }
R^{m(\mu-1)}
\xrightarrow{\ g\ }
\mathfrak K
\longrightarrow
0.
\]
The generators $g_{ij}$ satisfy the relations
\[
(t_k-1)g_{ij}
=
(t_j-1)g_{ik}
\]
and
\[
(t_{1k}-1)g_{ij}
=
(t_{1i}-1)g_{kj}.
\]
Accordingly, we may enlarge the chosen generating set of $\ker(g)$ by
elements $z_{i,jk}$ and $z_{ik,j}$ corresponding to these relations.
We obtain a presentation
\[
R^{|I|+m\binom{\mu-1}{2}+\binom{m}{2}(\mu-1)}
\xrightarrow{\ \widetilde\phi\ }
R^{m(\mu-1)}
\xrightarrow{\ g\ }
\mathfrak K
\longrightarrow
0,
\]
where
\[
\widetilde\phi(y_p)=\phi(y_p),
\]
\[
\widetilde\phi(z_{i,jk})
=
-(t_k-1)x_{ij}+(t_j-1)x_{ik},
\]
and
\[
\widetilde\phi(z_{ik,j})
=
-(t_{1k}-1)x_{ij}+(t_{1i}-1)x_{kj}.
\]
The first family of relations preserves the first index and varies the
second, whereas the second family preserves the second index and varies
the first. We will refer to $z_{i,jk}$ as \emph{relating} $x_{ij}$ and
$x_{ik}$, and to $z_{ik,j}$ as \emph{relating} $x_{ij}$ and $x_{kj}$.

We now exhibit two $(m(\mu-1)-1)\times(m(\mu-1)-1)$ minors of a
presentation matrix whose determinants are relatively prime.

First, let $\widetilde M_{11,2}$ be the
$(m(\mu-1)-1)\times m(\mu-1)$ matrix obtained from the relations
$z_{1i,2}$, for $2\leq i\leq m$, together with the relations
$z_{i,2j}$, for $1\leq i\leq m$ and $3\leq j\leq\mu$.
For example, when $m=3$ and $\mu=4$, this matrix is
\begin{center}
{\small
\setlength{\arraycolsep}{3pt}
$\begin{array}{c|ccccccccc}
 & x_{12} & x_{22} & x_{32} & x_{13} & x_{23} & x_{33} & x_{14} & x_{24} & x_{34} \\ \hline
z_{12,2}
& -(t_{12}-1) & t_{11}-1 & 0 & 0 & 0 & 0 & 0 & 0 & 0 \\
z_{13,2}
& -(t_{13}-1) & 0 & t_{11}-1 & 0 & 0 & 0 & 0 & 0 & 0 \\
z_{1,23}
& -(t_3-1) & 0 & 0 & t_2-1 & 0 & 0 & 0 & 0 & 0 \\
z_{2,23}
& 0 & -(t_3-1) & 0 & 0 & t_2-1 & 0 & 0 & 0 & 0 \\
z_{3,23}
& 0 & 0 & -(t_3-1) & 0 & 0 & t_2-1 & 0 & 0 & 0 \\
z_{1,24}
& -(t_4-1) & 0 & 0 & 0 & 0 & 0 & t_2-1 & 0 & 0 \\
z_{2,24}
& 0 & -(t_4-1) & 0 & 0 & 0 & 0 & 0 & t_2-1 & 0 \\
z_{3,24}
& 0 & 0 & -(t_4-1) & 0 & 0 & 0 & 0 & 0 & t_2-1
\end{array}$.}
\end{center}
We order the columns, corresponding to the $x_{ij}$, lexicographically
by the pairs $(j,i)$; explicitly,
\begin{equation}
\label{eq:column-order-M11-2}
\underbrace{x_{12},x_{22},\ldots,x_{m2}}_{\text{$j=2$}}
\ ;\ 
\underbrace{x_{13},x_{23},\ldots,x_{m3}}_{\text{$j=3$}}
\ ;\ \cdots\ ;\
\underbrace{x_{1\mu},x_{2\mu},\ldots,x_{m\mu}}_{\text{$j=\mu$}}.
\end{equation}
Similarly, we order the rows in corresponding blocks. The first block
consists of the $m-1$ relations connecting the remaining generators in
the $j=2$ block to $x_{12}$. For each $j\geq3$, the corresponding block
consists of the $m$ relations connecting the $i$-th generator in the
$j$-block to the $i$-th generator in the $j=2$ block. Explicitly, the
ordering is
\[
\underbrace{
z_{12,2},z_{13,2},\ldots,z_{1m,2}
}_{\text{$j=2$}}
\ ;\
\underbrace{
z_{1,23},z_{2,23},\ldots,z_{m,23}
}_{\text{$j=3$}}
\ ;\ \cdots\ ;\
\underbrace{
z_{1,2\mu},z_{2,2\mu},\ldots,z_{m,2\mu}
}_{\text{$j=\mu$}}.
\]
Let $M_{11,2}$ be the square matrix obtained from
$\widetilde M_{11,2}$ by deleting the column corresponding to
$x_{12}$. With these orderings, $M_{11,2}$ is lower triangular. Its
diagonal entries consist of $m-1$ copies of $(t_{11}-1)$ and
$m(\mu-2)$ copies of $(t_2-1)$. Hence
\begin{equation}
\label{eq:det-M11-2}
\det(M_{11,2})
=
(t_{11}-1)^{m-1}
(t_2-1)^{m(\mu-2)}.
\end{equation}

The construction of the second matrix is analogous. Starting from the
ordering in Equation~\eqref{eq:column-order-M11-2}, we first exchange
the first two elements $x_{13}$ and $x_{23}$ in the $j=3$ block, and
then exchange the $j=2$ and $j=3$ blocks. Thus, the resulting ordering
of the generators is
\begin{equation}
\label{eq:column-order-M12-3}
\underbrace{x_{23},x_{13},x_{33},\ldots,x_{m3}}_{\text{$j=3$}}
\ ;\
\underbrace{x_{12},x_{22},\ldots,x_{m2}}_{\text{$j=2$}}
\ ;\
\underbrace{x_{14},x_{24},\ldots,x_{m4}}_{\text{$j=4$}}
\ ;\ \cdots\ ;\
\underbrace{x_{1\mu},x_{2\mu},\ldots,x_{m\mu}}_{\text{$j=\mu$}}.
\end{equation}

We choose the relations according to the same rule as for
$\widetilde M_{11,2}$. The first $m-1$ relations relate the remaining
generators in the first block to $x_{23}$, while the relations in each
subsequent block relate its elements, position by position, to the
corresponding elements of the first block. Accordingly, we order the
rows as
\[
\underbrace{
z_{12,3},z_{23,3},z_{24,3},\ldots,z_{2m,3}
}_{\text{$j=3$}}
\ ;\
\underbrace{
z_{1,23},z_{2,23},\ldots,z_{m,23}
}_{\text{$j=2$}}
\ ;\
\underbrace{
z_{1,34},z_{2,34},\ldots,z_{m,34}
}_{\text{$j=4$}}
\ ;\ \cdots\ ;\
\underbrace{
z_{1,3\mu},z_{2,3\mu},\ldots,z_{m,3\mu}
}_{\text{$j=\mu$}}.
\]
When $m=1$ or $\mu=2$, the same construction applies, with any
unavailable exchange omitted.

Let $\widetilde M_{12,3}$ be the matrix determined by these relations,
and let $M_{12,3}$ be the square matrix obtained by deleting the column
corresponding to $x_{23}$. With these orderings, $M_{12,3}$ is lower
triangular. Its diagonal entries consist of $m-1$ copies of
$\pm (t_{12}-1)$ and $m(\mu-2)$ copies of $\pm (t_3-1)$. Hence
\begin{equation}
\label{eq:det-M12-3}
\det(M_{12,3})
=
\pm (t_{12}-1)^{m-1}
(t_3-1)^{m(\mu-2)}.
\end{equation}

Both determinants are $(m(\mu-1)-1)$-minors of a presentation matrix
for $\mathfrak K$, and hence
\[
\det(M_{11,2}),
\det(M_{12,3})
\in
E_1(\mathfrak K).
\]
Therefore,
\[
\Delta_1(\mathfrak K)
\;\Bigm|\;
\gcd\bigl(
\det(M_{11,2}),
\det(M_{12,3})
\bigr).
\]
By Equations~\eqref{eq:det-M11-2} and
\eqref{eq:det-M12-3}, the two determinants are relatively prime, so
\[
\gcd\bigl(
\det(M_{11,2}),
\det(M_{12,3})
\bigr)=1.
\]
Consequently,
\[
\Delta_1(\mathfrak K)=1.
\]
It follows that
\[
\Delta_i(\mathfrak K)=1
\qquad
\text{for every }i\geq1,
\]
which completes the proof.
\end{proof}

We now turn our attention to the left-hand term in the short exact
sequence~\eqref{eq:00-case-MV}. We first recall some facts that will be
used in its analysis.

\begin{rem}
\label{rem:specialization-zero-zero}
We use the notation from Remark~\ref{rem:change-of-variables} together
with the identifications established in
Proposition~\ref{prop:homology-of-lifted-pieces-zero-zero}. In the present
case,
\[
\phi_C(s_1)=1
\qquad\text{and}\qquad
\phi_P(z_1)=1.
\]
Let
\[
\psi_C\colon
R_C\twoheadrightarrow
\mathbb Z[t_2^{\pm1},\ldots,t_\mu^{\pm1}]
\]
and
\[
\psi_P\colon
R_P\twoheadrightarrow
\mathbb Z[t_{11}^{\pm1},\ldots,t_{1m}^{\pm1}]
\]
denote the homomorphisms obtained by setting $s_1=1$ and $z_1=1$,
respectively. Then, from the proof of
Lemma~\ref{lem:specialization-abelian-cover}, we have
\[
E_i\bigl(H_1(Z_C)\bigr)
=
\psi_C\bigl(E_i(H_1(\widehat X_C))\bigr)
\]
and
\[
E_i\bigl(H_1(Z_P)\bigr)
=
\psi_P\bigl(E_i(H_1(\widehat X_P))\bigr).
\]
\end{rem}
\begin{lem}
\label{lem:quotient-module-zero-zero}
Let
\[
\mathfrak B
:=
\frac{H_1(Y_C)\oplus H_1(Y_P)}
{\operatorname{im}(f_1)}.
\]
Then, for every $k\geq 0$,
\[
\Delta_k(\mathfrak B)
\;\Bigm|\;
\gcd_{\substack{\\[2pt] i+j=k-1}}
\!\Bigl(
    \Delta_{C',i}(\mathbf t_C)
    \;
    \Delta_{P,j}(\mathbf t_1)
\Bigr).
\]
\end{lem}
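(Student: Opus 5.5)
The plan is to reduce $\mathfrak B$, using only quotients and one cyclic kernel, to a direct sum of two modules whose orders are the Alexander polynomials of the sublinks $C'$ and $P$. Two order facts drive everything. First, if $\mathfrak M\twoheadrightarrow\mathfrak Q$ is surjective, then a presentation of $\mathfrak Q$ is obtained by adding rows to one of $\mathfrak M$. Hence $E_k(\mathfrak M)\subseteq E_k(\mathfrak Q)$, and so $\Delta_k(\mathfrak Q)\mid\Delta_k(\mathfrak M)$. Second, if $\mathfrak N\subseteq\mathfrak B$ is cyclic, then $E_i(\mathfrak N)=R$ for $i\ge1$. Corollary~\ref{cor:orders-short-exact-sequence} applied to $0\to\mathfrak N\to\mathfrak B\to\mathfrak B/\mathfrak N\to0$ then gives $\Delta_k(\mathfrak B)\mid\Delta_{k-1}(\mathfrak B/\mathfrak N)$. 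This shift by one cyclic kernel is where the index $k-1$ in the statement comes from.

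First, describe $f_1$. By Lemma~\ref{lem:lifted-pieces-zero-zero} and \eqref{eq:H1-Sigma-zero-zero}, $H_1(\widehat\Sigma)\cong R^2$. Its generators are the lifts of the meridian $\gamma_m$ and the zero-framed longitude $\gamma_\ell$ of $C_1$; both lifts are closed since $\mathbf w_1=0$ and $\boldsymbol\ell_1=0$. Under the standard-solid-torus identification, the meridian of $N(C_1)$ is the longitude $\lambda_\rho$ of the axis $\rho$, and the longitude of $N(C_1)$ is the meridian $\mu_\rho$. So
\[
f_1(\widehat\gamma_m)=(m_C,\lambda_\rho),\qquad f_1(\widehat\gamma_\ell)=(\ell_C,\mu_\rho),
\]
where $m_C,\ell_C\in H_1(Y_C)$ and $\mu_\rho,\lambda_\rho\in H_1(Y_P)$ are the corresponding lifted classes.

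Next, take $\mathfrak N\subseteq\mathfrak B$ to be the cyclic submodule generated by the class of $(m_C,0)$. In $\mathfrak B/\mathfrak N$, the relation $f_1(\widehat\gamma_m)=0$ forces $(0,\lambda_\rho)=0$ as well. Therefore $\mathfrak B/\mathfrak N$ is a quotient of
\[
\frac{H_1(Y_C)}{\langle m_C\rangle}\oplus\frac{H_1(Y_P)}{\langle\lambda_\rho\rangle}.
\]
Proposition~\ref{prop:homology-of-lifted-pieces-zero-zero} identifies $H_1(Y_C)$ with $H_1(Z_C)\otimes_{R_C}R$, and extension of scalars is flat here. Lemma~\ref{lem:quotient-by-torus-image} then identifies the first summand with $H_1(\widehat X_{C'})\otimes R$. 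Its orders are $\Delta_{C',i}(\mathbf t_C)$ by Lemma~\ref{lem:fitting-base-change}.

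For the second summand, the aim is to show that $H_1(Y_P)/\langle\lambda_\rho\rangle$ surjects onto, or has orders dividing those of, $H_1(\widehat X_{P})\otimes R$, where $X_P$ is now regarded as the exterior of $P$ in $S^3$. Lemma~\ref{lem:quotient-by-torus-image}, applied to the augmented link $P'$ with $\rho$ forgotten, identifies $H_1(Z_P)/\langle\mu_\rho\rangle$ with $H_1(\widehat X_P)$. So I need to trade $\lambda_\rho$ for $\mu_\rho$.

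I expect this trade to be the main obstacle. The first thing I would try is to avoid it by symmetry. The choice of whether to put the cyclic kernel on the companion side or the pattern side is free. If killing $(m_C,0)$ produces the wrong quotient on the $P$ side, I would instead kill a single combination such as $(m_C,0)$ and then use the second relation $f_1(\widehat\gamma_\ell)=0$ to replace $\ell_C$ by $-\mu_\rho$. Quotienting $H_1(\widehat X_{C'})$ further by the lifted core of $C_1$ only weakens orders in the allowed direction, provided I arrange that the final module is a quotient of $H_1(\widehat X_{C'})\otimes R\oplus H_1(\widehat X_P)\otimes R$. Failing that, I would use Corollary~\ref{cor:traldi-quotient-bounds} for $P'$ to compare $H_1(Z_P)$ modulo a boundary class with $\Delta_{P,j}$.

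Once $\mathfrak B/\mathfrak N$ is exhibited as a quotient of $\mathfrak M:=\bigl(H_1(\widehat X_{C'})\otimes R\bigr)\oplus\bigl(H_1(\widehat X_P)\otimes R\bigr)$, the proof concludes in one chain. The split case of Corollary~\ref{cor:orders-short-exact-sequence} computes $\Delta_{k-1}(\mathfrak M)$, and combining it with the two order facts gives
\[
\Delta_k(\mathfrak B)\mid\Delta_{k-1}(\mathfrak B/\mathfrak N)\mid\Delta_{k-1}(\mathfrak M)\doteq\gcd_{i+j=k-1}\Delta_{C',i}(\mathbf t_C)\,\Delta_{P,j}(\mathbf t_1).
\]
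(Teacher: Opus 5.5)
Your two order facts, your description of $f_1$, and the identification $H_1(Y_C)/\langle m_C\rangle\cong H_1(\widehat X_{C'})\otimes R$ are all fine. There is, however, a genuine gap at exactly the step you flag, and none of your fallbacks closes it.

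With $\mathfrak N=\langle(m_C,0)\rangle$, the module $\mathfrak B/\mathfrak N$ is $H_1(Y_C)\oplus H_1(Y_P)$ modulo three relations: $(m_C,0)$, $(0,\lambda_\rho)$, and the one remaining mixed relation $(\ell_C,\mu_\rho)$. The natural map $\mathfrak M\to\mathfrak B/\mathfrak N$ exists only if $(0,\mu_\rho)$, equivalently $(\ell_C,0)$, lies in $\operatorname{im}(f_1)+\langle(m_C,0)\rangle$. Nothing forces this, and in general it fails.

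Your fallbacks do not rescue this step:
\begin{itemize}
\item Quotienting further by the lifted core of $C_1$ runs in the wrong direction. For a further quotient $\mathfrak Q$ of $\mathfrak B/\mathfrak N$ you only get $\Delta_{k-1}(\mathfrak Q)\mid\Delta_{k-1}(\mathfrak B/\mathfrak N)$, which cannot be chained after $\Delta_k(\mathfrak B)\mid\Delta_{k-1}(\mathfrak B/\mathfrak N)$. Equivalently, the kernel becomes two-generated, and your cyclic-kernel fact then only yields $\Delta_k(\mathfrak B)\mid\Delta_{k-2}(\mathfrak M)$.
\item The Traldi fallback has the same defect. Since $\rho$ has zero linking with every $P_{1j}$, Corollary~\ref{cor:traldi-quotient-bounds} gives $\Delta_j(H_1(Y_P))\mid\Delta_{P,j-1}$. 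That is a second index shift on top of the cyclic one, so you end with $\gcd_{i+j=k-2}$, which is weaker than the lemma.
\end{itemize}

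The missing ingredient is a Fitting-ideal inclusion that costs no index when a single relation mixes the two summands. If $\mathfrak X=(\mathfrak U\oplus\mathfrak V)/\langle(u,v)\rangle$, then $E_s(\mathfrak U\oplus\mathfrak V/\langle v\rangle)\subseteq E_s(\mathfrak X)$, and symmetrically with $u$. The reason is that a nonzero minor of the split presentation matrix has square diagonal blocks. Restoring the off-diagonal part of the one mixed row then gives a block-triangular matrix with the same determinant.

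The paper applies this to $\mathfrak B$ directly. It discards the longitude relation and splits the meridian relation, giving $\Delta_k(\mathfrak B)\mid\Delta_k\bigl(H_1(\widehat X_{C'})\otimes R\oplus H_1(Y_P)\bigr)$. Traldi's bound on $H_1(Y_P)$ then supplies the single shift.

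Your own route can also be completed with this inclusion. After killing $\mathfrak N$, only $(\ell_C,\mu_\rho)$ is mixed. Splitting it to $(0,\mu_\rho)$ gives
$\Delta_{k-1}(\mathfrak B/\mathfrak N)\mid\Delta_{k-1}\bigl(H_1(\widehat X_{C'})\otimes R\oplus H_1(Y_P)/\langle\lambda_\rho,\mu_\rho\rangle\bigr)\mid\Delta_{k-1}(\mathfrak M)$.
The middle module is a quotient of $\mathfrak M$ because $H_1(Y_P)/\langle\mu_\rho\rangle\cong H_1(\widehat X_P)\otimes R$. This closes your chain without any appeal to Traldi. As written, though, the proposal contains neither argument, so its key step is unproved.
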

\begin{proof}
Let $M_C$ and $M_P$ be presentation matrices for $H_1(Y_C)$ and
$H_1(Y_P)$, respectively, with $n_C$ and $n_P$ columns. By
Proposition~\ref{prop:homology-of-lifted-pieces-zero-zero},
\[
H_1(\widehat\Sigma)\cong R^2.
\]
Choose generators $y,y'\in H_1(\widehat\Sigma)$ corresponding to the
lifts of the meridian and longitude of $\Sigma=\partial N(C_1)$,
respectively. Then $\operatorname{im}(f_1)$ is generated by
$f_1(y)$ and $f_1(y')$, either of which may vanish.

Let
\[
\operatorname{pr}_C\colon
H_1(Y_C)\oplus H_1(Y_P)\longrightarrow H_1(Y_C)
\]
and
\[
\operatorname{pr}_P\colon
H_1(Y_C)\oplus H_1(Y_P)\longrightarrow H_1(Y_P)
\]
denote the natural projections. Write
\[
r_C:=\operatorname{pr}_C(f_1(y)),
\qquad
r_P:=\operatorname{pr}_P(f_1(y)),
\]
and
\[
r_C':=\operatorname{pr}_C(f_1(y')),
\qquad
r_P':=\operatorname{pr}_P(f_1(y')).
\]
We regard these elements as row vectors with respect to the chosen
generators of the corresponding presentation matrices. A presentation
matrix for $\mathfrak B$ is therefore
\[
M_{\mathfrak B}
=
\begin{pmatrix}
M_C & 0 \\
r_C & r_P \\
r_C' & r_P' \\
0 & M_P
\end{pmatrix}.
\]
Thus $\Delta_k(\mathfrak B)$ is the greatest common divisor of the
$(n_C+n_P-k)$-minors of $M_{\mathfrak B}$.

Define
\[
f_C
:=
\left.
\operatorname{pr}_C\circ f_1
\right|_{\langle y\rangle}
\]
and
\[
f_P
:=
\left.
\operatorname{pr}_P\circ f_1
\right|_{\langle y'\rangle}.
\]
We claim that
\begin{equation}
\label{eq:inclusion-of-subideals}
\begin{aligned}
&E_k\left(
H_1(Y_C)/\operatorname{im}(f_C)
\oplus
H_1(Y_P)
\right)
\\
&\qquad+
E_k\left(
H_1(Y_C)
\oplus
H_1(Y_P)/\operatorname{im}(f_P)
\right)
\subseteq
E_k(\mathfrak B).
\end{aligned}
\end{equation}
For the first inclusion, observe that
\[
M_1
=
\begin{pmatrix}
M_C & 0 \\
r_C & 0 \\
0 & M_P
\end{pmatrix}
\]
is a presentation matrix for
\[
H_1(Y_C)/\operatorname{im}(f_C)
\oplus
H_1(Y_P).
\]
Let $p$ be a nonzero $(n_C+n_P-k)$-minor of $M_1$, arising from a
submatrix
\[
M_p
=
\begin{pmatrix}
\widetilde M_C & 0 \\
\widetilde r_C & 0 \\
0 & \widetilde M_P
\end{pmatrix},
\]
where the tildes indicate that some of the corresponding rows and
columns may have been deleted. Since $p\neq0$, the selected rows and
columns must balance within each diagonal block, so the two diagonal
blocks are both squares.  We refer to these square diagonal blocks as the
$C$-block and the $P$-block, respectively.

Choose the same rows and columns in $M_{\mathfrak B}$. The
corresponding submatrix is
\[
M_{\mathfrak B,p}
=
\begin{pmatrix}
\widetilde M_C & 0 \\
\widetilde r_C & \widetilde r_P \\
0 & \widetilde M_P
\end{pmatrix}.
\]
We claim that
\[
\det(M_{\mathfrak B,p})=p.
\]
If the $(r_C,r_P)$-row is not selected, then the equality is immediate.
Otherwise, $\widetilde M_C$ has size
$(a_C-1)\times a_C$, since adjoining the row $\widetilde r_C$ makes
the corresponding $C$-block square.

We now consider the Leibniz expansion of
$\det(M_{\mathfrak B,p})$. It suffices to show that no nonzero term
contains an entry of $\widetilde r_P$. Indeed, if a term contains an
entry of $\widetilde r_P$, then the $(r_C,r_P)$-row is paired with a
column in the $P$-block. The $a_C$ columns in the $C$-block must then
be paired with only the $a_C-1$ rows of $\widetilde M_C$ together with
rows from the $P$-block. Hence at least one $C$-column is paired with a
row from the $P$-block, where the corresponding entry is zero.
Therefore every term involving an entry of $\widetilde r_P$ vanishes.

It follows that
\[
\det(M_{\mathfrak B,p})
=
\det(M_p)
=
p.\]
Therefore,
\[
E_k\left(
H_1(Y_C)/\operatorname{im}(f_C)
\oplus
H_1(Y_P)
\right)
\subseteq
E_k(\mathfrak B).
\]

The same argument, using the row $(r_C',r_P')$, gives
\[
E_k\left(
H_1(Y_C)
\oplus
H_1(Y_P)/\operatorname{im}(f_P)
\right)
\subseteq
E_k(\mathfrak B),
\]
and hence proves~\eqref{eq:inclusion-of-subideals}.

We now pass from the inclusion of Fitting ideals to orders. Combining Lemma~\ref{lem:quotient-by-torus-image} with Proposition~\ref{prop:homology-of-lifted-pieces-zero-zero}, we obtain
\[
H_1(Y_C)/\operatorname{im}(f_C)
\cong
H_1(\widehat X_{C'})
\otimes R,
\]
and
\[
H_1(Y_P)/\operatorname{im}(f_P)
\cong
H_1(\widehat X_P)\otimes R,
\]
Here the longitude of $\partial N(C_1)$ corresponds, on the pattern
side, to the meridian of the augmented component $\rho$ of
$P'=P\cup\rho$.

Taking greatest common divisors in
\eqref{eq:inclusion-of-subideals} and using the direct-sum formula for
orders gives
\[
\Delta_k(\mathfrak B)
\;\Bigm|\;
\gcd\left(
\gcd_{\substack{i+j=k}}
\!\Bigl(
\Delta_{C',i}(\mathbf t_C)
\;
\Delta_j\bigl(H_1(Y_P)\bigr)
\Bigr),
\;
\gcd_{\substack{i+j=k}}
\!\Bigl(
\Delta_i\bigl(H_1(Y_C)\bigr)
\;
\Delta_{P,j}(\mathbf t_1)
\Bigr)
\right).
\]
By Proposition~\ref{prop:homology-of-lifted-pieces-zero-zero},
Remark~\ref{rem:specialization-zero-zero}, and
Corollary~\ref{cor:traldi-quotient-bounds},
\[
\Delta_j\bigl(H_1(Y_P)\bigr)
\;\Bigm|\;
\Delta_{P,j-1}(\mathbf t_1)
\]
and
\[
\Delta_i\bigl(H_1(Y_C)\bigr)
\;\Bigm|\;
\Delta_{C',i-1}(\mathbf t_C).
\]
Substituting these divisibilities into the preceding expression and
reindexing yields
\[
\Delta_k(\mathfrak B)
\;\Bigm|\;
\gcd_{\substack{\\[2pt] i+j=k-1}}
\!\Bigl(
\Delta_{C',i}(\mathbf t_C)
\;
\Delta_{P,j}(\mathbf t_1)
\Bigr),
\]
as required.
\end{proof}

The preceding analysis and Proposition~\ref{prop:module-k-for-00-case}
give the first divisibility in Theorem~\ref{thm:higher-single-component-case4}.

\begin{cor}
\label{cor:first-divisibility-zero-zero}
For each $k\geq0$, the $k$-th Alexander polynomial of the satellite
link satisfies
\[
\Delta_{L,k}
\;\Bigm|\;
\gcd_{\substack{\\[2pt] i+j=k-2}}
\!\Bigl(
    \Delta_{C',i}(\mathbf t_C)
    \;
    \Delta_{P,j}(\mathbf t_1)
\Bigr).
\]
\end{cor}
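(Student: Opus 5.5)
The plan is to apply Corollary~\ref{cor:orders-short-exact-sequence} to the short exact sequence~\eqref{eq:00-case-MV},
\[
0\longrightarrow \mathfrak B\longrightarrow H_1(\widehat X_L)\longrightarrow \mathfrak K\longrightarrow 0,
\]
where $\mathfrak B$ is the quotient module of Lemma~\ref{lem:quotient-module-zero-zero}. This gives, for every $k\geq 0$,
\[
\Delta_{L,k}=\Delta_k\bigl(H_1(\widehat X_L)\bigr)
\;\Bigm|\;
\gcd_{a+b=k}\bigl(\Delta_a(\mathfrak B)\,\Delta_b(\mathfrak K)\bigr).
\]
By Proposition~\ref{prop:module-k-for-00-case}, the term with $b=0$ vanishes because $\Delta_0(\mathfrak K)=0$, and it therefore contributes nothing to the gcd. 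For $b\geq 1$ we have $\Delta_b(\mathfrak K)=1$. Hence
\[
\Delta_{L,k}\;\Bigm|\;\gcd_{0\le a\le k-1}\Delta_a(\mathfrak B).
\]

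Since the orders of $\mathfrak B$ decrease under divisibility (Lemma~\ref{lem:basic-fitting-properties}), this gcd equals $\Delta_{k-1}(\mathfrak B)$ when $k\geq 1$. Applying Lemma~\ref{lem:quotient-module-zero-zero} with index $k-1$ then gives
\[
\Delta_{k-1}(\mathfrak B)\;\Bigm|\;\gcd_{i+j=k-2}\bigl(\Delta_{C',i}(\mathbf t_C)\,\Delta_{P,j}(\mathbf t_1)\bigr),
\]
which is the claimed bound.

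The one delicate point is the bookkeeping in the edge cases, namely $k=0$ and $k=1$. Under the paper's convention that terms with negative indices are omitted, the gcd on the right is empty in these cases. For $k=0$, the gcd over $a+b=0$ consists only of $\Delta_0(\mathfrak B)\Delta_0(\mathfrak K)=0$, so the argument yields only $\Delta_{L,0}\mid 0$. I would therefore interpret the empty gcd as $0$, making the statement trivially true. As a consistency check, rank additivity in~\eqref{eq:00-case-MV} gives $\operatorname{rank} H_1(\widehat X_L)\geq \operatorname{rank}\mathfrak K=1$, so in fact $\Delta_{L,0}=0$, which agrees with Torres' vanishing for total linking number zero. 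For $k=1$ I would also read the empty gcd as $0$. In that case the divisibility holds trivially and is consistent with what the sequence gives, namely $\Delta_{L,1}\mid\Delta_0(\mathfrak B)$.

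So, apart from fixing this convention, the main step is simply to check that the vanishing of $\Delta_0(\mathfrak K)$ removes exactly one index. This index shift, combined with the shift by one already contained in Lemma~\ref{lem:quotient-module-zero-zero}, produces the total shift $k-2$.
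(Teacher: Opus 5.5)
Your proposal is correct and follows the paper's proof: you apply Corollary~\ref{cor:orders-short-exact-sequence} to \eqref{eq:00-case-MV}, use Proposition~\ref{prop:module-k-for-00-case} to reduce the bound to $\Delta_{k-1}(\mathfrak B)$, and then invoke Lemma~\ref{lem:quotient-module-zero-zero}. For $k=0,1$ you read the empty gcd as $0$, so the bound is vacuous there; this is the only reading under which the statement holds, since $\Delta_{L,0}=0$, and it makes explicit a point the paper leaves implicit.
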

\begin{proof}
Recall from \eqref{eq:00-case-MV} that we have the short exact sequence
\[
0
\longrightarrow
\mathfrak B
\longrightarrow
H_1(\widehat X_L)
\longrightarrow
\mathfrak K
\longrightarrow
0.
\]
By Corollary~\ref{cor:orders-short-exact-sequence},
\[
\Delta_{L,k}
\;\Bigm|\;
\gcd_{\substack{\\[2pt] i+j=k}}
\!\Bigl(
\Delta_i(\mathfrak B)\,
\Delta_j(\mathfrak K)
\Bigr).
\]
By Proposition~\ref{prop:module-k-for-00-case}, we obtain
\[
\Delta_{L,k}
\;\Bigm|\;
\Delta_{k-1}(\mathfrak B).
\]
Applying Lemma~\ref{lem:quotient-module-zero-zero} to
$\Delta_{k-1}(\mathfrak B)$ gives
\[
\Delta_{L,k}
\;\Bigm|\;
\gcd_{\substack{\\[2pt] i+j=k-2}}
\!\Bigl(
\Delta_{C',i}(\mathbf t_C)\,
\Delta_{P,j}(\mathbf t_1)
\Bigr),
\]
as required.
\end{proof}

We next use the relative short exact sequence
\eqref{eq:00-case-MV-relative} to obtain the opposite divisibility. To
simplify the statement, we use the notation $D_{C,i}$ and $D_{P,i}$
from Definition~\ref{defn:DC-DP}.

\begin{prop}
\label{prop:reverse-divisibility-zero-zero}
For each $k\geq0$, the $k$-th Alexander polynomial of the satellite
link satisfies
\[
\gcd_{\substack{\\[2pt] i+j=k+1}}
\!\Bigl(
D_{C,i}(\mathbf t_C)\,D_{P,j}(\mathbf t_1)
\Bigr)
\;\Bigm|\;
\Delta_{L,k},
\]
\end{prop}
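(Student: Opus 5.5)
We will use the relative short exact sequence \eqref{eq:00-case-MV-relative},
\[
0\longrightarrow \operatorname{im}(f_1^{\mathrm{rel}})\longrightarrow \mathfrak M_C\oplus\mathfrak M_P\longrightarrow \mathfrak M_L\longrightarrow 0,
\]
with $\mathfrak M_C:=H_1(Y_C,\widehat X_0)$, $\mathfrak M_P:=H_1(Y_P,\widehat X_0)$ and $\mathfrak M_L:=H_1(\widehat X_L,\widehat X_0)$. This mirrors the argument of Corollary~\ref{cor:reverse-divisibility}. Corollary~\ref{cor:orders-short-exact-sequence} gives
\[
\Delta_{a+b}(\mathfrak M_C\oplus\mathfrak M_P)\mid \Delta_a(\operatorname{im}f_1^{\mathrm{rel}})\,\Delta_b(\mathfrak M_L).
\]
We then need two inputs. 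The first is the lowest index $a$ for which $\Delta_a(\operatorname{im} f_1^{\mathrm{rel}})=1$. The second is an identification of the orders of $\mathfrak M_C$ and $\mathfrak M_P$ in terms of $D_{C,i}$ and $D_{P,j}$.

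\textbf{Step 1: the image term.} By Remark~\ref{rem:relative-end-terms-zero-zero}, the long exact sequence of the pair $(\widehat\Sigma,\widehat X_0)$ gives
\[
0\to H_1(\widehat\Sigma)\cong R^2\to H_1(\widehat\Sigma,\widehat X_0)\to \ker(R\to H_0(\widehat\Sigma))\to 0 .
\]
By \eqref{eq:H0-Sigma-zero-zero}, $H_0(\widehat X_0)\to H_0(\widehat\Sigma)$ is an isomorphism, so the kernel vanishes and $H_1(\widehat\Sigma,\widehat X_0)\cong R^2$. Hence $\operatorname{im}(f_1^{\mathrm{rel}})$ is a quotient of $R^2$ generated by two elements, so $E_2(\operatorname{im} f_1^{\mathrm{rel}})=R$ and $\Delta_2=1$. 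Taking $a=2$ and $b=k+1$, and using $\Delta_{k+1}(\mathfrak M_L)\doteq\Delta_{L,k}$ from Proposition~\ref{prop:relative-Alexander-module}, we get
\[
\Delta_{k+3}(\mathfrak M_C\oplus\mathfrak M_P)\mid\Delta_{L,k}.
\]

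\textbf{Step 2: orders of the pieces.} Arguing as in Proposition~\ref{prop:relhomology-of-lifted-pieces}, we have $\mathfrak M_C\cong H_1(Z_C,Z_{C,0})\otimes R$ and similarly for $P$. Remark~\ref{rem:relative-Alexander-module-abelian-cover} gives $\Delta_{i+1}(H_1(Z_C,Z_{C,0}))\doteq\Delta_i(H_1(Z_C))$. Corollary~\ref{cor:traldi-quotient-bounds}, together with \eqref{eq:zeroth-order-special-cover} and the Torres formula (recall $A_1=1$ here, so that $\Delta_0(H_1(Z_C))=0$ unless handled by the $D$-convention), then yields
\[
D_{C,i}\mid\Delta_i(H_1(Z_C)),\qquad D_{P,j}\mid\Delta_j(H_1(Z_P)).
\]
Since base change $\otimes R$ is a flat, injective specialisation, it preserves orders up to the substitution. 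So
\[
D_{C,i-1}\mid\Delta_i(\mathfrak M_C),\qquad D_{P,j-1}\mid\Delta_j(\mathfrak M_P).
\]
The direct-sum formula then gives
\[
\gcd_{i+j=k+1}\bigl(D_{C,i}D_{P,j}\bigr)\mid \Delta_{k+3}(\mathfrak M_C\oplus\mathfrak M_P).
\]
Combined with Step 1, this is the claimed divisibility.

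\textbf{Main obstacle.} The main obstacle is the index bookkeeping in Step 2. The concern is whether the shift produced by the rank-two image term matches exactly the $k+1$ in the statement, or whether it is off by one. If the naive count gives $\gcd_{i+j=k}$, we will refine Step 1. We would first try to show that $f_1^{\mathrm{rel}}$ is injective on the summand coming from $H_0$, or to use the explicit image of the meridian and longitude generators, as in Lemma~\ref{lem:quotient-module-zero-zero}, to compute $\Delta_1(\operatorname{im} f_1^{\mathrm{rel}})$.

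The case $\mu=2$ or $m=1$, where $\Delta_0(H_1(Z_C))$ involves the factor $(t_2-1)$ in place of $(A_1-1)$, must be checked to fit the definition of $D_{C,0}$. This check should use the Torres formula together with $A_1=1$ and the fact that $\Delta_{L,0}$-type minors then contain the augmentation factor.
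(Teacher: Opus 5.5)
Your proposal is correct and follows the paper's proof essentially step for step. Both arguments use the relative sequence \eqref{eq:00-case-MV-relative}, get $\Delta_2(\operatorname{im} f_1^{\mathrm{rel}})=1$ from $H_1(\widehat\Sigma,\widehat X_0)\cong R^2$, shift by one index from the relative to the absolute modules of the pieces, and then apply Corollary~\ref{cor:traldi-quotient-bounds}. The bookkeeping worry in your last paragraph is unfounded: writing $a=i+1$, $b=j+1$ in $a+b=k+3$ gives exactly $i+j=k+1$. The $i=0$ (resp.\ $j=0$) cases are also harmless, because the Torres formula with $A_1=1$ (resp.\ $T_1=1$) gives $\Delta_0(H_1(Z_C))=0$ (resp.\ $\Delta_0(H_1(Z_P))=0$), which $D_{C,0}$ (resp.\ $D_{P,0}$) trivially divides.
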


\begin{proof}
Recall the short exact sequence~\eqref{eq:00-case-MV-relative},
\[
0
\longrightarrow
\operatorname{im}(f_1^{\mathrm{rel}})
\longrightarrow
H_1(Y_C,\widehat X_0)\oplus H_1(Y_P,\widehat X_0)
\longrightarrow
H_1(\widehat X_L,\widehat X_0)
\longrightarrow
0.
\]
For ease of notation, set
\[
\mathfrak B'
:=
H_1(Y_C,\widehat X_0)\oplus H_1(Y_P,\widehat X_0).
\]
By Corollary~\ref{cor:orders-short-exact-sequence}, for every
$k_1,k_2\geq0$ with $k_1+k_2=k+3$, we have
\[
\Delta_{k+3}(\mathfrak B')
\;\Bigm|\;
\Delta_{k_1}\bigl(\operatorname{im}(f_1^{\mathrm{rel}})\bigr)
\,
\Delta_{k_2}\bigl(H_1(\widehat X_L,\widehat X_0)\bigr).
\]
We first observe that
\[
\Delta_2\bigl(\operatorname{im}(f_1^{\mathrm{rel}})\bigr)=1.
\]
Indeed, the long exact sequence of the pair
$(\widehat\Sigma,\widehat X_0)$ contains
\[
H_1(\widehat X_0)
\longrightarrow
H_1(\widehat\Sigma)
\longrightarrow
H_1(\widehat\Sigma,\widehat X_0)
\longrightarrow
H_0(\widehat X_0)
\longrightarrow
H_0(\widehat\Sigma).
\]
Since $\widehat X_0$ is discrete,
\[
H_1(\widehat X_0)=0.
\]
Moreover, by Remark~\ref{rem:H0-lifted-pieces-zero-zero}, the map
\[
H_0(\widehat X_0)
\longrightarrow
H_0(\widehat\Sigma)
\]
is an isomorphism. Hence
\[
H_1(\widehat\Sigma)
\cong
H_1(\widehat\Sigma,\widehat X_0).
\]
By Proposition~\ref{prop:homology-of-lifted-pieces-zero-zero},
\[
H_1(\widehat\Sigma)\cong R^2,
\]
so $\operatorname{im}(f_1^{\mathrm{rel}})$ is generated by at most two
elements. Therefore,
\[
\Delta_2\bigl(\operatorname{im}(f_1^{\mathrm{rel}})\bigr)=1.
\]
Taking $k_1=2$ in the preceding divisibility and using
Proposition~\ref{prop:relative-Alexander-module}, we obtain
\[
\Delta_{k+3}(\mathfrak B')
\;\Bigm|\;
\Delta_{k+1}\bigl(H_1(\widehat X_L,\widehat X_0)\bigr)
\doteq
\Delta_{L,k}.
\]
For the two summands of $\mathfrak B'$, the argument of
Proposition~\ref{prop:relative-Alexander-module} gives
\[
\Delta_i\bigl(H_1(Y_C,\widehat X_0)\bigr)
\doteq
\Delta_{i-1}\bigl(H_1(Y_C)\bigr)
\]
and
\[
\Delta_j\bigl(H_1(Y_P,\widehat X_0)\bigr)
\doteq
\Delta_{j-1}\bigl(H_1(Y_P)\bigr).
\]
The direct-sum formula for orders therefore gives
\[
\Delta_{k+3}(\mathfrak B')
\doteq
\gcd_{\substack{\\[2pt] i+j=k+1}}
\!\Bigl(
\Delta_i\bigl(H_1(Y_C)\bigr)
\,
\Delta_j\bigl(H_1(Y_P)\bigr)
\Bigr).
\]
Finally, by Proposition~\ref{prop:homology-of-lifted-pieces-zero-zero},
Remark~\ref{rem:specialization-zero-zero}, and
Corollary~\ref{cor:traldi-quotient-bounds}, we have
\[
D_{C,i}(\mathbf t_C)
\;\Bigm|\;
\Delta_i\bigl(H_1(Y_C)\bigr)
\qquad\text{and}\qquad
D_{P,j}(\mathbf t_1)
\;\Bigm|\;
\Delta_j\bigl(H_1(Y_P)\bigr).
\]
Consequently,
\[
\gcd_{\substack{\\[2pt] i+j=k+1}}
\!\Bigl(
D_{C,i}(\mathbf t_C)\,D_{P,j}(\mathbf t_1)
\Bigr)
\;\Bigm|\;
\Delta_{L,k},
\]
as required.
\end{proof}

We are now ready to complete the proof of
Theorem~\ref{thm:higher-single-component-case4}.

\begin{proof}[Proof of Theorem~\ref{thm:higher-single-component-case4}]
The result follows by combining
Corollary~\ref{cor:first-divisibility-zero-zero} and
Proposition~\ref{prop:reverse-divisibility-zero-zero}.
\end{proof}

\subsection{Winding or Linking Number Zero}
\label{subsec:winding-or-linking-zero}\hfill

\noindent
Throughout this subsection, we assume that
\[
\mathbf w_1=\mathbf 0
\qquad\text{and}\qquad
\boldsymbol\ell_1\neq\mathbf 0.
\]
Our main goal in this subsection is to prove
Theorem~\ref{thm:higher-single-component-case2}.

The argument follows the same general strategy as in the previous two
subsections. We continue to use the notation and framework established
above, although several of the spaces and modules appearing in the
argument take a different form in the present case. The first important
difference concerns the lift of the gluing torus.

\begin{defn}
\label{defn:cylinder-cover}
Let $\gamma_m$ be a meridian of $\Sigma=\partial N(C_1)$ and let
$\gamma_\ell$ be the longitude determined by the $0$-framing, both
based at $x_0$. Thus
\[
\pi_1(\Sigma,x_0)\cong\mathbb Z^2
\]
is generated by $[\gamma_m]$ and $[\gamma_\ell]$. Define
\[
\overline p_\Sigma\colon
\overline\Sigma\longrightarrow\Sigma
\]
to be the covering corresponding to the kernel of
\[
g_m\colon
\pi_1(\Sigma,x_0)\longrightarrow\mathbb Z,
\]
where
\[
g_m([\gamma_m])=0
\qquad\text{and}\qquad
g_m([\gamma_\ell])=1.
\]
\end{defn}

We can now formulate the analogue of
Lemmas~\ref{lem:lifted-pieces-nonzero} and
\ref{lem:lifted-pieces-zero-zero}. As in Remark~\ref{rem:indexing-sets-zero-zero}, the indexing sets of
Definition~\ref{defn:component-indexing-sets} simplify according to the
vanishing assumptions in the present case. With these indexing sets, we
have the following description of the lifted pieces.

\begin{lem}
\label{lem:lifted-pieces-winding-zero}
There are identifications
\begin{align}
Y_C
&=
\bigsqcup_{\mathbf n\in\Gamma_C}
\mathbf t_1^{\mathbf n} Z_C,
\label{eq:YC-components-winding-zero}
\\
Y_P
&=
\bigsqcup_{[\mathbf r]\in\Gamma_P}
\mathbf t_C^{\mathbf r}\widehat X_P,
\label{eq:YP-components-winding-zero}
\\
\widehat\Sigma
&=
\bigsqcup_{[(\mathbf n,\mathbf r)]\in\Gamma_\Sigma}
\mathbf t_1^{\mathbf n}\mathbf t_C^{\mathbf r}\overline\Sigma.
\label{eq:Sigma-components-winding-zero}
\end{align}
\end{lem}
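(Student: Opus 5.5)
We follow the proof of Lemma~\ref{lem:lifted-pieces-nonzero} essentially verbatim. The approach has two parts. First, identify the covering of each piece given by a single component of the preimage, using the kernel of the composite map to $H_1(X_L;\mathbb Z)$. Second, index the components by the cokernel of the corresponding map on first homology, via the double coset argument $[G_L,G_L]\backslash G_L/\operatorname{im}(\iota_*)$ after abelianising. In the present case $\mathbf w_1=\mathbf 0$ and $\boldsymbol\ell_1\neq\mathbf 0$, so the indexing sets become
\[
\Gamma_C=\mathbb Z^m,\qquad \Gamma_P=\mathbb Z^{\mu-1}/\langle\boldsymbol\ell_1\rangle,\qquad \Gamma_\Sigma=\bigl(\mathbb Z^m\oplus\mathbb Z^{\mu-1}\bigr)/\langle\overline{\boldsymbol\ell}_1\rangle .
\]

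\emph{Companion piece.} The inclusion $X_C\hookrightarrow X_L$ induces
\[
(a_1,\ldots,a_\mu)\longmapsto(a_1\mathbf w_1,a_2,\ldots,a_\mu)=(\mathbf 0,a_2,\ldots,a_\mu).
\]
Its kernel is exactly $\ker(h_C)$, so as in Lemma~\ref{lem:lifted-pieces-zero-zero} a component is the special cover $Z_C$. The image is $\mathbf 0\oplus\mathbb Z^{\mu-1}$, so the components are indexed by $\mathbb Z^m=\Gamma_C$.

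\emph{Pattern piece.} Here $H_1(X_P)\cong\mathbb Z^{m+1}$ has basis the meridians $u_{1j}$ and the meridian $z_1$ of $\rho$. Via $\phi_P$ these map to $e_{1j}\oplus 0$ and to $\overline{\boldsymbol\ell}_1$ respectively. Since $\boldsymbol\ell_1\neq\mathbf 0$, this map is injective. So, exactly as in Lemma~\ref{lem:lifted-pieces-nonzero}, each component is the maximal abelian cover $\widehat X_P$, and the components are indexed by $\bigl(\mathbb Z^m\oplus\mathbb Z^{\mu-1}\bigr)/\bigl(\mathbb Z^m\oplus\langle\boldsymbol\ell_1\rangle\bigr)\cong\Gamma_P$.

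\emph{Torus.} $H_1(\Sigma)\cong\mathbb Z^2$ is generated by $[\gamma_m]$ and $[\gamma_\ell]$. The meridian $\gamma_m$ of $C_1$ is a meridian of the solid torus, i.e.\ of $X_P$ in the pattern coordinates, so it bounds a disk in $N(C_1)$ meeting $P$ with algebraic intersection $\mathbf w_1$. Hence it maps to $(\mathbf w_1,\mathbf 0)=0$. The $0$-framed longitude $\gamma_\ell$ maps to $(\mathbf 0,\boldsymbol\ell_1)=\overline{\boldsymbol\ell}_1\neq 0$. The kernel of the composite is therefore generated by $[\gamma_m]$, which is $\ker(g_m)$. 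Hence each component is the cover $\overline\Sigma$, a cylinder $S^1\times\mathbb R$. The image is $\langle\overline{\boldsymbol\ell}_1\rangle$, so the components are indexed by $\Gamma_\Sigma$.

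In each case, regularity of $\widehat X_L\to X_L$ shows that every component is a deck translate of the basepoint component, which gives the displayed decompositions \eqref{eq:YC-components-winding-zero}--\eqref{eq:Sigma-components-winding-zero}. The only step needing care is the torus computation: the images of $\gamma_m$ and $\gamma_\ell$ must be checked against the linking-number conventions of Remark~\ref{rem:change-of-variables}, in particular the convention that the longitude corresponds on the pattern side to the meridian $z_1$ of $\rho$. The rest is bookkeeping identical to the earlier lemmas.
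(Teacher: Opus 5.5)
Your proposal is correct and follows the paper's route. The paper's proof simply says the argument is identical to Lemmas~\ref{lem:lifted-pieces-nonzero} and~\ref{lem:lifted-pieces-zero-zero}, and you carry out that kernel/cokernel computation for each piece: the companion map has kernel $\ker(h_C)$ and cokernel $\mathbb Z^m$, the pattern map is injective since $\boldsymbol\ell_1\neq\mathbf 0$, and the torus map has kernel $\langle[\gamma_m]\rangle=\ker(g_m)$ and image $\langle\overline{\boldsymbol\ell}_1\rangle$.
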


\begin{proof}
The argument is identical to that of
Lemma~\ref{lem:lifted-pieces-nonzero} and
Lemma~\ref{lem:lifted-pieces-zero-zero}.
\end{proof}

Again, we start by analysing the first homologies of the lifted pieces.

\begin{lem}
\label{lem:homology-of-lift-of-torus}
There is a natural isomorphism of $R$-modules
\[
H_1(\widehat\Sigma)
\cong
R/\langle A_1-1\rangle.
\]
\end{lem}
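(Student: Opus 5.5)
Recall the setting: $\mathbf w_1=\mathbf 0$ and $\boldsymbol\ell_1\neq\mathbf 0$, so $T_1=1$ while $A_1\neq 1$. By Lemma~\ref{lem:lifted-pieces-winding-zero}, the space $\widehat\Sigma$ is a disjoint union of translates of the cylinder $\overline\Sigma$. These translates are indexed by
\[
\Gamma_\Sigma=(\mathbb Z^m\oplus\mathbb Z^{\mu-1})/\langle\overline{\boldsymbol\ell}_1\rangle.
\]
The plan is to compute $H_1(\widehat\Sigma)$ component by component, as in Propositions~\ref{prop:homology-of-lifted-pieces} and~\ref{prop:homology-of-lifted-pieces-zero-zero}, and then read off the $R$-module structure from the action of the deck group on the set of components.

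The first step is to check that $\overline\Sigma$ is an infinite cylinder whose first homology is $\mathbb Z$, generated by a lift $\widehat\gamma_m$ of the meridian. By Definition~\ref{defn:cylinder-cover}, the meridian lifts to closed loops and the longitude unwinds. I would confirm that this cover agrees with the restriction of $p_L$ to $\Sigma$ by computing the composite
\[
\pi_1(\Sigma)\to H_1(X_L)\cong\mathbb Z^m\oplus\mathbb Z^{\mu-1}.
\]
The meridian of $C_1$ bounds a disc in $N(C_1)$ meeting each $P_{1j}$ algebraically $w_{1j}=0$ times, so it maps to $(\mathbf w_1,\mathbf 0)=0$. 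The $0$-framed longitude maps to $(\mathbf 0,\boldsymbol\ell_1)$, which is nonzero. The kernel is therefore generated by $[\gamma_m]$, as required.

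Next I would assemble the module. We have
\[
H_1(\widehat\Sigma)=\bigoplus_{[(\mathbf n,\mathbf r)]\in\Gamma_\Sigma}\mathbb Z\cdot \mathbf t_1^{\mathbf n}\mathbf t_C^{\mathbf r}\widehat\gamma_m,
\]
and the deck group $\mathbb Z^m\oplus\mathbb Z^{\mu-1}$ acts by permuting the summands. The stabiliser of the component $\overline\Sigma$ is $\langle\overline{\boldsymbol\ell}_1\rangle$, and the generator $A_1$ of the stabiliser acts on $\overline\Sigma$ as translation along the cylinder. This is the step that needs care. Translation along the cylinder is homotopic to the identity, and it preserves the orientation of the meridian circle, so $A_1$ fixes the class $\widehat\gamma_m$. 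Hence $H_1(\widehat\Sigma)$ is the permutation module $\mathbb Z[\Gamma_\Sigma]$. Since $\mathbb Z[\Gamma_\Sigma]\cong R\otimes_{\mathbb Z[\langle A_1\rangle]}\mathbb Z\cong R/\langle A_1-1\rangle$, the isomorphism sends $1\mapsto\widehat\gamma_m$.

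Equivalently, one can phrase this through cellular chains. Give $\Sigma$ a CW structure with one $0$-cell, two $1$-cells $\gamma_m,\gamma_\ell$ and one $2$-cell, and lift it to $\widehat\Sigma$. This gives the chain complex
\[
0\to R\xrightarrow{\ \partial_2\ } R^2\xrightarrow{\ \partial_1\ } R\to 0,
\]
with $\partial_1=(\phi(\gamma_m)-1,\ \phi(\gamma_\ell)-1)=(0,\ A_1-1)$. The Fox derivatives of the commutator relator give $\partial_2=(1-A_1,\ 0)$ up to sign. Then
\[
\ker\partial_1=R\oplus 0,
\qquad
\operatorname{im}\partial_2=\langle A_1-1\rangle\oplus 0,
\]
since $A_1-1$ is a nonzerodivisor. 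This yields $H_1(\widehat\Sigma)\cong R/\langle A_1-1\rangle$ directly, and also shows the isomorphism is natural.

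The only real obstacle is the orientation and sign bookkeeping in the action of the stabiliser. I expect the Fox-calculus computation to settle it cleanly, so I would present that as the proof and mention the geometric picture as motivation.
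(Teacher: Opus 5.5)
Your proposal is correct, and its geometric half is exactly the paper's proof. The paper notes that $H_1(\overline\Sigma)\cong\mathbb Z[z^{\pm1}]/\langle z-1\rangle$, since translation along the cylinder fixes the meridian class. It then writes $H_1(\widehat\Sigma)\cong H_1(\overline\Sigma)\otimes_{\mathbb Z[z^{\pm1}]}R$ with $z\mapsto A_1$, which is your identification $\mathbb Z[\Gamma_\Sigma]\cong R\otimes_{\mathbb Z[\langle A_1\rangle]}\mathbb Z$ in other words.

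The Fox-calculus computation you intend to present as the official proof is a genuinely different route. It works directly with the free $R$-chain complex of $\widehat\Sigma=p_L^{-1}(\Sigma)$. It uses only that the meridian maps to $T_1=1$ and the longitude to $A_1\neq 1$. It therefore bypasses three things the paper relies on: the identification of components, the stabiliser action, and the flatness/extension-of-scalars step borrowed from Proposition~\ref{prop:homology-of-lifted-pieces}.

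It also gives more at no extra cost. You get $H_0(\widehat\Sigma)=\operatorname{coker}\partial_1\cong R/\langle A_1-1\rangle$ and $H_2(\widehat\Sigma)=\ker\partial_2=0$. Taking $x_0$ as the $0$-cell, you also get $H_1(\widehat\Sigma,\widehat X_0)=R^2/\operatorname{im}\partial_2\cong R\oplus R/\langle A_1-1\rangle$. The paper records the first of these in Remark~\ref{rem:H0-lifted-pieces-winding-zero} and obtains the last by a separate splitting argument in Proposition~\ref{prop:relative-sequence-first-term-0-case}.

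What the paper's route buys in return is uniformity with the other cases, and a visible geometric generator, namely the lift of the meridian.
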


\begin{proof}
By Definition~\ref{defn:cylinder-cover}, the covering
$\overline\Sigma$ is homeomorphic to an infinite cylinder. Its first
homology is generated by a lift of the meridian $\gamma_m$, while the
deck transformation corresponding to the longitude acts by translation.
Hence
\[
H_1(\overline\Sigma)
\cong
\mathbb Z[z^{\pm1}]/\langle z-1\rangle.
\]
As in the proof of
Proposition~\ref{prop:homology-of-lifted-pieces}, the decomposition in
Lemma~\ref{lem:lifted-pieces-winding-zero} gives
\[
H_1(\widehat\Sigma)
\cong
H_1(\overline\Sigma)
\otimes_{\mathbb Z[z^{\pm1}]}R.
\]
The coefficient homomorphism
\[
\mathbb Z[z^{\pm1}]
\longrightarrow
R
\]
is induced by the inclusion
$H_1(\Sigma)\to H_1(X_L)$ and sends
\[
z\longmapsto A_1.
\]
Therefore,
\[
H_1(\widehat\Sigma)
\cong
R/\langle A_1-1\rangle.
\]
\end{proof}

\begin{prop}
\label{prop:homology-of-lifted-pieces-0}
There are natural isomorphisms of $R$-modules
\begin{align}
H_1(Y_C)
&\cong
H_1(Z_C)\otimes_{R_C}R,
\label{eq:H1-YC-winding-zero}
\\
H_1(Y_P)
&\cong
H_1(\widehat X_P)\otimes_{R_P}R.
\label{eq:H1-YP-winding-zero}
\end{align}
\end{prop}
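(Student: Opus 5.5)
The plan is to follow the proof of Proposition~\ref{prop:homology-of-lifted-pieces} verbatim, now using the component description from Lemma~\ref{lem:lifted-pieces-winding-zero}. We first identify the coefficient homomorphisms on each piece. On the companion side, $\mathbf w_1=0$ makes the inclusion $\iota_*\colon H_1(X_C)\to H_1(X_L)$ send the first basis vector to zero. It factors as the projection $h_C$ followed by the injective map $\mathbb Z^{\mu-1}\to\mathbb Z^m\oplus\mathbb Z^{\mu-1}$, $(a_2,\ldots,a_\mu)\mapsto(\mathbf 0,a_2,\ldots,a_\mu)$. Hence each component of $Y_C$ is a copy of $Z_C$, and its deck group $\mathbb Z^{\mu-1}$ embeds in the deck group of $\widehat X_L$, with cokernel $\mathbb Z^m=\Gamma_C$. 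On the pattern side, $\boldsymbol\ell_1\neq0$ makes $H_1(X_P)\cong\mathbb Z^{m+1}\to\mathbb Z^m\oplus\mathbb Z^{\mu-1}$, $u_{1j}\mapsto t_{1j}$, $z_1\mapsto A_1$, injective. So, exactly as in Lemma~\ref{lem:lifted-pieces-nonzero}, each component is the maximal abelian cover $\widehat X_P$, with components indexed by $\Gamma_P$.

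Next we compare chain complexes. Choose a CW structure on $X_C$ (respectively $X_P$) and lift it. The decomposition of $Y_C$ into translates $\mathbf t_1^{\mathbf n}Z_C$, $\mathbf n\in\mathbb Z^m$, identifies $C_*(Y_C)$ with $C_*(Z_C)\otimes_{\Lambda_{\mu-1}}R$. Here $\Lambda_{\mu-1}=\mathbb Z[t_2^{\pm1},\ldots,t_\mu^{\pm1}]$ is included in $R$ as a sub-Laurent ring, and $R$ is free over it with basis the monomials $\mathbf t_1^{\mathbf n}$. This is the meaning of ``$\otimes_{R_C}R$'' in \eqref{eq:H1-YC-winding-zero}: $H_1(Z_C)$ is an $R_C$-module on which $s_1$ acts trivially, via $\psi_C$ of Remark~\ref{rem:specialization-zero-zero}. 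Because $\phi_C$ factors as $\psi_C$ followed by the inclusion, tensoring over $R_C$ agrees with tensoring over $\Lambda_{\mu-1}$.

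Similarly, $C_*(Y_P)\cong C_*(\widehat X_P)\otimes_{R_P}R$ via $\phi_P$. Now $\phi_P$ maps $R_P$ isomorphically onto the Laurent subring generated by $\mathbf t_1$ and $A_1$. The image of $\mathbb Z^{m+1}$ in $\mathbb Z^m\oplus\mathbb Z^{\mu-1}$ is a subgroup, $\mathbb Z^m\oplus\langle\boldsymbol\ell_1\rangle$. It may fail to be a direct summand if $\boldsymbol\ell_1$ is not primitive, but $R$ is still a free module over the group ring of any subgroup, with basis indexed by coset representatives. So in both cases $R$ is free, hence flat, and homology commutes with the tensor product. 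This gives both isomorphisms, natural in the inclusions.

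The main point needing care is this flatness and freeness. In the companion case the subtlety is that $\phi_C$ is not injective. I would handle it by factoring through $\Lambda_{\mu-1}$, where the relevant extension is a genuine free extension of group rings, and by noting that on $C_*(Z_C)$ the $R_C$-action already factors through $\psi_C$. In the pattern case, non-primitivity of $\boldsymbol\ell_1$ is handled by the free-over-subgroup-ring remark rather than by a direct-summand splitting.
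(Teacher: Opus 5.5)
Your proposal is correct and follows the paper's route. The paper's proof just says the argument is identical to that of Proposition~\ref{prop:homology-of-lifted-pieces}, applied to the decomposition of Lemma~\ref{lem:lifted-pieces-winding-zero}: lift a CW structure, identify $C_*(Y_C)$ and $C_*(Y_P)$ with induced complexes, and use freeness of $R$ to pass to homology. Your factorization of $\phi_C$ through $\psi_C$ and the free extension $\Lambda_{\mu-1}\subset R$ is exactly the detail that argument needs here, since when $\mathbf w_1=0$ the map $\phi_C$ is not injective and $R$ is not flat over $R_C$, so the injectivity step the paper cites does not carry over verbatim.
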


\begin{proof}
The argument is identical to that of
Proposition~\ref{prop:homology-of-lifted-pieces}, using the
description of the lifted pieces in
Lemma~\ref{lem:lifted-pieces-winding-zero}.
\end{proof}

As in Remark~\ref{rem:H0-lifted-pieces-zero-zero}, the description of
the connected components determines the zeroth homology of the lifted
pieces. We record only the facts that will be used below.

\begin{rem}
\label{rem:H0-lifted-pieces-winding-zero}
In the present case,
\[
H_0(\widehat\Sigma)
\cong
R/\langle A_1-1\rangle.
\]
Moreover, the argument of
Remark~\ref{rem:relative-end-terms-zero-zero} still applies, and hence
\[
H_0(\widehat\Sigma,\widehat X_0)=0.
\]
\end{rem}

We use the same construction and notation as in
Construction~\ref{cons:zero-zero-short-exact-sequences}. In particular,
we use the absolute and relative short exact sequences
\eqref{eq:00-case-MV} and \eqref{eq:00-case-MV-relative},
respectively. 

We first analyze the module $\mathfrak K$ appearing in
\eqref{eq:00-case-MV}.

\begin{prop}
\label{prop:module-k-0-case}
The module $\mathfrak K$ satisfies
\[
\Delta_0(\mathfrak K)
\;\Bigm|\;
A_1-1
\]
and
\[
\Delta_i(\mathfrak K)=1
\qquad\text{for all }i\geq1.
\]
\end{prop}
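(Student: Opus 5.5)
The plan is to combine the two earlier arguments for $\mathfrak K$. Proposition~\ref{prop:K-order} handled the annihilator side, and Proposition~\ref{prop:module-k-for-00-case} produced explicit coprime minors.

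First I will describe $\mathfrak K$ concretely. In the present case $T_1=1$ and $A_1\neq 1$. Lemma~\ref{lem:lifted-pieces-winding-zero} gives
\[
H_0(\widehat\Sigma)\cong R/\langle A_1-1\rangle,\qquad
H_0(Y_C)\cong R/\langle t_2-1,\ldots,t_\mu-1\rangle,\qquad
H_0(Y_P)\cong R/\langle t_{11}-1,\ldots,t_{1m}-1,A_1-1\rangle .
\]
As in Proposition~\ref{prop:generators-for-K}, $f$ is the boundary map of the complete bipartite graph on $\Gamma_C\sqcup\Gamma_P$, where now $\Gamma_C=\mathbb Z^m$ and $\Gamma_P=\mathbb Z^{\mu-1}/\langle\boldsymbol\ell_1\rangle$. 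The same $4$-cycle computation shows that $\mathfrak K$ is generated by the classes of $g_{ij}=(t_{1i}-1)(t_j-1)$ in $R/\langle A_1-1\rangle$. That argument never used nonvanishing of $\mathbf w_1$, so it carries over verbatim.

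The first claim follows exactly as in Proposition~\ref{prop:K-order}. Since $\mathfrak K\subseteq R/\langle A_1-1\rangle$, the element $A_1-1$ annihilates $\mathfrak K$. If $\mathfrak K$ has $N$ generators, Lemma~\ref{lem:annihilator-fitting} gives $(A_1-1)^N\in E_0(\mathfrak K)$, so $\Delta_0(\mathfrak K)\mid (A_1-1)^N$. To sharpen this to $A_1-1$, I will use the generators $g_{ij}$ and relations to build a presentation. One relation family is $(A_1-1)x_{ij}$. Others are the same relations $z_{i,jk}$ and $z_{ik,j}$ as in Proposition~\ref{prop:module-k-for-00-case}, which still hold in the quotient.

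Take the square matrix consisting of the lower-triangular block $M_{11,2}$ on all columns except $x_{12}$, together with the row $(A_1-1)x_{12}$. Its determinant is $(A_1-1)(t_{11}-1)^{m-1}(t_2-1)^{m(\mu-2)}$. Doing the same with $M_{12,3}$ gives $\pm(A_1-1)(t_{12}-1)^{m-1}(t_3-1)^{m(\mu-2)}$. The gcd of these two determinants is $A_1-1$, since the cofactors are coprime. Hence $\Delta_0(\mathfrak K)\mid A_1-1$.

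For $i\ge 1$, the $(m(\mu-1)-1)$-minors $\det M_{11,2}$ and $\det M_{12,3}$ computed in Proposition~\ref{prop:module-k-for-00-case} are still minors of this enlarged presentation matrix, because adding rows only adds minors. They are coprime, so $\Delta_1(\mathfrak K)=1$. The monotonicity in Lemma~\ref{lem:basic-fitting-properties} then gives $\Delta_i(\mathfrak K)=1$ for all $i\ge1$. The degenerate case $(m,\mu)=(1,2)$ is handled directly. There $\mathfrak K$ is cyclic, generated by $g_{12}$, with annihilator containing $A_1-1$. Thus $E_0(\mathfrak K)\ni A_1-1$ and $E_1(\mathfrak K)=R$.

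I expect the main obstacle to be confirming that the relations $z_{i,jk}$ and $z_{ik,j}$ genuinely hold in $R/\langle A_1-1\rangle$ and form part of a valid presentation. They hold already in $R$, so this is fine. A related point is that $A_1-1$ is coprime to the $(t-1)$ factors. This holds because $A_1$ is a nontrivial monomial in $t_2,\ldots,t_\mu$, whereas $t_{1i}-1$ is irreducible and does not divide it. The factor $t_j-1$ divides $A_1-1$ only when $A_1$ is a power of $t_j$. Choosing $j$ and $k$ appropriately when $\mu\ge3$, or simply noting that the claim is only a divisibility $\Delta_0\mid A_1-1$, settles this, since the gcd of the two determinants divides $(A_1-1)\gcd(\text{cofactors})=A_1-1$.
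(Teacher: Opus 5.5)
Your proposal is correct and follows essentially the same route as the paper. It reuses the generators $g_{ij}$ and the relations $z_{i,jk}$, $z_{ik,j}$, adjoins the relations $(A_1-1)x_{ij}$ coming from $(A_1-1)\mathfrak K=0$, and notes that $\det M_{11,2}$ and $\det M_{12,3}$ survive as coprime $(m(\mu-1)-1)$-minors, giving $\Delta_1(\mathfrak K)=1$. It then borders $\widetilde M_{11,2}$ and $\widetilde M_{12,3}$ with the rows $(A_1-1)x_{12}$ and $(A_1-1)x_{23}$ to obtain two $n$-minors whose gcd is $A_1-1$. Your preliminary annihilator bound $(A_1-1)^N$ and your separate treatment of $(m,\mu)=(1,2)$ are harmless extras that the paper's argument does not need.
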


\begin{proof}
By Proposition~\ref{prop:generators-for-K}, the module $\mathfrak K$
is generated by
\[
g_{ij}=(t_{1i}-1)(t_j-1),
\qquad
1\leq i\leq m,\quad 2\leq j\leq\mu.
\]
As in the proof of
Proposition~\ref{prop:module-k-for-00-case}, let
\[
\{x_{ij}\}
\]
denote the corresponding generators of a free module mapping onto
$\mathfrak K$, and enlarge a finite generating set for the kernel by
the relations $z_{i,jk}$ and $z_{ik,j}$ introduced there.

Recall that $\mathfrak K$ is a submodule of $H_0(\widehat\Sigma)$.
Hence, by Remark~\ref{rem:H0-lifted-pieces-winding-zero},
\[
(A_1-1)\mathfrak K=0.
\]
We may therefore add, for every $i,j$, a relation $z_{ij}$ satisfying
\[
\widetilde\phi(z_{ij})
=
(A_1-1)x_{ij}.
\]
Thus we obtain a presentation
\[
R^{|I|+m\binom{\mu-1}{2}
+\binom{m}{2}(\mu-1)+m(\mu-1)}
\xrightarrow{\ \widetilde\phi\ }
R^{m(\mu-1)}
\longrightarrow
\mathfrak K
\longrightarrow
0.
\]

The two minors used in the proof of
Proposition~\ref{prop:module-k-for-00-case} remain minors of this
presentation matrix. Since their determinants are relatively prime, the
same argument gives
\[
\Delta_1(\mathfrak K)=1.
\]
It follows that
\[
\Delta_i(\mathfrak K)=1
\qquad\text{for all }i\geq1.
\]

It remains to consider $\Delta_0(\mathfrak K)$. Starting with the
matrix $\widetilde M_{11,2}$ from the proof of
Proposition~\ref{prop:module-k-for-00-case}, add a row corresponding
to the relation
\[
(A_1-1)x_{12}=0.
\]
With the ordering used there, the resulting square matrix is lower
triangular, and its determinant is
\[
(A_1-1)
(t_{11}-1)^{m-1}
(t_2-1)^{m(\mu-2)}.
\]
Similarly, starting with $\widetilde M_{12,3}$ and adding the row
corresponding to
\[
(A_1-1)x_{23}=0
\]
gives a square minor with determinant
\[
\pm
(A_1-1)
(t_{12}-1)^{m-1}
(t_3-1)^{m(\mu-2)}.
\]
As in the previous proof, when $m=1$ or $\mu=2$, the construction is
understood with any unavailable exchange omitted. The greatest common
divisor of these two determinants is $A_1-1$. Hence
\[
\Delta_0(\mathfrak K)
\;\Bigm|\;
A_1-1,
\]
as required.
\end{proof}

We next analyze the left-hand term in the short exact
sequence~\eqref{eq:00-case-MV}. The following is the analogue of
Lemma~\ref{lem:quotient-module-zero-zero} for the present case.

\begin{lem}
\label{lem:quotient-module-0-case}
For every $k\geq0$,
\[
\Delta_k(\mathfrak B)
\;\Bigm|\;
\gcd_{\substack{\\[2pt] i+j=k}}
\!\Bigl(
\Delta_{C',i}(\mathbf t_C)
\;
\Delta_{P',j}(\mathbf t_1,A_1)
\Bigr).
\]
\end{lem}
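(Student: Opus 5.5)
We follow the proof of Lemma~\ref{lem:quotient-module-zero-zero} almost verbatim. The one change is on the pattern side. There $H_1(Y_P)$ is now the extension of the full Alexander module of $P'$ (Proposition~\ref{prop:homology-of-lifted-pieces-0}), not a module built from the special cover $Z_P$. We therefore quotient only the companion side, by the image of the lifted meridian, and leave the pattern side untouched.

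First we write down a presentation matrix for $\mathfrak B$. By Lemma~\ref{lem:homology-of-lift-of-torus}, $H_1(\widehat\Sigma)\cong R/\langle A_1-1\rangle$ is cyclic. Its generator $y$ is the lift of the meridian $\gamma_m$ of $\Sigma=\partial N(C_1)$. Let $M_C$ and $M_P$ be presentation matrices for $H_1(Y_C)$ and $H_1(Y_P)$, with $n_C$ and $n_P$ columns. Set $r_C=\operatorname{pr}_C(f_1(y))$ and $r_P=\operatorname{pr}_P(f_1(y))$. Then
\[
M_{\mathfrak B}=\begin{pmatrix} M_C & 0\\ r_C & r_P\\ 0 & M_P\end{pmatrix}
\]
presents $\mathfrak B$. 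There is no second boundary row, since the longitude does not give a closed class in $\overline\Sigma$.

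Next we apply the Leibniz-expansion argument from the proof of Lemma~\ref{lem:quotient-module-zero-zero} unchanged. It shows that every nonzero $(n_C+n_P-k)$-minor of
\[
\begin{pmatrix} M_C&0\\ r_C&0\\ 0&M_P\end{pmatrix}
\]
is also a minor of $M_{\mathfrak B}$. Hence
\[
E_k\bigl(H_1(Y_C)/\operatorname{im}(f_C)\oplus H_1(Y_P)\bigr)\subseteq E_k(\mathfrak B),
\]
where $f_C=\operatorname{pr}_C\circ f_1$. Taking gcds and using the direct-sum formula of Corollary~\ref{cor:orders-short-exact-sequence}, we get
\[
\Delta_k(\mathfrak B)\mid\gcd_{i+j=k}\Bigl(\Delta_i\bigl(H_1(Y_C)/\operatorname{im} f_C\bigr)\,\Delta_j\bigl(H_1(Y_P)\bigr)\Bigr).
\]

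It then remains to identify the two factors.
\begin{itemize}
\item \emph{Pattern factor.} Proposition~\ref{prop:homology-of-lifted-pieces-0} and Lemma~\ref{lem:fitting-base-change} give $\Delta_j(H_1(Y_P))\doteq\Delta_{P',j}(\mathbf t_1,A_1)$. The coefficient map $\phi_P$ is injective here because $\boldsymbol\ell_1\neq0$, so flatness is used exactly as in Proposition~\ref{prop:homology-of-lifted-pieces}. Since $\phi_P$ is also injective on monomials, the gcds are preserved.
\item \emph{Companion factor.} By Proposition~\ref{prop:homology-of-lifted-pieces-0}, $H_1(Y_C)\cong H_1(Z_C)\otimes_{R_C}R$, and $f_C$ is the tensored-up version of the map $f_m$ of Notation~\ref{not:quotient-by-torus-image} for the link $C$. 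Lemma~\ref{lem:quotient-by-torus-image} then gives
\[
H_1(Y_C)/\operatorname{im} f_C\cong H_1(\widehat X_{C'})\otimes R,
\]
using right exactness of $\otimes$. Its $i$-th order is $\Delta_{C',i}(\mathbf t_C)$, because $\mathbf t_C\subset R$ is just a polynomial extension.
\end{itemize}
Substituting both identifications yields the claimed bound.

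The step I expect to be the main obstacle is the companion-side identification. One must check that the image of $H_1(\widehat\Sigma)$ in $H_1(Y_C)$ is exactly the tensored image of the lifted meridians $f_m$, as Lemma~\ref{lem:quotient-by-torus-image} requires. The worry is that the cylinders $\overline\Sigma$ inside the components $Z_C$ match the covering $\widehat\Sigma\to\Sigma$ used in that lemma (the case $\operatorname{lk}(C_1,C_j)\neq0$) only after tensoring. I would handle this by working componentwise, using the disjoint-union decompositions of Lemma~\ref{lem:lifted-pieces-winding-zero}, and then applying the first case of the proof of Lemma~\ref{lem:quotient-by-torus-image}. The nonvanishing of $\boldsymbol\ell_1$ is exactly what puts us in that case.
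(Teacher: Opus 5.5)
Your proposal is correct and follows the paper's proof essentially step for step. You use a single boundary row because $H_1(\widehat\Sigma)$ is cyclic on the lifted meridian, and the Leibniz block argument from Lemma~\ref{lem:quotient-module-zero-zero} then gives $E_k\bigl(H_1(Y_C)/\operatorname{im}(f_C)\oplus H_1(Y_P)\bigr)\subseteq E_k(\mathfrak B)$. You then identify the two factors via Lemma~\ref{lem:quotient-by-torus-image} on the companion side and Proposition~\ref{prop:homology-of-lifted-pieces-0} on the pattern side. Your extra checks are sound and only make explicit what the paper leaves implicit in ``extending scalars to $R$''. These are the componentwise matching of the cylinders, so that $f_C$ is the extended $f_m$, and the flatness of $\phi_P$, so that gcds survive base change.
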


\begin{proof}
The argument is the one used in
Lemma~\ref{lem:quotient-module-zero-zero}, with one simplification.
By Lemma~\ref{lem:homology-of-lift-of-torus},
$H_1(\widehat\Sigma)$ is cyclic, generated by the lift of the meridian.
Thus a presentation matrix for $\mathfrak B$ has the form
\[
M_{\mathfrak B}
=
\begin{pmatrix}
M_C & 0 \\
r_C & r_P \\
0 & M_P
\end{pmatrix}.
\]
The determinant argument from
Lemma~\ref{lem:quotient-module-zero-zero} therefore gives
\[
E_k\left(
H_1(Y_C)/\operatorname{im}(f_C)
\oplus H_1(Y_P)
\right)
\subseteq
E_k(\mathfrak B),
\]
where $f_C$ denotes the projection of $f_1$ to
$H_1(Y_C)$.

Combining Lemma~\ref{lem:quotient-by-torus-image} with
Proposition~\ref{prop:homology-of-lifted-pieces-0}, and extending
scalars to $R$, gives
\[
\Delta_k(\mathfrak B)
\;\Bigm|\;
\gcd_{\substack{\\[2pt] i+j=k}}
\!\Bigl(
\Delta_{C',i}(\mathbf t_C)
\;
\Delta_{P',j}(\mathbf t_1,A_1)
\Bigr),
\]
as required.
\end{proof}

The preceding analysis gives the first divisibility in
Theorem~\ref{thm:higher-single-component-case2}.

\begin{cor}
\label{coro:first-divisibility-winding-zero}
For every $k\geq0$,
\[
\Delta_{L,k}
\;\Bigm|\;
\gcd_{\substack{\\[2pt] i+j=k}}
\!\Bigl(
\Delta_{C',i-1}(\mathbf t_C)
\;
\Delta_{P',j}(\mathbf t_1,A_1),
\;
(A_1-1)\Delta_{C',i}(\mathbf t_C)
\;
\Delta_{P',j}(\mathbf t_1,A_1)
\Bigr).
\]
\end{cor}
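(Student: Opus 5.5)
We combine the absolute short exact sequence~\eqref{eq:00-case-MV},
\[
0\longrightarrow\mathfrak B\longrightarrow H_1(\widehat X_L)\longrightarrow\mathfrak K\longrightarrow 0,
\]
with Corollary~\ref{cor:orders-short-exact-sequence}. This gives
\[
\Delta_{L,k}\;\Bigm|\;\Delta_a(\mathfrak B)\,\Delta_b(\mathfrak K)
\qquad\text{for every } a+b=k.
\]
Two choices of $(a,b)$ suffice.

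\emph{Step 1: the choice $b\ge1$.} Here Proposition~\ref{prop:module-k-0-case} gives $\Delta_b(\mathfrak K)=1$. Taking $b=1$, $a=k-1$, we get $\Delta_{L,k}\mid\Delta_{k-1}(\mathfrak B)$. Lemma~\ref{lem:quotient-module-0-case} then gives
\[
\Delta_{L,k}\;\Bigm|\;\Delta_{C',i'}(\mathbf t_C)\,\Delta_{P',j}(\mathbf t_1,A_1)
\qquad\text{whenever } i'+j=k-1.
\]
Writing $i'=i-1$ with $i+j=k$, this is exactly divisibility by the first family of terms,
\[
\Delta_{C',i-1}(\mathbf t_C)\,\Delta_{P',j}(\mathbf t_1,A_1),\qquad i+j=k,\ i\ge1,
\]
with terms of negative index omitted as usual.

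\emph{Step 2: the choice $b=0$.} Proposition~\ref{prop:module-k-0-case} gives $\Delta_0(\mathfrak K)\mid A_1-1$, so $\Delta_{L,k}\mid (A_1-1)\Delta_k(\mathfrak B)$. Lemma~\ref{lem:quotient-module-0-case} with index $k$ then gives
\[
\Delta_{L,k}\;\Bigm|\;(A_1-1)\,\Delta_{C',i}(\mathbf t_C)\,\Delta_{P',j}(\mathbf t_1,A_1)
\qquad\text{for all } i+j=k.
\]
This is the second family of terms.

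\emph{Step 3: conclusion.} Since $\Delta_{L,k}$ divides every element of both families, it divides their gcd. That gcd is the right-hand side of the statement.

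The only real content is the bookkeeping of indices. One point deserves care. The inequality $\Delta_{L,k}\mid\Delta_a(\mathfrak B)\Delta_b(\mathfrak K)$ is valid for each individual pair $(a,b)$, not only after taking the gcd over all pairs. This holds because Lemma~\ref{lem:Fitting-short-exact-sequence} gives the ideal inclusion $E_a(\mathfrak B)E_b(\mathfrak K)\subseteq E_{k}(H_1(\widehat X_L))$, and gcds are multiplicative on products of ideals. Also, $\Delta_0(\mathfrak K)$ is only known to divide $A_1-1$, not to equal it; since divisibility is all that is needed, this causes no problem.
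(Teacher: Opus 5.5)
Your proposal is correct and is essentially the paper's proof: apply Corollary~\ref{cor:orders-short-exact-sequence} to \eqref{eq:00-case-MV}, and use $\Delta_1(\mathfrak K)=1$ and $\Delta_0(\mathfrak K)\mid A_1-1$ from Proposition~\ref{prop:module-k-0-case} to get $\Delta_{L,k}\mid\gcd\bigl(\Delta_{k-1}(\mathfrak B),(A_1-1)\Delta_k(\mathfrak B)\bigr)$; then apply Lemma~\ref{lem:quotient-module-0-case} to each term with the index shift you describe. Your remark on per-pair divisibility is valid, but it already follows from the gcd form of Corollary~\ref{cor:orders-short-exact-sequence}, since a gcd divides each of its terms.
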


\begin{proof}
Applying Corollary~\ref{cor:orders-short-exact-sequence} to the short
exact sequence~\eqref{eq:00-case-MV} gives
\[
\Delta_{L,k}
\;\Bigm|\;
\gcd_{\substack{\\[2pt] k_1+k_2=k}}
\!\Bigl(
\Delta_{k_1}(\mathfrak B)\,
\Delta_{k_2}(\mathfrak K)
\Bigr).
\]
By Proposition~\ref{prop:module-k-0-case},
\[
\Delta_0(\mathfrak K)
\;\Bigm|\;
A_1-1,
\qquad
\Delta_{k_2}(\mathfrak K)=1
\quad\text{for all }k_2\geq1.
\]
Consequently, considering $k_2=0$ and $k_2=1$, we obtain
\[
\Delta_{L,k}
\;\Bigm|\;
\gcd\Bigl(
\Delta_{k-1}(\mathfrak B),
(A_1-1)\Delta_k(\mathfrak B)
\Bigr).
\]
Applying Lemma~\ref{lem:quotient-module-0-case} to the two terms on the
right gives the desired divisibility.
\end{proof}

We now turn to the relative Mayer--Vietoris sequence
\eqref{eq:relative-MV-sequence}, and in particular to the short exact
sequence~\eqref{eq:00-case-MV-relative} obtained from it. The main new
feature in the present case is the structure of
$H_1(\widehat\Sigma,\widehat X_0)$.

\begin{prop}
\label{prop:relative-sequence-first-term-0-case}
There is a natural isomorphism of $R$-modules
\[
H_1(\widehat\Sigma,\widehat X_0)
\cong
R\oplus R/\langle A_1-1\rangle.
\]
In particular,
$\operatorname{im}(f_1^{\mathrm{rel}})$ is generated by at most two
elements.
\end{prop}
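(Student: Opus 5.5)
We will use the long exact sequence of the pair $(\widehat\Sigma,\widehat X_0)$, just as in the proofs of Propositions~\ref{prop:relative-homology-of-torus} and \ref{prop:reverse-divisibility-zero-zero}. Since $\widehat X_0$ is discrete, $H_1(\widehat X_0)=0$. By Remark~\ref{rem:H0-lifted-pieces-winding-zero}, $H_0(\widehat\Sigma,\widehat X_0)=0$. The relevant portion of the sequence is therefore
\[
0\longrightarrow H_1(\widehat\Sigma)\longrightarrow H_1(\widehat\Sigma,\widehat X_0)\longrightarrow H_0(\widehat X_0)\longrightarrow H_0(\widehat\Sigma)\longrightarrow 0 .
\]
Choosing a lift of $x_0$ identifies $H_0(\widehat X_0)\cong R$. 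Remark~\ref{rem:H0-lifted-pieces-winding-zero} gives $H_0(\widehat\Sigma)\cong R/\langle A_1-1\rangle$, and under these identifications the map between them is the quotient map. Its kernel is the principal ideal $\langle A_1-1\rangle$. Because $\boldsymbol\ell_1\neq 0$, the element $A_1-1$ is nonzero, so this ideal is a free $R$-module of rank one. We thus obtain a short exact sequence
\[
0\longrightarrow H_1(\widehat\Sigma)\longrightarrow H_1(\widehat\Sigma,\widehat X_0)\longrightarrow \langle A_1-1\rangle\cong R\longrightarrow 0,
\]
and Lemma~\ref{lem:homology-of-lift-of-torus} identifies the left-hand term with $R/\langle A_1-1\rangle$.

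Since the right-hand term is free, the sequence splits, which gives
\[
H_1(\widehat\Sigma,\widehat X_0)\cong R\oplus R/\langle A_1-1\rangle .
\]
The splitting can also be made geometric, which is the natural choice here. In the component $\overline\Sigma$ containing the chosen lift $\widehat x_0$, take the lift of the longitude $\gamma_\ell$ starting at $\widehat x_0$. This arc ends at $A_1\widehat x_0$, so it defines a relative class whose image in $H_0(\widehat X_0)\cong R$ is $A_1-1$, the generator of the kernel. Sending $1\in R$ to this arc gives the section.

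The second assertion follows at once. The module $R\oplus R/\langle A_1-1\rangle$ is generated by two elements, namely the longitude arc and the lifted meridian. Hence its image $\operatorname{im}(f_1^{\mathrm{rel}})$ is also generated by at most two elements.

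The only point requiring care is checking that the map $H_0(\widehat X_0)\to H_0(\widehat\Sigma)$ really is the quotient by $\langle A_1-1\rangle$. This amounts to identifying which deck translates of $\widehat x_0$ lie in the same component of $\widehat\Sigma$. By Lemma~\ref{lem:lifted-pieces-winding-zero}, the components are indexed by $\Gamma_\Sigma$, and the stabiliser of a component is generated by the image of $\pi_1(\Sigma)$ in $H_1(X_L)$. With $\mathbf w_1=0$, the meridian maps to $T_1=1$, and the longitude maps to $A_1$. The stabiliser is therefore generated by $A_1$, which gives the stated identification. Naturality of the isomorphism needs no separate argument: every map used comes from the long exact sequence of the pair, apart from the choice of splitting.
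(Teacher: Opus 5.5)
Your proof is correct and matches the paper's argument. Both use the long exact sequence of the pair $(\widehat\Sigma,\widehat X_0)$, together with Lemma~\ref{lem:homology-of-lift-of-torus} and Remark~\ref{rem:H0-lifted-pieces-winding-zero}, to get $0\to R/\langle A_1-1\rangle\to H_1(\widehat\Sigma,\widehat X_0)\to\langle A_1-1\rangle\to 0$, which splits since $\langle A_1-1\rangle\cong R$; your lifted-longitude section and your stabiliser check are extra detail the paper leaves implicit, not a different route.
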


\begin{proof}
Arguing as in the proof of
Proposition~\ref{prop:reverse-divisibility-zero-zero}, the long exact
sequence of the pair $(\widehat\Sigma,\widehat X_0)$, together with
Lemma~\ref{lem:homology-of-lift-of-torus} and Remark~\ref{rem:H0-lifted-pieces-winding-zero}, gives a short exact
sequence
\[
0
\longrightarrow
R/\langle A_1-1\rangle
\longrightarrow
H_1(\widehat\Sigma,\widehat X_0)
\longrightarrow
\langle A_1-1\rangle
\longrightarrow
0.
\]
Since $\langle A_1-1\rangle\cong R$, the sequence splits, and therefore
\[
H_1(\widehat\Sigma,\widehat X_0)
\cong
R\oplus R/\langle A_1-1\rangle.
\]
The final assertion follows immediately.
\end{proof}

\begin{lem}
\label{lem:im-in-relative-case-0-case}
The module $\operatorname{im}(f_1^{\mathrm{rel}})$ satisfies
\[
\Delta_1\bigl(\operatorname{im}(f_1^{\mathrm{rel}})\bigr)
\;\Bigm|\;
A_1-1
\]
and
\[
\Delta_i\bigl(\operatorname{im}(f_1^{\mathrm{rel}})\bigr)=1
\qquad\text{for all }i\geq2.
\]
\end{lem}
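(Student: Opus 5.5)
The plan is to treat $\operatorname{im}(f_1^{\mathrm{rel}})$ as a quotient of the two-generator module $H_1(\widehat\Sigma,\widehat X_0)\cong R\oplus R/\langle A_1-1\rangle$ from Proposition~\ref{prop:relative-sequence-first-term-0-case}. Then I would bound its orders using a presentation matrix with two columns.

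First I fix generators. Take $e_1$ to be a lift of a relative class mapping onto the generator $A_1-1$ of $\langle A_1-1\rangle$; for instance, a path in the lifted cylinder joining $\widehat x_0$ to its translate by $A_1$. Take $e_2$ to be the lift of the meridian $\gamma_m$, which generates the torsion summand. Write $\pi\colon R^2\to\operatorname{im}(f_1^{\mathrm{rel}})$ for the composite surjection. The relation $(A_1-1)e_2=0$ holds already in $H_1(\widehat\Sigma,\widehat X_0)$, so it lies in $\ker\pi$. Enlarging any finite generating set of $\ker\pi$ by this relation gives a presentation matrix with $2$ columns containing the row $(0,\,A_1-1)$.

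The bound for $i\geq 2$ is immediate. Since there are only two generators, $E_i=R$ for $i\geq2$ by Definition~\ref{defn:fitting-ideals}, so $\Delta_i=1$.

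For $\Delta_1$, the ideal $E_1$ is generated by the entries of the presentation matrix. The row $(0,A_1-1)$ contributes the entry $A_1-1\in E_1(\operatorname{im}(f_1^{\mathrm{rel}}))$, hence $\Delta_1\mid A_1-1$.

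The only point needing care is confirming that $e_2$ really is annihilated by $A_1-1$ inside the image, rather than only in the abstract module. This is automatic, since $\pi$ is a module map and $(A_1-1)e_2=0$ already in the source. A second subtlety is to make sure the presentation has exactly two columns. If $\operatorname{im}(f_1^{\mathrm{rel}})$ happens to be generated by fewer elements, the Fitting ideal bound only improves. The splitting of the short exact sequence in Proposition~\ref{prop:relative-sequence-first-term-0-case} is what ensures two generators suffice, so I expect no genuine obstacle here.
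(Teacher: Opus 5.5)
Your proposal is correct and follows the paper's proof essentially step for step. The splitting $H_1(\widehat\Sigma,\widehat X_0)\cong R\oplus R/\langle A_1-1\rangle$ gives a two-generator presentation of $\operatorname{im}(f_1^{\mathrm{rel}})$, so $E_i=R$ for $i\geq 2$. Because the torsion generator is killed by $A_1-1$, the row $(0,\,A_1-1)$ puts $A_1-1$ among the $1$-minors, hence $A_1-1\in E_1$ and $\Delta_1\mid A_1-1$.
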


\begin{proof}
By Proposition~\ref{prop:relative-sequence-first-term-0-case},
$\operatorname{im}(f_1^{\mathrm{rel}})$ is generated by at most two
elements. Hence
\[
\Delta_i\bigl(\operatorname{im}(f_1^{\mathrm{rel}})\bigr)=1
\qquad\text{for all }i\geq2.
\]
Moreover, the image of the summand
\[
R/\langle A_1-1\rangle
\subseteq
H_1(\widehat\Sigma,\widehat X_0)
\]
is annihilated by $A_1-1$. Thus a two-generator presentation may be
chosen with a relation whose only nonzero entry is $A_1-1$. Therefore
\[
A_1-1
\in
E_1\bigl(\operatorname{im}(f_1^{\mathrm{rel}})\bigr),
\]
and hence
\[
\Delta_1\bigl(\operatorname{im}(f_1^{\mathrm{rel}})\bigr)
\;\Bigm|\;
A_1-1.
\]
\end{proof}

As in Proposition~\ref{prop:reverse-divisibility-zero-zero}, set
\[ \mathfrak B' := H_1(Y_C,\widehat X_0)\oplus H_1(Y_P,\widehat X_0). \] 
We next record the lower bound on its orders.

\begin{lem}
\label{lem:middle-term-in-relative-ses-0-case}
For every $k\geq0$,
\[
\gcd_{\substack{\\[2pt] i+j=k-2}}
\!\Bigl(
D_{C,i}(\mathbf t_C)
\;
\Delta_{P',j}(\mathbf t_1,A_1)
\Bigr)
\;\Bigm|\;
\Delta_k(\mathfrak B').
\]
\end{lem}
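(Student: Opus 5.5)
We will argue exactly as in the proof of Proposition~\ref{prop:reverse-divisibility-zero-zero}, splitting $\mathfrak B'$ into its two summands and treating each through the corresponding absolute module. The first step is to pass from relative to absolute homology on each piece. The argument of Proposition~\ref{prop:relative-Alexander-module}, applied componentwise, gives
\[
\Delta_i\bigl(H_1(Y_C,\widehat X_0)\bigr)\doteq\Delta_{i-1}\bigl(H_1(Y_C)\bigr),
\qquad
\Delta_j\bigl(H_1(Y_P,\widehat X_0)\bigr)\doteq\Delta_{j-1}\bigl(H_1(Y_P)\bigr).
\]
For $Y_P$ this is legitimate because, by Lemma~\ref{lem:lifted-pieces-winding-zero}, $Y_P$ is a disjoint union of translates of the maximal abelian cover $\widehat X_P$. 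Remark~\ref{rem:relative-modules-for-PC} then applies, and extension of scalars along $\phi_P$ is flat. For $Y_C$, the components are translates of $Z_C$. Remark~\ref{rem:relative-abelian-cover} gives the shift by one for $Z_C$, and the relative analogue of Proposition~\ref{prop:homology-of-lifted-pieces-0} gives $H_1(Y_C,\widehat X_0)\cong H_1(Z_C,Z_{C,0})\otimes R$. Fitting ideals are preserved by this extension of scalars, by Lemma~\ref{lem:fitting-base-change}.

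Next we apply the direct-sum formula of Corollary~\ref{cor:orders-short-exact-sequence}:
\[
\Delta_k(\mathfrak B')\doteq\gcd_{a+b=k}\Bigl(\Delta_a\bigl(H_1(Y_C,\widehat X_0)\bigr)\,\Delta_b\bigl(H_1(Y_P,\widehat X_0)\bigr)\Bigr)
\doteq\gcd_{i+j=k-2}\Bigl(\Delta_i\bigl(H_1(Y_C)\bigr)\,\Delta_j\bigl(H_1(Y_P)\bigr)\Bigr).
\]
When $a=0$ or $b=0$, the relative order vanishes, because each relative module has rank at least one. Those terms therefore contribute nothing to the gcd, which is consistent with the convention of omitting negative indices.

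It remains to bound each factor from below.
\begin{itemize}
\item \emph{Pattern factor.} Proposition~\ref{prop:homology-of-lifted-pieces-0} and Lemma~\ref{lem:fitting-base-change} give $\Delta_j(H_1(Y_P))\doteq\Delta_{P',j}(\mathbf t_1,A_1)$, an exact identity.
\item \emph{Companion factor.} Remark~\ref{rem:specialization-zero-zero} gives $E_i(H_1(Z_C))=\psi_C(E_i(H_1(\widehat X_C)))$, where $\psi_C$ specialises $s_1=1$. Corollary~\ref{cor:traldi-quotient-bounds}, in both the case $\mu>2$ and the case $\mu=2$, then gives $D_{C,i}(\mathbf t_C)\mid\Delta_i(H_1(Z_C))$. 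Extending scalars to $R$ gives $D_{C,i}\mid\Delta_i(H_1(Y_C))$.
\end{itemize}
Substituting these bounds into the gcd yields the claimed divisibility.

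The main obstacle is the base change along $\phi_C$ in the present case. Since $\mathbf w_1=0$, the map $\phi_C$ sends $s_1\mapsto 1$ and is not injective. The flatness argument must therefore be run over $\mathbb Z[t_2^{\pm1},\ldots,t_\mu^{\pm1}]\to R$ rather than over $R_C$. This map is a polynomial extension, hence free, which makes the step straightforward. One must also check that orders are preserved under this inclusion, not just Fitting ideals. This holds because the gcd of elements of a Laurent polynomial ring is unchanged by adjoining new variables.
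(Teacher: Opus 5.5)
Your proposal is correct and follows the paper's proof essentially step for step. You pass from relative to absolute homology on each summand of $\mathfrak B'$, apply the direct-sum formula, use the identity $\Delta_j(H_1(Y_P))\doteq\Delta_{P',j}(\mathbf t_1,A_1)$, and use the Traldi-type bound $D_{C,i}\mid\Delta_i(H_1(Y_C))$. Your remarks on the vanishing $a=0$ or $b=0$ terms, and on base-changing over $\mathbb Z[t_2^{\pm1},\ldots,t_\mu^{\pm1}]$ because $\phi_C(s_1)=1$, fill in details the paper leaves implicit.

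One small point, which the paper shares: Corollary~\ref{cor:traldi-quotient-bounds} is stated only for $i\geq1$. The $i=0$ term should instead come from Theorem~\ref{thm:Torres} together with \eqref{eq:zeroth-order-special-cover}, which give $\Delta_0(H_1(Z_C))\doteq(A_1-1)\Delta_{C',0}$. This is divisible by $D_{C,0}$; when $\mu=2$ this uses $A_1=t_2^{l_{12}}$ with $l_{12}\neq0$, so $t_2-1\mid A_1-1$.
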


\begin{proof}
As in the proof of
Proposition~\ref{prop:reverse-divisibility-zero-zero}, the relative
Alexander-module shift and the direct-sum formula give
\[
\Delta_k(\mathfrak B')
\doteq
\gcd_{\substack{\\[2pt] i+j=k-2}}
\!\Bigl(
\Delta_i\bigl(H_1(Y_C)\bigr)
\;
\Delta_j\bigl(H_1(Y_P)\bigr)
\Bigr).
\]
By Proposition~\ref{prop:homology-of-lifted-pieces-0} and
Corollary~\ref{cor:traldi-quotient-bounds},
\[
D_{C,i}(\mathbf t_C)
\;\Bigm|\;
\Delta_i\bigl(H_1(Y_C)\bigr),
\]
whereas
\[
\Delta_j\bigl(H_1(Y_P)\bigr)
\doteq
\Delta_{P',j}(\mathbf t_1,A_1).
\]
The stated divisibility follows.
\end{proof}

The preceding computations give the two opposite divisibilities.

\begin{cor}
\label{coro:reverse-divisibilities-winding-zero}
For every $k\geq0$,
\[
\gcd_{\substack{\\[2pt] i+j=k+1}}
\!\Bigl(
D_{C,i}(\mathbf t_C)
\;
\Delta_{P',j}(\mathbf t_1,A_1)
\Bigr)
\;\Bigm|\;
\Delta_{L,k},
\]
and
\[
\gcd_{\substack{\\[2pt] i+j=k}}
\!\Bigl(
D_{C,i}(\mathbf t_C)
\;
\Delta_{P',j}(\mathbf t_1,A_1)
\Bigr)
\;\Bigm|\;
(A_1-1)\Delta_{L,k}.
\]
\end{cor}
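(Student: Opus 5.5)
We will derive both divisibilities from the relative short exact sequence \eqref{eq:00-case-MV-relative},
\[
0\longrightarrow \operatorname{im}(f_1^{\mathrm{rel}})\longrightarrow \mathfrak B'\longrightarrow H_1(\widehat X_L,\widehat X_0)\longrightarrow 0,
\]
arguing as in Proposition~\ref{prop:reverse-divisibility-zero-zero}. By Corollary~\ref{cor:orders-short-exact-sequence}, for every $k_1+k_2=N$ we have
\[
\Delta_N(\mathfrak B')\;\Bigm|\;\Delta_{k_1}\bigl(\operatorname{im}(f_1^{\mathrm{rel}})\bigr)\,\Delta_{k_2}\bigl(H_1(\widehat X_L,\widehat X_0)\bigr),
\]
and Proposition~\ref{prop:relative-Alexander-module} identifies $\Delta_{k_2}(H_1(\widehat X_L,\widehat X_0))\doteq\Delta_{L,k_2-1}$. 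The two statements correspond to the two useful choices of $k_1$ allowed by Lemma~\ref{lem:im-in-relative-case-0-case}.

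For the first divisibility we take $N=k+3$ and $k_1=2$. Lemma~\ref{lem:im-in-relative-case-0-case} gives $\Delta_2(\operatorname{im}(f_1^{\mathrm{rel}}))=1$, so $\Delta_{k+3}(\mathfrak B')\mid\Delta_{L,k}$. Lemma~\ref{lem:middle-term-in-relative-ses-0-case} with index $k+3$ then gives
\[
\gcd_{i+j=k+1}\bigl(D_{C,i}(\mathbf t_C)\,\Delta_{P',j}(\mathbf t_1,A_1)\bigr)\;\Bigm|\;\Delta_{k+3}(\mathfrak B'),
\]
and combining the two yields the claim.

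For the second divisibility we take $N=k+2$ and $k_1=1$. Lemma~\ref{lem:im-in-relative-case-0-case} gives $\Delta_1(\operatorname{im}(f_1^{\mathrm{rel}}))\mid A_1-1$, hence
\[
\Delta_{k+2}(\mathfrak B')\;\Bigm|\;(A_1-1)\,\Delta_{k+1}\bigl(H_1(\widehat X_L,\widehat X_0)\bigr)\doteq(A_1-1)\,\Delta_{L,k}.
\]
Lemma~\ref{lem:middle-term-in-relative-ses-0-case} at index $k+2$ supplies the lower bound $\gcd_{i+j=k}\bigl(D_{C,i}\,\Delta_{P',j}(\mathbf t_1,A_1)\bigr)\mid\Delta_{k+2}(\mathfrak B')$, and chaining gives the claim.

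The only delicate point is bookkeeping. The index shift in Lemma~\ref{lem:middle-term-in-relative-ses-0-case} is by $2$: each relative summand shifts by one, as in Proposition~\ref{prop:relative-Alexander-module} applied to the components $Z_C$ and $\widehat X_P$ via Remark~\ref{rem:relative-alexander-module-abelian-cover}. This has to be matched against the shift by one for $\mathfrak M_L$. Small $k$ needs no separate treatment, since terms with negative indices are omitted by convention. The substantive inputs, namely $\Delta_1$ and $\Delta_2$ of $\operatorname{im}(f_1^{\mathrm{rel}})$ and the Traldi-type lower bound $D_{C,i}\mid\Delta_i(H_1(Y_C))$, are already established.
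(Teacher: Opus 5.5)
Your proposal is correct and is essentially the paper's own proof. It applies Corollary~\ref{cor:orders-short-exact-sequence} to the relative sequence \eqref{eq:00-case-MV-relative} with $(n,k_1,k_2)=(k+3,2,k+1)$ and $(k+2,1,k+1)$, uses Lemma~\ref{lem:im-in-relative-case-0-case} and Proposition~\ref{prop:relative-Alexander-module}, and then chains with Lemma~\ref{lem:middle-term-in-relative-ses-0-case} at indices $k+3$ and $k+2$, with the same index bookkeeping as the paper.
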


\begin{proof}
Applying Corollary~\ref{cor:orders-short-exact-sequence} to the relative
short exact sequence~\eqref{eq:00-case-MV-relative}, we have
\[
\Delta_n(\mathfrak B')
\;\Bigm|\;
\Delta_{k_1}\bigl(\operatorname{im}(f_1^{\mathrm{rel}})\bigr)
\,
\Delta_{k_2}\bigl(H_1(\widehat X_L,\widehat X_0)\bigr)
\]
whenever $k_1+k_2=n$.

Taking
\[
(n,k_1,k_2)=(k+3,2,k+1)
\]
and using
Lemma~\ref{lem:im-in-relative-case-0-case} and
Proposition~\ref{prop:relative-Alexander-module} gives
\[
\Delta_{k+3}(\mathfrak B')
\;\Bigm|\;
\Delta_{L,k}.
\]
Similarly, taking
\[
(n,k_1,k_2)=(k+2,1,k+1)
\]
gives
\[
\Delta_{k+2}(\mathfrak B')
\;\Bigm|\;
(A_1-1)\Delta_{L,k}.
\]
The two stated divisibilities now follow from
Lemma~\ref{lem:middle-term-in-relative-ses-0-case}.
\end{proof}

We are now ready to complete the proof of
Theorem~\ref{thm:higher-single-component-case2}.

\begin{proof}[Proof of Theorem~\ref{thm:higher-single-component-case2}]
The three divisibility statements follow from
Corollary~\ref{coro:first-divisibility-winding-zero} and
Corollary~\ref{coro:reverse-divisibilities-winding-zero}.
\end{proof}

\section{Examples}
\label{sec:Examples}

We now give several examples illustrating the results of this paper. The first shows that one of lower bounds from Theorem~\ref{thm:higher-single-component-case2} can be realised.

\begin{exa}
\label{ex:lower-bound-case2}
Our first example examines the bounds in
Theorem~\ref{thm:higher-single-component-case2}. The augmented pattern
$P'$, the companion link $C$, and the resulting satellite link $L$ are
shown in Figure~\ref{fig:example-lower-bound-case2}.

\begin{figure}[h]
    \centering
    \includegraphics[width=0.9\textwidth]{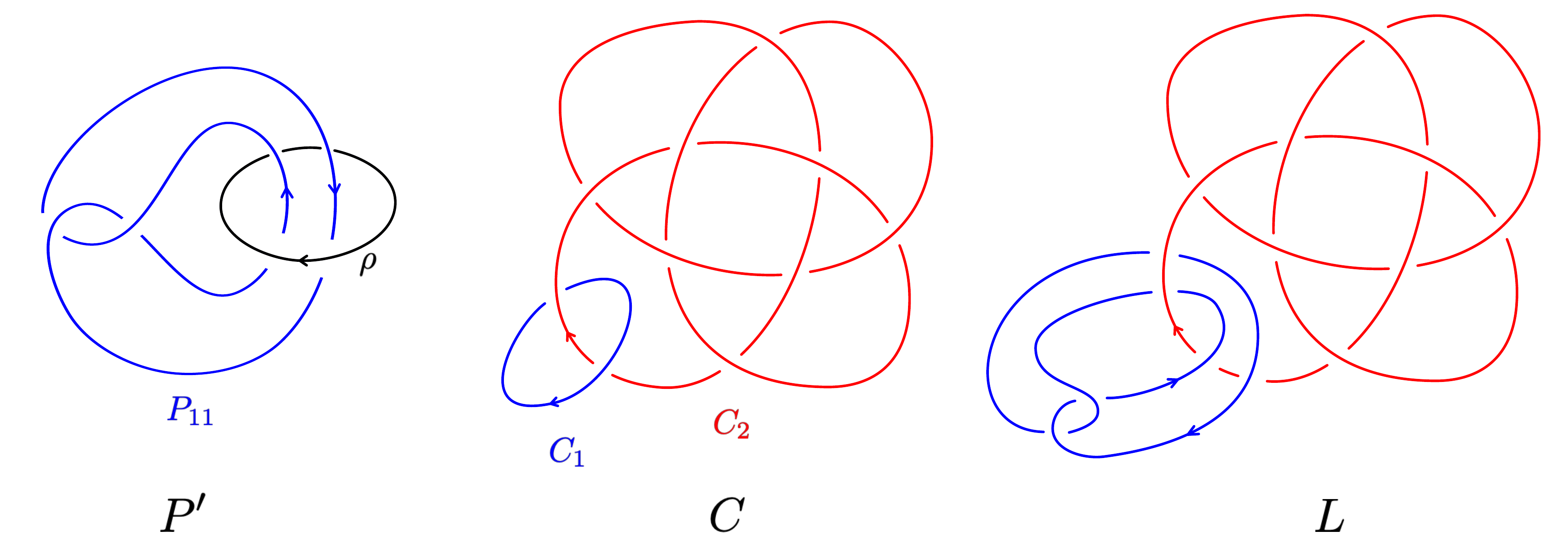}
    \caption{Example 1: The augmented pattern $P'$, the companion link
    $C$, and the resulting satellite link $L$.}
    \label{fig:example-lower-bound-case2}
\end{figure}

In this example,
\[
\mathbf w_1=\mathbf 0
\qquad\text{and}\qquad
\boldsymbol\ell_1\neq\mathbf 0,
\]
so Theorem~\ref{thm:higher-single-component-case2} applies. Moreover,
\[
A_1=t_2,
\]
and the relevant Alexander polynomials of the augmented pattern are
\[
\Delta_{P',0}(\mathbf t_1,A_1)
=
(t_{11}-1)(t_2-1),
\qquad
\Delta_{P',1}(\mathbf t_1,A_1)=1.
\]
For the sublink $C'=C\setminus C_1$, we have
\[
\Delta_{C',0}(\mathbf t_C)
=
(t_2^2-3t_2+1)(t_2^2-t_2+1)^2,
\]
\[
\Delta_{C',1}(\mathbf t_C)
=
t_2^2-t_2+1,
\qquad
\Delta_{C',2}(\mathbf t_C)=1.
\]
As a result, the quantities from Definition~\ref{defn:DC-DP} are:
\begin{align*}
D_{C,0}&=(t_2-1)\Delta_{C',0}=(t_2-1)(t_2^2-3t_2+1)(t_2^2-t_2+1)^2,
\\
D_{C,1}&=\gcd(\Delta_{C',0},(t_2-1)\Delta_{C',1})=t_2^2-t_2+1,
\\
D_{C,2}&=\gcd(\Delta_{C',1},(t_2-1)\Delta_{C',2})=1.
\end{align*}

We first consider the two lower bounds in
Theorem~\ref{thm:higher-single-component-case2} for $k=1$. The first
gives
\[
\gcd_{\substack{\\[2pt] i+j=2}}
\!\Bigl(
D_{C,i}(\mathbf t_C)
\;
\Delta_{P',j}(\mathbf t_1,A_1)
\Bigr)
\;\Bigm|\;
\Delta_{L,1}.
\]
In particular, the terms corresponding to $(i,j)=(0,2),(1,1)$ and $(2,0)$
are
\[
D_{C,0}\Delta_{P',2}=(t_2-1)(t_2^2-3t_2+1)(t_2^2-t_2+1)^2
\]
\[
D_{C,1}\Delta_{P',1}
=
t_2^2-t_2+1
\]
and
\[
D_{C,2}\Delta_{P',0}
=
(t_{11}-1)(t_2-1),
\]
respectively. Since
\[
\gcd\!\left(
t_2^2-t_2+1,\,
(t_{11}-1)(t_2-1)
\right)
=1,
\]
the first lower bound is trivial.

The second lower bound gives
\[
\gcd_{\substack{\\[2pt] i+j=1}}
\!\Bigl(
D_{C,i}(\mathbf t_C)
\;
\Delta_{P',j}(\mathbf t_1,A_1)
\Bigr)
\;\Bigm|\;
(A_1-1)\Delta_{L,1}.
\]
The two terms in this greatest common divisor are
\[
D_{C,0}\Delta_{P',1}
=
(t_2-1)(t_2^2-3t_2+1)(t_2^2-t_2+1)^2
\]
and
\[
D_{C,1}\Delta_{P',0}
=
(t_2^2-t_2+1)(t_{11}-1)(t_2-1).
\]
Hence
\[
\gcd\ \!\Bigl(
D_{C,0}\Delta_{P',1},
D_{C,1}\Delta_{P',0}
\Bigr)
=
(t_2-1)(t_2^2-t_2+1).
\]
Since $A_1=t_2$, the second lower bound therefore gives
\[
(t_2-1)(t_2^2-t_2+1)
\;\Bigm|\;
(t_2-1)\Delta_{L,1}.
\]
it follows that
\[
t_2^2-t_2+1
\;\Bigm|\;
\Delta_{L,1}.
\]

We now consider the upper bound in
Theorem~\ref{thm:higher-single-component-case2}. For $k=1$, it gives
\[
\Delta_{L,1}
\;\Bigm|\;
\gcd_{\substack{\\[2pt] i+j=1}}
\!\Bigl(
\Delta_{C',i-1}(\mathbf t_C)
\;
\Delta_{P',j}(\mathbf t_1,A_1),
\;
(A_1-1)\Delta_{C',i}(\mathbf t_C)
\;
\Delta_{P',j}(\mathbf t_1,A_1)
\Bigr).
\]
Using the convention that terms involving Alexander polynomials with
negative indices are omitted, the relevant terms are
\[
(A_1-1)\Delta_{C',0}\Delta_{P',1}
=
(t_2-1)\Delta_{C',0},
\]
\[
(A_1-1)\Delta_{C',1}\Delta_{P',0}
=
(t_{11}-1)(t_2-1)^2(t_2^2-t_2+1)
\]
and
\[
\Delta_{C',0}\Delta_{P',0}
=
(t_{11}-1)(t_2-1)\Delta_{C',0}.
\]
Hence, the greatest common divisor of these three terms is
\[
(t_2-1)(t_2^2-t_2+1).
\]
Thus, the upper bound gives
\[
\Delta_{L,1}
\;\Bigm|\;
(t_2-1)(t_2^2-t_2+1).
\]

A direct computation gives
\[
\Delta_{L,1}
=
t_2^2-t_2+1.
\]
Thus, in this example the first lower bound is trivial, while the
second lower bound is sharp and determines $\Delta_{L,1}$ up to a
unit. The upper bound, on the other hand, differs from
$\Delta_{L,1}$ by the additional factor $t_2-1$.
\end{exa}

To contrast the above example we now give a second example where neither of the lower bounds from Theorem~\ref{thm:higher-single-component-case2} realise $\Delta_{L,1}$.

\begin{exa}
In this example we take the same augmented pattern $P'$ from the previous example, and then the companion $C$ and resulting satellite $L$ as shown in Figure~\ref{fig:example-lower-bound-case2}.

\begin{figure}[h]
    \centering
    \includegraphics[width=0.8\textwidth]{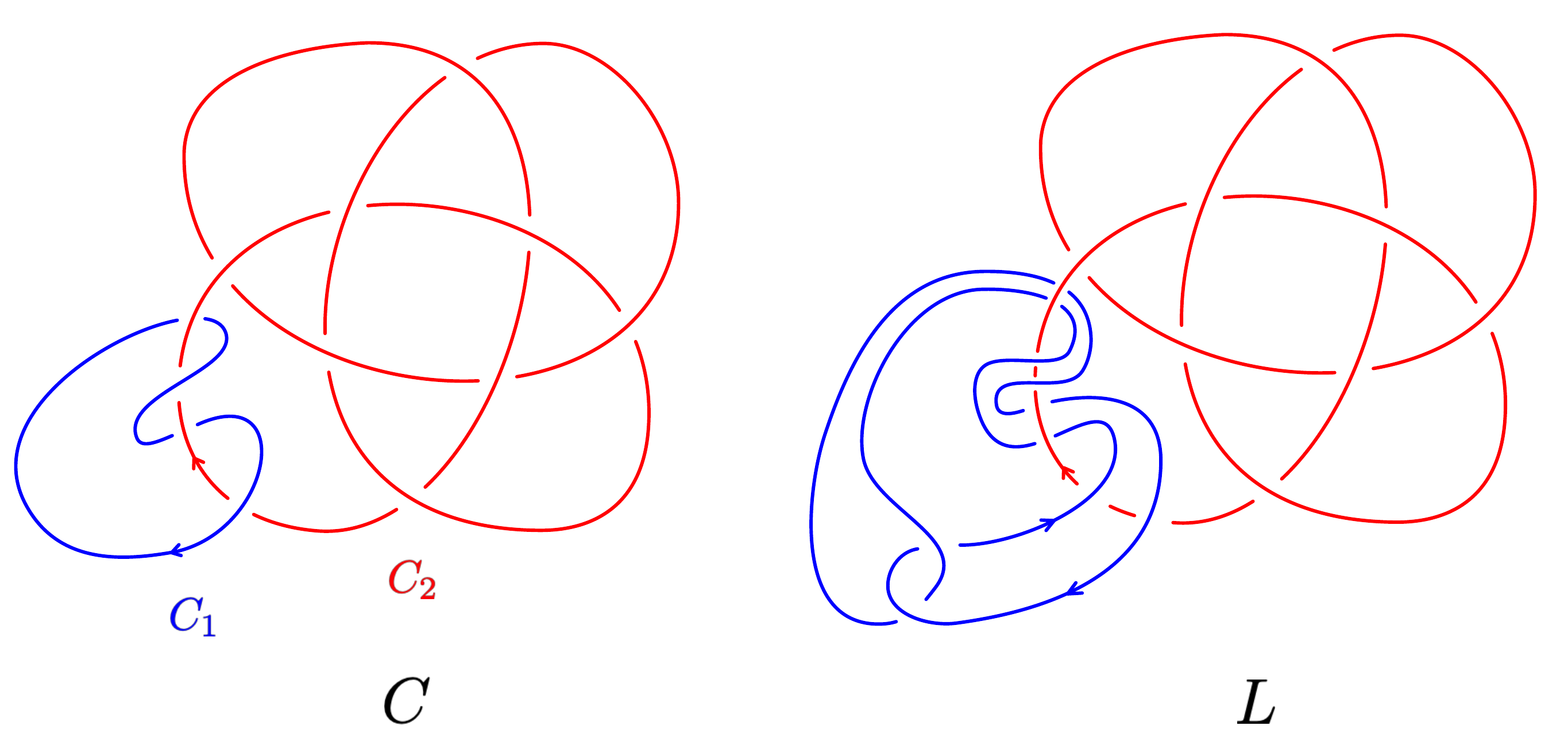}
    \caption{Example 2: The companion link
    $C$ and the resulting satellite link $L$.}
    \label{fig:example-lower-bound-case2}
\end{figure}

Again in this example we have,
\[
\mathbf w_1=\mathbf 0
\qquad\text{and}\qquad
\boldsymbol\ell_1\neq\mathbf 0,
\]
so Theorem~\ref{thm:higher-single-component-case2} applies. As $C'=C\backslash C_1$ is the same knot as in the previous example we have the use those computations. That is we have,
\[
D_{C,0}=(t_2-1)(t_2^2-3t_2+1)(t_2^2-t_2+1)^2, \qquad D_{C,1}=t_2^2-t_2+1, \qquad D_{C,2}=1.
\]
Similarly, as the pattern is the same we have,
\[
\Delta_{P',0}(\mathbf t_1,A_1)
=
(t_{11}-1)(t_2^2-1),
\qquad
\Delta_{P',1}(\mathbf t_1,A_1)=1.
\]

Note that $A_1=t_2^2$. So applying the lower bounds from Theorem~\ref{thm:higher-single-component-case2} using $k=1$ and the computations as in the previous example we have,
\[
1 \;\Bigm|\; \Delta_{L,1}
\qquad
\text{and}
\qquad
(t_2-1)(t_2^2-t_2+1) \;\Bigm|\; (t_2^2-1)\Delta_{L,1}.
\]
For the upper bound the relevant terms are,
\[
(A_1-1)\Delta_{C',0}\Delta_{P',1}
=
(t_2^2-1)\Delta_{C',0},
\]
\[
(A_1-1)\Delta_{C',1}\Delta_{P',0}
=
(t_{11}-1)(t_2^2-1)^2(t_2^2-t_2+1)
\]
and
\[
\Delta_{C',0}\Delta_{P',0}
=
(t_{11}-1)(t_2^2-1)\Delta_{C',0}.
\]
Hence, the upper bound gives,
\[
\Delta_{L,1} \,\bigm|\, (t_2^2-1)(t_2^2-t_2+1).
\]

Finally, a direct computation gives that,
\[
\Delta_{L,1}=t_2^2-t_2+1.
\]
Thus, in this example neither of the lower bounds are sharp. The first is trivial and the second is off by a factor of $(t_2+1)$. Further, again the upper bound fails to be sharp by a factor of $(A_1-1)$.
\end{exa}

\bibliographystyle{alpha} 
\bibliography{bibtemplate}

@article{Seifert1950,
author = {Seifert, Herbert and Threlfall, William},
address = {Cambridge, UK},
copyright = {Copyright © Canadian Mathematical Society 1950},
issn = {0008-414X},
journal = {Canadian journal of mathematics},
language = {eng},
number = {1},
pages = {1-15},
publisher = {Cambridge University Press},
title = {{O}ld and {N}ew {R}esults on {K}nots},
volume = {2},
year = {1950},
}

@article{Torres,
  title={{On the Alexander Polynomial}},
  author={Guillermo del Castillo Torres},
  journal={Annals of Mathematics},
  year={1953},
  volume={57},
  pages={57},
  url={https://api.semanticscholar.org/CorpusID:123894744}
}

@book{Lickorish,
author = {Lickorish, W. B. Raymond},
address = {New York, NY},
booktitle = {An Introduction to Knot Theory},
edition = {1st ed. 1997.},
isbn = {1-4612-6869-9},
language = {eng},
publisher = {Springer New York},
series = {Graduate Texts in Mathematics, 175},
title = {An Introduction to Knot Theory },
year = {1997},
}

@article{Petkova,
author = {Petkova, Ina},
address = {SOMERVILLE},
copyright = {Copyright 2018 Elsevier B.V., All rights reserved.},
issn = {1527-5256},
journal = {Journal of symplectic geometry},
language = {eng},
number = {1},
pages = {227-277},
publisher = {Int Press Boston, Inc},
title = {{The decategorification of bordered Heegaard Floer homology}},
volume = {16},
year = {2018},
}

@InProceedings{Livingston,
author="Livingston, Charles
and Melvin, Paul",
title="{Abelian invariants of satellite knots}",
booktitle="Geometry and Topology",
year="1985",
publisher="Springer Berlin Heidelberg",
address="Berlin, Heidelberg",
pages="217--227",
isbn="978-3-540-39738-0"
}

@book{Burde,
  author = {Burde, Gerhard and Zieschang, Heiner and Heusener, Michael},
  title = {Knots},
  series = {De Gruyter Studies in Mathematics},
  volume = {5},
  pages = {xiv+417},
  publisher = {De Gruyter, Berlin},
  edition = {extended},
  year = {2014},
  isbn = {978-3-11-027074-7; 978-3-11-027078-5},
  mrclass = {57-01 (57M25)},
  mrnumber = {3156509},
  mrreviewer = {Swatee\ Naik},
}

@book{EisenbudNeumann,
author = {Eisenbud, David and Neumann, W. D.},
address = {Princeton},
booktitle = {Three-dimensional link theory and invariants of plane curve singularities},
isbn = {0691083800},
language = {eng},
lccn = {LC 85-545},
publisher = {Princeton University Press},
series = {Annals of mathematics studies ; no. 110},
title = {Three-dimensional link theory and invariants of plane curve singularities },
year = {1985},
}

@misc{Friedl,
  title={Topology},
  author={Friedl, Stefan},
  year={2023},
  publisher={Lecture notes},
  note={URL: \url{https://friedl.app.uni-regensburg.de/papers/1t-total-public-september-1.pdf}}
}

@misc{Signature,
author = {Degtyarev, Alex and Florens, Vincent and Lecuona, Ana G.},
copyright = {legaldeposit},
language = {eng},
publisher = {Oxford University Press},
title = {{The Signature of a Splice}},
year = {20170415},
}

@article{Cimasoni,
author = {Cimasoni, David},
address = {Cambridge, UK},
copyright = {Copyright © Edinburgh Mathematical Society 2005},
issn = {0013-0915},
journal = {Proceedings of the Edinburgh Mathematical Society},
language = {eng},
number = {1},
pages = {61-73},
publisher = {Cambridge University Press},
title = {{The Conway function of a splice}},
volume = {48},
year = {2005},
}

@misc{stacks-project,
  shorthand    = {Stacks},
  author       = {The {Stacks Project Authors}},
  title        = {\textit{Stacks Project}},
  howpublished = {\url{https://stacks.math.columbia.edu}},
  year         = {2018},
}

@article{Blanchfield,
author = {Blanchfield, Richard C.},
address = {PRINCETON},
issn = {0003-486X},
journal = {Annals of mathematics},
language = {eng},
number = {2},
pages = {340-356},
publisher = {Princeton University},
title = {{Intersection Theory of Manifolds With Operators with Applications to Knot Theory}},
volume = {65},
year = {1957},
}

@article{Traldi,
 ISSN = {00029947},
 URL = {http://www.jstor.org/stable/1998469},
 author = {Lorenzo Traldi},
 journal = {Transactions of the American Mathematical Society},
 number = {2},
 pages = {593--610},
 publisher = {American Mathematical Society},
 title = {{A Generalization of Torres' Second Relation}},
 urldate = {2026-09-03},
 volume = {269},
 year = {1982}
}
\end{document}